\documentclass[12pt]{article}

% Packages

\usepackage{latexsym}
\usepackage{a4wide}
\usepackage{amscd}
\usepackage{graphics}
\usepackage{amsmath}
\usepackage{amssymb}
\usepackage{esint}
\usepackage{mathrsfs}
\usepackage{amsthm}
\usepackage{bbm}
\usepackage{color}
\usepackage{accents}
\usepackage{enumerate}
\usepackage{mathtools}
\usepackage{wasysym}
\input xy
\xyoption{all}

% New commands and symbols

\newcommand{\N}{\mathbb{N}}                     % the natural numbers
\newcommand{\Z}{\mathbb{Z}}                     % the integer numbers
\newcommand{\Q}{\mathbb{Q}}                     % the rational numbers
\newcommand{\R}{\mathbb{R}}                     % the real line
\newcommand{\C}{\mathbb{C}}                     % the complex plane
                     % the unit circle
\newcommand{\T}{\mathbb{T}}                     % the torus
                     % a field
                     % the disc
       % the set
                 % implication arrow
              % iff arrow
                 % one over two
            % end of proof
                 % immaginary part
                 % real part
        % divergence
             % distance
               % Kernel
           % Cokernel
         % corank
             % span
               % signum
             % diameter
               % Fredholm index
           % codimension
               % total variation
             % diagonal matrix
      % essential range
             % support
             % convex hull
             % spectrum
            % graph
               % Lipschitz norm
             % rank
               % range
                 % category
               % diffeomorphisms group
                 % symmetric matrices
               % domain
                 % essential commutator
               % angle
                   % complex projective space
                   % real projective space
                      % Palais-Smale condition
               % Critical set
                 % Space of homomorphisms
             % gradient
           % co-index
             % Hessian
             % signature
                 % convex hull    
     % closed convex hull
                 % Determinant bundle
             % set of rest poinst
     % scalar product

% Theorem-style environments

\newtheorem{mainthm}{\sc Theorem}           % numbered absolutely
\newtheorem{maincor}{\sc Corollary}           % numbered absolutely
           % numbered absolutely
\newtheorem{mainrem}{\sc Remark}           % numbered absolutely
           % numbered absolutely
\newtheorem{mainex}{\sc Example}           % numbered absolutely
\newtheorem{mainprop}{\sc Proposition}           % numbered absolutely
\newtheorem{thm}{\sc Theorem}[section]            % numbered within section
\newtheorem*{thm*}{\sc Theorem}               % no number
        % numbered along with Theorem
\newtheorem*{cor*}{\sc Corollary}        % no number
\newtheorem{lem}[thm]{\sc Lemma}            % numbered along with Theorem
\newtheorem{prop}[thm]{\sc Proposition}     % numbered along with Theorem
         % numbered along with Theorem
      % numbered along with Theorem
\newtheorem{rem}[thm]{\sc Remark}           % numbered along with Theorem  
           % numbered along with Theorem
    % numbered along with Theorem
       % numbered along with Theorem
        % numbered along with Theorem
      % numbered along with Theorem
\newtheorem{claim}[thm]{\sc Claim}

\title{Symplectic capacities of domains close to the ball and Banach--Mazur geodesics in the space of contact forms}
\date{}
\author{Alberto Abbondandolo\footnote{Fakult\"at f\"ur Mathematik, Ruhr-Universit\"at Bochum, Universit\"atsstra{\ss}e 150, 44801 Bochum, Germany, alberto.abbondandolo@rub.de}, Gabriele Benedetti\footnote{Department of Mathematics, Vrije Universiteit Amsterdam, De Boelelaan 1111, 1081 HV Amsterdam, g.benedetti@vu.nl}, and Oliver Edtmair\footnote{Department of Mathematics, University of California at Berkeley, Berkeley, CA, 94720, USA, oliver\_edtmair@berkeley.edu}}

\begin{document}

\maketitle

\begin{abstract}
We prove that all normalized symplectic capacities coincide on smooth domains in $\C^n$ which are $C^2$-close to the Euclidean ball, whereas this fails for some smooth domains which are just $C^1$-close to the ball. We also prove that all symplectic capacities whose value on ellipsoids agrees with that of the $n$-th Ekeland--Hofer capacity coincide in a $C^2$-neighborhood of the Euclidean ball of $\C^n$. These results are deduced from a general theorem about contact forms which are $C^2$-close to Zoll ones, saying that these contact forms can be pulled back to suitable ``quasi-invariant'' contact forms. We relate all this to the question of the existence of minimizing geodesics in the space of contact forms equipped with a Banach--Mazur pseudo-metric. Using some new spectral invariants for contact forms, we prove the existence of minimizing geodesics from a Zoll contact form to any contact form which is $C^2$-close to it.
This paper also contains an appendix in which we review the construction of exotic ellipsoids by the Anosov--Katok conjugation method, as these are related to the above mentioned pseudo-metric.
\end{abstract}

\section*{Introduction}

\paragraph{Normalized symplectic capacities.} Let 
\[
\omega_0 = \sum_{j=1}^n dx_j\wedge dy_j
\]
be the standard symplectic form on $\C^n$, endowed with coordinates $z_j=x_j+ i y_j$, for $j=1,\dots,n$. In this paper, by a symplectic capacity on $(\C^n,\omega_0)$ we mean a function
\[
c: \{ \mbox{open subsets of } \C^n \} \rightarrow [0,+\infty]
\]
satisfying the following conditions:
\begin{itemize}
\item (monotonicity) if there exists a symplectomorphism $\phi: \C^n \rightarrow \C^n$ such that $\phi(A)\subset A'$, then $c(A)\leq c(A')$;
\item (conformality) $c(rA) = r^2 c(A)$ for every $r>0$;
\item (non-triviality)  $c(B)>0$ and $c(Z)<\infty$.
\end{itemize}
Here, $B$ denotes the unit Euclidean ball in $\C^n$ and $Z$ the cylinder $\{z\in \C^n \mid |z_1|<1\}$. The symplectic capacity $c$ is said to be normalized if the non-triviality condition is upgraded by the following requirement:
\begin{itemize}
\item (normalization)  $c(B) = c(Z) = \pi$.
\end{itemize}
Symplectic capacities were introduced by Ekeland and Hofer in \cite{eh89} building on Gromov's work \cite{gro85}. What we here call a symplectic capacity on $(\C^n,\omega_0)$ is called by some authors a {\em relative} or {\em extrinsic} symplectic capacity, whereas a symplectic capacity should be {\em intrinsic}, meaning that the monotonicity axiom should hold more generally for all symplectic embeddings of $A$ into $A'$, see \cite[Chapter 12]{ms17}. With this terminology, any symplectic capacity is a relative symplectic capacity, but there are interesting relative symplectic capacities, such as Hofer's displacement energy from \cite{hof90b}, which are not symplectic capacities. In this paper, we wish to consider also these more general functions and hence work with the above weaker definition, but, to avoid to burden the terminology, we omit the word ``relative''. 

Gromov's non-squeezing theorem implies that the functions
\[
\begin{split}
\underline{c}(A) &\coloneqq \sup \{ \pi r^2 \mid \exists \mbox{ symplectomorphism $\phi:\C^n \rightarrow \C^n$ such that } \phi(rB) \subset A\}, \\
\overline{c}(A) &\coloneqq \inf \{ \pi r^2 \mid \exists \mbox{ symplectomorphism $\phi:\C^n \rightarrow \C^n$ such that } \phi(A) \subset rZ \},
\end{split}
\] 
are normalized symplectic capacities. They are known as {\em ball capacity} (or Gromov width) and {\em cylindrical capacity}, respectively. From the above axioms, one easily sees that they are the smallest and largest normalized capacities: any normalized symplectic capacity $c$ satisfies the inequalities
\begin{equation}
\label{BcZ}
\underline{c} \leq c \leq \overline{c}.
\end{equation}
The ball and cylindrical capacities are difficult to compute, as this would involve understanding symplectic embeddings, which is precisely what one would use a symplectic capacity for. Over the last decades, several more computable symplectic capacities based on periodic Hamiltonian orbits, pseudo-holomorphic curves, Lagrangian submanifolds, or a combination of these ingredients have been  constructed. See the survey \cite{chls07} and references therein. Many of these symplectic capacities are {\em spectral}, meaning that the symplectic capacity of a bounded domain $A$ with smooth boundary of restricted contact type coincides with the action of some closed characteristic on $\partial A$ (these hypersurfaces always admit closed characteristics, as first proven in \cite{vit87b}).

It is important to observe that the normalization axiom does not determine $c$ uniquely. Indeed, different normalized symplectic capacities may have different values on domains belonging to the set
\[
\begin{split}
\mathcal{A} \coloneqq \{ & \mbox{bounded open neighborhoods of the origin in $\C^n$ whose boundary is smooth} \\ & \mbox{and transverse to the radial direction} \}.
\end{split}
\]
The first examples are due to Hermann, who in \cite{her98} constructed Reinhardt domains in $\mathcal{A}$ with arbitrarily small volume, and hence arbitrarily small ball capacity, but large displacement energy, and hence large cylindrical capacity. Examples of domains in $\mathcal{A}$ whose cylindrical capacity is strictly larger than the displacement energy are constructed in \cite{ham02}. Examples of domains  whose ball capacity does not coincide with any spectral capacity are given by the domains in $\mathcal{A}$
with fixed volume and arbitrarily large systole which are constructed in \cite{abhs18} and \cite{sag21}. Here, the systole of $A\in \mathcal{A}$ is the positive number
\[
\mathrm{sys}(A)\coloneqq \min \{ \mbox{action of closed characteristics on } \partial A\}.
\]

A long standing open question is whether all normalized symplectic capacities coincide on bounded convex domains, see e.g.~\cite[Conjecture 1.9]{her98} and \cite[Section 14.9, Problem 53]{ms17}. A positive answer to this question would imply that for any normalized symplectic capacity and any bounded convex domain $C$ the inequality
\begin{equation}
\label{viterbo}
c(C)^n \leq n! \, \mathrm{vol}(C)
\end{equation}
holds, as the above inequality trivially holds for the ball capacity. Inequality \eqref{viterbo} was conjectured by Viterbo in \cite{vit00} and, for this reason, the coincidence of all normalized symplectic capacities on bounded convex domains is in the recent literature referred to as the strong Viterbo conjecture. Note that the equality holds in \eqref{viterbo} when $C$ is symplectomorphic to a ball, and Viterbo actually conjectured that these are the only bounded convex domains for which the equality holds (this part of Viterbo's conjecture would not follow from the coincidence of all normalized capacities on bounded convex domains). See \cite{ost14} and \cite{ghr22} for more information about these conjectures and their consequences in convex geometry.

Evidence for a positive answer to the strong Viterbo conjecture is given by the fact that for many normalized symplectic capacities $c$ it has been shown that
\begin{equation}
\label{c=sys}
c(C) = \mathrm{sys}(C)
\end{equation}
whenever $C$ is a bounded convex domain with smooth boundary. Identity \eqref{c=sys} has been proven  in \cite{vit89} for the Ekeland--Hofer capacity from \cite{eh89}, for the Hofer--Zehnder capacity in \cite{hz90},  and in \cite{ak22,iri22} for the capacity from symplectic homology introduced in \cite{fhw94}, which by Hermann's work \cite{her04} coincides also with Viterbo's generating functions capacity from \cite{vit92}.

Thanks to \eqref{BcZ}, proving the equivalence of all normalized capacities on bounded convex domains amounts to prove that $\underline{c} = \overline{c}$ for these domains, for instance by showing that both these capacities satisfy \eqref{c=sys}. For an arbitrary  bounded convex domain $C$, it is not known whether \eqref{c=sys} holds for $\underline{c}$ or for $\overline{c}$.

In this paper, we restrict our analysis to smooth convex domains that are $C^k$-close to the ball $B$. More precisely, we observe that the domains in $\mathcal{A}$ are precisely the sets of the form
\[
A_f \coloneqq \{ rz \mid z\in S^{2n-1}, \; 0\leq r < f(z) \},
\]
where $f$ is an arbitrary smooth positive function on the unit sphere $S^{2n-1}$ of $\C^n$, and  define the $C^k$-distance between domains in $\mathcal{A}$ as the $C^k$-distance between the corresponding smooth functions on $S^{2n-1}$. With the notation above, $B=A_1$.

Our first main result is the following theorem.

\begin{mainthm}
\label{1stcap}
There exists a $C^2$-neighborhood $\mathcal{B}$ of $B$ in $\mathcal{A}$ such that for every $A$ in $\mathcal{B}$ the following facts hold:
\begin{enumerate}[(i)]
\item There is a symplectomorphism $\phi: \C^n \rightarrow \C^n$ mapping $A$ into the cylinder  whose systole coincides with the systole of $A$. In particular, $\overline{c}(A)=\mathrm{sys}(A)$.
\item There is a symplectomorphism $\phi: \C^n \rightarrow \C^n$ mapping the ball whose systole coincides with the systole of $A$ into $A$.  In particular, $\underline{c}(A)=\mathrm{sys}(A)$.

\end{enumerate}
Consequently, all normalized symplectic capacities coincide on $\mathcal{B}$.
\end{mainthm}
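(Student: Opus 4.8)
The plan is to transfer the problem to the contact boundary and bring it into a normal form. For $A=A_f\in\mathcal A$, restricting the standard Liouville primitive $\lambda_0=\tfrac12\sum_j(x_j\,dy_j-y_j\,dx_j)$ to $\partial A$ gives a contact form $\alpha_A$ on $S^{2n-1}$ whose Reeb vector field directs the characteristic foliation of $\partial A$, so that $\mathrm{sys}(A)$ equals the smallest period of a closed Reeb orbit of $\alpha_A$. When $A$ is $C^2$-close to $B$, the form $\alpha_A$ is $C^2$-close to $\alpha_0:=\alpha_B$, the Zoll contact form on $S^{2n-1}$ whose Reeb flow is the free Hopf $S^1$-action, all of whose orbits have period $\pi$. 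Since the systole is continuous in the $C^2$-topology near a Zoll form, after rescaling $A$ by a factor close to $1$ --- which multiplies every capacity and $\mathrm{sys}$ by the same constant and keeps $A$ close to $B$ --- we may assume $\mathrm{sys}(A)=\pi$.

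The crux, which I would establish first, is the structural statement announced in the abstract: for $A$ in a sufficiently small $C^2$-neighborhood of $B$ there is a strict contactomorphism $\psi$ of $S^{2n-1}$ such that $\psi^*\alpha_A$ is \emph{quasi-invariant}, i.e. invariant under the Hopf action, hence of the form $\psi^*\alpha_A=h\,\alpha_0$ where $h$ is the pullback along the Hopf projection of a positive function $\bar h$ on $\CP^{n-1}$ that is $C^2$-close to the constant $1$. A direct computation of the Reeb flow of $h\,\alpha_0$ shows that its shortest closed orbits are the Hopf fibers lying over the minima of $\bar h$, of period $\pi\min\bar h$; together with the normalization above this forces $\min\bar h=1$. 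The contactomorphism $\psi$ extends, by conjugation with the radial Liouville flow $z\mapsto e^{t/2}z$, to an exact symplectomorphism of $\C^n$ carrying $A$ onto the star-shaped domain $A'=A_{\sqrt h}$; since all quantities in the statement are symplectic invariants it suffices to prove (i) and (ii) for $A'$. The gain is that $\sqrt h$ is Hopf-invariant, so $A'=\{\,z:|z|^2<\bar h([z])\,\}$ is invariant under the diagonal circle action $z\mapsto e^{it}z$, a Hamiltonian $S^1$-action on $\C^n$ with moment map $\tfrac12|z|^2$.

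In the normal form both claims become elementary. For (ii): since $\bar h\ge\min\bar h=1$ everywhere, the ball $B=A_1$ is literally contained in $A'$, and $\mathrm{sys}(B)=\pi=\pi\min\bar h=\mathrm{sys}(A')$; so the identity is a symplectomorphism of $\C^n$ sending the ball of systole $\mathrm{sys}(A')$ into $A'$, which gives $\underline c(A')=\mathrm{sys}(A')$. For (i): exploiting the Hamiltonian $S^1$-action one describes $A'$ through symplectic reduction at the levels of $\tfrac12|z|^2$ --- the reduced spaces being all of $\CP^{n-1}$ for $|z|^2<1$ and the shrinking sublevel sets $\{\bar h>|z|^2\}$ for $1\le|z|^2<\max\bar h$ --- and builds by hand a symplectic embedding of $A'$ into the cylinder $Z$ whose image has $z_1$-component contained in a disk of area $\pi=\mathrm{sys}(A')$: one ``unrolls'' the Hopf fibration along the closed characteristic lying over a minimum point of $\bar h$, trading the fibration over $\CP^{n-1}$ for a product, so that the area spent in the $z_1$-direction is governed by the systole rather than by $\max\bar h$. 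This gives $\overline c(A')\le\mathrm{sys}(A')$, and together with (ii) and \eqref{BcZ} it yields $\overline c(A')=\mathrm{sys}(A')$. Both maps extend to symplectomorphisms of all of $\C^n$ because they are restrictions of explicit, globally defined symplectomorphisms.

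Combining (i) and (ii) gives $\underline c(A)=\overline c(A)=\mathrm{sys}(A)$, so \eqref{BcZ} forces $c(A)=\mathrm{sys}(A)$ for every normalized symplectic capacity $c$ and every $A$ in the neighborhood $\mathcal B$. The main obstacle is the structural step: producing the Hopf-invariant normal form $h\,\alpha_0$ from a contact form merely $C^2$-close to the Zoll one. I expect it to require first averaging $\alpha_A$ over the Hopf action to obtain an approximately invariant contact form, and then correcting it by a contactomorphism produced through an implicit-function or fixed-point argument that absorbs the non-invariant remainder while keeping the Reeb dynamics under control. That $C^2$-closeness is essential --- the abstract notes the conclusion fails in $C^1$ --- reflects the loss of one derivative in passing from a contact form to its Reeb field, on which both the identity $\mathrm{sys}(h\,\alpha_0)=\pi\min\bar h$ and the whole near-Zoll analysis rely.
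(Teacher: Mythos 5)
Your strategy rests on a normal form that is strictly stronger than what actually holds. You assert that a contact form $C^2$-close to $\alpha_0$ can be conjugated, via a strict contactomorphism of $S^{2n-1}$, to $h\,\alpha_0$ with $h$ Hopf-invariant. If this were true, the Reeb flow of $\alpha_A$ would be smoothly conjugate to an $S^1$-equivariant flow and would therefore carry the lift of $\bar h$ as a first integral; but a generic small $C^2$-perturbation of $\alpha_0$ has only finitely many short closed Reeb orbits and no such integrable structure. What Theorem~\ref{qi} actually establishes is only the \emph{quasi}-invariant normal form $\varphi^*\alpha = T e^h\alpha_0$, where $T$ is $S^1$-invariant but the correction factor $e^h$ is not --- the paper's name ``quasi-invariant'' flags precisely this unavoidable non-invariance, which survives the averaging-and-Moser procedure you sketch at the end.

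For part (ii) this defect is harmless: property (c) of Theorem~\ref{qi} gives $\min_M T e^h = \min_M T$, so the ball of radius $\sqrt{\min_{S^{2n-1}} T e^h}$ has systole $\pi\min T = \mathrm{sys}(A)$ and sits inside $A' = A_{\sqrt{T e^h}}$, and extending the contactomorphism by positive $1$-homogeneity (with a smoothing near the origin via the paper's path-of-domains device) yields the required global symplectomorphism. So your argument for (ii), once corrected to use the quasi-invariant form, is essentially the paper's. For part (i), however, the gap is fatal. Your construction of the embedding of $A'$ into the cylinder uses the diagonal circle symmetry of $A'$ --- reduction at levels of $\tfrac12|z|^2$, then ``unrolling the Hopf fibration'' --- but with the correct normal form $A'$ is no longer circle-invariant, since $e^h$ is not. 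Moreover, even granting invariance, the step is not a proof: the Hopf bundle over $\CP^{n-1}$ is nontrivial for $n\ge 2$, so it cannot be traded globally for a product, and circle invariance of a starshaped domain does not by itself give $\overline c \le \mathrm{sys}$ (the known such statements need convexity, or full $T^n$-symmetry plus a monotonicity hypothesis, and convexity is not preserved by the symplectomorphism producing $A'$). The paper's actual proof of (i) uses no contact normal form at all; it is an independent, elementary argument in an explicit local chart near a shortest characteristic, where Proposition~\ref{modham} modifies the generating Hamiltonian of $\partial A$, keeping its time-one map fixed, so that the graph becomes pointwise $\le \pi|w|^2$ and the modified domain lies inside the cylinder.
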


Statement (i) is an improvement of Proposition 4 from \cite{ab23}, where the analogous result is proven for domains that are $C^3$-close to the ball. The proof of (i) (as that of \cite[Proposition 4]{ab23}) is elementary. It uses generating functions together with ideas from \cite{edt22} for constructing symplectic embeddings starting with suitable Hamiltonian diffeomorpisms.  In \cite{edt22}, one of us showed that in dimension four the equality $\overline{c} = \mathrm{sys}$ holds also under non-perturbative assumptions: indeed, this equality holds whenever the boundary of the smooth bounded convex domain $C\subset \C^2$ admits a closed characteristic of minimal action which is unknotted as a closed curve on $\partial C\cong S^3$ and has self-linking number -1. The latter condition holds for domains that are $C^2$-close to $B$, but it may actually hold for any $A$ in $\mathcal{A}$ which is convex, see the open question concluding the introduction in \cite{hwz98}.

Statement (ii) is more subtle. In dimension four, \cite{edt22} proves the equality $\underline{c}=\mathrm{sys}$ on a $C^3$-neighborhood of the ball using global surfaces of section for the characteristic foliation of $\partial A$. The novelty here is that we shall prove this in all dimensions, while at the same time improving the closeness assumption from $C^3$ to $C^2$. Note that domains that are sufficiently $C^2$-close to the ball are convex, whereas any $C^1$-neighborhood of $B$ contains non-convex domains. This is consistent with the following statement which implies that the $C^2$-closeness assumption in Theorem \ref{1stcap} (ii) is optimal.

\begin{mainprop}
\label{C^1counter}
There exists a smooth family of domains $\{A_{\lambda}\}_{\lambda\in (0,\lambda_0]}$ in $\mathcal{A}$ which $C^1$-converges to the ball $B$ for $\lambda\rightarrow 0$, and such that
\begin{equation}
\label{counter}
\underline{c}(A_{\lambda})  <  \mathrm{sys}(A_{\lambda})\leq \overline{c}(A_{\lambda}) \qquad \forall \lambda\in (0,\lambda_0].
\end{equation}
\end{mainprop}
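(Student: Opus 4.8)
The plan is to construct the family $\{A_\lambda\}$ by the Anosov--Katok conjugation method reviewed in the appendix, applied to the round sphere $\partial B$, and then to establish the two inequalities in \eqref{counter} by quite different arguments.

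The inequality $\mathrm{sys}(A)\le\overline{c}(A)$ needs no closeness hypothesis and holds for every $A\in\mathcal{A}$. Since the boundary of such an $A$ is of restricted contact type, the minimax value defining the first Ekeland--Hofer capacity $c_1^{\mathrm{EH}}(A)$ is the action of a closed characteristic on $\partial A$, so $c_1^{\mathrm{EH}}(A)\ge\mathrm{sys}(A)$; and since $c_1^{\mathrm{EH}}$ is a normalized symplectic capacity, \eqref{BcZ} gives $c_1^{\mathrm{EH}}\le\overline{c}$. Hence $\mathrm{sys}(A_\lambda)\le c_1^{\mathrm{EH}}(A_\lambda)\le\overline{c}(A_\lambda)$, the middle term also showing that $\mathrm{sys}(A_\lambda)$ is finite and positive.

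The substance is the strict inequality $\underline{c}(A_\lambda)<\mathrm{sys}(A_\lambda)$, and here the $C^1$- but not $C^2$-closeness to $B$ is indispensable: by Theorem \ref{1stcap} no such inequality can hold in a fixed $C^2$-neighborhood of $B$. One realizes $\partial A_\lambda$ as the sphere $S^{2n-1}$ carrying a contact form $\alpha_\lambda$ obtained as a $C^\infty$-limit $\lim_k(\psi_k)_*\alpha_0$, where $\alpha_0$ is the standard Zoll contact form and the $\psi_k$ are contactomorphisms of $(S^{2n-1},\xi_0)$ --- induced by radial symplectomorphisms of $\C^n\setminus\{0\}$ --- chosen inductively so that $(\psi_k)_*\alpha_0$ is Cauchy in the $C^1$-topology while $\|\psi_k\|_{C^2}\to\infty$. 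The limit $\alpha_\lambda$ is $C^1$-close to $\alpha_0$ but, because the $\psi_k$ themselves do not converge, it is not conjugate to $\alpha_0$, and the associated domain $A_\lambda$ is $C^1$-close to $B$ yet not symplectomorphic to it. The parameter $\lambda$ records how late the conjugation is switched on, which makes the $C^1$-distance of $A_\lambda$ to $B$ tend to $0$ as $\lambda\to 0$; interpolating between consecutive stages of the scheme produces the smooth family, and a small generic correction keeps each $A_\lambda$ transverse to the radial direction, i.e. in $\mathcal{A}$. One arranges in addition that the Reeb flow of $\alpha_\lambda$ is an explicit, controlled perturbation of the Hopf flow, so that its closed orbits are under control and $\mathrm{sys}(A_\lambda)$ is close to $\pi$, and that one of the new spectral invariants for contact forms --- which, like the Gromov width, obstructs symplectic embeddings of round balls --- takes on $\alpha_\lambda$ a value strictly below $\mathrm{sys}(A_\lambda)$. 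Since any such invariant dominates $\underline{c}$, this gives $\underline{c}(A_\lambda)<\mathrm{sys}(A_\lambda)$. Alternatively, one argues directly that no round ball of capacity close to $\mathrm{sys}(A_\lambda)$ embeds into $A_\lambda$: if one did, a Gromov-compactness argument, using that $\partial A_\lambda$ is a small $C^1$-perturbation of the Hopf sphere and hence has tightly constrained characteristic dynamics, would force an embedded ball of capacity exactly $\mathrm{sys}(A_\lambda)$, whereupon a rigidity statement would identify $A_\lambda$ with that ball, contradicting that $A_\lambda$ is not symplectomorphic to a ball.

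The crux --- and the only genuinely hard step --- is to show that the exoticness of $A_\lambda$ is not dissolved by the embedding flexibility of symplectic geometry: that the spectral invariant really drops below $\mathrm{sys}(A_\lambda)$, equivalently that a slightly sub-systolic ball genuinely fails to embed. This is where the Anosov--Katok control of the limiting Reeb dynamics must be combined with the continuity properties of the new spectral invariants, which are designed to be stable enough to pass to the $C^1$-limit and yet sharp enough to separate $A_\lambda$ from a round ball of the same systole. Everything else --- $C^1$-convergence to $B$, smoothness in $\lambda$, and transversality to the radial direction --- is routine bookkeeping within the construction.
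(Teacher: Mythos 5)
Your treatment of the easy inequality $\mathrm{sys}(A_\lambda)\le\overline{c}(A_\lambda)$ is fine and matches the paper's remark: any spectral normalized capacity, e.g.\ the first Ekeland--Hofer capacity, sits between $\mathrm{sys}$ and $\overline{c}$.

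The core of your plan, however, does not work, and the paper's own appendix explains why. The Anosov--Katok conjugation method produces a contact form $\alpha$ as a $C^\infty$-limit of $\varphi_j^*\varepsilon_{a,b}$ with $\varphi_j\in\mathrm{Cont}_0$. But the associated domain $A$ is then \emph{symplectomorphic} to the open ellipsoid $E(a,b)$ --- this is precisely Theorem \ref{eliashberg-hofer}, whose proof works because the $\varphi_j$ lift to global symplectomorphisms of $\C^n$ and the $C^0$-convergence of the forms lets one glue these embeddings into one in the limit. Consequently $\underline{c}(A)=\underline{c}(E(a,b))=\min\{a,b\}=\mathrm{sys}(A)$: the conjugation method delivers \emph{equality} of ball capacity and systole, not strict inequality. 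The ``exoticness'' of these ellipsoids is invisible to all symplectic capacities; that is exactly the phenomenon of invisible symplectic boundaries, and it is the opposite of what you need. The same issue kills your ``Gromov compactness plus rigidity'' alternative, since a full-capacity ball genuinely does embed. Your appeal to the spectral invariants $c_k$ of Section~\ref{spesec} is also misdirected: by spectrality $c_k(\alpha)$ is a sum of actions of closed Reeb orbits, so $c_k(\alpha)\ge\mathrm{sys}(\alpha)$ always, and $c_0$ of a Zoll form is exactly the systole (Proposition \ref{zoll_computation}); these invariants cannot drop below the systole and do not bound $\underline{c}$ from above.

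The paper's actual proof is elementary and entirely different. It uses the lifting construction of Section~\ref{lifts}: pick a compactly supported Hamiltonian $H$ on $\T\times\widehat{B}$ whose time-one map has all fixed points of non-negative action but strictly negative Calabi invariant (a construction from \cite{abhs18}: compose a radial ``half-turn'' with a Hamiltonian displaced by it), and consider the rescaled family $H^\lambda(t,w)=\lambda^2 H(t,w/\lambda)$, which $C^1$-converges to $0$ while retaining $C^2$-size of order one. The domains $A_\lambda=D(H^\lambda)$ then $C^1$-converge to $B$. Proposition~\ref{lift}(ii) plus the action bounds for periodic points give $\mathrm{sys}(A_\lambda)=\pi$, while Proposition~\ref{lift}(i) and $\mathrm{Cal}(\phi^1_{H^\lambda})=\lambda^{2n}\mathrm{Cal}(\phi^1_H)<0$ give $\mathrm{vol}(A_\lambda)<\pi^n/n!$. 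The strict inequality $\underline{c}(A_\lambda)<\mathrm{sys}(A_\lambda)$ then follows immediately from the trivial volume bound $\underline{c}(A)^n\le n!\,\mathrm{vol}(A)$. No pseudo-holomorphic curve theory is needed.
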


Here, the second inequality holds for any domain in $\mathcal{A}$, just because there exist normalized capacities which are spectral. So the new statement is the first strict inequality.

In the proof of Theorem \ref{1stcap} (ii), we exploit the $S^1$-symmetry of the ball. Recall that if the domain $A\in \mathcal{A}$ is convex and invariant under the $S^1$-action on $\C^n$ given by multiplication by unitary complex numbers, then the equivalence of all normalized symplectic capacities for $A$ follows from a short argument of Ostrover: by the $S^1$-invariance, the largest ball centered at the origin and contained in $A$ touches $\partial A$ along a closed characteristic, proving that $\underline{c}(A)\geq \mathrm{sys}(A)$. By convexity, $A$ is contained in the cylinder over the disk spanned by this closed characteristic, proving that $\overline{c}(A)\leq \mathrm{sys}(A)$, and the equivalence of all normalized symplectic capacities for $A$ follows from \eqref{BcZ}. Under a stronger symmetry assumption, namely the invariance under the $n$-torus action given by multiplication of each coordinate in $\C^n$ by unitary complex numbers, the equivalence of all normalized symplectic capacities holds also by replacing convexity by a weaker monotonicity property (see \cite{ghr22} for the 4-dimensional case and \cite{cgh23} for the general case). 

We will prove Theorem \ref{1stcap} (ii) by showing that any $A\in \mathcal{A}$ which is $C^2$-close to $B$ is symplectomorphic to a domain $A'$ which, albeit not being $S^1$-invariant, is such that the largest ball centered at the origin and contained in $A'$ touches $\partial A'$ along a closed characteristic. This fact will be deduced from a general theorem on ``quasi-invariant'' contact forms, which is described further down in this introduction. The same theorem implies that the smallest ball centered at the origin and containing $A'$ touches $\partial A'$ along a closed characteristic. The latter fact has a consequence for higher symplectic capacities, which we now describe. 

\paragraph{Higher symplectic capacities.} In \cite{eh90}, Ekeland and Hofer used the $S^1$-invariance of the Hamiltonian action functional in order to define a sequence of symplectic capacities, only the first of which is normalized (it is the already mentioned Ekeland--Hofer capacity from \cite{eh89}). In \cite{gh18}, Gutt and Hutchings  introduced a sequence of symplectic capacities using $S^1$-equivariant symplectic homology, the first of which is the already mentioned normalized symplectic capacity from \cite{fhw94}. Conjecturally, these two sequences of symplectic capacities coincide. 

Let $\widehat{c}_k$ be either the $k$-th Ekeland--Hofer capacity or Gutt--Hutchings capacity. The considerations below hold for both of them, as these symplectic capacities do coincide on ellipsoids. Indeed, any ellipsoid in $\C^n$ is linearly symplectic equivalent to an ellipsoid of the form
\[
E(a_1,\dots,a_n) \coloneqq \Bigl\{ (z_1,\dots,z_n)\in \C^n\ \Big|\ {\textstyle \sum_{j=1}^n} {\textstyle \frac{\pi}{a_j} |z_j|^2} < 1 \Bigr\}, 
\]
where the $a_j$ are positive numbers, and 
\[
\begin{split}
\widehat{c}_k(E(a_1,\dots,a_n)) = & \mbox{ $k$-th element in the sequence obtained by ordering the list} \\ & \bigl(ha_j\bigr)_{h\in \N, j=1,\dots,n}
\mbox{ in increasing order, allowing repetitions,}
\end{split}
\]
as proven in \cite[Proposition 4]{eh90} and \cite[Example 1.8]{gh18}.
Building on this, we shall say that a symplectic capacity $c$ is {\em $k$-normalized} if it satisfies the following condition:

\begin{itemize}
\item ($k$-th normalization) $c(E)=\widehat{c}_k(E)$ for every ellipsoid $E$.
\end{itemize}

Normalized symplectic capacities, as defined at the beginning of this introduction, are 1-normalized. The $k$-th Ekeland--Hofer and Gutt--Hutchings symplectic capacities are $k$-normalized. The functions
\[
\begin{split}
\underline{c}_k(A) &\coloneqq \sup\{ \widehat{c}_k(E) \mid E \mbox{ ellipsoid, } \exists \phi:\C^n \rightarrow \C^n \mbox{ symplectomorphisms with } \phi(E) \subset A  \},
\\
\overline{c}_k(A) &\coloneqq \inf\{ \widehat{c}_k(E) \mid E \mbox{ ellipsoid, } \exists \phi:\C^n \rightarrow \C^n \mbox{ symplectomorphisms with } \phi(A) \subset E \},
\end{split} 
\]
are $k$-normalized capacities, and any $k$-normalized capacity $c$ satisfies
\[
\underline{c}_k \leq c \leq \overline{c}_k.
\]
The coincidence of all $k$-normalized capacities for the bounded open set $A\subset \C^n$ is equivalent to the equality $\underline{c}_k(A)=\overline{c}_k(A)$ and hence to the fact that $A$ contains the symplectic image of an ellipsoid $E$ and can be symplectically embedded into an ellipsoid $E'$ such that the difference $\widehat{c}_k(E') - \widehat{c}_k(E)$ is arbitrarily small. Our next result concerns the case $k=n$, where $2n$ is the dimension of the symplectic vector space we are considering.

\begin{mainthm}
\label{nthcap}
There exists a $C^2$-neighborhood $\mathcal{B}$ of $B$ in $\mathcal{A}$ such that for any $A$ in $\mathcal{B}$ and any $n$-normalized symplectic capacity $c$ the following facts hold:
\begin{enumerate}[(i)]
\item There exists an ellipsoid $E$ with $c(E)=c(A)$ and a symplectomorphism $\phi: \C^n \rightarrow \C^n$ such that $\phi(E) \subset A$.
\item There exists a symplectomorphism $\psi: \C^n \rightarrow \C^n$ mapping $A$ into the ball $B'$ such that $c(B') = c(A)$.
\end{enumerate}
In particular, all $n$-normalized symplectic capacities coincide on $\mathcal{B}$.
\end{mainthm}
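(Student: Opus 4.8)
The plan is to deduce Theorem~\ref{nthcap} from the general theorem on quasi-invariant contact forms, together with the elementary computation of $\widehat{c}_n$ on ellipsoids that are close to a round ball. As all the quantities in the statement are invariant under global symplectomorphisms of $\C^n$, it suffices to prove (i) and (ii) for a domain $A'$ which is symplectomorphic to $A$. I would first use the quasi-invariant theorem to produce, after shrinking $\mathcal{B}$, such an $A'$ with the extra property that the largest ball $B(r)$ centered at the origin and contained in $A'$ touches $\partial A'$ along a closed characteristic, while the smallest ball $B(R)$ centered at the origin and containing $A'$ touches $\partial A'$ along a closed characteristic; after further shrinking $\mathcal{B}$ we may also assume $2r^2 > R^2$ and, after a unitary change of coordinates, that the circumscribed characteristic is $\gamma_R = \{z_1=\dots=z_{n-1}=0,\ |z_n|=R\}$, whose action is $\pi R^2$.

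The core of the argument is the identity $\underline{c}_n(A')=\overline{c}_n(A')=\pi R^2$. The upper bound is immediate, since $A'\subset B(R)=E(\pi R^2,\dots,\pi R^2)$ gives $\overline{c}_n(A')\le\widehat{c}_n(B(R))=\pi R^2$. For the lower bound I would construct a symplectic embedding into $A'$ of the ellipsoid
\[
E_0 \coloneqq E(\pi r^2,\dots,\pi r^2,\pi R^2) .
\]
Because $2r^2 > R^2$, the increasing list $(ha_j)_{h\in\N,\, 1\le j\le n}$ associated with $E_0$ starts with $\pi r^2$ repeated $n-1$ times and then $\pi R^2$, so $\widehat{c}_n(E_0)=\pi R^2$; hence such an embedding would yield $\underline{c}_n(A')\ge\widehat{c}_n(E_0)=\pi R^2$ and the identity. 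The embedding should be extracted from the normal form for $A'$ near $\gamma_R$ provided by quasi-invariance: along $\gamma_R$ the hypersurface $\partial A'$ is tangent to $\partial B(R)$, and quasi-invariance makes this tangency uniform around the orbit, so that a neighborhood of $\gamma_R$ in $A'$ contains a standard symplectic tube accommodating the part of $E_0$ near its core circle $\{z_1=\dots=z_{n-1}=0\}$, which is thin in the first $n-1$ complex directions and of size $\pi R^2$ in the last; the part of $E_0$ near the slice $\{z_n=0\}$ is absorbed by the inscribed ball $B(r)\subset A'$; and the transition between the two regimes is treated using that $A'$ is squeezed between $B(r)$ and $B(R)$ and is quasi-invariant.

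Granting the identity, Theorem~\ref{nthcap} follows quickly. For any $n$-normalized capacity $c$ the inequalities $\underline{c}_n\le c\le\overline{c}_n$ force $c(A)=c(A')=\pi R^2$, so all $n$-normalized capacities coincide on $\mathcal{B}$. For (i) take $E=E_0$: then $c(E_0)=\widehat{c}_n(E_0)=\pi R^2=c(A)$, and composing the embedding $E_0\hookrightarrow A'$ with the symplectomorphism taking $A'$ to $A$ produces a symplectomorphism $\phi$ of $\C^n$ with $\phi(E_0)\subset A$. For (ii) take $B'=B(R)$: then $c(B')=\widehat{c}_n(B')=\pi R^2=c(A)$, and the symplectomorphism taking $A$ to $A'\subset B'$ is the required $\psi$.

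The main obstacle is the lower bound, i.e.\ the explicit symplectic embedding $E_0\hookrightarrow A'$. The soft inclusions $B(r)\subset A'\subset B(R)$ only yield $\pi r^2\le\underline{c}_n(A')\le\overline{c}_n(A')\le\pi R^2$, which is too weak when $r<R$; one must use that $\partial A'$ osculates $\partial B(R)$ \emph{along the whole closed characteristic} $\gamma_R$ --- a genuinely global feature of quasi-invariant domains --- rather than merely at a point. Controlling the embedding in the region where $\partial A'$ sits strictly inside $\partial B(R)$ is the technical heart, and it is there that the analytic content of the quasi-invariant theorem comes in.
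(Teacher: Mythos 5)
Your high-level structure is right, and the upper bound (ii) agrees with the paper: after applying Theorem~\ref{qi} along a homotopy and using property (c), the domain $A'=A_{\sqrt{T}e^{h/2}}$ is contained in the circumscribed ball, and the global symplectomorphism $A\to A'$ is built exactly as in the proof of Theorem~\ref{1stcap}~(ii). The problem is the lower bound, i.e.\ the embedding $E_0\coloneqq E(\pi r^2,\dots,\pi r^2,\pi R^2)\hookrightarrow A'$. Your sketch contains a concrete false step: the claim that the part of $E_0$ near $\{z_n=0\}$ is ``absorbed by the inscribed ball $B(r)$'' fails immediately, because at any level $0<|z_n|^2=t<r^2$ the $E_0$-slice $\{|w|^2<r^2(1-t/R^2)\}$ is \emph{strictly larger} than the $B(r)$-slice $\{|w|^2<r^2-t\}$ whenever $r<R$. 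So $B(r)$ absorbs only the zero-measure slice $\{z_n=0\}$. The tube near $\gamma_R$ gives $C^1$-tangency of $\partial A'$ to $\partial B(R)$, but quasi-invariance says nothing about the $C^2$ geometry there, and $E_0$ is far too fat transversally to fit inside any osculating tube; nothing in your argument controls the transition region. As written, it is not even clear the embedding $E_0\hookrightarrow A'$ exists for a general quasi-invariant $A'$, and you would in any case need it as a \emph{global} symplectomorphism of $\C^n$ restricted to $E_0$.

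The paper avoids all of this by choosing a deliberately \emph{slimmer} ellipsoid. After normalizing so $T_{\max}(A)=\pi$, it embeds $E=E(\pi,\frac{\pi}{2},\dots,\frac{\pi}{2})$ into $A$ (not $A'$): this ellipsoid still has $\widehat{c}_n(E)=\pi$, but it is thin enough in the transverse directions to lie entirely inside the tubular neighborhood $V=\Phi(U)$ of the closed characteristic $\gamma$ of action $\pi$. In $V$, the boundary $\partial A$ is the graph of a $C^2$-small Hamiltonian $H$ over $\T\times\widehat{B}$ with a fixed point of zero action, and Proposition~\ref{modham} modifies $H$, keeping its time-one map and its values outside a smaller ball, so that $H^1\geq -\pi|w|^2$; by Proposition~\ref{lift}~(iii) and Remark~\ref{support} this yields a compactly supported symplectomorphism $\tilde\phi$ with $\tilde\phi(A)\supset E$. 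Your route obtains the same numerical identity $\underline{c}_n=\overline{c}_n=\pi R^2$ but from a harder, larger-ellipsoid embedding that your sketch does not establish; the fix is to replace the ``tube plus absorption plus transition'' paragraph with the paper's Hamiltonian machinery and the smaller ellipsoid, which makes the whole construction local near $\gamma$.
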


It is natural to ask whether all $k$-normalized capacities coincide on bounded convex domains of $\C^n$. As we shall see, the answer is negative for most pairs $(n,k)$, but besides the pairs $(n,1)$ (corresponding to the strong Viterbo conjecture) there are other pairs for which we do not know the answer. The next result shows that all 2-normalized capacities of polydiscs in $\C^2$ coincide and that this is not true anymore for $k\geq 3$.

\begin{mainprop}
\label{polydiscs}
Let $P(a,b)$ be the following four-dimensional polydisk:
\[
P(a,b) \coloneqq \{ (z_1,z_2) \in \C^2 \mid {\textstyle \frac{\pi}{a}} |z_1|^2 < 1, \; {\textstyle \frac{\pi}{b}} |z_2|^2 < 1 \},
\]
where $a,b>0$. Then:
\begin{enumerate}[(i)]
\item All 2-normalized symplectic capacities coincide on $P(a,b)$: if $c$ is such a capacity, then
\[
c(P(a,b)) = 2 \min\{ a,b\}.
\]
\item For every $k\in \N$ we have
\[
\underline{c}_k(P(1,1))= \widehat{c}_k(E(1,2)) \leq k = \overline{c}_k(P(1,1)),
\]
and the inequality is strict if and only if $k\geq 3$.
\end{enumerate}
\end{mainprop}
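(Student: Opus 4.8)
The plan is to compute both $\underline{c}_k$ and $\overline{c}_k$ on polydisks --- these sandwich every $k$-normalized capacity --- the crux being a comparison of $P(1,1)$ with the ellipsoid $E(1,2)$. For the upper bounds, assume $a\le b$; for every $\epsilon>0$ and every large enough $N$ one has the elementary inclusion $P(a,b)\subseteq E(a+\epsilon,N)$, and $\widehat{c}_k(E(a+\epsilon,N))=k(a+\epsilon)$ as soon as $N>k(a+\epsilon)$, so letting $\epsilon\to0$ gives $\overline{c}_k(P(a,b))\le k\min\{a,b\}$; in particular $\overline{c}_k(P(1,1))\le k$. For $\overline{c}_k(P(1,1))\ge k$ I would use that the $k$-th Ekeland--Hofer (equivalently Gutt--Hutchings) capacity is itself $k$-normalized, hence $\le\overline{c}_k$, together with its known value on polydisks, $\widehat{c}_k(P(a,b))=\min\{ja+\ell b:j,\ell\ge0,\ j+\ell=k\}=k\min\{a,b\}$. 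Since $\widehat{c}_k(E(1,2))$ is the $k$-th term of the nondecreasing rearrangement $1,2,2,3,4,4,5,6,6,\dots$ of $(h)_{h\ge1}$ and $(2h)_{h\ge1}$, it equals $k$ for $k\in\{1,2\}$ and is strictly smaller for $k\ge3$; granted $\underline{c}_k(P(1,1))=\widehat{c}_k(E(1,2))$, this gives the last sentence of (ii).

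Everything thus reduces to computing $\underline{c}_k(P(1,1))$. For $\underline{c}_k(P(1,1))\ge\widehat{c}_k(E(1,2))$ I would exhibit, for each $\epsilon>0$, a symplectic embedding $E(1-\epsilon,2-2\epsilon)\hookrightarrow P(1,1)$, extend it to a symplectomorphism of $\C^2$, and conclude $\underline{c}_k(P(1,1))\ge\widehat{c}_k(E(1-\epsilon,2-2\epsilon))=(1-\epsilon)\widehat{c}_k(E(1,2))\to\widehat{c}_k(E(1,2))$. Such an embedding exists because the Fibonacci weight expansion of $E(c,2c)$ is $(c,c)$, so by McDuff's reduction of ellipsoid embeddings to ball packings it amounts to a full packing of the cube $P(1,1)$ by two equal balls; and this is classical, since cutting $[0,1]^2$ along its diagonal produces two triangles, one being the standard moment triangle of $B^4(1)$ and the other its image under the integral affine involution $(x,y)\mapsto(1-x,1-y)$, hence again a symplectic copy of $B^4(1)$. (Alternatively it follows from the sharpness of the ECH obstruction for ellipsoids into cubes due to Frenkel--M\"uller.)

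For $\underline{c}_k(P(1,1))\le\widehat{c}_k(E(1,2))$ the pivotal observation is that $P(1,1)$ and $E(1,2)$ have the same ECH capacities. Indeed, Hutchings' polydisk formula gives $c^{\mathrm{ECH}}_j(P(1,1))=\min\{d+e:d,e\ge0,\ (d+1)(e+1)\ge j+1\}$, whereas $c^{\mathrm{ECH}}_j(E(1,2))$ is the $j$-th smallest element of the multiset $\{m+2n:m,n\ge0\}$, and the elementary identity $\max\{(d+1)(e+1):d+e\le v\}=\#\{(m,n)\in\Z^2_{\ge0}:m+2n\le v\}$ --- both sides being $(\lfloor v/2\rfloor+1)(\lceil v/2\rceil+1)$ --- forces $c^{\mathrm{ECH}}_j(P(1,1))=c^{\mathrm{ECH}}_j(E(1,2))$ for all $j$. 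Hence if $E$ is an ellipsoid with $\phi(E)\subset P(1,1)$ for some symplectomorphism $\phi$ of $\C^2$, monotonicity of ECH capacities yields $c^{\mathrm{ECH}}_j(E)\le c^{\mathrm{ECH}}_j(E(1,2))$ for all $j$, so $E$ symplectically embeds into $E(1,2)$ by McDuff's theorem that ECH capacities are a complete obstruction for one ellipsoid to embed into another, and therefore $\widehat{c}_k(E)\le\widehat{c}_k(E(1,2))$ for all $k$ by monotonicity of $\widehat{c}_k$; taking the supremum over $E$ gives the inequality. Part (i) is then immediate from conformality and monotonicity: for $a\le b$, $P(a,b)\supseteq P(a,a)$ gives $\underline{c}_2(P(a,b))\ge\underline{c}_2(P(a,a))=a\,\underline{c}_2(P(1,1))=a\,\widehat{c}_2(E(1,2))=2a$, which with $\overline{c}_2(P(a,b))\le2a$ forces every $2$-normalized capacity to equal $2\min\{a,b\}$ on $P(a,b)$.

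I expect the one genuinely hard step to be the existence of the embedding $E(1-\epsilon,2-2\epsilon)\hookrightarrow P(1,1)$ --- equivalently the full two-ball packing of the cube, equivalently the sharpness of the ECH obstruction there --- since this is the only point where one leaves explicit inclusions and cited capacity values behind and genuinely needs four-dimensional pseudo-holomorphic curve technology (McDuff--Schlenk, Frenkel--M\"uller). The coincidence of the ECH capacities of $P(1,1)$ and $E(1,2)$ is the conceptual hinge, transporting the upper bound on $\underline{c}_k$ to the fully understood ellipsoid-into-ellipsoid problem.
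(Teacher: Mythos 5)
Your proposal is correct and follows essentially the same route as the paper: both use the Frenkel--M\"uller embedding $E(1,2)\hookrightarrow P(1,1)$ (extended to a global symplectomorphism via Schlenk's extension-after-retraction principle) for the lower bound on $\underline{c}_k$, and both rely on the coincidence of the ECH capacities of $P(1,1)$ and $E(1,2)$ together with McDuff's completeness theorem for ellipsoid embeddings to get the matching upper bound. The only cosmetic difference is that you derive the ECH coincidence from the explicit lattice-point identity, whereas the paper simply cites \cite[Remark 1.8]{hut11}.
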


More generally, Gutt and Ramos have very recently proved that all 2-normalized capacities coincide on convex toric domains in $\C^2$, see \cite{gr23}. They also showed that when $k\geq \max\{n,3\}$ the following strict inequality holds 
\[
\underline{c}_k(P(1,\dots,1)) < \widehat{c}_k(P(1,\dots,1)) = k,
\]
where 
\[
P(1,\dots,1)  \coloneqq \{ z \in \C^n \mid \pi |z_j|^2 < 1 \; \forall j=1,\dots,n \}
\]
is the equilateral polydisc in $\C^n$. Therefore, for these pairs $(n,k)$ the $k$-normalized capacities do not coincide on bounded convex domains. In particular, the open $C^2$-neighborhood $\mathcal{B}$ of Theorem \ref{nthcap} does not contain all (smooth bounded) convex domains.

These results suggest that the case $k=n=2$ may be somehow special. As far as we know, it is indeed possible that all 2-normalized capacities coincide on bounded convex domains of $\C^2$. Notice that this would imply the following inequality
\begin{equation}
\label{viterbo-2}
\widehat{c}_2(A)^2 \leq 4 \, \mathrm{vol}(A)
\end{equation}
since the ellipsoid $E(1,2)$ maximizes $\widehat{c}_2$ among all ellipsoids with the same volume. 
Some evidence for the validity of \eqref{viterbo-2} comes from a recent paper of Baracco, Bernardi, Lange and Mazzucchelli: in \cite{bblm23}, the authors studied the Ekeland--Hofer capacities on smooth starshaped domains in $\C^2$ and for each $k$ identified the local maximizers of the ratio
\[
\frac{\widehat{c}_k(A)^2}{\mathrm{vol}(A)} 
\]
with respect to the $C^3$-topology on $\mathcal{A}$. In particular, their analysis shows that the inequality \eqref{viterbo-2} holds for every $A$ in a $C^3$-neighborhood of $E(1,2)$, with equality if and only if $A$ is symplectomorphic to a rescaled copy of the ellipsoid $E(1,2)$.

\medskip 

The proofs of Theorems \ref{1stcap} and \ref{nthcap} are based on a general result about contact forms which are close to Zoll ones, which might have independent interest and is described in the next part of this introduction.

\paragraph{Quasi invariant contact forms.}  Let $M$ be a $(2n-1)$-dimensional closed manifold, $n\geq 1$, and $\alpha$ a contact form on $M$, that is, a smooth 1-form such that $\alpha\wedge d\alpha^{n-1}$ is nowhere vanishing. The corresponding Reeb vector field $R_{\alpha}$ is defined by the identities
\[
\imath_{R_{\alpha}} d\alpha =0, \qquad \imath_{R_{\alpha}} \alpha=1.
\]
The flow of $R_{\alpha}$ is called the Reeb flow of $\alpha$. The set of periods of the closed orbits of the Reeb flow of $\alpha$ is closed in $\R$ and we denote by $\mathrm{sys}(\alpha)>0$ its minimum (setting $\mathrm{sys}(\alpha)\coloneqq+\infty$ if $R_{\alpha}$ has no closed orbits, a possibility which never occurs if the Weinstein conjecture holds true). If $A$ belongs to the set $\mathcal{A}$ defined above, then the standard primitive 
\begin{equation}
\label{lambda_0}
\lambda_0\coloneqq \frac{1}{2} \sum_{j=1}^n ( x_j \, dy_j - y_j\, dx_j)
\end{equation}
of $\omega_0$ restricts to a contact form on $\partial A$, the Reeb vector field of $\lambda_0|_{\partial A}$ spans the characteristic distribution of $\partial A$, and we have the identity
\[
\mathrm{sys}(A) = \mathrm{sys}(\lambda_0|_{\partial A}),
\]
which justifies the choice of a common notation. 

The contact form $\alpha$ is said to be Zoll if all the orbits of its Reeb flow are closed and have the same minimal period, that is, if the Reeb flow of $\alpha$ defines a free $S^1$-action on $M$. We shall deduce Theorems \ref{1stcap} and \ref{nthcap} from the following theorem. 

\begin{mainthm}
\label{qi}
Let $\alpha_0$ be a Zoll contact form on the closed manifold $M$. Then for every $\epsilon>0$ there is a $\delta>0$ such that, for any contact form $\alpha$ on $M$ with $\|\alpha-\alpha_0\|_{C^2} < \delta$, there exists a diffeomorphism $\varphi: M \rightarrow M$ with the property that
\begin{equation}
\label{tesi}
\varphi^* \alpha = T e^h \alpha_0,
\end{equation}
where:
\begin{enumerate}[(a)]
\item $T$ is a smooth positive function on $M$ invariant under the free $S^1$-action defined by the Reeb flow of $\alpha_0$;
\item $h$ is a smooth real function on $M$ such that $h$ and $dh$ vanish on the critical set of $T$;
\item $\displaystyle{\min_M T e^h = \min_M T}$ and $\displaystyle{\max_M T e^h = \max_M T}$;
\item any closed orbit  of $R_{\alpha}$ is either {\em long}, meaning that its minimal period is larger than $\frac{1}{\epsilon}$, or {\em short}, meaning that its minimal period belongs to the interval $(\mathrm{sys}(\alpha_0) - \epsilon, \mathrm{sys}(\alpha_0) + \epsilon)$;  
\item the short closed orbits of $R_{\alpha}$ are precisely the images via $\varphi$ of those orbits $\gamma$ of the free $S^1$-action defined by the Reeb flow of $\alpha_0$ consisting of critical points of $T$, and the minimal period of such an orbit is $\mathrm{sys}(\alpha_0) T(\gamma)$.
\end{enumerate}
Moreover, the map $\alpha\mapsto (\varphi,T,h)$ is smooth and maps $\alpha_0$ to $(\mathrm{id},1,0)$.
\end{mainthm}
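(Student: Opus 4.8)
\section*{Proof proposal (plan)}

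The plan is to proceed in three stages --- a soft reduction via Gray stability, an $S^1$-averaging step that produces the function $T$, and a fine normal-form step that enforces the critical-set conditions (b)--(c) --- after which the dynamical statements (d)--(e) are read off from a perturbation analysis of the Reeb flow. After rescaling $\alpha_0$ we may assume $\mathrm{sys}(\alpha_0)=1$, so that its Reeb flow is a free $S^1$-action of period $1$; write $\xi_0=\ker\alpha_0$, let $\partial_\theta=R_{\alpha_0}$ be the infinitesimal generator, and let $\pi\colon M\to B:=M/S^1$ be the quotient principal bundle, so $d\alpha_0=\pi^*\omega_B$ for a symplectic form $\omega_B$ on $B$. \textbf{Stage 1.} Since $\alpha$ is $C^2$-close to $\alpha_0$, the contact structures $\ker\alpha$ and $\xi_0$ are $C^1$-close, and a (smoothness-preserving, quantitatively controlled) application of Gray's stability theorem to the linear path $\alpha_t=(1-t)\alpha_0+t\alpha$ yields a diffeomorphism $\psi$, depending smoothly on $\alpha$ and close to the identity, with $\psi^*\alpha=f\,\alpha_0$ for a positive function $f$ close to $1$. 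It therefore suffices to treat $\alpha=f\alpha_0$. Note that for any $\alpha_0$-contactomorphism $\varphi$ (i.e.\ $\varphi^*\alpha_0=u\,\alpha_0$) one has $\varphi^*(f\alpha_0)=P\,\alpha_0$ with $P=(f\circ\varphi)\,u$, and the remaining freedom in writing a positive $P$ as $Te^h$ with $T$ being $S^1$-invariant is exactly the freedom to multiply $T$ by an arbitrary positive $S^1$-invariant function; this freedom, together with the freedom in $\varphi$, is what must be spent to meet (a)--(c).

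\textbf{Stage 2.} Linearising $\varphi^*(f\alpha_0)=Te^h\alpha_0$ at $(\varphi,T,h,f)=(\mathrm{id},1,0,1)$, and writing the generator of $\varphi$ as $X=a\,\partial_\theta+Y$ with $Y\in\xi_0$, the $\xi_0$-component of $L_X\alpha_0$ determines $Y$ from $a$ through the nondegeneracy of $d\alpha_0|_{\xi_0}$, while the $\partial_\theta$-component gives $\partial_\theta a=(\delta T+\delta h)-\delta f$; this is solvable iff $\delta T+\delta h-\delta f$ has zero fibrewise average, forcing the $S^1$-invariant part of the conformal factor to be the fibrewise average of $f$. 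Accordingly I take $T_0(x):=\int_0^1 f(\rho_s(x))\,ds$ (the $\partial_\theta$-average, $S^1$-invariant and $C^2$-close to $1$) and perform one averaging/Moser step --- flowing along the vector field $X$ solving $\partial_\theta a=f-T_0$ on $\xi_0$-directions as above --- which conjugates $f\alpha_0$ to $T_0e^{h_0}\alpha_0$ with $h_0$ of size $O(\|f-1\|^2)$ and zero fibrewise average. Here I will use the elementary but crucial observation that an $\alpha_0$-orbit $\gamma_b=\pi^{-1}(b)$ is a closed Reeb orbit of a contact form $P\alpha_0$ precisely when $dP$ annihilates $\xi_0$ along $\gamma_b$, and that in that case its minimal period equals the fibrewise average $\overline P(b)$; in particular $b$ is then critical for $\overline P$ on $B$.

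\textbf{Stage 3.} The remaining, and main, point is condition (b), whose constraint set $\{h=dh=0\text{ on }\crit(T)\}$ moves with $\alpha$ and degenerates at $\alpha_0$ (there $\crit(T)=M$), so a naive implicit function theorem at $\alpha_0$ fails. I would instead solve for $T$ and a final $\alpha_0$-contactomorphism simultaneously by a fixed-point argument: seek $T=T_0e^{g}$ with $g$ $S^1$-invariant and small, and $\chi=\exp(Z)$, such that $\chi^*(T_0e^{h_0}\alpha_0)=Te^h\alpha_0$ with $h=h_0\circ\chi+\log u-g$ satisfying $h=0$ and $dh=0$ on $\crit(T)$. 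Using the $C^2$-smallness, near $\crit(T_0)$ the Hessian of $\overline{T_0}$ controls $\crit(T)$ (a graph over a submanifold close to $\crit(\overline{T_0})$), and the localized equation for $(g,Z)$ on $B$ near $\crit(T_0)$ is a small perturbation of a solvable linear system: $g$ handles the value condition $h=0$ and the $\xi_0$-component of $Z$ handles the derivative condition $dh=0$ (via the Reeb-orbit criterion above). This yields $(\varphi=\psi\circ\exp(X)\circ\chi,\,T,\,h)$; property (c) then holds because $h$ vanishes at the extremal critical orbits of $T$ and is small enough that $Te^h$ still attains its minimum and maximum there. Finally (d)--(e) follow from a perturbation analysis of the Reeb flow of $Te^h\alpha_0$: off any neighborhood of $\crit(T)$ the transverse drift per $\partial_\theta$-period is bounded below by a multiple of $|d\overline T|$, so a closed orbit there needs $\gtrsim 1/\epsilon$ periods (the long orbits), while near $\crit(T)$ a contraction on the first-return map --- using $h=dh=0$ on $\crit(T)$ and the Hessian of $\overline T$ --- shows the only closed orbits are the $\alpha_0$-orbits over $\crit(T)$, with minimal period the fibrewise average $\mathrm{sys}(\alpha_0)\,T(\gamma)$ (the short orbits), giving the bijection in (e). Smoothness of $\alpha\mapsto(\varphi,T,h)$ is inherited from the smooth dependence in each step, and at $\alpha_0$ all perturbations vanish, giving $(\mathrm{id},1,0)$.

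\textbf{The main obstacle} is Stage 3: reconciling the soft averaging output, where $h$ naturally has zero fibrewise average but is nonzero on $\crit(T)$, with the pointwise requirement (b) that $h$ and $dh$ vanish there, in a setting where $\crit(T)$ itself depends on $\alpha$ and collapses onto all of $M$ as $\alpha\to\alpha_0$. This is exactly where the $C^2$ hypothesis (rather than $C^1$) is essential --- one needs $C^1$-control of $\nabla T$ and $C^0$-control of the Hessian of $\overline T$ near $\crit(T)$ both to make sense of ``$\crit(T)$ is a tame set close to $\crit(\overline{T_0})$'' and to run the contractions for (b) and for the return-map analysis in (d)--(e) --- and Proposition \ref{C^1counter} shows it cannot be relaxed to $C^1$.
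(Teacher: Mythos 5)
Your Stage 1 (Gray stability reducing to $\alpha = f\alpha_0$) is fine, and your observation that a fibre $\gamma_b=\pi^{-1}(b)$ is a closed Reeb orbit of $P\alpha_0$ exactly when $dP$ annihilates $\xi_0$ along $\gamma_b$, with period the fibrewise average, is a correct and useful criterion. But the proposal has a genuine gap, precisely in the place you flag as ``the main obstacle.'' The trouble begins already in Stage 2: taking $T_0=\overline f$ (the $S^1$-average) and running one Moser step produces a remainder $h_0$ for which the only available control is the \emph{global} estimate $\|h_0\|=O(\|f-1\|^2)$. This is far too weak for (b) and (c), which require the \emph{pointwise} vanishing of $h$ and $dh$ on $\crit(T)$. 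What is actually needed --- and what the paper establishes --- is a pointwise quadratic bound of the form $|h|\lesssim |dT|^2$ (Claim \ref{claim:normal_form_new_proof_g_estimate} in the paper: $|g|\leq\sigma(\|\alpha-\alpha_0\|_{C^2})\,|dS|^2$). Such a bound cannot be extracted from the Gray-plus-averaging data alone, because after Gray stability you retain no quantitative relation between the oscillation $f-\overline f$ and the gradient $d\overline f$; these are independent. The paper's pointwise bound comes from the very specific structure of the normal form of Theorem \ref{theorem:normal_form_abbondandolo_benedetti} (due to \cite{ab23}), where the non-invariant pieces $\eta$ and $f$ are themselves pointwise slaved to $dS$ (e.g.\ $\iota_{R_{\alpha_0}}d\eta=F[dS]$ and $dS=-B[V]$); it is this structure that makes the Moser vector field $C_t$, and hence the conformal factor, $O(|dS|)$ and $O(|dS|^2)$ respectively, at each point.

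Stage 3 as written does not close this gap. The fixed-point scheme for $(g,Z)$ subject to $h=dh=0$ on $\crit(T)$ is left as a plan, but the linearized problem degenerates as $\alpha\to\alpha_0$ (where $\crit(T)\to M$), which is exactly the regime of interest; you acknowledge this but do not produce a contraction that survives the degeneration. There is a second, independent issue with choosing $T_0=\overline f$: your Reeb-orbit criterion shows that if $\gamma_b$ is a short closed orbit then $b\in\crit(\overline f)$, but the converse fails generically, so $\crit(T_0)$ typically \emph{strictly contains} the set of short orbits, which would violate (e). The paper avoids both problems at once: the function $S$ from the \cite{ab23} normal form already has $\crit(S)$ \emph{exactly} equal to (the preimage of) the short orbits (Proposition \ref{proposition:normal_form_abbondandolo_benedetti_short_orbits}), and the accompanying pointwise estimate forces (b)–(c) with no fixed-point correction needed. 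In short, the decisive idea your plan is missing is not a harder contraction but a better starting normal form, carrying pointwise-in-$dS$ control, under which (b)–(c) become automatic consequences of the Moser computation.
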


The smoothness mentioned at the end of the statement is meant in the diffeological sense: if $\{\alpha_t\}_{t\in \R^k}$ is a smooth family of contact forms having $C^2$-distance less than $\delta$ from $\alpha_0$, then the diffeomorphism $\varphi_t$ and the functions $T_t$ and $h_t$ which are given by the above theorem depend smoothly on $t\in \R^k$. This immediately implies that if $\ker \alpha=\ker \alpha_0$ then the diffeomorphism $\varphi$ belongs to the identity component in the contactomorphism group of $(M,\ker \alpha_0)$.

By statement (e), the $S^1$-invariant function $T$ in the above theorem gives us a finite dimensional variational principle for detecting the short closed Reeb orbits of $\alpha$ and, together with (c), implies the identity
\[
\mathrm{sys}(\alpha) =  \mathrm{sys}(\alpha_0) \min_M T = \mathrm{sys}(\alpha_0) \min_M T e^h.
\]
An immediate consequence of Theorem \ref{qi} is then the inequality
\begin{equation}
\label{sysine}
\frac{\mathrm{sys}(\alpha)^n}{\mathrm{vol} (M, \alpha\wedge d\alpha^{n-1})} = \mathrm{sys}(\alpha_0)^n \frac{\min_M T^n e^{nh}}{\int_M T^n e^{nh} \,  \alpha_0\wedge d\alpha_0^{n-1} } \leq \frac{\mathrm{sys}(\alpha_0)^n}{ \mathrm{vol}(M,\alpha_0\wedge d\alpha_0^{n-1})},
\end{equation}
which gives us the following corollary.

\begin{maincor}
\label{systineq}
Zoll contact forms on the closed manifold $M$ are $C^2$-local maximizers of the systolic ratio
\[
\rho_{\mathrm{sys}}(\alpha) \coloneqq \frac{\mathrm{sys}(\alpha)^n}{\mathrm{vol} (M, \alpha\wedge d\alpha^{n-1})},
\]
and if $\alpha$ is $C^2$-close to the Zoll contact form $\alpha_0$ and satisfies $\rho_{\mathrm{sys}}(\alpha)=\rho_{\mathrm{sys}}(\alpha_0)$, then there is a diffeomorphism $\varphi:M \rightarrow M$ such that $\varphi^* \alpha = c\, \alpha_0$ for some constant $c>0$.
\end{maincor}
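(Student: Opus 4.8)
The plan is to apply Theorem~\ref{qi} and then read the chain of (in)equalities~\eqref{sysine} as a finite-dimensional variational principle. First I would fix $\epsilon>0$ small enough that $\mathrm{sys}(\alpha_0)+\epsilon<1/\epsilon$, take the corresponding $\delta>0$ from Theorem~\ref{qi}, and declare the neighborhood in the corollary to be $\{\alpha : \|\alpha-\alpha_0\|_{C^2}<\delta\}$. For such an $\alpha$ we obtain $\varphi$, $T$, $h$ with $\varphi^*\alpha=Te^h\alpha_0$ and properties (a)--(e) in force.

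Next I would record how the contact volume form transforms under a conformal change: if $g>0$, then expanding $d(g\alpha_0)=dg\wedge\alpha_0+g\,d\alpha_0$ and using $\alpha_0\wedge\alpha_0=0$ and $\alpha_0\wedge dg\wedge\alpha_0=0$, all cross terms vanish and $(g\alpha_0)\wedge d(g\alpha_0)^{n-1}=g^n\,\alpha_0\wedge d\alpha_0^{n-1}$. Applying this with $g=Te^h$ and pulling back along $\varphi$ — which is isotopic to the identity, hence orientation-preserving, since $\alpha\mapsto(\varphi,T,h)$ is smooth and sends $\alpha_0$ to $(\mathrm{id},1,0)$ — gives $\mathrm{vol}(M,\alpha\wedge d\alpha^{n-1})=\int_M T^n e^{nh}\,\alpha_0\wedge d\alpha_0^{n-1}$. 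For the systole: property (d) together with the choice of $\epsilon$ ensures that every short closed orbit is strictly shorter than every long one, and short orbits exist because $T$, being a smooth function on a closed manifold, has critical points; hence $\mathrm{sys}(\alpha)$ is finite and equals the minimal period of a short orbit, which by (e) equals $\mathrm{sys}(\alpha_0)\min\{T(\gamma)\}$ over the $S^1$-orbits $\gamma$ of critical points of $T$. Since $T$ is $S^1$-invariant, its global minimum is attained along such an orbit, so $\mathrm{sys}(\alpha)=\mathrm{sys}(\alpha_0)\min_M T$; combined with $\min_M Te^h=\min_M T$ from (c) this yields
\[
\rho_{\mathrm{sys}}(\alpha)=\mathrm{sys}(\alpha_0)^n\,\frac{\min_M\!\big(T^n e^{nh}\big)}{\int_M T^n e^{nh}\,\alpha_0\wedge d\alpha_0^{n-1}}\ \le\ \frac{\mathrm{sys}(\alpha_0)^n}{\mathrm{vol}(M,\alpha_0\wedge d\alpha_0^{n-1})}=\rho_{\mathrm{sys}}(\alpha_0),
\]
the inequality being simply $\int_M f\ge(\min_M f)\cdot\mathrm{vol}(M,\alpha_0\wedge d\alpha_0^{n-1})$ applied to the positive continuous function $f=T^n e^{nh}$. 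This gives the asserted $C^2$-local maximality.

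For the rigidity statement I would run the same computation in the case $\rho_{\mathrm{sys}}(\alpha)=\rho_{\mathrm{sys}}(\alpha_0)$: then the displayed inequality is an equality, i.e.\ $\int_M\big(T^n e^{nh}-\min_M(T^n e^{nh})\big)\,\alpha_0\wedge d\alpha_0^{n-1}=0$ with a nonnegative continuous integrand, so $T^n e^{nh}$ — and therefore $Te^h$, both being positive — is the constant $c:=\min_M T>0$. Hence $\varphi^*\alpha=Te^h\alpha_0=c\,\alpha_0$, as required.

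I do not expect a genuine obstacle here: all the substance lies in Theorem~\ref{qi}, and what remains is the elementary observation that, under the normal form $\varphi^*\alpha=Te^h\alpha_0$, the systolic ratio becomes the Rayleigh-type quotient $\mathrm{sys}(\alpha_0)^n\min_M(T^n e^{nh})/\int_M T^n e^{nh}\,\alpha_0\wedge d\alpha_0^{n-1}$, which is maximized precisely when the density $T^n e^{nh}$ is constant. The only points needing a little care are the choice of $\epsilon$ (so that short orbits control the systole) and the remark that $\varphi$, being isotopic to the identity, preserves the contact volume — both immediate from the statement of Theorem~\ref{qi}.
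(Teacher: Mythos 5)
Your proof is correct and follows essentially the same route as the paper: derive $\mathrm{sys}(\alpha)=\mathrm{sys}(\alpha_0)\min_M T=\mathrm{sys}(\alpha_0)\min_M(Te^h)$ from Theorem~\ref{qi} (e) and (c), express the volume via the conformal factor $T^ne^{nh}$, and conclude by the elementary inequality $\min f\cdot\mathrm{vol}\le\int f$ with equality iff $f$ is constant. The only cosmetic difference is in the rigidity step: the paper passes through ``$T$ is constant'' via property (c) and then invokes (b) to kill $h$, whereas you observe directly that $Te^h$ constant already gives $\varphi^*\alpha=c\alpha_0$ and use (c) only to identify $c=\min_M T$ — both are fine.
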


The latter assertion follows from the fact that if the equality holds in \eqref{sysine} then the function $T$ is constant and hence by Theorem \ref{qi} (b) and \eqref{tesi} the diffeomorphism $\varphi$ has the required property. Under a stronger closeness assumption, this systolic inequality is proven in \cite{ab23} using arguments which will also be used in the proof of Theorem \ref{qi}. 

A modification of the proof of Proposition \ref{C^1counter} shows that the $C^2$-closeness assumption is sharp in Corollary \ref{systineq}.

\begin{mainprop}
\label{C^1counter2}
For every Zoll contact form $\alpha_0$ there exists a smooth family of contact forms $\{\alpha_{\lambda}\}_{\lambda\in (0,\lambda_0]}$ which $C^1$-converges to $\alpha_0$ for $\lambda\to0$, and satisfies 
\[
\rho_{\mathrm{sys}}(\alpha_{\lambda}) > \rho_{\mathrm{sys}}(\alpha_0) \qquad \forall \lambda\in (0,\lambda_0].
\] 
\end{mainprop}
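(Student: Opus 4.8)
The plan is to reduce the proposition to a volume estimate. Write $\tau:=\mathrm{sys}(\alpha_0)$, so the Reeb flow of $\alpha_0$ is a free $S^1$-action all of whose orbits have minimal period $\tau$. It suffices to construct contact forms $\alpha_\lambda$ on $M$ depending smoothly on $\lambda>0$, with $\alpha_\lambda\to\alpha_0$ in $C^1$ as $\lambda\to0$ and
\[
\mathrm{sys}(\alpha_\lambda)=\tau,\qquad \mathrm{vol}\bigl(M,\alpha_\lambda\wedge d\alpha_\lambda^{n-1}\bigr)<\mathrm{vol}\bigl(M,\alpha_0\wedge d\alpha_0^{n-1}\bigr),
\]
for then $\rho_{\mathrm{sys}}(\alpha_\lambda)>\rho_{\mathrm{sys}}(\alpha_0)$. (For $n=1$ one has $\rho_{\mathrm{sys}}\equiv1$ and the statement is empty, so assume $n\ge2$.) By Corollary \ref{systineq} no such family can converge in $C^2$, so $\alpha_\lambda-\alpha_0$ must oscillate with frequency of order $\lambda^{-1}$: I will arrange it to have $C^0$- and $C^1$-norms of order $\lambda^2$ and $\lambda$, hence $C^2$-norm of order $1$.

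The construction is local, near a single closed Reeb orbit $\gamma_0$ of $\alpha_0$. A Darboux--Moser argument gives a neighborhood $U\cong(\R/\tau\Z)\times D$, with $D\subset\R^{2n-2}$ a small ball about the origin, under which $\alpha_0$ corresponds to $dt+\beta_0$, $\beta_0$ the standard primitive of the Euclidean symplectic form on $D$; then $R_{\alpha_0}=\partial_t$ and the Reeb orbits in $U$ are the circles $(\R/\tau\Z)\times\{x\}$. I would set $\alpha_\lambda:=f_\lambda\cdot(dt+\beta_0)$ on $U$ and $\alpha_\lambda:=\alpha_0$ elsewhere, with $f_\lambda>0$ equal to $1$ near $\partial U$ and near $\gamma_0$ and of the form
\[
f_\lambda(t,x)=1+\lambda^2\,\psi(|x|)\cos\!\bigl(\tfrac{2\pi t}{\lambda}+\vartheta(x)\bigr)-\lambda^3\,v(x),
\]
where $\psi\ge0$ and $v\ge0$ are supported in a fixed annulus $\{\delta_1\le|x|\le\delta_0\}$ with $\int_D v\,d\beta_0^{n-1}>0$, $\vartheta$ is an angular coordinate chosen so that the averaged drift below is nonvanishing, and the frequency is taken commensurable ($\tau/\lambda\in\Z$) or a cutoff in $t$ is inserted, so as to keep the family smooth in $\lambda$. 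Then $\|\alpha_\lambda-\alpha_0\|_{C^1}=O(\lambda)$ and $\alpha_\lambda\to\alpha_0$ in $C^1$.

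For the volume, $(f\beta)\wedge d(f\beta)^{n-1}=f^n\,\beta\wedge d\beta^{n-1}$ and $\alpha_0\wedge d\alpha_0^{n-1}=dt\wedge d\beta_0^{n-1}$ on $U$, and the vanishing of the mean of the $\cos$-term along each Reeb circle gives
\[
\mathrm{vol}\bigl(M,\alpha_\lambda\wedge d\alpha_\lambda^{n-1}\bigr)-\mathrm{vol}\bigl(M,\alpha_0\wedge d\alpha_0^{n-1}\bigr)=\int_D\Bigl(\int_{\R/\tau\Z}(f_\lambda^{\,n}-1)\,dt\Bigr)d\beta_0^{n-1}=-\,n\tau\lambda^3\!\int_D v\,d\beta_0^{n-1}+O(\lambda^4)<0
\]
for small $\lambda$. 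For the systole, any closed Reeb orbit of $\alpha_\lambda$ disjoint from $\supp(f_\lambda-1)$ is a closed Reeb orbit of the Zoll form $\alpha_0$ and has period exactly $\tau$, so $\mathrm{sys}(\alpha_\lambda)\le\tau$; it remains to exclude closed orbits of period $<\tau$. Since $R_{\alpha_\lambda}$ is $C^0$-close to $\partial_t$, along any orbit in $U$ the coordinate $t$ is strictly increasing with $\partial_t$-component near $1$, so a closed orbit of period $<\tau$ winds exactly once around the $t$-circle and hence meets the perturbed annulus; one such loop defines a Poincar\'e return map $P_\lambda$ of the slice $\{t=0\}\cong D$, which is $O(\lambda)$-close to the identity and equals the identity on $\{|x|<\delta_1\}\cup\{|x|>\delta_0\}$, where $\alpha_\lambda=\alpha_0$. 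I would show, by averaging over the fast variable $s=t/\lambda$, that on the annulus $P_\lambda=\Phi_\lambda+o(\lambda^{3})$, with $\Phi_\lambda$ the rotation of $D$ about its center at a rate of order $\lambda^3$ given by the averaged quadratic self-interaction of the $\cos$-term — a rate which, for a suitable $\vartheta$, is bounded away from $0$ on $\{\delta_1\le|x|\le\delta_0\}$. Granting this, for $\lambda$ small $P_\lambda$ has no fixed point in the annulus; since a closed Reeb orbit of period $<\tau$ meets $\{t=0\}$ at a fixed point of $P_\lambda$, it must lie in $\{f_\lambda\equiv1\}$, where $\alpha_\lambda=\alpha_0$ and its period is $\tau$ — a contradiction. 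Hence $\mathrm{sys}(\alpha_\lambda)=\tau$, the two displayed conditions hold, and the proposition follows. The case $M=S^{2n-1}=\partial B$, $\alpha_0=\lambda_0|_{\partial B}$ is Proposition \ref{C^1counter}; a general Zoll $\alpha_0$ is handled by transplanting the construction into $U$, all changes being supported away from $\partial U$.

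The main obstacle is precisely this averaging analysis: verifying that the averaged quadratic effect of the rapidly rotating term in $f_\lambda$ is a genuine, nonvanishing rotation of the transverse disk about $\gamma_0$ — so that the fast oscillation sweeps every off-axis Reeb orbit around $\gamma_0$ before it can close up — and controlling the remainders well enough to conclude that $\mathrm{sys}(\alpha_\lambda)$ is unchanged while the volume has strictly decreased. This is an Anosov--Katok-type conjugation mechanism, of the kind reviewed in the appendix, tuned here so as to be $C^1$-small rather than merely $C^0$-small away from $\alpha_0$.
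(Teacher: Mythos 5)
Your approach is genuinely different from the paper's, and it has a gap precisely where you flag it. Both constructions localize in a Darboux tube around a single Zoll orbit, but the paper (Remark~\ref{C^1counter2sketch}) perturbs only the $dt$-coefficient, setting $\varphi^*\alpha_\lambda = (1 + H^\lambda)\,dt + \widehat{\lambda}_0$ with $H^\lambda(t,w) = \lambda^2 H(t,w/\lambda)$ the \emph{spatially} rescaled Hamiltonian from the proof of Proposition~\ref{C^1counter}; there is no fast oscillation in $t$, and the $C^1$-smallness comes entirely from the spatial rescaling. Closed Reeb orbits then correspond, via Proposition~\ref{lift}, to periodic points of $\phi^1_{H^\lambda}$, and because $H$ is engineered so that the autonomous radial piece $F$ displaces the support of $G$, every such fixed point has non-negative action while $\mathrm{Cal}(\phi^1_H)<0$; this yields $\mathrm{sys}(\alpha_\lambda)=\mathrm{sys}(\alpha_0)$ and $\mathrm{vol}(\alpha_\lambda)<\mathrm{vol}(\alpha_0)$ with no averaging whatsoever.

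In your conformal construction $\alpha_\lambda = f_\lambda\,\alpha_0$, any closed Reeb orbit through a point $x_0$ with $v(x_0)>0$ has period $\int_\gamma f_\lambda\alpha_0 = \tau\bigl(1-\lambda^3 v(x_0)\bigr)+O(\lambda^4) < \tau$, so the systole drops unless you can exclude closed orbits from $\supp v$; this is the load-bearing step, and your averaging estimate does not hold as stated. Unpacking the Reeb vector field of $f\alpha_0$ in the Darboux model, the cosine term contributes an $O(\lambda)$ oscillating \emph{radial} drift (through the term $\partial_t f\cdot\beta_0$ in $df|_{\ker\alpha_0}$) and an $O(\lambda^2)$ oscillating angular drift (through $d_x f$); the averaged quadratic interaction of these is only $O(\lambda^4)$ over time $\tau$, not $O(\lambda^3)$. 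The dominant non-oscillating angular drift actually comes from the $v$-term itself and is of size $\tau\lambda^3\,v'(|x|)/|x|$, but it vanishes on the critical circles of $v$ — in particular at its interior maximum, which lies in $\supp v$, exactly where a fixed point would be most damaging. At such radii the competing terms are all $O(\lambda^4)$ with undetermined sign, so the absence of closed Reeb orbits in the annulus is not established; and even if a short orbit merely shrinks the systole by $\lambda^3 v(x_0)$ rather than vanishing, the systolic ratio comparison becomes a delicate constant-chasing exercise rather than the clean argument you propose. (There is also a lesser issue: $\cos(2\pi t/\lambda)$ is $\tau$-periodic only when $\tau/\lambda\in\Z$, so some care is needed to make $\{\alpha_\lambda\}_{\lambda\in(0,\lambda_0]}$ an honest smooth family in $\lambda$.) The paper's choice of perturbation — rescale in the transverse variable rather than oscillate in time, and modify only the $dt$-coefficient — is exactly what avoids all of this.
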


In this paper, our interest in Theorem \ref{qi} is that it allows us to construct symplectic embedding of balls and into balls, thus proving Theorems \ref{1stcap} and \ref{nthcap}. 

Contact forms of the form $T e^h \alpha_0$ with $\alpha_0$ Zoll and $T$, $h$ satisfying the conditions (a)-(e) of Theorem \ref{qi} might be called ``quasi-invariant'', the ``quasi'' referring to the presence of the non-$S^1$-invariant function $e^h$. Note the similarity with the notion of quasi-autonomous Hamiltonian: a compactly supported time-dependent Hamiltonian $H=H(t,x)$ on a symplectic manifold $(M,\omega)$ is said to be quasi-autonomous if the sets of maximizers and minimizers of the functions $H(t,\cdot)$ do not depend on $t$. As shown in \cite{bp94} for $(\C^n,\omega_0)$ and in \cite{lmd95} for closed symplectic manifolds, any compactly supported Hamiltonian diffeomorphism which is $C^1$-close to the identity is generated by a quasi-autonomous Hamiltonian. 
The proof of this fact is based on the Weinstein tubular neighborhood theorem and on a version of the Hamilton--Jacobi equation. 

Theorem \ref{qi} seems much more subtle. Our proof is based on a normal form for contact forms which are closed to Zoll ones (proved by two of us in \cite{ab23}), and uses averaging techniques and Moser's homotopy argument.

Paths in the Hamiltonian group which are generated by quasi-autonomous Hamiltonians are minimizing geodesics in the Hofer metric, see again \cite{bp94} and \cite{lmd95}. Results of the same flavour hold also for a Banach--Mazur pseudo-metric on the space of contact forms on a given contact manifold. In the next subsection, we discuss these results.

\paragraph{A contact Banach--Mazur pseudo-metric.} 
Let $\xi$ be a co-oriented contact structure on the closed manifold $M$. We denote by $\mathcal{F}(M,\xi)$ the set of contact forms $\alpha$ defining $\xi$, meaning that $\ker \alpha=\xi$ and $\alpha$ is positive on tangent vectors which are positively transverse to $\xi$. We denote by $\mathrm{Cont}_0(M,\xi)$ the identity component of the contactomorphism group of $(M,\xi)$, i.e.\ the set of all diffeomorphisms of $M$ which are smoothly isotopic to the identity by a path of diffeomorphisms whose differential maps $\xi$ to itself. The following function
\begin{equation}
\label{bmpm}
d(\alpha,\beta) \coloneqq \inf \{ \max f - \min f \mid \exists \varphi\in \mathrm{Cont}_0(M,\xi) \mbox{ such that } \varphi^* \beta = e^f \alpha \}
\end{equation}
is readily seen to be a pseudo-metric on $\mathcal{F}(M,\xi)$. The group $G\coloneqq \R\times  \mathrm{Cont}_0(M,\xi)$ acts on $\mathcal{F}(M,\xi)$ by
\[
(s,\varphi)\cdot \alpha \mapsto e^s \varphi^* \alpha,
\]
and the pseudo-metric $d$ is clearly invariant under this action, meaning that 
\[
d(e^s \alpha, e^t \beta) = d(\alpha,\beta), \quad \forall s,t\in \R, \qquad d(\varphi^* \alpha,\psi^* \beta) = d(\alpha,\beta), \quad \forall \varphi,\psi\in \mathrm{Cont}_0(M,\xi),
\]
for every $\alpha,\beta\in \mathcal{F}(M,\xi)$. In particular, the pseudo-distance of any two elements of the same $G$-orbit is zero. More precisely, $d$ measures the minimal distance between the $G$-orbits of two contact forms in $\mathcal{F}(M,\xi)$ in the $C^0$-topology and can be therefore considered as a contact analogue of the Banach--Mazur pseudo-metric on the set of convex bodies in $\R^n$ (there, the action is given by the group of affine transformations).

\begin{mainrem}
{\rm Banach--Mazur pseudo-metrics on contact forms were introduced by Rosen and Zhang in \cite{rz21}, where they considered the slightly different pseudo-metric
\[
d'(\alpha,\beta) \coloneqq \inf \{ \max |f| \mid \exists \varphi\in \mathrm{Cont}_0(M,\xi) \mbox{ such that } \varphi^* \beta = e^f \alpha \}.
\]
This pseudo-metric is invariant only under the action of $\mathrm{Cont}_0(M,\xi)$ and satisfies
\[
d'(e^s\alpha,e^t \alpha) = |t-s|, \qquad d'(e^s \alpha,e^t \beta) \leq  |t-s| + d'(\alpha,\beta), \qquad \forall s,t\in \R.
\]
Here, we prefer to work with the pseudo-metric $d$ which is invariant also under rescalings, but all the results below hold for $d'$ as well. A different pseudo-metric on $\mathcal{F}(M,\xi)$ is studied in Melistas' PhD thesis \cite{mel21}. Banach--Mazur pseudo-metrics on contact forms are strictly related to symplectically invariant Banach--Mazur pseudo-metrics on domains, whose definition was suggested by Ostrover and Polterovich and which are studied in \cite{prsz20,sz21,ush22}.}
\end{mainrem}

By invariance, $d$ descends to the quotient of $\mathcal{F}(M,\xi)$ by $G$. This quotient pseudo-metric need not be a genuine metric either. Indeed, by construction $d(\alpha,\beta)$ vanishes if and only if $\beta$ belongs to the $C^0$-closure of the $G$-orbit of $\alpha$. Therefore, the element $G\cdot \alpha$ in the orbit space $\mathcal{F}(M,\xi)/G$ has positive pseudo-distance from all the other elements if and only if $G\cdot \alpha$ is $C^0$-closed in $\mathcal{F}(M,\xi)$.  The next examples show that some $G$-orbits are indeed $C^0$-closed, but other ones are not even $C^{\infty}$-closed.

In both examples, we consider the standard tight contact structure $\xi_{\mathrm{st}}$ on $S^3$ which is defined by the restriction of the one-form $\lambda_0$ to $S^3\subset \C^2$. For every pair of positive numbers $a,b$ we denote by $\varepsilon_{a,b}\in \mathcal{F}(S^3,\xi_{\mathrm{st}})$ the contact form which is given by the pull-back of the restriction of $\lambda_0$ to the boundary of the ellipsoid $E(a,b)$ by the radial projection $S^3 \rightarrow \partial E(a,b)$. 

\begin{mainex}
\label{rational}
{\rm If $\frac{a}{b}$ is rational then the $G$-orbit of $\varepsilon_{a,b}$ is $C^0$-closed in $\mathcal{F}(S^3,\xi_{\mathrm{st}})$. Here is a proof based on Hutchings' embedded contact homology (ECH), for which we refer to \cite{hut14}. Let $(s_j,\varphi_j)$ be a sequence in $G$ such that $e^{s_j} \varphi_j^* \varepsilon_{a,b}$ $C^0$-converges to some contact form $\alpha$ in $\mathcal{F}(S^3,\xi_{\mathrm{st}})$. Then the total volume of this sequence of contact forms converges to the total volume of $\alpha$. This implies that the sequence $s_j$ converges to a real number $s$, and hence $\varphi_j^* \varepsilon_{a,b}$ converges to the contact form $\beta\coloneqq e^{-s} \alpha$. Denote by $\sigma_k: \mathcal{F}(S^3,\xi_{\mathrm{st}})\rightarrow \R$ the ECH-spectral invariant which is associated with the generator of degree $2k$ of the embedded contact homology of $(S^3,\xi_{\mathrm{st}})$. Since the ECH-spectral invariants are invariant under contactomorphisms and $C^0$-continuous, we have $\sigma_k(\beta)=\sigma_k(\varepsilon_{a,b})$ for every $k$. As shown by Cristofaro-Gardiner and Mazzucchelli in \cite[Lemma 3.1]{cgm20}, any contact form $\beta$ in $\mathcal{F}(S^3,\xi_{\mathrm{st}})$ having the same ECH-spectral invariants of $\varepsilon_{a,b}$ with $\frac{a}{b}$ rational is equivalent to $\varepsilon_{a,b}$: there exists $\psi\in \mathrm{Cont}_0(S^3,\xi_{\mathrm{st}})$ such that $\beta=\psi^* \varepsilon_{a,b}$ (see also \cite{mr23}). Therefore, $\alpha = e^s \psi^* \varepsilon_{a,b}$ belongs to the $G$-orbit of $\varepsilon_{a,b}$, which is then $C^0$-closed.
}
\end{mainex}

\begin{mainex} 
\label{irrational}
{\rm If $\frac{a}{b}$ is irrational and not Diophantine then the $G$-orbit of $\varepsilon_{a,b}$ is not $C^{\infty}$-closed in $\mathcal{F}(S^3,\xi_{\mathrm{st}})$. Indeed, by the Anosov--Katok conjugation method from \cite{ak70} and \cite{kat73} one can construct a sequence $(\varphi_j)$ in $\mathrm{Cont}_0(S^3,\xi_{\mathrm{st}})$ such that $(\varphi_j^* \varepsilon_{a,b})$ $C^{\infty}$-converges to a contact form $\alpha\in \mathcal{F} (S^3,\xi_{\mathrm{st}})$ whose Reeb flow has a dense orbit. See Theorem \ref{katok} and Remark \ref{more} in the appendix below. The contact form $\alpha$ cannot belong to the $G$-orbit of $\varepsilon_{a,b}$ because the Reeb flow of the latter contact form does not have dense orbits.}

{\rm Note however that the domain in $\C^2$ which corresponds to $\alpha$ in the correspondence between domains in $\mathcal{A}$ and elements of $\mathcal{F}(S^3,\xi_{\mathrm{st}})$ is symplectomorphic to the open ellipsoid $E(a,b)$, see Theorem \ref{eliashberg-hofer} in the appendix below. This is a case of ``invisible symplectic boundaries'', the first examples of which were found by Eliashberg and Hofer in \cite{eh96}.}

{\rm If $\frac{a}{b}$ is Diophantine we do not know whether the $G$-orbit of $\varepsilon_{a,b}$ is closed in some $C^k$-topology. Nor we know whether there are domains in $\mathcal{A}$ which are symplectomorphic to the open ellipsoid $E(a,b)$ but such that the restriction of $\lambda_0$ to $\partial A$ is not conjugate to $\varepsilon_{a,b}$.  These questions are related to an open question of Hermann about area-preserving diffeomorphisms of the annulus (see  \cite[Question 3.2]{her98b}), whose reformulation for flows is the following: Is a Reeb flow on $(S^3,\xi_{\mathrm{st}})$ with only two closed orbits having periods with Diophantine ratio smoothly conjugated to the Reeb flow of an irrational ellipsoid?}
\end{mainex}

It is interesting to investigate the situation in which the infimum defining $d(\alpha,\beta)$ is achieved, as this fact implies the existence of common invariant probability measures and of minimizing geodesics connecting $\alpha$ and $\beta$ in $\mathcal{F}(M,\xi)$. In this context, a continuous curve $\gamma:[0,1] \rightarrow \mathcal{F}(M,\xi)$ is said to be a minimizing geodesic if its length
\[
\mathrm{Length}(\gamma) \coloneqq \sup \Bigl\{ \sum_{j=1}^k d(\gamma(t_{j-1}),\gamma(t_j))\ \Big|\ k\in \N, \; 0 = t_0 < t_1< \dots < t_k = 1 \Bigr\}
\]
coincides with $d(\gamma(0),\gamma(1))$. The precise result is as follows.

\begin{mainthm}
\label{achieved}
Let $\alpha,\beta\in \mathcal{F}(M,\xi)$ and assume that 
\[
d(\alpha,\beta) = \max_M f - \min_M f
\]
for some $f\in C^{\infty}(M)$ and $\varphi\in \mathrm{Cont}_0(M,\xi)$ such that $\varphi^* \beta = e^f \alpha$. Then the following facts hold:
\begin{enumerate}[(i)]
\item There are probability measures which are supported in the level sets $f^{-1}(\min f)$ and $f^{-1}(\max f)$ and are invariant with respect to the Reeb flows of both $\alpha$ and $\varphi^* \beta$.
\item For every path of contactomorphisms $\{\psi_t\}_{t\in [0,1]} \subset \mathrm{Cont}_0(M,\xi)$ such that $\psi_0=\mathrm{id}$ and $\psi_1 = \varphi^{-1}$, the path
\[
\gamma(t) \coloneqq \psi_t^* ( e^{tf} \alpha), \quad t\in [0,1],
\]
is a minimizing geodesic with $\gamma(0)=\alpha$ and $\gamma(1)=\beta$.
\end{enumerate}
\end{mainthm}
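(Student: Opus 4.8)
The plan is to exploit the scaling-and-reparametrization structure of the problem. Throughout, write $\beta' \coloneqq \varphi^* \beta = e^f \alpha$; since $d$ is $\mathrm{Cont}_0$-invariant, proving the statement with $\beta$ replaced by $\beta'$ suffices, and $\gamma(t) = \psi_t^*(e^{tf}\alpha)$ has $\gamma(0)=\alpha$, $\gamma(1)=\varphi^{-1\,*}(e^f\alpha)=\beta$.

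\emph{Step 1: the length upper bound.} I would first show $\mathrm{Length}(\gamma) \le d(\alpha,\beta)$. Fix a partition $0=t_0<\dots<t_k=1$. For consecutive points, $\gamma(t_{j-1}) = \psi_{t_{j-1}}^*(e^{t_{j-1}f}\alpha)$ and $\gamma(t_j) = \psi_{t_j}^*(e^{t_j f}\alpha)$; applying the contactomorphism $(\psi_{t_{j-1}})^{-1}\circ \psi_{t_j}\in \mathrm{Cont}_0(M,\xi)$ and using $G$-invariance of $d$, one gets $d(\gamma(t_{j-1}),\gamma(t_j)) = d(e^{t_{j-1}f}\alpha, e^{t_j f}\alpha) = d(\alpha, e^{(t_j - t_{j-1})f}\alpha) \le \max_M\bigl((t_j-t_{j-1})f\bigr) - \min_M\bigl((t_j - t_{j-1})f\bigr) = (t_j - t_{j-1})(\max_M f - \min_M f)$, where I used the identity map as the competing contactomorphism in the infimum \eqref{bmpm}. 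Summing over $j$ telescopes to $\max_M f - \min_M f = d(\alpha,\beta)$.

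\emph{Step 2: the length lower bound via a triangle inequality.} The reverse inequality $\mathrm{Length}(\gamma)\ge d(\gamma(0),\gamma(1)) = d(\alpha,\beta)$ is automatic from the definition of length (take the trivial partition $\{0,1\}$, since $d$ satisfies the triangle inequality any refinement only increases the sum). Combining Steps 1 and 2 gives $\mathrm{Length}(\gamma) = d(\alpha,\beta)$, so $\gamma$ is a minimizing geodesic, proving (ii).

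\emph{Step 3: the invariant measures.} For (i), the idea is that minimality forces the conformal factor $f$ to be "frozen" at its extrema, which is exactly where an invariant measure must concentrate. Concretely, suppose toward a contradiction that there is no $R_\alpha$-invariant probability measure supported in $f^{-1}(\min f)$. By a standard compactness/duality argument (the Reeb flow-invariant probability measures form a nonempty weak-$*$ compact convex set, and one applies a separation argument, or equivalently the ergodic characterization of $\min$ of time-averages), this is equivalent to the existence of $\eta>0$ and a smooth function $u$ on $M$ with $df(R_\alpha) \cdot u$-type correction such that the time-averaged value of $f$ along every piece of Reeb orbit is bounded below by $\min_M f + \eta$ after subtracting a coboundary; more precisely there is $u\in C^\infty(M)$ with $f + du(R_\alpha) \ge \min_M f + \eta$ pointwise. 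One then modifies $\alpha$ by the contactomorphism generated by $u$ (using that adding a coboundary $du(R_\alpha)$ to the conformal factor corresponds to pulling back by a flow in $\mathrm{Cont}_0(M,\xi)$, which is where the structure of contact forms in a fixed class enters) to lower $\max f - \min f$ by a definite amount, contradicting $d(\alpha,\beta) = \max_M f - \min_M f$. The same argument with $-f$ and $\beta'$ in place of $f$ and $\alpha$ handles the $\max$ level set and the $\varphi^*\beta$-invariance; note that a measure supported on $f^{-1}(\min f)$ which is $R_\alpha$-invariant is automatically $R_{\beta'}$-invariant there, since on that level set $\beta' = e^{\min f}\alpha$ so the two Reeb vector fields are positively proportional.

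\emph{Main obstacle.} The delicate point is Step 3: making precise the claim that "no invariant measure on $f^{-1}(\min f)$" yields a coboundary correction producing a strict improvement, and checking that the correcting diffeomorphism genuinely lies in $\mathrm{Cont}_0(M,\xi)$ and produces a conformal factor of the required form. This is the contact analogue of the classical argument for Hofer-geodesics generated by quasi-autonomous Hamiltonians (\cite{bp94,lmd95}), and I expect it to require the characterization of when two contact forms in $\mathcal{F}(M,\xi)$ are conjugate via the time-one flow of a contact isotopy, together with a careful duality/Hahn--Banach argument on the space of Reeb-invariant measures. Steps 1 and 2 are essentially formal manipulations with the invariance properties of $d$.
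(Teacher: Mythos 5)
Your Steps 1 and 2 (proving part (ii)) are correct and are essentially the paper's argument. One notational slip in Step 1: the claimed equality $d(e^{t_{j-1}f}\alpha, e^{t_j f}\alpha) = d(\alpha, e^{(t_j - t_{j-1})f}\alpha)$ does not hold, because $d$ is invariant under rescaling by \emph{constants}, not under conformal rescaling by arbitrary functions such as $e^{t_{j-1}f}$. What you actually have is the inequality $d(e^{g_1}\alpha, e^{g_2}\alpha) \leq \max_M(g_2-g_1) - \min_M(g_2-g_1)$, obtained by taking the identity map as the competing contactomorphism in the infimum \eqref{bmpm}; this is precisely the paper's estimate \eqref{stosc}, it telescopes correctly, and it is all your argument uses.

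Step 3 is where there are genuine gaps, and you flag the location of the difficulty but misdiagnose the mechanism. First, you want $u$ with $f + du(R_\alpha) \geq \min_M f + \eta$ \emph{globally}; this is a stronger conclusion than the absence of invariant measures on $K_{\min}$ directly yields (it requires the ergodic-optimization / LP-duality identity $\inf_\mu\int f\,d\mu = \sup_u\min_M(f+du(R_\alpha))$, whose proof is essentially of the same depth as the whole step), and it is also more than you need. The paper instead invokes the elementary Sullivan alternative (Theorem \ref{alternative}): absence of an invariant measure supported in $K_{\min}$ gives a function $H$ with $dH[R_{\varphi^*\beta}] > 0$ \emph{only on $K_{\min}$}, which is then multiplied by a cutoff vanishing near $K_{\max}$; the global lower bound on the new conformal factor is recovered from compactness for short flow times, and the upper bound is automatic where $H\equiv 0$. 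Second, and more seriously, the claim that ``adding a coboundary $du(R_\alpha)$ to the conformal factor corresponds to pulling back by a contact flow'' is not accurate. If $\phi^t$ is the contact flow generated by $u$ with respect to $\alpha$, the new conformal factor of $(\varphi\circ\phi^t)^*\beta$ relative to $\alpha$ is $f\circ\phi^t + \rho_t$ where $(\phi^t)^*\alpha = e^{\rho_t}\alpha$; its $t$-derivative at $t=0$ is $df(X) + du(R_\alpha)$, where $X$ is the \emph{contact} vector field of $u$, not the Reeb field. Your global bound on $f + du(R_\alpha)$ does not control the extra term $df(X)$. The paper circumvents this cleanly (see the proof of (i) and Lemma \ref{formula}) by generating the contact flow with respect to $\varphi^*\beta$ rather than $\alpha$, so that the new conformal factor is literally $f$ plus a pure coboundary $k_t$ with $\dot k_0 = dH[R_{\varphi^*\beta}]$ and no $df$ term appears. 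Third, you do not address how the maximum of the new conformal factor is kept at or below $\max_M f$; the paper gets this from the cutoff forcing $H$ (hence the contact vector field) to vanish near $K_{\max}$, together with a compactness estimate off $K_{\max}$. Overall your plan for (i) is in the right spirit — contradiction plus duality plus a corrective contact flow — but the correct realization is via Sullivan's alternative, a cutoff near $K_{\max}$, and generating the flow with respect to $\varphi^*\beta$, not $\alpha$.
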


In particular, if the contact form $\alpha$ in the above proposition is Zoll, then (i) implies that the Reeb flow of $\beta$ has a closed orbit which is the image by $\varphi$ of a closed orbit of the Reeb flow of $\alpha$ and whose $\beta$-action is $e^{\min f}$ times its $\alpha$-action. As an immediate consequence, we obtain the following statement: If $\alpha\in \mathcal{F}(M,\xi)$ is a Zoll contact form then any $\beta$ in $\mathcal{F}(M,\xi)$ such that the infimum in the definition of $d(\alpha,\beta)$ is achieved satisfies the contact systolic inequality $\rho_{\mathrm{sys}}(\beta) \leq \rho_{\mathrm{sys}}(\alpha)$.

The fact that any co-oriented contact structure $\xi$ admits defining contact forms with arbitrarily large systolic ratio (see \cite{sag21}) shows that the infimum in the definition of $d(\alpha,\beta)$ may not be achieved. Actually, Example \ref{C^1counter2} shows that $d(\alpha,\beta)$ may not be achieved for contact forms $\beta$ which are $C^1$-close to a Zoll contact form $\alpha$. It is however achieved when the contact form $\beta$ is $C^2$-close enough to the Zoll contact form $\alpha$. Indeed, our last result states the following,

\begin{mainthm} 
\label{short-geodesics}
Let $\alpha_0$ be a Zoll contact form defining a co-oriented contact structure $\xi$ on the closed manifold $M$. If the contact form $\alpha\in \mathcal{F}(M,\xi)$ is $C^2$-close enough to $\alpha_0$ then there exist $f\in C^{\infty}(M)$ and $\varphi\in \mathrm{Cont}_0(M,\xi)$ such that 
\[
\varphi^* \alpha = e^f \alpha_0 \quad \mbox{and} \quad d(\alpha_0,\alpha) = \max_{M} f - \min_{M} f = \log \frac{T_{\max}(\alpha)}{T_{\min}(\alpha)},
\]
where $T_{\max}(\alpha)$ and $T_{\min}(\alpha)=\mathrm{sys}(\alpha)$ are the maximum and minimum periods of the short closed orbits of $\alpha$, in the sense of Theorem \ref{qi} (d). In particular, there are minimizing geodesics connecting $\alpha_0$ to $\alpha$. 
\end{mainthm}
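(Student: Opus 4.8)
The plan is to combine Theorem~\ref{qi}, which supplies a natural competitor $(\varphi,f)$ for $d(\alpha_0,\alpha)$, with a pair of spectral invariants of contact forms, which pin the infimum \eqref{bmpm} down from below, and with Theorem~\ref{achieved}, which upgrades the resulting equality into minimizing geodesics. First I would fix a small $\epsilon>0$ and let $\delta>0$ be the corresponding threshold given by Theorem~\ref{qi}. Assuming $\|\alpha-\alpha_0\|_{C^2}<\delta$ and using $\ker\alpha=\ker\alpha_0=\xi$, the remark following Theorem~\ref{qi} produces $\varphi\in\mathrm{Cont}_0(M,\xi)$ with $\varphi^*\alpha=Te^h\alpha_0$, where $T,h$ satisfy (a)--(e). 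Set $f\coloneqq\log T+h\in C^\infty(M)$, so that $\varphi^*\alpha=e^f\alpha_0$. By property~(c), $\max_M f-\min_M f=\log(\max_M T)-\log(\min_M T)$. The $S^1$-invariant function $T$ attains its extrema on its critical set, hence on critical $S^1$-orbits, which by~(d)--(e) correspond under $\varphi$ to short closed Reeb orbits of $\alpha$ of minimal period $\mathrm{sys}(\alpha_0)\,T(\gamma)$, and every short closed orbit arises in this way; together with the identity $\mathrm{sys}(\alpha)=\mathrm{sys}(\alpha_0)\min_M T$ recorded after Theorem~\ref{qi}, this gives $T_{\min}(\alpha)=\mathrm{sys}(\alpha_0)\min_M T=\mathrm{sys}(\alpha)$ and $T_{\max}(\alpha)=\mathrm{sys}(\alpha_0)\max_M T$, so that
\[
\max_M f-\min_M f=\log\frac{\max_M T}{\min_M T}=\log\frac{T_{\max}(\alpha)}{T_{\min}(\alpha)}.
\]
Since $\varphi\in\mathrm{Cont}_0(M,\xi)$, the definition \eqref{bmpm} immediately yields $d(\alpha_0,\alpha)\le\max_M f-\min_M f$.

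For the matching lower bound I would use two spectral invariants $\sigma_-$ and $\sigma_+$ for contact forms defining $\xi$, with the following properties: $\sigma_\pm(c\beta)=c\,\sigma_\pm(\beta)$ for every constant $c>0$; $\sigma_\pm$ is invariant under pull-back by elements of $\mathrm{Cont}_0(M,\xi)$; $\sigma_\pm(u\beta)\le\sigma_\pm(\beta)$ for every smooth $u:M\to(0,1]$; $\sigma_-(\alpha_0)=\sigma_+(\alpha_0)=\mathrm{sys}(\alpha_0)$ because $\alpha_0$ is Zoll; and, for every $\alpha$ which is $C^2$-close to $\alpha_0$, $\sigma_-(\alpha)\le T_{\min}(\alpha)$ and $\sigma_+(\alpha)\ge T_{\max}(\alpha)$. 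Granting these, let $\psi\in\mathrm{Cont}_0(M,\xi)$ and $g\in C^\infty(M)$ be arbitrary with $\psi^*\alpha=e^g\alpha_0$. Writing $e^g\alpha_0=(e^{g-\min_M g})\,e^{\min_M g}\alpha_0=(e^{g-\max_M g})\,e^{\max_M g}\alpha_0$ with $e^{g-\max_M g}\le 1\le e^{g-\min_M g}$, monotonicity, homogeneity and $\mathrm{Cont}_0$-invariance give
\[
e^{\min_M g}\,\mathrm{sys}(\alpha_0)\le\sigma_-(\alpha)\le T_{\min}(\alpha),\qquad T_{\max}(\alpha)\le\sigma_+(\alpha)\le e^{\max_M g}\,\mathrm{sys}(\alpha_0).
\]
Dividing the second chain by the first yields $\max_M g-\min_M g\ge\log\bigl(T_{\max}(\alpha)/T_{\min}(\alpha)\bigr)$, and taking the infimum over all such $(\psi,g)$ gives $d(\alpha_0,\alpha)\ge\log\bigl(T_{\max}(\alpha)/T_{\min}(\alpha)\bigr)$.

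Combining the two bounds, $d(\alpha_0,\alpha)=\max_M f-\min_M f=\log\bigl(T_{\max}(\alpha)/T_{\min}(\alpha)\bigr)$, realized by $\varphi\in\mathrm{Cont}_0(M,\xi)$ with $\varphi^*\alpha=e^f\alpha_0$; applying the last display to $(\psi,g)=(\varphi,f)$, where $e^{\min_M f}\mathrm{sys}(\alpha_0)=T_{\min}(\alpha)$ and $e^{\max_M f}\mathrm{sys}(\alpha_0)=T_{\max}(\alpha)$ by~(c), all inequalities there become equalities, so in fact $\sigma_-(\alpha)=T_{\min}(\alpha)=\mathrm{sys}(\alpha)$ and $\sigma_+(\alpha)=T_{\max}(\alpha)$. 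Finally, Theorem~\ref{achieved} applies with $\alpha_0,\alpha,\varphi,f$ in the roles of $\alpha,\beta,\varphi,f$ there: choosing any smooth path $\{\psi_t\}_{t\in[0,1]}$ in $\mathrm{Cont}_0(M,\xi)$ with $\psi_0=\mathrm{id}$ and $\psi_1=\varphi^{-1}$ --- which exists since $\mathrm{Cont}_0(M,\xi)$ is path-connected --- the curve $\gamma(t)=\psi_t^*(e^{tf}\alpha_0)$ is a minimizing geodesic from $\alpha_0$ to $\alpha$. This establishes all the assertions.

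The reduction above is essentially formal once the spectral invariants are available; the substance, and the step I expect to be the main obstacle, is the construction of $\sigma_-$ and $\sigma_+$ and the proof of their properties --- above all the monotonicity $\sigma_\pm(u\beta)\le\sigma_\pm(\beta)$ for $u\le 1$ and the brackets $\sigma_-(\alpha)\le\mathrm{sys}(\alpha)$, $\sigma_+(\alpha)\ge T_{\max}(\alpha)$ for $\alpha$ merely $C^2$-close to $\alpha_0$. This is exactly where Theorem~\ref{qi} re-enters: after pulling $\alpha$ back to the quasi-invariant model $Te^h\alpha_0$, the short Reeb orbits are organized by the critical points of the $S^1$-invariant function $T$, and one must show that the relevant spectral numbers --- built, for instance, from $S^1$-equivariant symplectic homology of a symplectization or filling, or directly from the finite-dimensional variational principle furnished by $T$ --- are computed by $\min_M T$ and $\max_M T$, just as in finite-dimensional Morse theory the spectral numbers of the fundamental class and of the point class are the minimum and the maximum of the Morse function. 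This Morse--Bott computation, together with the cobordism argument behind the monotonicity property, is the technical core; the remaining steps are bookkeeping with Theorems~\ref{qi} and~\ref{achieved}.
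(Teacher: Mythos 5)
Your proposal follows the same overall route as the paper's proof: use Theorem~\ref{qi} to produce the competitor $(\varphi, f=\log T + h)$ and the identity $\max_M f-\min_M f=\log\bigl(T_{\max}/T_{\min}\bigr)$, pin the lower bound on $d(\alpha_0,\alpha)$ by two spectral invariants with exactly the scaling, $\mathrm{Cont}_0$-invariance and monotonicity axioms you list, and then invoke Theorem~\ref{achieved}\,(ii) for the minimizing geodesics. The axiomatic $\sigma_-$ and $\sigma_+$ you postulate are realized in the paper by the ``elementary'' spectral invariants $c_0$ and $c_1$ of Section~\ref{spesec}, and your chain of inequalities for an arbitrary $(\psi,g)$ with $\psi^*\alpha=e^g\alpha_0$ is literally the one in Section~\ref{secthm5}.

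Where the two treatments differ is in how the invariants are built and how the key brackets $\sigma_-(\alpha)\le T_{\min}(\alpha)$, $\sigma_+(\alpha)\ge T_{\max}(\alpha)$ are obtained. You speculate that they could come from $S^1$-equivariant symplectic homology of a filling or from the finite-dimensional variational principle furnished by $T$; the paper instead defines $c_k$ as a sup--inf over point-constrained punctured finite-energy holomorphic curves in the symplectization (following McDuff--Siegel, Hutchings, Edtmair), for which monotonicity and the crucial packing property are proved by SFT neck-stretching, and the Zoll computation $c_0(\alpha_0)=c_1(\alpha_0)=\mathrm{sys}(\alpha_0)$ is deduced from a nonvanishing Gromov--Witten invariant of a symplectic cut. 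The brackets are then not obtained directly for $\alpha$: the proof first shows $c_k(\alpha)=c_k(S\alpha_0)$ via two contact-form homotopies, combining the normal form $v^*u^*\alpha=S\alpha_0+\eta$, $t$-independence of the short action spectrum along $S\alpha_0+t\eta$ and $(1-t)S\alpha_0+t\alpha_0$, continuity of $c_k$ and measure-zero of the action spectrum, before reducing to Proposition~\ref{c_kqi} which packs a ball near the extremal $S^1$-orbit and applies the packing inequality. So the reduction scheme you sketch is exactly right, and you have correctly isolated the technical core; what you do not supply, and do not claim to, is the construction of the invariants, the packing argument, and the two-step homotopy that replaces $\alpha$ by $S\alpha_0$. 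Your remark that the brackets should follow by a Morse--Bott computation on the $S^1$-invariant model is morally the content of Proposition~\ref{c_kqi}, but the actual mechanism is a symplectic ball embedding plus the packing property, not a Morse-theoretic spectral sequence.
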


This result will be deduced from Theorem \ref{qi} using a sequence of ``elementary'' spectral invariants for elements of $\mathcal{F}(M,\xi)$, whose definition mimics analogous spectral invariants for domains and Hamiltonian diffeomorphisms which were introduced by McDuff and Siegel in \cite{ms23}, Hutchings in \cite{hut22a,hut22b} and one of us in \cite{edt22b}.

\paragraph{Organization of the paper.} In Section \ref{lifts}, we show how suitable compactly supported Hamiltonian diffeomorphisms of the unit ball in $\C^{n-1}$ can be lifted as characteristic flows on the boundary of certain domains in $\C^n$. We use this construction in Section \ref{prop13} to prove Propositions \ref{C^1counter} and \ref{C^1counter2}, and in Section \ref{pf1stcapi} to prove part (i) of Theorem \ref{1stcap}. In Section \ref{norforsec}, we recall and complement the main result from \cite{ab23}, which is then used in Section \ref{qisec} to prove Theorem \ref{qi}. Building on this theorem, we prove statement (ii) of Theorem \ref{1stcap} in Section \ref{proof-of-thm-1}. The proof of Theorem \ref{nthcap} is similar to the proof of Theorem \ref{1stcap} and is sketched in Section \ref{pfnthcap}. Proposition \ref{polydiscs} is proved in Section \ref{secpoly} and Theorem \ref{achieved} is proved in Section \ref{secachieved}. In Section \ref{spesec}, we introduce the spectral invariants which are mentioned above. In Section \ref{secthm5}, we prove Theorem \ref{short-geodesics}. Appendix A contains a discussion of the results which are mentioned in Example \ref{irrational}.

\paragraph{Acknowledgments.} We are grateful to Jean Gutt and Vinicius Ramos for sharing with us the preliminary version of their work \cite{gr23} on $k$-normalized capacities.

A.A.~is partially supported by the DFG under the Collaborative Research Center SFB/TRR 191 - 281071066 (Symplectic Structures in Geometry, Algebra and Dynamics). G.B.~is partially supported by the DFG 
under Germany's Excellence Strategy EXC2181/1 - 390900948 (the Heidelberg STRUCTURES Excellence Cluster). A.A.~and G.B.~gratefully acknowledge support from the Simons Center for Geometry and Physics, Stony Brook University at which some of the research for this paper was performed during the program \textit{Mathematical Billiards: at the Crossroads of Dynamics, Geometry, Analysis, and Mathematical Physics}.

\numberwithin{equation}{section}

\section{From Hamiltonian diffeomorphisms to domains}
\label{lifts}

In this section, which is inspired by analogous 4-dimensional arguments from \cite{edt22}, we show how suitable compactly supported Hamiltonian diffeomorphisms of the unit ball in $\C^{n-1}$ can be lifted as characteristic flows on the boundary of certain domains in $\C^n$. These arguments will be used here to prove Propositions \ref{C^1counter} and \ref{C^1counter2} in Section \ref{prop13} and to prove part (i) of Theorems \ref{1stcap} and \ref{nthcap} in Sections \ref{pf1stcapi} and \ref{pfnthcap}, respectively.

Setting $\T\coloneqq \R/\Z$, we consider the domain
\[
\Omega \coloneqq \{(s,t,w)\in \R \times \T \times \C^{n-1} \mid s > \pi ( |w|^2-1) \}
\]
and the smooth map
\begin{equation}
\label{Phi}
\Phi: \Omega \rightarrow \C^n, \qquad \Phi(s,t,w) \coloneqq e^{2\pi i t} \left( \sqrt{1 + \textstyle{\frac{s}{\pi}} - |w|^2} , w \right),
\end{equation}
which is a diffeomorphism onto $\C^* \times \C^{n-1}$. Recall that $\lambda_0$ denotes the standard primitive \eqref{lambda_0} of the standard symplectic form $\omega_0$ of $\C^n$. Denoting by $\widehat{\lambda}_0$ and $\widehat{\omega}_0$ the analogous forms on $\C^{n-1}$, we have
\begin{equation}
\label{pbl}
\Phi^* \lambda_0 = (\pi + s) \, dt + \widehat{\lambda}_0,
\end{equation}
where $(s,t)$ denote the coordinates in $\R \times \T$. By differentiating \eqref{pbl}, we obtain that $\Phi$ is a symplectomorphism from $(\Omega,ds\wedge dt + \widehat{\omega}_0)$ to $(\C^* \times \C^{n-1},\omega_0)$. Since
\begin{equation}
\label{normPhi}
|\Phi(s,t,w)| = \sqrt{1 + \textstyle{\frac{s}{\pi}} },
\end{equation}
this symplectomorphism maps each domain $\{(s,t,w)\in \Omega \mid s < s_0\}$ to a ball centered at 0 minus the linear subspace $\{z_1=0\}$. In particular, the hypersurface $\{(s,t,w)\in \Omega \mid s=0\} = \{0\} \times \T \times \widehat{B}$, where $\widehat{B}$ denotes the open unit ball in $\C^{n-1}$, is mapped onto the unit sphere $\partial B$ minus the linear subspace $\{z_1=0\}$.

Given a smooth compactly supported time-periodic Hamiltonian $H: \T \times \widehat{B} \rightarrow \R$ such that
\begin{equation}
\label{buonadef}
H(t,w) > - \pi (1-|w|^2) \qquad \forall (t,w)\in \T\times \widehat{B},
\end{equation}
we consider the subset of $\C^n$ given by
\[
D(H)\coloneqq \Phi \bigl( \{ (s,t,w) \in\Omega \mid w\in \widehat{B} , \;  s < H(t,w) \} \bigr) \cup \{z\in \C^n \mid z_1 = 0, \; |z|<1 \},
\]
which is readily seen to be an open neighborhood of $0$ diffeomorphic to a ball. The boundary of $D(H)$ is smooth and is  given by the closure of the image of the graph of $H$ by $\Phi$:
\[
\partial D(H) = \overline{\Phi(\Gamma(H))} = \Phi(\Gamma(H)) \cup \{z\in \C^n \mid z_1=0, \; |z|=1\},
\]
where
\[
\Gamma(H) \coloneqq \{(s,t,w)\in \R\times \T \times \widehat{B} \mid s=H(t,w)\}.
\]
The domain $D(H)$ coincides with $B$ near the subspace $\{z_1=0\}$ and its boundary is transverse to the vector field $\nabla |z_1|$ away from $\{z_1=0\}$. Conversely, any smooth domain with these properties has the form $D(H)$ for a suitable compactly supported smooth function $H$ on $\T\times \widehat{B}$ satisfying \eqref{buonadef}. Moreover, $D(H)$ is $C^k$-close to $B$ when $H$ is $C^k$-small and supported uniformly away from the boundary of $\T\times \widehat{B}$. Denote by 
\[
\phi_H^t: \widehat{B} \rightarrow  \widehat{B}, \qquad \phi_H^0 = \mathrm{id},
\]
the smooth path of compactly supported Hamiltonian diffeomorphisms of $( \widehat{B},\widehat{\omega}_0)$ which is obtained by integrating the time-periodic Hamiltonian vector field $X_H$ given by
\[
\imath_{X_{H_t}} \widehat{\omega}_0 = dH_t, \qquad H_t(w) \coloneqq H(t,w).
\]
We recall that the Calabi invariant of $\varphi\coloneqq\phi_H^1$ is the real number
\[
\mathrm{Cal}(\varphi) \coloneqq \int_{\T \times  \widehat{B}} H \, dt\wedge \widehat{\omega}_0^{n-1}.
\]
The notation is justified by the fact that the above integral does not depend on the choice of the compactly supported Hamiltonian $H$ defining $\varphi$. The Calabi invariant defines a real-valued homomorphism on the group of compactly supported Hamiltonian diffeomorphisms of $( \widehat{B},\widehat{\omega}_0)$. The action of a fixed point $w$ of $\varphi=\phi_H^1$ is given by
\[
\mathcal{A}_{\varphi}(w) \coloneqq \int_{\substack{t \mapsto \phi^t_H(w)\\t\in [0,1]}} \widehat{\lambda}_0 + \int_0^1 H(t,\phi_H^t(w))\, dt.
\]
This quantity does not depend on the choice of the compactly supported Hamiltonian $H$ defining $\varphi$ either. See \cite[Chapter 9 and 10]{ms17} for the proofs of the above mentioned facts about the Calabi invariant and the action. The next result relates the properties of the Hamiltonian diffeomorphism determined by $H$ to those of the domain $D(H)$.

\begin{prop}
\label{lift}
Let $H: \T \times \widehat{B} \rightarrow \R$ be a smooth compactly supported Hamiltonian satisfying \eqref{buonadef} and set $\varphi\coloneqq \phi_H^1$. Then:
\begin{enumerate}[(i)]
\item $\displaystyle\mathrm{vol}(D(H)) = \frac{\pi^n}{n!}  + \frac{1}{(n-1)!} \mathrm{Cal}(\varphi) = \mathrm{vol}(B) +  \frac{1}{(n-1)!} \mathrm{Cal}(\varphi)$.
\item There is a one-to-one correspondence between the periodic points $w$ of $\varphi$ and the closed characteristics $\gamma$ on the boundary of $D(H)$ other than those which foliate the submanifold $\partial D(H)\cap \{z_1=0\} = \partial B \cap \{z_1=0\}$. The corresponding actions are related by
\[
\int_{\gamma} \lambda_0 = k \pi + \mathcal{A}_{\varphi^k}(w),
\]
where $k\in \N$ denotes the minimal period of $w$.
\item Let $\{H^{\lambda}\}_{\lambda\in [0,1]}$ be a smooth family of compactly supported Hamiltonians on $\T\times \widehat{B}$ satisfying  \eqref{buonadef} and such that $\varphi\coloneqq \phi_{H^{\lambda}}^1$ does not depend on $\lambda$. Then there exists a compactly supported symplectomorphism $\psi:\C^n\rightarrow \C^n$ such that $\psi(D(H^0))=D(H^1)$. 
\end{enumerate}
\end{prop}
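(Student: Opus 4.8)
The plan is to exploit systematically the fact, established via \eqref{pbl}, that $\Phi$ is a symplectomorphism from $(\Omega, ds\wedge dt + \widehat\omega_0)$ to $(\C^*\times\C^{n-1},\omega_0)$, so that everything about $D(H)$ can be read off from the ``suspension'' picture on $\Omega$. For part (i), I would compute $\mathrm{vol}(D(H))$ by pulling back $\omega_0^n/n!$ through $\Phi$: since the linear subspace $\{z_1=0\}$ has measure zero, $\mathrm{vol}(D(H))$ equals the integral of $(ds\wedge dt + \widehat\omega_0)^n/n! = ds\wedge dt\wedge \widehat\omega_0^{n-1}/(n-1)!$ over the region $\{(s,t,w)\in\Omega\mid w\in\widehat B,\ s<H(t,w)\}$. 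Integrating in $s$ first over the interval $(\pi(|w|^2-1), H(t,w))$ produces $H(t,w) - \pi(|w|^2-1)$; the term $\int H\,dt\wedge\widehat\omega_0^{n-1}/(n-1)!$ is exactly $\mathrm{Cal}(\varphi)/(n-1)!$, and the remaining term $-\pi\int_{\T\times\widehat B}(|w|^2-1)\,dt\wedge\widehat\omega_0^{n-1}/(n-1)!$ is a constant that must equal $\mathrm{vol}(B) = \pi^n/n!$ (one checks this by taking $H\equiv 0$, where $D(0)=B$). This gives (i).

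For part (ii), the key point is that the characteristic foliation of $\partial D(H)$ away from $\{z_1=0\}$ is, under $\Phi$, the characteristic foliation of the graph $\Gamma(H)\subset(\Omega, ds\wedge dt+\widehat\omega_0)$, and the latter is precisely the suspension flow of the Hamiltonian isotopy $\phi_H^t$. Concretely, on the hypersurface $\{s = H(t,w)\}$ the characteristic direction is spanned by $\partial_t + X_{H_t}(w) + (\partial_t H)\,\partial_s$ (restricted to the graph), whose $(t,w)$-projection is the vector field generating $\phi_H^t$. Hence closed characteristics on $\Gamma(H)$ correspond to closed orbits of the suspension, i.e.\ to periodic points $w$ of $\varphi$ with minimal period $k\in\N$ (a closed orbit winds $k$ times around the $\T$-factor). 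The action identity then follows by integrating $\lambda_0$ over $\gamma$ and using \eqref{pbl}: $\int_\gamma\lambda_0 = \int_{\Phi^{-1}(\gamma)}\big((\pi+s)\,dt + \widehat\lambda_0\big)$; the $(\pi+s)\,dt$ part contributes $k\pi + \int_0^k H(t,\phi_H^t(w))\,dt$ over the $k$ periods and the $\widehat\lambda_0$ part contributes $\int \widehat\lambda_0$ along $t\mapsto\phi_H^t(w)$, $t\in[0,k]$, which together are exactly $\mathcal A_{\varphi^k}(w)$ as defined. One should note separately that the characteristics foliating $\partial B\cap\{z_1=0\}$ are the circles $t\mapsto e^{2\pi it}(0,w_0)$ with $|w_0|=1$, which lie outside the image of $\Phi$ and are therefore excluded from the correspondence; these are the orbits ``at infinity'' of the suspension picture.

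For part (iii), I would use a Moser-type argument on $\Omega$. Since $\varphi=\phi_{H^\lambda}^1$ is $\lambda$-independent, for each $\lambda$ there is a compactly supported function $c^\lambda:\T\to\R$ (in fact one can arrange $H^\lambda$ and $H^0$ to generate the same path $\phi_{H^\lambda}^t=\phi_{H^0}^t$ after a reparametrization, but more robustly) such that the graphs $\Gamma(H^\lambda)$ are all Hamiltonianly isotopic in $\Omega$; concretely, the region $\{s<H^1\}$ is carried to $\{s<H^0\}$ by a compactly supported symplectomorphism of $(\Omega, ds\wedge dt+\widehat\omega_0)$ built by integrating the time-dependent vector field $\dot s = \partial_\lambda H^\lambda\circ(\text{projection})$, suitably cut off, which preserves $ds\wedge dt+\widehat\omega_0$ up to an exact correction absorbed by the fact that $\partial_\lambda\mathrm{Cal}=0$ — here one uses that $\mathrm{Cal}(\varphi)$, hence $\int(H^1-H^0)\,dt\wedge\widehat\omega_0^{n-1}$, vanishes, so the ``volume between the graphs'' is zero and Moser's equation is solvable with compact support. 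Conjugating this symplectomorphism of $\Omega$ by $\Phi$ and extending by the identity across $\{z_1=0\}$ (where all $D(H^\lambda)$ agree with $B$) yields the desired $\psi$ on $\C^n$.

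The main obstacle I anticipate is part (iii): one must produce the ambient symplectomorphism of $\Omega$ with \emph{compact support}, which forces the correcting Hamiltonian to vanish near $\{w\in\partial\widehat B\}$ and near $s=\pi(|w|^2-1)$, and it is exactly the hypothesis $\mathrm{Cal}(\varphi_\lambda)$ constant — equivalently that the symplectic volume enclosed by $\Gamma(H^\lambda)$ is $\lambda$-independent — that makes the relevant Moser equation solvable within that class. Getting the cutoffs and the interpolating isotopy to interact correctly with the non-compactness of $\Omega$ in the $s$-direction, while keeping track of the fact that the graphs sit over a fixed isotopy $\phi_{H^\lambda}^t$, is where the real care is needed; parts (i) and (ii) are essentially bookkeeping with \eqref{pbl}.
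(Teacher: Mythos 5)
Your treatments of (i) and (ii) are correct and essentially match the paper's, with a minor stylistic difference in (i): you integrate $\Phi^*(\omega_0^n/n!)$ directly in the $s$-variable and evaluate the constant term by setting $H\equiv 0$, whereas the paper passes to the boundary via Stokes and works with $\lambda_0\wedge\omega_0^{n-1}$; both routes are clean and lead to the same identity. Your identification of the characteristic flow on $\Gamma(H)$ with the suspension of $\phi_H^t$ and the action computation via $\Phi^*\lambda_0 = (\pi+s)\,dt + \widehat\lambda_0$ are exactly what the paper does.

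Part (iii) has a genuine gap. You propose to flow in $\lambda$ by the vector field $\partial_\lambda H^\lambda\,\partial_s$, which does carry $\Gamma(H^0)$ to $\Gamma(H^\lambda)$, but this vector field is \emph{not} symplectic: its Lie derivative of $ds\wedge dt+\widehat\omega_0$ is $d\bigl(\partial_\lambda H^\lambda\,dt\bigr) = d_w(\partial_\lambda H^\lambda)\wedge dt$, which does not vanish unless $\partial_\lambda H^\lambda$ is independent of $w$. Invoking Moser to ``absorb an exact correction'' misapplies the technique: here the symplectic form is fixed, and any correcting vector field with nontrivial $(t,w)$-component would destroy the graph-to-graph property you started with. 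The constancy of $\mathrm{Cal}(\phi^1_{H^\lambda})$ controls only the total volume enclosed by the graphs, which is far too weak to produce a compactly supported symplectomorphism between them. The missing idea is that one must also move in the $w$-direction, using the \emph{loop} of Hamiltonian diffeomorphisms $\psi^t_\lambda \coloneqq \phi^t_{H^\lambda}\circ(\phi^t_{H^0})^{-1}$, which is based at the identity because the time-one maps coincide; the paper's diffeomorphism $\tilde\psi_\lambda(s,t,w) = \bigl(s+G^\lambda(t,\psi^t_\lambda(w)),\,t,\,\psi^t_\lambda(w)\bigr)$ couples an $s$-shift with this motion and is genuinely Hamiltonian. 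There is then a nontrivial point you do not address at all, namely that the generating Hamiltonian be $1$-periodic in $t$ (the subclaim $F^\lambda_1=F^\lambda_0$ in the paper), which is precisely where the constraint $\phi^1_{H^\lambda}=\varphi$ enters via the vanishing of the action $\mathcal A_{\phi^1_{G^\lambda}}$, rather than through the Calabi invariant.
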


\begin{proof} (i) The map 
\[
\psi: \T \times \widehat{B} \rightarrow \partial D(H), \quad \psi(t,w) \coloneqq \Phi(H(t,w),t,w),
\]
is a diffeomorphism onto $\partial D(H)\setminus \{z_1=0\}$  and hence
\[
n!\, \mathrm{vol}(D(H)) = \int_{D(H)} \omega_0^n = \int_{\partial D(H)} \lambda_0 \wedge \omega_0^{n-1} = \int_{\T \times \widehat{B}} \psi^*( \lambda_0 \wedge \omega_0^{n-1}).
\]
From \eqref{pbl} we deduce 
\[
\psi^* \lambda_0 = (\pi + H)\, dt + \widehat{\lambda}_0, \qquad \psi^* \omega_0 = d \psi^* \lambda_0 = dH\wedge dt + \widehat{\omega}_0,
\]
and hence
\[
\psi^*( \lambda_0 \wedge \omega_0^{n-1}) = (\pi+H) dt \wedge \widehat{\omega}_0^{n-1} + (n-1) dH \wedge dt\wedge \widehat{\lambda}_0 \wedge \widehat{\omega}_0^{n-2}.
\]
Since $dH \wedge dt\wedge \widehat{\lambda}_0 \wedge \widehat{\omega}_0^{n-2}$ differs from $H \, dt \wedge \widehat{\omega}_0^{n-1}$ by an exact form, we obtain
\[
n!\, \mathrm{vol}(D(H)) = \pi \int_{\T\times \widehat{B}} dt\wedge \widehat{\omega}_0^{n-1} + n \int_{\T\times \widehat{B}} H\, dt \wedge  \widehat{\omega}_0^{n-1} = \pi^n + n \,\mathrm{Cal}(\varphi),
\]
which implies (i).

\bigskip

\noindent (ii) The hypersurface $\Gamma(H)$ is the zero level set of the autonomous Hamiltonian
\begin{equation}
\label{auto}
\widetilde{H}(s,t,w) \coloneqq H(t,w) - s
\end{equation}
on the symplectic manifold $(\R\times \T \times \widehat{B},ds\wedge dt + \widehat{\omega})$, whose Hamiltonian vector field is easily seen to be
\begin{equation}
\label{campo}
X_{\widetilde{H}}(s,t,w) = \partial_t H(t,w)\, \partial_s  + \partial_t  + X_{H_t}(w).
\end{equation}
The closed characteristics of $\Gamma(H)$ are the closed orbits of the flow of $X_{\widetilde{H}}$ on the energy level $\{\widetilde{H}=0\}$. Every orbit of this flow intersects the hypersurface $\{t=0\}$ transversally infinitely many times, and the orbit of $(s,0,w)$ is easily seen to be given by
\begin{equation}
\label{flow}
\phi_{\widetilde{H}}^{\tau}(s,0,w) = \bigl( s + H(\tau,\phi_H^{\tau}(w)) - H(0,w),\tau,\phi_H^{\tau}(w) \bigr), \qquad  \tau \in \R.
\end{equation}
In particular, the orbit of a point $(s,0,w)$ in $\Gamma(H)$ is the curve
\[
\widehat{\gamma}(\tau) = \bigl( H(\tau,\phi_{X_H}^{\tau}(w))  , \tau, \phi_{H}^{\tau}(w) \bigr), \qquad  \tau \in \R.
\]
This orbit is closed if and only if $w$ is a $k$-periodic point of $\phi_X^1$, for some $k\in\N$, and in this case
\[
\int_{[0,k]} \widehat{\gamma}^*( \Phi^* \lambda_0) = \int_{[0,k]} \widehat{\gamma}^*( (\pi+s)\, dt + \widehat{\lambda}_0) = \pi k + \mathcal{A}_{\phi_H^k}(w).
\]
Statement (ii) follows by taking $\gamma\coloneqq\Phi(\widehat\gamma)$.

\bigskip

\noindent (iii) For each $(\lambda,t)\in[0,1]\times\T$, consider the Hamiltonian diffeomorphism
\[
\psi_\lambda^t\colon \widehat B\to\widehat B,\qquad \psi_\lambda^t\coloneqq\phi^t_{H^\lambda}\circ (\phi^t_{H^0})^{-1}.
\]
By assumption, we have
\begin{equation}\label{psi0lambda}
\psi_\lambda^0=\psi_\lambda^1=\mathrm{id}_{\widehat B},\qquad  \forall\lambda\in[0,1]. 
\end{equation}
For each $\lambda\in[0,1]$, the loop of Hamiltonian diffeomorphisms $t\mapsto \psi_\lambda^t$, $t\in\T$, based at $\mathrm{id}_{\widehat B}$ is generated by the Hamiltonian
\begin{equation}\label{Glambda}
G^\lambda\colon\T\times\widehat B \rightarrow \R,\qquad G^\lambda(t,w)=H^\lambda\#\overline{ H^0}(t,w)=H^\lambda(t,w)-H^0(t,(\psi^t_\lambda)^{-1}(w)).
\end{equation}
Notice that $G^\lambda$ is indeed 1-periodic in $t$ since the same is true for $\psi_\lambda^t$. Moreover, $G^0=0$. We define
the diffeomorphism
\begin{equation}\label{tildepsi}
\tilde \psi_\lambda\colon \R\times \T\times\widehat B  \rightarrow  \R\times \T\times\widehat B,\quad \tilde \psi_\lambda(s,t,w)=(s+G^\lambda(t,\psi^t_\lambda(w) ),t,\psi^t_\lambda(w)).
\end{equation}
We have $\tilde\psi_0=\mathrm{id}$ and, using the formula for $G^\lambda$ given in \eqref{Glambda}, that $\widetilde{H}^{\lambda}\circ \tilde\psi_\lambda= \widetilde{H}^0$, where $\widetilde{H}^{\lambda}$ is induced by $H^{\lambda}$ as in \eqref{auto}. In particular, $\tilde\psi_\lambda$ maps $\Gamma(H^0)$ to $\Gamma(H^\lambda)$. 

We now make the claim that the path $\lambda\mapsto \tilde\psi_\lambda$, $\lambda\in[0,1]$, is generated by a smooth path of Hamiltonians $F^\lambda\colon\R\times\T\times\widehat B\to\R$ which are supported in a set of the form $\R \times \T \times K$, where $K$ is a compact subset of $\widehat{B}$. If the claim is true, we can consider the multiplication $E^\lambda\coloneqq\chi^\lambda F^\lambda$ of $F^\lambda$ by a cut-off function $\chi^\lambda$ supported in an arbitrarily small neighborhood of $\Gamma(H^\lambda)$. Since $F^\lambda$ has support in $\R \times \T \times K$, the function $E^\lambda$ has compact support. Since $\Phi$ is a symplectomorphism, the function $E^\lambda\circ \Phi^{-1}$ yields a compactly supported function of the whole $\C^n$. The time-one map $\psi\colon\C^n\to\C^n$ of the Hamiltonian flow of $\{E^\lambda\circ \Phi^{-1}\}_{\lambda\in[0,1]}$ yields the desired symplectomorphism.  

We are left with proving the claim. As a first step, consider the compactly supported Hamiltonian function $F^\lambda_t\colon\widehat B\to\R$, which, for every $t\in\R$, generates the path of compactly supported Hamiltonian diffeomorphisms $\lambda\mapsto\psi^t_\lambda$, $\lambda\in[0,1]$. By \cite[Prop.~3.1.5]{ban97}, we have
\begin{equation}\label{comban}
\partial_t F^\lambda_t-\partial_\lambda G^\lambda_t=\{F^\lambda_t,G^\lambda_t\},
\end{equation}
where $\{F^\lambda_t,G^\lambda_t\}=dG^\lambda_t[X_{F^\lambda_t}]=-dF^\lambda_t[X_{G^\lambda_t}]=-\{G^\lambda_t,F^\lambda_t\}$ is the Poisson bracket.

We now make the subclaim that 
\begin{equation}\label{subclaim}
F^\lambda_1=F^\lambda_0,\qquad \forall\lambda\in[0,1].
\end{equation}
If the subclaim is true, then the function
\[
F^\lambda\colon \R\times\T\times\widehat B\to\R,\qquad F^\lambda(s,t,w)=F^\lambda_t(w)
\]
is well-defined and smooth, by the smoothness of $t\mapsto \psi_{\lambda}^t$ on $\T$.
This function has the desired property. Indeed, by \eqref{tildepsi} and \eqref{comban}, we get
\[
(\partial_\lambda\tilde\psi_\lambda)\circ (\tilde\psi_\lambda)^{-1} =\big(\partial_\lambda G^\lambda_t+\{F^\lambda_t,G^\lambda_t\}\big)\partial_s+X_{F^\lambda_t}=\partial_t F^\lambda_t \partial_s+X_{F^\lambda_t}
\]
and
\[
\imath_{\partial_t F^\lambda_t \partial_s+X_{F^\lambda_t}}(ds\wedge dt+\widehat\omega)=\partial_t F^\lambda_tdt+dF^\lambda_t=dF^\lambda.
\]
We are now left with proving the subclaim \eqref{subclaim}. Since $G^\lambda$ has compact support and $\phi^1_{G^\lambda}(w)=w$ for all $w\in \widehat B$, we have $\mathcal A_{\phi^1_{G^\lambda}}(w)=0$. Differentiating this equality with respect to $\lambda$ and setting 
\[
\gamma_\lambda\colon\T\to\widehat B,\qquad \gamma_\lambda(t)\coloneqq\phi^t_{G^\lambda}(w),\ \forall t\in\T,
\]
using the definition of the action we get
\begin{align}
0=\partial_\lambda\mathcal A_{\phi^1_{G^\lambda}}(w)&=\int_0^1\Big(\widehat\omega(\partial_\lambda\gamma_\lambda(t),\dot\gamma_\lambda(t))+dG^\lambda_t(\gamma_\lambda(t))[\partial_\lambda\gamma_\lambda(t)]\Big)dt+\int_0^1(\partial_\lambda G^\lambda_t)(\gamma_\lambda(t))dt\nonumber\\
&=\int_0^1(\partial_\lambda G^\lambda_t)(\gamma_\lambda(t))dt,\label{partiallambda} 
\end{align}
where we used that $\gamma_\lambda$ is a Hamiltonian trajectory for $G^\lambda$.
On the other hand, by \eqref{comban}, we get
\begin{align*}
\int_0^1(\partial_\lambda G^\lambda_t)(\gamma_\lambda(t))dt=\int_0^1\big(\partial_t F^\lambda_t+dF^\lambda_t[\dot\gamma_\lambda]\big)(\gamma_\lambda(t))dt&=\int_0^1\frac{d}{dt}\Big(F^\lambda_t(\gamma_\lambda(t)\Big)dt\\
&=F^\lambda_1(\gamma_\lambda(1))-F^\lambda_0(\gamma_\lambda(0))\\
&=F^\lambda_1(w)-F^\lambda_0(w),
\end{align*}
which, in combination with \eqref{partiallambda}, yields $0=F^\lambda_1(w)-F^\lambda_0(w)$ for all $\lambda\in[0,1]$ and $w\in\widehat B$, as stated in the subclaim \eqref{subclaim}. 
\end{proof}

\begin{rem}
\label{support}
{\rm Let $\epsilon>0$ and $r\in (0,1)$ be such that $(-\epsilon,\epsilon) \times \T \times \widehat{B}_r$ is contained in $\Omega$ and set
\[
V\coloneqq \Phi( (-\epsilon,\epsilon) \times \T \times \widehat{B}_r ).
\]
The above proof shows that if $H^{\lambda}-H^0$ is supported in $\T\times \widehat{B}_r$ and $|H^{\lambda}|<\epsilon$ for every $\lambda\in [0,1]$, then the symplectomorphism $\psi$ in (iii) can be chosen to be supported in $V$ using suitable cut-off functions $\chi^\lambda$.}
\end{rem}

\section{Proof of Proposition \ref{C^1counter} and Proposition \ref{C^1counter2}} 
\label{prop13}

In this section we show how the first two statements of Proposition \ref{lift} can be used to prove Proposition \ref{C^1counter} from the Introduction. The proof of Proposition \ref{C^1counter2} goes along similar lines and is sketched in Remark \ref{C^1counter2sketch} at the end of this section. 

We consider a compactly supported smooth Hamiltonian $H: \T\times \widehat{B} \rightarrow \R$ such that all the fixed points of $\phi^1_H$ have non-negative action while the Calabi invariant of $\phi^1_H$ is negative. Following \cite{abhs18}, a Hamiltonian with these properties can be constructed in the following way. Consider an autonomous radial Hamiltonian on $\widehat{B}$ of the form
\[
F(w) \coloneqq f(|w|^2),
\]
where $f:[0,1) \rightarrow \R$ is a smooth compactly supported monotonically decreasing function such that $f'(r^2)=-\frac{\pi}{2}$ for every $r\in [0,\frac{1}{2}]$. The identity
\[
\phi_F^1(w) = e^{-2i f'(|w|^2)} w
\]
shows that $\phi_F^1(w)=-w$ if $|w|\leq \frac{1}{2}$. Next, consider an open ball $U\subset \widehat{B}$ contained in the ball of radius $\frac{1}{2}$ centered at the origin and such that $\phi_F^1(U) = - U$ does not intersect $U$. Let $G$ be a smooth function on $\widehat{B}$ with support in $U$ and such that 
\begin{equation}
	\label{negcal}
	\mathrm{Cal}(\phi_G^1) = \int_{\widehat{B}} G\, \widehat{\omega}^{n-1} < - \mathrm{Cal}(\phi_F^1).
\end{equation}
The Hamiltonian diffeomorphism $\phi_F^1\circ \phi_G^1$ is generated by the compactly supported Hamiltonian
\[
H(t,w) = (F\# G)(t,w) = F(w) + G(t, (\phi_F^t)^{-1}(w)). 
\]
The fact that $\phi_F^1$ displaces the support of $G$ from itself implies that the only fixed points of $\phi_H^1$ are fixed points of $\phi_F^1$, and if $w$ is such a fixed point we have
\[
\mathcal{A}_{\phi_H^1}(w) = \mathcal{A}_{\phi_F^1}(w) = f(|w|^2) - |w|^2 f'(|w|^2) \geq 0.
\]
Finally, \eqref{negcal} implies that the Calabi invariant of $\phi_H^1$ is negative. This shows that $H$ has the required properties. The $k$-periodic points of $\phi_H^1$ may have negative action, but by the definition of action we have a bound of the form
\begin{equation}
	\label{higherper}
	\bigl| \mathcal{A}_{\phi_H^k}(w) \bigr| \leq c k \qquad \forall k\in \N,
\end{equation}
for every $k$-periodic point $w$ of $\phi_H^1$, for a suitable number $c$.

We now see $H$ as a smooth Hamiltonian on $\T\times \C^{n-1}$ with support in $\T\times \widehat{B}$. The family of rescaled Hamiltonians
\[
H^{\lambda}(t,w) \coloneqq \lambda^2 H\bigl(t, {\textstyle \frac{w}{\lambda}} \bigr), \quad\lambda>0,
\]
$C^1$-converges to the zero function for $\lambda\rightarrow 0$. Therefore, there exists $\lambda_0>0$ such that $H^{\lambda}$ satisfies \eqref{buonadef} and $A_{\lambda} \coloneqq D(H^{\lambda})$ belongs to $\mathcal{A}$ if $\lambda\in (0,\lambda_0]$, and $A_{\lambda}$ $C^1$-converges to $B$ for $\lambda\rightarrow 0$. The identity
\[
X_{H^{\lambda}}(t,w) = \lambda X_H \bigl(t,{\textstyle \frac{w}{\lambda}} \bigr)
\]
implies that the conformally symplectic diffeomorphism $w\mapsto \lambda \cdot w$ conjugates the Hamiltonian dynamics of $H$ and $H^{\lambda}$, meaning that
\[
\phi^t_{H^{\lambda}} (w) = \lambda \cdot \phi^t_{H} \bigl( {\textstyle \frac{w}{\lambda}} \bigr).
\]
Therefore, $w\in \widehat{B}$ is a fixed point of $\phi_{H^{\lambda}}^1$ if and only if $\frac{w}{\lambda}$ is a fixed point of $\phi_H^1$, and in this case we have
\[
\mathcal{A}_{\phi_{H^{\lambda}}^1} (w) = \lambda^2 \mathcal{A}_{\phi_H^1} \bigl( {\textstyle \frac{w}{\lambda}} \bigr) \geq 0.
\]
If $\gamma$ is the closed characteristic on $\partial A_{\lambda}$ corresponding to a fixed point of $\phi_{H^{\lambda}}^1$, statement (ii) in Proposition \ref{lift} gives us the bound
\[
\int_{\gamma} \lambda_0 = \pi + \mathcal{A}_{\phi_{H^{\lambda}}^1} (w) \geq \pi.
\]
If $\gamma$ is the closed characteristic on $\partial A_{\lambda}$ corresponding to a periodic point of $\phi_{H^{\lambda}}^1$ of minimal period $k\geq 2$, then using also \eqref{higherper} we obtain 
\[
\int_{\gamma} \lambda_0 = k\pi + \mathcal{A}_{\phi_{H^{\lambda}}^k} (w) = k \pi + \lambda^2 \mathcal{A}_{\phi_H^k} \bigl( {\textstyle \frac{w}{\lambda}} \bigr)\geq k \pi - \lambda^2 c k \geq \pi,
\]
provided that $\lambda^2\leq \frac{\pi}{2c}$. Together with the fact that the characteristics of $\partial A_{\lambda}$ in $\{z_1=0\}$ are closed with action $\pi$, up to reducing the size of $\lambda_0$ we deduce that $\mathrm{sys}(A_{\lambda}) = \pi$ for every $\lambda\in (0,\lambda_0]$. On the other hand,
\[
\mathrm{Cal}(\phi_{H^{\lambda}}^1) = \lambda^{2n}\, \mathrm{Cal}(\phi_H^1) < 0,
\]
so by statement (i) in Proposition \ref{lift} we obtain
\[
\mathrm{vol}(A_{\lambda}) < \frac{\pi^n}{n!}  \qquad \forall \lambda\in (0,\lambda_0],
\]
and hence
\[
\mathrm{sys}(A_{\lambda})^n = \pi^n > n! \, \mathrm{vol}(A_{\lambda}) \geq \underline{c}(A_{\lambda})^n \qquad \forall \lambda\in (0,\lambda_0],
\]
proving the strict inequality
\[
\underline{c}(A_{\lambda}) < \mathrm{sys}(A_{\lambda}) \qquad  \forall \lambda\in (0,\lambda_0],
\]
which is stated in Proposition \ref{C^1counter}.

\begin{rem}
	\label{C^1counter2sketch}
	{\rm The proof of Proposition \ref{C^1counter2} is similar and we just sketch it. Assume without loss of generality that the orbits of the Zoll contact form $\alpha_0$ have minimal period 1. Then we can find an embedding $\varphi: \T \times \widehat{B}_r \rightarrow M$ such that
		\[
		\varphi^* \alpha_0 = dt + \widehat{\lambda}_0.
		\]
		Here, $\widehat{B}_r$ denotes the open ball of radius $r$ in $\C^{n-1}$. We modify $\alpha_0$ inside the image of $\varphi$ and obtain a smooth family of contact forms $\{\alpha_{\lambda}\}_{\lambda\in (0,\lambda_0]}$ such that
		\[
		\varphi^* \alpha_{\lambda} = ( 1 + H^{\lambda}) \,  dt + \widehat{\lambda}_0,
		\]
		where $\{H^{\lambda}\}$ is the family of rescaled Hamiltonians constructed above and $\lambda_0>0$ is so small that $H^{\lambda}$ is supported in $\T \times \widehat{B}_r$ for all $\lambda\in (0,\lambda_0]$. The contact forms $\alpha_{\lambda}$ converge to $\alpha_0$ in the $C^1$-topology for $\lambda\rightarrow 0$. By the properties of $H^{\lambda}$, we have $\mathrm{sys}(\alpha_{\lambda})=1 = \mathrm{sys}(\alpha_0)$ but the volume of $(M,\alpha_{\lambda}\wedge d\alpha_{\lambda}^{n-1})$ is strictly smaller than that of $(M,\alpha_0 \wedge d\alpha_0^{n-1})$.}
\end{rem}

\section{Proof of Theorem \ref{1stcap} (i)} 
\label{pf1stcapi}

The proof of statement (i) of Theorem \ref{1stcap} uses the construction from Section \ref{lifts} together with the following result.

\begin{prop}
\label{modham}
Let $m\in\mathbb N$ and denote by $B_r$ the open ball of radius $r$ centered at $0$ in $\C^m$ and by $B_r^{\mathrm{c}}$ its complement. For every $r>0$ and $\epsilon>0$ there exists $\delta>0$ such that if $H$ is a compactly supported Hamiltonian on $\T \times \C^{m}$ satisfying $\|H\|_{C^2} < \delta$ and such that $0$ is a fixed point of $\phi_H^1$ with $\mathcal{A}_{\phi_H^1}(0)=0$, then there exists a smooth family of Hamiltonians $\{H^{\lambda}\}_{\lambda\in [0,1]}\subset C^{\infty}(\T \times \C^{m})$ such that:
\begin{enumerate}[(i)]
\item $H^0=H$;
\item $H^{\lambda}=H$ on $\T \times B^{\mathrm{c}}_r$ for every $\lambda\in [0,1]$;
\item $\phi_{H^{\lambda}}^1 = \phi_H^1$ for every $\lambda\in [0,1]$;
\item $\Vert H^{\lambda}\Vert_{C^0}< \epsilon$ for every $\lambda\in [0,1]$;
\item $|H^1(t,z)| \leq \epsilon |z|^2$ for every $(t,z)\in \T\times \C^{m}$.
\end{enumerate}
\end{prop}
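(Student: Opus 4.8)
The plan is to produce the family $\{H^\lambda\}$ by conjugating the Hamiltonian isotopy $t\mapsto\phi_H^t$ with a loop of compactly supported Hamiltonian diffeomorphisms of $\C^m$, based at the identity and supported in $B_r$, chosen so that the resulting generating Hamiltonian vanishes to second order along $\T\times\{0\}$; once $\delta$ is small, Taylor's formula with Lagrange remainder then upgrades this into the quadratic bound (v). The family itself is obtained by deforming the conjugating loop to the constant loop $\mathrm{id}$ through its partial flows. Throughout, $\delta$ is taken small enough that every object built from $H$ below is $O(\delta)$ in the $C^k$-norms that occur; in particular the curve $a_t:=\phi_H^t(0)$ — which is smooth and $1$-periodic in $t\in\R$ since $\phi_H^1(0)=0$, with $a_0=a_1=0$ — stays inside $B_{r/8}$.

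\textbf{Step 1: straightening the origin.} Fix $\beta\in C_c^\infty(\C^m)$ with $\beta\equiv1$ on $B_{r/4}$ and $\supp\beta\subset B_{r/2}$, and for each $t$ let $\ell_t$ be the linear Hamiltonian whose Hamiltonian vector field is the constant vector $-a_t$. Because $|a_t|$ is small, the time-one map $\Theta_t$ of the autonomous Hamiltonian $\beta\ell_t$ coincides on $B_{r/8}$ with the translation by $-a_t$, hence $\Theta_t(a_t)=0$; moreover $\Theta_0=\Theta_1=\mathrm{id}$ (as $a_0=a_1=0$) and $\supp\Theta_t\subset B_{r/2}$. The conjugated isotopy $g_t:=\Theta_t\circ\phi_H^t$ then has the same time-one map $\phi_H^1$, fixes the origin for all $t$, and is generated by $K_t=E_t+H_t\circ\Theta_t^{-1}$, where $E_t$ is the compactly supported Hamiltonian generating $t\mapsto\Theta_t$. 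By construction $K_t=H_t$ on $B_r^{\mathrm{c}}$, $\|K\|_{C^2}=O(\delta)$, and $d_wK_t(0)=0$ since $g_t(0)\equiv0$. It only remains to make the critical value $c(t):=K_t(0)$ vanish, and this is where the hypothesis $\mathcal A_{\phi_H^1}(0)=0$ is used: since $g_t(0)\equiv0$, the boundary one-form term in the action of the fixed point $0$ of $g_1=\phi_H^1$, computed via the generating Hamiltonian $K$, vanishes, so $0=\mathcal A_{\phi_H^1}(0)=\int_0^1 K_t(0)\,dt=\int_0^1 c(t)\,dt$.

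\textbf{Step 2: killing the critical value.} Set $C(t):=\int_0^t c(s)\,ds$, so that $C(0)=C(1)=0$ and $C=O(\delta)$, fix $\beta'\in C_c^\infty(\C^m)$ with $\beta'\equiv1$ near $0$ and $\supp\beta'\subset B_{r/4}$, and let $\Xi_t$ be the time-one map of the autonomous Hamiltonian $C(t)\beta'$. Then $\Xi_0=\Xi_1=\mathrm{id}$, $\Xi_t$ fixes $0$, $\supp\Xi_t\subset B_{r/4}$, and the Hamiltonian $E'_t$ generating $t\mapsto\Xi_t$ satisfies $E'_t(0)=c(t)$. The isotopy $h_t:=\Xi_t^{-1}\circ g_t$ then again has time-one map $\phi_H^1$, again fixes the origin, and is generated by $\widehat H_t:=(K_t-E'_t)\circ\Xi_t$, which equals $c(t)-c(t)=0$ at the origin, equals $H_t$ on $B_r^{\mathrm{c}}$, and is still $O(\delta)$ in $C^2$. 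Thus $\widehat H_t(0)=0$, $d_w\widehat H_t(0)=0$ (because $h_t(0)\equiv0$), and $\|\widehat H\|_{C^2}=O(\delta)$, so $|\widehat H(t,z)|\le\tfrac12\|d_w^2\widehat H\|_{C^0}\,|z|^2\le\epsilon|z|^2$ once $\delta$ is small; this $\widehat H$ will be $H^1$.

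\textbf{Step 3: the family and the main difficulty.} For $\lambda\in[0,\tfrac12]$ replace $\Theta_t$ by the time-$2\lambda$ flow of $\beta\ell_t$, for $\lambda\in[\tfrac12,1]$ replace $\Xi_t$ by the time-$(2\lambda-1)$ flow of $C(t)\beta'$, and let $H^\lambda$ be the generating Hamiltonian of the corresponding conjugated isotopy; reparametrizing each half by a smooth function flat at its endpoints makes the two halves glue into a $C^\infty$ family, and each conjugating isotopy remains a loop at $\mathrm{id}$, so the $H^\lambda$ stay $1$-periodic in $t$. Then (i) holds because at $\lambda=0$ the conjugation is trivial; (ii) and (iv) hold because every modification is supported in $B_{r/2}$ and is $O(\delta)$ in $C^0$; (iii) holds because each conjugating isotopy is $\mathrm{id}$ at $t=0,1$ — thanks to $a_1=0$ on the first half and to $C(1)=\int_0^1c=\mathcal A_{\phi_H^1}(0)=0$ on the second; and (v) holds for $H^1=\widehat H$. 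The only substantial point is the implication ``same time-one map $\Rightarrow$ generator vanishing to second order at $0$'': making $0$ a fixed point of the whole conjugated flow kills the gradient for free, but killing the critical value forces the conjugating diffeomorphisms to form a \emph{loop}, and this is exactly what the vanishing of $\mathcal A_{\phi_H^1}(0)$ provides via $C(1)=0$. The remainder is bookkeeping — tracking generating Hamiltonians through compositions and conjugations and checking that each auxiliary object is $O(\delta)$ in $C^2$ so that the final Taylor estimate closes.
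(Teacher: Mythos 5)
Your proof is correct, and it takes a genuinely different route from the paper's. The paper works with \emph{generating functions}: it represents each $\phi_H^t$ by a compactly supported generating function $S_t^0$, notes that the fixed-point hypotheses give $S_1^0(0)=0$ (via \eqref{action-S}) and $\nabla S_1^0(0)=0$, interpolates the path of generating functions so that near the origin it becomes $\eta(t)S_1^0$, and transfers the resulting quadratic vanishing to the Hamiltonian via the Hamilton--Jacobi equation \eqref{HJ}. This forces a preliminary reduction to the case $H_t\equiv 0$ for $t$ near $0\in\T$ (needed so that the interpolated generating-function path stays $1$-periodic), which accounts for a substantial share of the paper's argument. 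You instead manipulate the Hamiltonian isotopy directly, conjugating by two loops of compactly supported Hamiltonian diffeomorphisms: the first (translations cut off inside $B_{r/2}$, driven by $a_t=\phi_H^t(0)$ and closing because $a_0=a_1=0$) pins the orbit of $0$ and kills $d_wK_t(0)$; the second (the flow of $\beta'$ for time $C(t)=\int_0^t c$) kills the critical value $c(t)=K_t(0)$, and closes up precisely because the hypothesis $\mathcal{A}_{\phi_H^1}(0)=0$ translates into $C(1)=\int_0^1 c\,dt=0$. Both proofs rely on the same nontrivial input --- that the fixed-point action is independent of the choice of compactly supported Hamiltonian path --- the paper through identity \eqref{action-S}, you through computing $\mathcal{A}_{\phi_H^1}(0)$ along the conjugated path $g_t$. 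Your version is arguably more elementary (no generating functions or Hamilton--Jacobi), avoids the paper's two-step reduction entirely, and the role of the hypothesis $\mathcal{A}_{\phi_H^1}(0)=0$ is structurally more transparent (it is exactly what makes the second conjugating isotopy a loop); the price is that the $O(\delta)$ bookkeeping for the auxiliary generators $E_t$, $c(t)$, $E'_t$ and the compositions has to be carried out by hand, whereas the generating-function formalism bundles those estimates into \eqref{eq:normal_form_abbondandolo_benedetti_estimate}-style bounds more mechanically. One small point worth making explicit when writing this up: $E'_t(0)=c(t)$ follows from $\Xi_t=\phi^{C(t)}_{\beta'}$, which gives $E'_t=c(t)\beta'$ with $\beta'(0)=1$; and the claim $\|\widehat H\|_{C^2}=O(\delta)$ should be spelled out using that $\Xi_t$ and $\Theta_t$ are $C^3$-close to the identity uniformly in $t$, so composition does not degrade the $C^2$-bound.
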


The proof of this proposition is given at the end of this section. Here, we show how Propositions \ref{lift} and \ref{modham} imply statement (i) of Theorem \ref{1stcap}.

Our aim is to prove that if $A\in \mathcal{A}$ is $C^2$-close enough to the unit ball $B$, then there exists a symplectomorphism $\psi: \C^n \rightarrow \C^n$ mapping $A$ into the cylinder whose systole coincides with the systole of $A$. Up to rescaling, we may assume that $\mathrm{sys}(A)=\pi$, so that $\psi$ is required to map $A$ into the standard cylinder $Z=\{z\in \C^n \mid |z_1|<1\}$. A closed characteristic $\gamma$ on $\partial A$ achieving $\mathrm{sys}(A)$ is $C^0$-close to some closed characteristic on $\partial B$. Thus, up to applying a unitary transformation of $\C^n$ and a small translation we may assume that $\gamma$ passes through the point $(1,0,\dots,0)\in \C^n$ and is $C^0$-close to the curve 
\[
\gamma_0: \T\rightarrow \C^n, \qquad t\mapsto (e^{2\pi i t},0,\dots,0).
\]
Let $\Phi: \Omega \rightarrow \C^*\times \C^{n-1}$ be the symplectomorphism introduced in \eqref{Phi}. Noting that $\Phi(0,t,0)=\gamma_0(t)$ and that the set
\begin{equation}
\label{U}
U\coloneqq \bigl(- {\textstyle \frac{\pi}{2}, \frac{\pi}{2}} \bigr) \times \T \times \widehat{B}_{\frac{2}{3}}
\end{equation}
is contained in $\Omega$, we consider the open neighborhood $V\coloneqq \Phi(U)$ of $\gamma_0$. Here, $\widehat{B}_r$ denotes the open ball of radius $r$ centered at the origin of $\C^{n-1}$. Being  close to $\gamma_0$, the curve $\gamma$ is contained in $V$. Moreover, the form of $\Phi$ implies the identities
\begin{equation}
\label{chart}
B \cap V= \Phi(U \cap \{s<0\}), \qquad Z \cap V = \Phi(U \cap \{ s< \pi |w|^2 \}).
\end{equation}
Since $A$ is $C^2$-close to $B$, the hypersurface $\partial A\cap V$ is the image via $\Phi$ of the graph of a $C^2$-small smooth function on $ \T \times \widehat{B}_{\frac{2}{3}}$. We can extend this function to a $C^2$-small smooth function $H$ on $\T\times \C^{n-1}$ having support in $\T \times \widehat{B}$ and satisfying \eqref{buonadef}, thus obtaining
\begin{equation}
\label{AcapV}
A \cap V = D(H) \cap V,
\end{equation}
where $D(H)$ denotes the domain which we introduced in the previous section. As it passes through $(1,0,\dots,0)=\Phi(0,0,0)$, the closed characteristic $\gamma_0$ on $\partial A\cap V = \partial D(H) \cap V$ corresponds via $\Phi$ to the fixed point $0$ of $\phi^1_H$ and
\[
\mathcal{A}_{\phi^1_H}(0) = \int_{\gamma} \lambda_0 - \pi = 0,
\]
as shown in Proposition \ref{lift} (ii). Assuming that $A$ is $C^2$-close enough to $B$ - and hence $H$ is $C^2$-small enough - we apply Proposition \ref{modham} with $m=n-1$ to get a smooth family of Hamiltonian functions $H^{\lambda} : \T \times \C^{n-1} \rightarrow \R$, $\lambda\in [0,1]$, such that:
\begin{enumerate}[(i)]
\item $H^0=H$;
\item $H^{\lambda}=H$ on $\T \times \widehat{B}^{\mathrm{c}}_{\frac{1}{3}}$ for every $\lambda\in [0,1]$;
\item $\phi_{H^{\lambda}}^1 = \phi_H^1$ for every $\lambda\in [0,1]$;
\item $\Vert H^{\lambda}\Vert_{C^0}< \frac{\pi}{2}$ for every $\lambda\in [0,1]$;
\item $H^1(t,w) \leq \pi |w|^2$ for every $(t,w)\in \T\times \C^{n-1}$.
\end{enumerate}
By properties (i) and (iii), Proposition \ref{lift} (iii) implies that there exists a symplectomorphism $\psi: \C^n \rightarrow \C^n$ such that $\psi(D(H))=D(H^1)$. Thanks to properties (ii), (iv) and Remark \ref{support}, we can assume that $\psi$ is supported in $V$ and hence \eqref{AcapV} implies
\[
\psi(A) = (A\setminus V) \cup (D(H^1)\cap V).
\]
Since the closure of $B\setminus V$ is contained in $Z$, $C^0$-closeness of $A$ to $B$ implies that $A\setminus V$ is contained in $Z$. By (v) and the second identity in \eqref{chart}, $D(H^1)\cap V$ is also contained in $Z$. We conclude that $\psi(A)$ is contained in $Z$, hence proving statement (i) in Theorem \ref{1stcap}.

\bigskip

There remains to prove Proposition \ref{modham}. The proof of this proposition makes use of some properties of generating functions which we now recall, referring to \cite{bp94} for more details. Every compactly supported symplectomorphism $\varphi: \C^m \rightarrow \C^m$ which is sufficiently $C^1$-close to the identity is represented by a unique $C^2$-small, compactly supported, smooth generating function $S:\C^m \rightarrow \R$ which is characterized by the equation
\begin{equation}
\label{genfun}
i(z-\varphi(z)) = \nabla S \bigl( {\textstyle \frac{z+\varphi(z)}{2} } \bigr) \qquad \forall z\in \C^m.
\end{equation}
Conversely, every $C^2$-small, compactly supported, smooth function $S$ defines by the above identity a compactly supported symplectomorphism $\varphi: \C^m \rightarrow \C^m$ which is $C^1$-close to the identity. If $\{\varphi_t\}_{t\in [0,1]}$ is a smooth isotopy starting at the identity and consisting of compactly supported symplectic diffeomorphisms of $\C^m$ which are $C^1$-close to the identity, then $\varphi_t = \phi_H^t$, 
where the smooth family of compactly supported Hamiltonians $\{H_t\}_{t\in [0,1]}$ is related to the smooth family $\{S_t\}_{t\in [0,1]}$ of compactly supported generating functions for $\{\varphi_t\}_{t\in [0,1]}$ by the Hamilton--Jacobi equation
\begin{equation}
\label{HJ}
\partial_t S_t (z) = H \bigl( t, z + {\textstyle \frac{i}{2}} \nabla S_t(z) \bigr)\qquad  \forall (t,z)\in [0,1]\times \C^m.
\end{equation}
Let $\varphi: \C^m \rightarrow \C^m$ be a compactly supported symplectomorphism which is $C^1$-close to the identity and denote by $S:\C^m \rightarrow \R$ the corresponding generating function. Let $z$ be a fixed point of $\varphi$. By \eqref{genfun}, $z$ is a critical point of $S$. The path of generating functions $\{tS\}_{t\in [0,1]}$ produces a compactly supported symplectic isotopy from the identity to $\varphi$ such that $z$ is a fixed point of all maps in this isotopy. If $H$ denotes the compactly supported Hamiltonian on $[0,1]\times \C^m$ associated with this isotopy, \eqref{HJ} implies that $H(t,z)=S(z)$ for every $t\in [0,1]$ and hence 
\begin{equation}
\label{action-S}
\mathcal{A}_{\varphi}(z) = S(z).
\end{equation}
After these preliminaries, we can prove Proposition \ref{modham}.

\begin{proof}[Proof of Proposition \ref{modham}] In this proof, whenever we say that some function is $C^k$-small, we understand ``provided that $H$ is $C^2$-small''.

We first prove this proposition under the additional assumption that $H_t$ is identically zero for every $t$ in a neighborhood of $0$ in $\T$. 

Since $H$ is $C^2$-small, each symplectomorphism $\phi_H^t$ is $C^1$-close to the identity and hence generated by a unique $C^2$-small generating function $S_t^0$. It follows from the Hamilton--Jacobi equation \eqref{HJ} that $S_t^0$ is also $C^2$-small when viewed as a function on $[0,1]\times \C^{m}$. 
By our additional assumption on $H$, $\phi_H^t=\mathrm{id}$ for $t$ close to $0$ and $\phi_H^t=\phi_H^1$ for $t$ close enough to $1$. Therefore, $S_t^0=0$ for $t$ close to $0$ and $S_t^0=S_1^0$ for $t$ close to 1. 

Fix a smooth monotonically increasing function $\eta: [0,1]\rightarrow [0,1]$ such that $\eta(t)=0$ for $t$ close to $0$ and $\eta(t)=1$ for $t$ close to $1$.
By modifying $S_t^0$, we can find a smooth family of functions $S_t^1$ such that:
\begin{enumerate}[(a)]
\item $S_t^1 = \eta(t) S_1^0$ inside the ball $B_{\frac{r}{3}}$, for every $t\in [0,1]$;
\item $S_t^1 = S_t^0$ outside the ball $B_{\frac{2}{3} r}$, for every $t\in [0,1]$;
\item $S_t^1=0$ for $t$ close to $0$ and $S_t^1 = S_1^0$ for $t$ close to $1$;
\item $S_t^1$ is $C^2$ small when viewed as a function on $[0,1]\times \C^m$.
\end{enumerate}
For every $\lambda\in [0,1]$, we define
\begin{equation*}
S_t^\lambda \coloneqq (1-\lambda) S_t^0 + \lambda S_t^1.
\end{equation*}
By (d), this function is $C^2$-small when viewed as a function on $[0,1]^2\times \C^m$. Moreover, $S_t^\lambda=0$ for $t$ sufficiently close to $0$ and $S_t^\lambda=S_1^0$ for $t$ sufficiently close to $1$. For every $\lambda\in [0,1]$, let $H^\lambda$ be the unique compactly supported Hamiltonian generating the Hamiltonian isotopy associated with the family of generating functions $\{S_t^\lambda\}_{t\in [0,1]}$. It follows from the Hamilton--Jacobi equation that $H^\lambda$ is $C^1$-small, and in particular we can ensure that (iv) holds. Clearly, $H^0 = H$ and by (b) $H^\lambda_t$ agrees with $H_t$ outside $B_r$, for every $t\in [0,1]$, so (i) and (ii) hold. By the second assertion in (c), we have $\phi_{H^\lambda}^1= \phi_H^1$ for all $\lambda\in [0,1]$, which proves (iii). Since $S_t^\lambda$ is constant in $t$ for $t$ close to $0$ and for $t$ close to $1$, the Hamiltonian $H^\lambda_t$ vanishes for $t$ near $0$ and $1$. Therefore, it descends to a $1$-periodic Hamiltonian. It remains to check (v). By (a), the Hamilton--Jacobi equation \eqref{HJ} implies that
\begin{equation}
\label{HJ2}
H^1 \bigl(t, z + {\textstyle \frac{i}{2}} \nabla S_t^1(z) \bigr) = \partial_t S_t^1(z) = \eta'(t) S_1^0(z)
\end{equation}
for every $z \in B_{\frac{r}{3}}$. Since $0$ is a fixed point of $\phi_H^1$ of zero action, the function $S_1^0$ and its derivative both vanish at $0$, see \eqref{action-S}. Therefore, we get
\[
|S_1^0(z)|\leq \tfrac12\Vert S_1^0\Vert_{C^2}|z|^2,\qquad \forall z\in \C^m.
\]
The fact that $S_1^0$ is $C^2$-small implies that the diffeomorphism
\[
\theta_t\colon \C^m\to\C^m, \quad\theta_t(z)\coloneqq z + {\textstyle \frac{i}{2}} \nabla S_t^1(z),\ \forall z\in \C^m,
\]
is $C^1$-close to the identity and fixes the origin. Thus, the same is true for the inverse diffeomorphism, and we get
\[
|\theta^{-1}_t(w)|\leq 2|w|,\qquad\forall w\in\C^{m}.
\]
Using \eqref{HJ2}, we obtain for all $(t,w)\in\T\times \C^m$
\[
|H^1(t,w)|=|\eta'(t)S_1^0(\theta^{-1}_t(w))|\leq \Vert\eta\Vert_{C^1}\tfrac12\Vert S_1^0\Vert_{C^2}|\theta^{-1}_t(w)|^2\leq \Vert\eta\Vert_{C^1}\tfrac12\Vert S_1^0\Vert_{C^2}4|w|^2,
\] 
which implies the bound (v) since $S_1^0$ is $C^2$-small.

\bigskip

We now show how the general case can be deduced from the special one considered above. Using again the function $\eta$ whose properties are described above, we introduce the Hamiltonian
\[
\widehat{H}(t,z) \coloneqq \eta'(t) H(\eta(t),z), \qquad \forall (t,z)\in [0,1]\times \C^m,
\]
which vanishes for $t$ close to $0$ and for $t$ close to $1$, and in particular can be seen as 1-periodic in time, is $C^2$-small and satisfies $\phi_{\widehat{H}}^1=\phi^1_H$. We claim that there exists a family of smooth Hamiltonians $K^\lambda: \T \times \C^m \rightarrow \R$, $\lambda\in [0,1]$, such that:
\begin{enumerate}[(a')]
\item $K^0 = H$;
\item $K^1=\widehat{H}$ on $\T \times B_{\frac{r}{2}}$;
\item $K^\lambda=H$ on $\T \times B_r^{\mathrm{c}}$ for every $\lambda\in [0,1]$;
\item $\phi_{K^\lambda}^1 = \phi_H^1$ for every $\lambda\in [0,1]$;
\item $\Vert K^\lambda\Vert_{C^0}<\epsilon$ for every $\lambda\in [0,1]$.
\end{enumerate}
Once this is proven, the conclusion of Proposition \ref{modham} in the general case follows. Indeed, by applying the above special case to the Hamiltonian $\widehat{H}$, we obtain a smooth family of Hamiltonians $\{\widehat{H}^{\lambda}\}_{\lambda\in [0,1]}$ on $\T\times \C^m$ which satisfies:
\begin{enumerate}[(i')]
\item $\widehat{H}^0=\widehat{H}$;
\item $\widehat{H}^{\lambda}=\widehat{H}$ on $\T \times B^{\mathrm{c}}_{\frac{r}{4}}$ for every $\lambda\in [0,1]$;
\item $\phi_{\widehat{H}^{\lambda}}^1 = \phi_{\widehat{H}}^1$ for every $\lambda\in [0,1]$;
\item $\Vert\widehat{H}^{\lambda}\Vert_{C^0} < \epsilon$ for every $\lambda\in [0,1]$;
\item $|\widehat{H}^1(t,z)| \leq \epsilon |z|^2$ for every $(t,z)\in \T\times \C^m$.
\end{enumerate}
For $\lambda\in [0,1]$, we define
\[
L^{\lambda} \coloneqq \left\{ \begin{array}{ll} \widehat{H}^{\lambda} & \mbox{on } \T \times B_{\frac{r}{2}}, \\[0.5ex] K^1 & \mbox{on } \T \times B_{\frac{r}{4}}^{\mathrm{c}}. \end{array} \right.
\]
This yields  a smooth family of Hamiltonians $\{L^{\lambda}\}_{\lambda\in [0,1]}$ on $\T\times \C^m$. One easily verifies that $K^1=L^0$ and that the 
smooth concatenation 
\[
H^{\lambda} \coloneqq \left\{ \begin{array}{ll} K^{\eta(2\lambda)} & \mbox{for } \lambda\in [0,\frac{1}{2}], \\ L^{\eta(2\lambda-1)} & \mbox{for } \lambda\in [\frac{1}{2},1] , \end{array} \right.
\]
satisfies the desired properties (i)-(v).

\bigskip

There remains to construct a family of smooth Hamiltonians $K^\lambda:\T\times \C^m \rightarrow \R$ satisfying (a')-(e'). For $\lambda\in [0,1]$, we define $\eta^\lambda(t) \coloneqq (1-\lambda)t + \lambda\eta(t)$. Note that $\eta^\lambda$ descends to a smooth map from $\mathbb{T}$ to itself. Therefore, we may define a smooth family of Hamiltonians $\widetilde{H}^\lambda:\T\times \C^m\rightarrow \R$, $\lambda\in [0,1]$, by the formula
\begin{equation*}
\widetilde{H}^\lambda(t,z) \coloneqq (\eta^\lambda)'(t) H(\eta^\lambda(t),z).
\end{equation*}
Note that $\widetilde{H}^0=H$ and $\widetilde{H}^1=\widehat{H}$ and that $\phi_{\widetilde{H}^\lambda}^1 = \phi_H^1$ for all $\lambda$. Choose a family of smooth functions $F^\lambda:\T \times \C^m\rightarrow \R$, $\lambda\in [0,1]$, such that
\begin{equation}
\label{FF}
F^\lambda= \widetilde{H}^\lambda \quad \mbox{on } \T \times B_{\frac{7}{10}r}, \qquad F^\lambda=H \quad \mbox{on } \T \times B_{\frac{8}{10}r}^{\mathrm{c}}.
\end{equation}
We can choose $F^0$ to be equal to $H$ everywhere because $\widetilde{H}^0 = H$. Since $H$ and $\widetilde{H}^\lambda$ are $C^2$-small, $F^\lambda$ can be made $C^2$-small as well. Therefore, the compactly supported symplectic diffeomorphism
\[
\psi^\lambda\coloneqq (\phi_{F^\lambda}^1)^{-1} \circ \phi_H^1
\]
is $C^1$-close to the identity and hence has a $C^2$-small compactly supported generating function $S^\lambda:\C^m \rightarrow \R$. By choosing $H$ to be $C^2$-small enough, we can ensure that
\begin{equation}
\label{piccoli}
\|X_H\|_{C^0} < {\textstyle \frac{r}{10}}, \qquad \|X_{F^\lambda}\|_{C^0} < {\textstyle \frac{r}{10}},
\end{equation}
and, together with \eqref{FF} and the fact that the time-$1$-map of the Hamiltonian $\widetilde{H}^\lambda$ is equal to $\phi^1_H$, we deduce that $\psi^\lambda$ is the identity on $B_{\frac{6}{10}r}$ and on $B_{\frac{9}{10}r}^{\mathrm{c}}$. The identity \eqref{genfun} guarantees that $\nabla S^\lambda$ vanishes on these two sets and hence
\begin{equation}
\label{valS}
S^\lambda = c^\lambda \quad \mbox{on } B_{\frac{6}{10}r}, \qquad  S^\lambda = 0 \quad \mbox{on } B_{\frac{9}{10}r}^{\mathrm{c}},
\end{equation}
for a suitable constant $c^\lambda\in \R$ which we will later show to be zero. Now let $\psi^\lambda_t$ be the symplectomorphism which is generated by $\eta(t) S^\lambda$. Then $\psi^\lambda_t$ depends smoothly on $t$, $\psi_t^\lambda=\mathrm{id}$ for $t$ close to $0$ and $\psi^\lambda_t=\psi^\lambda$ for $t$ close to 1.

Let $G^\lambda: [0,1]\times \C^m \rightarrow \R$ be the compactly supported Hamiltonian associated with the symplectic isotopy $\{\psi^\lambda_t\}_{t\in [0,1]}$. Then $G^\lambda_t=0$ for $t$ close to either $0$ or $1$, and hence we can see $G^\lambda$ as a smooth function on $\T\times \C^m$. The Hamilton--Jacobi equation \eqref{HJ} reads
\[
\eta'(t) S^\lambda(z) = G^\lambda_t \bigl( z + {\textstyle \frac{i}{2}} \eta(t) \nabla S^\lambda(z) \bigr).
\]
It implies that $G^\lambda$ is $C^1$-small. Moreover, thanks to \eqref{valS} it implies:
\begin{eqnarray}
\label{Gin}
&G^\lambda(t,z) = c^\lambda \, \eta'(t) \qquad \forall (t,z)\in \T \times B_{\frac{6}{10}r}, \\
\label{Gout}
&G^\lambda(t,z) = 0  \qquad \forall (t,z)\in \T \times B_{\frac{9}{10}r}^{\mathrm{c}}.
\end{eqnarray}
We set
\[
K^\lambda(t,z) \coloneqq (F^\lambda\# G^\lambda)(t,z) = F^\lambda(t,z) + G^\lambda \bigl( t, (\phi_{F^\lambda}^t)^{-1}(z) \bigr).
\]
This function is smooth on $\T\times \C^m$, compactly supported and satisfies
\[
\phi_{K^\lambda}^1 = \phi_{F^\lambda}^1 \circ \phi_{G^\lambda}^1 = \phi_{F^\lambda}^1 \circ \psi^\lambda = \phi_H^1,
\]
proving (d'). By \eqref{FF}, \eqref{piccoli} and \eqref{Gout}, $K^\lambda$ satisfies (c'). By \eqref{FF}, \eqref{piccoli} and \eqref{Gin}, we obtain
\[
K^\lambda(t,z) = \widetilde{H}^\lambda (t,z) + c^\lambda\, \eta'(t) \qquad \forall (t,z)\in \T \times B_{\frac{r}{2}}.
\]
The Hamiltonians $K^\lambda$ and $\widetilde{H}^\lambda$ induce the same time-one map, so the independence of the action of the fixed point $0$ from the choice of the Hamiltonian implies that the number $c^\lambda$ in the above identity is zero, as claimed above. This shows that $K^1$ satisfies (b'). Property (e') follows from the fact that $F^\lambda$ and $G^\lambda$ are both $C^0$-small. Since $F^0=H$, we have $G^0=0$ and thus $K^0=H$, showing (a'). This concludes the proof of Proposition \ref{modham}.
\end{proof}

\section{Averaging and a preliminary normal form}
\label{norforsec}

We denote by $\Omega^k(M)$, $k\geq 0$, the space of smooth $k$-forms on the manifold $M$. We assume that $M$ is closed, odd dimensional and endowed with a Zoll contact form $\alpha_0\in \Omega^1(M)$. We denote by $R_{\alpha_0}$ the Reeb vector field of $\alpha_0$ and by $\theta_t$ its flow. By the Zoll property, the flow $\theta_t$ defines a free, smooth $S^1$-action on $M$. Here, we identify $S^1$ with $\R/T_0 \Z$, where $T_0=\mathrm{sys}(\alpha_0)$ is the minimal period of the Reeb orbits of $\alpha_0$. Whenever we talk about $S^1$-invariant or $S^1$-equivariant objects on $M$, we refer to this $S^1$-action.

On the space $\Omega^k(M)$ we have the averaging operator mapping each $\beta\in \Omega^k(M)$ to the $S^1$-invariant $k$-form
\[
\overline{\beta} \coloneqq \frac{1}{T_0} \int_0^{T_0} \theta_t^* \beta\, dt.
\]
Averaging commutes with differentiation:
\[
\overline{d\beta} = d \overline{\beta} \qquad \forall \beta\in \Omega^k(M).
\]
Moreover, if $Y$ is an $S^1$-invariant vector field on $M$ (for instance $Y=R_{\alpha_0}$), we have
\[
\overline{\imath_Y \beta} = \imath_Y \overline{\beta}  \qquad \forall \beta\in \Omega^k(M).
\]
We shall also need to average linear endomorphisms $F: T^*M \rightarrow T^*M$ lifting the identity. The averaged endomorphism $\overline{F}: T^*M \rightarrow T^*M$ is defined by
\[
\overline{F} \coloneqq \frac{1}{T_0} \int_0^{T_0} \theta_t^* F\, dt,
\]
where the pull-back of $F$ by a diffeomorphism $\theta: M \rightarrow M$ is given by
\[
(\theta^* F)(x)[p] \coloneqq d\theta(x)^*\Big( F(\theta(x)) [ p \circ d\theta(x)^{-1} ]\Big), \qquad \forall x\in M, \; \forall p\in T_x^* M.
\]
If $F$ is as above and $\beta\in \Omega^1(M)$ is $S^1$-invariant, then the 1-form $F[\beta]$ satisfies
\[
\overline{F[\beta]} = \overline{F}[\beta].
\]
We fix an arbitrary $S^1$-invariant Riemannian metric on $M$. The $C^k$-norms of tensors on $M$ and the covariant derivatives we shall occasionally use in our proofs are induced by this metric. 

Our proof of Theorem \ref{qi} relies on the following normal form which is proven in \cite[Theorem 2]{ab23}.

\begin{thm}
\label{theorem:normal_form_abbondandolo_benedetti}
Let $\alpha_0$ be a Zoll contact form on a closed manifold $M$ with orbits having minimal period $T_0$. There is $\delta_0>0$ such that if $\alpha$ is a contact form on $M$ satisfying $\|\alpha-\alpha_0\|_{C^2}<\delta_0$, then there exists a diffeomorphism $u:M\rightarrow M$ such that
\begin{equation}
\label{norfor}
u^*\alpha = S\alpha_0 + \eta + df,
\end{equation}
where:
\begin{enumerate}[(i)]
\item $S$ is a smooth positive function on $M$ that is invariant under the Reeb flow of $\alpha_0$;
\item $f$ is a smooth function on $M$ with average zero along each orbit of $R_{\alpha_0}$;
\item $\eta$ is a smooth one-form on $M$ satisfying $\iota_{R_{\alpha_0}}\eta=0$;
\item $\iota_{R_{\alpha_0}}d\eta = F[dS]$ for a smooth endomorphism $F:T^*M\rightarrow T^*M$ lifting the identity;
\item $\iota_{R_{\alpha_0}}df=\iota_ZdS$ for a smooth vector field $Z$ on $M$ taking values in the contact distribution $\operatorname{ker}\alpha_0$ and having average zero along each orbit of $R_{\alpha_0}$;
\item $dS=-B[V]$, where $V$ is a smooth $S^1$-invariant section of $\ker \alpha_0$ and $B: \ker \alpha_0 \rightarrow (\ker \alpha_0)^*$ is a smooth isomorphism lifting the identity.
\end{enumerate}
Moreover, for every integer $k\geq 0$, there is a modulus of continuity $\omega_k$ such that
\begin{equation}
\label{eq:normal_form_abbondandolo_benedetti_estimate}
\begin{split}
\max \bigl\{ \operatorname{dist}_{C^{k+1}}(u,\operatorname{id}), \|S-1\|_{C^{k+1}}, \|f\|_{C^{k+1}}, \|\eta\|_{C^k}, \|d\eta\|_{C^k}, \|F\|_{C^k}, \|Z\|_{C^k}, \\   \|V\|_{C^{k+1}}, \|B-B_0\|_{C^k}
\bigr\} \leq \omega_k(\|\alpha-\alpha_0\|_{C^{k+2}}),
\end{split}
\end{equation}
where $B_0 : \ker \alpha_0 \rightarrow (\ker \alpha_0)^*$ is the isomorphism given by the non-degenerate bilinear form $d\alpha_0|_{\ker \alpha_0 \times \ker \alpha_0}$. Finally, the map $\alpha\mapsto (u,S,\eta,f)$ is smooth and maps $\alpha_0$ to $(\mathrm{id},1,0,0)$.
\end{thm}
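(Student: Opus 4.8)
The plan is to produce the diffeomorphism $u$ and the data $(S,\eta,f)$ simultaneously, by combining the averaging operator attached to the free $S^1$-action generated by $R_{\alpha_0}$ with a Moser-type deformation argument based at $(\mathrm{id},1,0,0)$. Since the contact condition is $C^2$-open, for $\delta_0$ small $\alpha$ is contact and all the relevant structures — the Reeb field $R_\alpha$ and its flow, the splitting $TM=\R R_{\alpha_0}\oplus\ker\alpha_0$, the averaging operators on forms and on bundle endomorphisms $F\colon T^*M\to T^*M$ — are well defined and depend smoothly on $\alpha$; the asserted smoothness of $\alpha\mapsto(u,S,\eta,f)$ will come out of running everything in a parametrized (diffeological) way.

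First I would pass to the $S^1$-average $\overline\alpha=\frac1{T_0}\int_0^{T_0}\theta_t^*\alpha\,dt$: it is $S^1$-invariant, contact, and $C^{k+2}$-close to $\alpha_0=\overline{\alpha_0}$ with a modulus of continuity controlled by that of $\alpha$. Writing $\overline\alpha=S\alpha_0+\eta$ with $S:=\iota_{R_{\alpha_0}}\overline\alpha>0$ (automatically $S^1$-invariant) and $\eta:=\overline\alpha-S\alpha_0$ ($S^1$-invariant with $\iota_{R_{\alpha_0}}\eta=0$), one checks from $\mathcal L_{R_{\alpha_0}}\overline\alpha=0$ that $\iota_{R_{\alpha_0}}d\eta=0$, so (i), (iii), and (iv) (with $F=0$) hold for this invariant part; moreover $dS$ is $S^1$-invariant and $R_{\alpha_0}$-horizontal, hence $dS=-B_0[V]$ with $V:=-B_0^{-1}[dS]$, which gives (vi). The remainder $\gamma:=\alpha-\overline\alpha$ has vanishing $S^1$-average. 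I would then solve the cohomological equation $R_{\alpha_0}f=\iota_{R_{\alpha_0}}\gamma$ along the circle fibres of $M\to M/S^1$ — possible because the right-hand side has zero fibrewise average — taking the distinguished solution with $\overline f=0$, which depends linearly, smoothly and tamely on $\gamma$; subtracting $df$ then leaves $\alpha=S\alpha_0+(\eta+\eta'')+df$ with $\eta''$ horizontal of zero $S^1$-average. This already has the shape \eqref{norfor} with (i)--(iii) and (v) in force (by construction $R_{\alpha_0}f=\iota_{R_{\alpha_0}}\gamma$), but (iv) generally fails, since $\iota_{R_{\alpha_0}}d\eta''$ need not be subordinate to $dS$.

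The second, and main, step is a deformation argument turning this approximate normal form into the genuine one. Along a path $\alpha_s$ of contact forms from $S\alpha_0+\eta+df$ at $s=0$ to $\alpha$ at $s=1$ — say $\alpha_s=S\alpha_0+\eta+df+s\,\eta''$ — I would look for diffeomorphisms $u_s$, $u_0=\mathrm{id}$, with $u_s^*\alpha_s$ independent of $s$; this is Moser's equation $\iota_{X_s}d\alpha_s+d(\iota_{X_s}\alpha_s)=-\eta''$ for the generating vector field $X_s$, solvable for $X_s$ once $\iota_{X_s}\alpha_s$ is prescribed, because $d\alpha_s$ is non-degenerate on $\ker\alpha_s$. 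The delicate point — and what I expect to be the hard part — is that the normalization of $X_s$ must be chosen so that $u:=u_1$ produces a form satisfying (iv) and (v) \emph{with the stated structure}: the non-$S^1$-invariance of $u^*\alpha$, measured by $\iota_{R_{\alpha_0}}d(\text{horizontal part})$ and by $R_{\alpha_0}(\text{function part})$, must factor as a bundle endomorphism applied to $dS$, resp. as $\iota_Z\,dS$ with $Z$ a zero-average section of $\ker\alpha_0$. The reason one can expect this is that for a genuinely $S^1$-invariant contact form both quantities vanish identically (as just seen for $\overline\alpha$), so in general they are governed by the drift of the Reeb flow of $\alpha$ relative to that of $\alpha_0$, a drift which — once the averaging step has removed all $S^1$-invariant content beyond $S\alpha_0+\eta$ — is itself controlled by $dS$. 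Converting this heuristic into the precise factorizations $\iota_{R_{\alpha_0}}d\eta=F[dS]$, $R_{\alpha_0}f=\iota_Z\,dS$, $dS=-B[V]$, and simultaneously tracking all norms through the averaging, the cohomological solution operator and Moser's ODE so as to obtain \eqref{eq:normal_form_abbondandolo_benedetti_estimate} (with the controlled loss of at most two derivatives coming from the exterior derivatives appearing in those operations), is the core of the argument, and it forces $X_s$ to be built so as to preserve the $R_{\alpha_0}$-direction up to $dS$-controlled terms rather than via a generic normalization. Finally, since averaging, the cohomological solution operator, the integration of Moser's ODE and the extraction of $(F,Z,V,B)$ are all smooth in the diffeological sense on the $C^2$-ball around $\alpha_0$, the map $\alpha\mapsto(u,S,\eta,f)$ is smooth and sends $\alpha_0$ to $(\mathrm{id},1,0,0)$.
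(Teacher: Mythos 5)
Your route differs from the paper's. The theorem is not proved in this paper: it is cited from \cite[Theorem 2]{ab23}, and the paper recalls (see equation \eqref{after-bottkol}) that the diffeomorphism $u$ there is constructed via a Bottkol-type conjugation of the Reeb \emph{vector field}, namely $h\, u^*R_\alpha = R_{\alpha_0} - Q[V]$ with $h$, $V$ $S^1$-invariant and $V$ horizontal; the structural conditions (iv)--(vi), which factor the non-invariant content of $u^*\alpha$ through $dS$, are read off from this conjugation. You instead try to build $u$ directly on the one-form, via averaging, a cohomological equation, and Moser's homotopy, without normalizing the Reeb vector field first.

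This does not close, and you yourself flag the missing piece as ``the hard part'' without filling it. Concretely: if your Moser path from $S\alpha_0 + \eta + df$ to $\alpha$ succeeded as written, you would obtain $u^*\alpha = S\alpha_0 + \eta + df$ with $\eta$ $S^1$-invariant; then (iv) holds trivially with $F = 0$, and (v) becomes $R_{\alpha_0}f = \iota_Z\, dS$. But your $f$ was constructed to solve $R_{\alpha_0}f = \iota_{R_{\alpha_0}}(\alpha - \overline{\alpha})$, and the zero-average part $\alpha - \overline{\alpha}$ has no reason to factor through $dS$: it carries the full non-invariant content of $\alpha$, which is a priori independent of the derivative of the averaged function $S$. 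Trying to repair this by ``choosing the normalization of $X_s$'' differently is self-undermining; Moser's calculus then gives $u_s^*\alpha_s = e^{\rho_s}(S\alpha_0 + \eta + df)$ for some conformal factor $\rho_s$ that is not $S^1$-invariant, so the resulting form is no longer of the shape \eqref{norfor} with an $S^1$-invariant $S$, and the same problem recurs after re-decomposition. There is also a technical obstruction you gloss over: contracting Moser's equation $\mathcal{L}_{X_s}\alpha_s = -\eta''$ with $R_{\alpha_s}$ forces $R_{\alpha_s}(\iota_{X_s}\alpha_s) = -\iota_{R_{\alpha_s}}\eta''$, a cohomological equation over the \emph{non-Zoll} field $R_{\alpha_s}$, whose smooth solvability does not follow from the zero $R_{\alpha_0}$-average of $\eta''$. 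The heuristic that the non-$S^1$-invariance of $u^*\alpha$ ``is itself controlled by $dS$'' is exactly the content of (iv)--(vi), i.e.\ what needs to be proved; the averaging-plus-Moser scheme does not deliver it without the Reeb vector field conjugation on which the cited proof rests.
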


Here, by modulus of continuity we mean a monotonically increasing continuous function $\omega:[0,+\infty) \rightarrow [0,+\infty)$ such that $\omega(0)=0$. 

\begin{rem}
\label{deduced}
{\rm Actually, statement (vi) and the corresponding bounds for $V$ and $B$ in \eqref{eq:normal_form_abbondandolo_benedetti_estimate} are not present in the formulation of \cite[Theorem 2]{ab23}, but they are easily recoverable from its proof (see in particular equation (2.25) together with (2.4) and (2.24) therein).

The smoothness which is mentioned at the end of the theorem is not explicitly stated in \cite[Theorem 2]{ab23} either, but can be deduced from its proof. This amounts to checking that in \cite[Theorem B.1]{ab23} the vector fields $U$, $V$ and the function $h$ depend smoothly on the vector field $X$, and take the values $U=0$, $V=0$, $h=1$ when $X=X_0$. The triplet $(U,V,h)$ is determined by $X$ by means of a functional equation of the form $\Phi_X(U,V,h)=0$, where the dependence of $\Phi$ on $X$ is affine and $\Phi_0(0,0,1)=0$. A standard argument involving the parametric inverse mapping theorem and \cite[Lemma B.4]{ab23} gives us the required smooth dependence on $X$ of the solution $(U,V,h)$ of this equation.}
\end{rem}
We will also need the following additional fact. 

\begin{prop}
\label{proposition:normal_form_abbondandolo_benedetti_short_orbits}
In the setting of Theorem \ref{theorem:normal_form_abbondandolo_benedetti}, the following additional properties hold: for every $\epsilon>0$ there is a positive number $\delta\leq \delta_0$ such that if $\|\alpha-\alpha_0\|_{C^2}<\delta$ then:
\begin{enumerate}[(i)]
\setcounter{enumi}{6}
\item any closed orbit  of $R_{\alpha}$ is either long, meaning that its minimal period is larger than $\frac{1}{\epsilon}$, or short, meaning that its minimal period is contained in the interval $(T_0 - \epsilon, T_0 + \epsilon)$;  
\item the short closed orbits of $R_{\alpha}$ are precisely the images via $u$ of those orbits $\gamma$ of the free $S^1$-action $\theta_t$ consisting of critical points of $S$, and the minimal period of such an orbit is $T_0 S(\gamma)$.
\end{enumerate}
\end{prop}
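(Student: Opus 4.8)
The plan is to work with the normal form $\tilde{\alpha} \coloneqq u^{*}\alpha = S\alpha_0 + \eta + df$ supplied by Theorem \ref{theorem:normal_form_abbondandolo_benedetti}. Since $u$ conjugates $R_{\tilde{\alpha}}$ to $R_{\alpha}$, carrying closed orbits to closed orbits with the same minimal periods, it suffices to establish (vii)--(viii) for $\tilde{\alpha}$, reading ``image via $u$'' as equality. Using $d\tilde{\alpha} = dS\wedge\alpha_0 + S\,d\alpha_0 + d\eta$ together with properties (i), (iii), (iv) one computes
\[
\iota_{R_{\alpha_0}} d\tilde{\alpha} = (F - \mathrm{id})[dS], \qquad g \coloneqq \tilde{\alpha}(R_{\alpha_0}) = S + R_{\alpha_0}(f),
\]
where, by the estimates \eqref{eq:normal_form_abbondandolo_benedetti_estimate}, $g$ is uniformly $C^0$-close to $1$ and $(F - \mathrm{id})[dS]$ is uniformly $C^0$-small. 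I would extract two consequences. First, at any point of an $R_{\alpha_0}$-orbit $\gamma$ where $dS$ vanishes, $R_{\alpha_0}$ lies in the one-dimensional kernel of $d\tilde{\alpha}$, hence is a multiple of $R_{\tilde{\alpha}}$; so $\gamma$ is itself a closed orbit of $R_{\tilde{\alpha}}$, and since $\dot{\gamma}\parallel R_{\alpha_0}$ and $\iota_{R_{\alpha_0}}\eta = 0$ its minimal period is $\int_{\gamma}\tilde{\alpha} = S(\gamma)\int_{\gamma}\alpha_0 = T_0\,S(\gamma)$. Second, as $d\tilde{\alpha}$ is uniformly non-degenerate on $\ker\tilde{\alpha}$, the displayed identities force $R_{\tilde{\alpha}} = g^{-1}R_{\alpha_0} + N$ with $N$ uniformly $C^0$-small, where the model field $g^{-1}R_{\alpha_0}$ is a positive reparametrization of $R_{\alpha_0}$ whose orbits are the $R_{\alpha_0}$-orbits, each closed with period $\int_0^{T_0} g\circ\theta_s\,ds = T_0\,\overline{g} = T_0\,S$ (using (i)--(ii) and that averaging commutes with $\iota_{R_{\alpha_0}}$ and $d$).

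The first consequence, together with $\|S - 1\|_{C^0}\to 0$ as $\|\alpha - \alpha_0\|_{C^2}\to 0$, already gives the inclusion ``$\supseteq$'' in (viii): every critical $R_{\alpha_0}$-orbit of $S$ is a closed $R_{\tilde{\alpha}}$-orbit of minimal period $T_0\,S(\gamma)\in(T_0 - \epsilon, T_0 + \epsilon)$. For the rest I would first record, by a discretization argument using that $\alpha_0$ is Zoll together with a flow-box argument ruling out arbitrarily short closed orbits, that every closed $R_{\tilde{\alpha}}$-orbit of minimal period $\le 1/\epsilon$ has minimal period within $\epsilon$ of $kT_0$ for some integer $k\ge 1$ and, by continuous dependence of flows on the vector field and the second consequence, stays over one period inside an arbitrarily thin tube around a single $R_{\alpha_0}$-orbit $\gamma_0$, winding $k$ times. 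It then remains to show that $k = 1$ and that such a once-winding orbit coincides with $\gamma_0$ and lies in the critical set of $S$.

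To this end I would pass to a local contact chart $U\cong(\R/T_0\Z)\times D$ for $\alpha_0$ around $\gamma_0$, in which $\alpha_0 = dt + \lambda_D$, and consider the first-return map $\mathcal{P}$ of the Reeb flow of $\tilde{\alpha}$ to $\Sigma\coloneqq\{0\}\times D$: it is a smooth symplectomorphism of $(D, d\tilde{\alpha}|_{\Sigma})$, $C^0$-close to $\mathrm{id}$, with a smooth return-time function $\tau$ (close to $T_0 S$) satisfying the exact-symplectic identity $\mathcal{P}^{*}(\tilde{\alpha}|_{\Sigma}) - \tilde{\alpha}|_{\Sigma} = d\tau$. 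Closed $R_{\tilde{\alpha}}$-orbits meeting $U$ correspond to periodic points of $\mathcal{P}$, once-winding ones to fixed points. Properties (iv)--(vi) of the normal form---in particular $dS = -B[V]$ with $B$ an isomorphism and $V$ an $S^1$-invariant section of $\ker\alpha_0$, and $\iota_{R_{\alpha_0}}df = \iota_Z dS$---are precisely what one expects to make the Lyapunov--Schmidt-type reduction of the fixed-point equation for $\mathcal{P}$ collapse \emph{exactly} to the equation $dS = 0$ on $\Sigma$. This would identify $\mathrm{Fix}(\mathcal{P})$ with $\mathrm{Crit}(S|_{\Sigma})$, pin the return time there to $T_0 S$, show the orbit is $S^1$-invariant (hence equals $\gamma_0$), and, applied to the iterates $\mathcal{P}^k$, rule out periodic points of period $k\ge 2$ and thus closed orbits of minimal period near $kT_0$.

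The main obstacle is precisely this last reduction. Because \eqref{eq:normal_form_abbondandolo_benedetti_estimate} loses two derivatives, from $\|\alpha - \alpha_0\|_{C^2} < \delta$ one controls $\eta$ and $F$ only in $C^0$, so $R_{\tilde{\alpha}}$ is only $C^0$-close to the model and $\mathcal{P}$ only $C^0$-close to the identity---and a merely $C^0$-small symplectomorphism may well have periodic points of period $\ge 2$ (it could, say, interchange two nearby points). Soft perturbation theory is therefore inadequate, and one must genuinely exploit that $\mathcal{P}$ is an \emph{exact} symplectomorphism whose generating datum $\tau$ is rigidly tied to $S$ through properties (iv)--(vi), for instance by showing directly that any periodic point of $\mathcal{P}$ inside the tube forces $dS$ to vanish there. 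Making this rigidity effective with only the available $C^0$ bounds is where the real work lies.
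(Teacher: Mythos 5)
Your forward inclusion (that an $R_{\alpha_0}$-orbit in the critical set of $S$ is a closed orbit of $u^*\alpha$ with minimal period $T_0 S(\gamma)$), the computation $\iota_{R_{\alpha_0}}d(u^*\alpha) = -dS + F[dS]$, and the reading of (vii) as a soft $C^1$-perturbation statement all agree with the paper (which cites Bangert for (vii)). Your diagnosis of the hard direction is also correct: since the bound \eqref{eq:normal_form_abbondandolo_benedetti_estimate} loses two derivatives, $\|\alpha-\alpha_0\|_{C^2}<\delta$ yields only $C^0$ control on $\eta$, $d\eta$, $F$, hence only $C^0$-closeness of $R_{u^*\alpha}$ to $R_{\alpha_0}$ and a Poincar\'e return map only $C^0$-close to the identity. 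But the route you then sketch --- a Lyapunov--Schmidt reduction of this $C^0$-small exact symplectomorphism via generating functions, hoping that (iv)--(vi) collapse the reduced fixed-point equation to $dS=0$ --- is left as an aspiration rather than a proof, and you explicitly concede that making it effective with $C^0$ bounds is the real work. As written this is a genuine gap: you have not produced a mechanism that rules out spurious periodic points.

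The paper closes the gap with a tool you do not invoke. By the Bottkol-type result \cite[Theorem 2.1]{ab23}, the diffeomorphism $u$ satisfies $h\, u^* R_{\alpha} = R_{\alpha_0} - Q[V]$, where $h$ is $S^1$-invariant and $C^1$-close to $1$, $V$ is an $S^1$-invariant vector field with zero set exactly $\mathrm{Crit}(S)$ and $\|V\|_{C^1}\leq\omega(\|\alpha-\alpha_0\|_{C^2})$, and $Q$ is an endomorphism of $TM$ with $\|Q-\mathrm{id}\|_{C^0}\leq\omega(\|\alpha-\alpha_0\|_{C^2})$. The decisive structural feature, absent from your decomposition $R_{\tilde\alpha}=g^{-1}R_{\alpha_0}+N$ with $N$ merely $C^0$-small, is that the perturbation $Q[V]$ is a $C^0$-bounded operator applied to a vector field one controls in $C^1$ and whose zeros one knows. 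After a local $S^1$-bundle trivialization, short closed orbits of $u^*R_\alpha$ over a base ball become $T_0$-periodic orbits of $W_t(x)=(\operatorname{id}+A_t(x))V(x)$ with $\|V\|_{C^1}$ and $\|A\|_{C^0}$ small, and Lemma \ref{non-autonomous} settles this by an elementary monotonicity argument: if $V(p)\neq 0$, then $t\mapsto\langle \gamma(t),V(p)\rangle$ is strictly increasing along the $W_t$-trajectory from $p$, so $p$ is not a fixed point of the time-one flow. No symplectic structure, return map, or generating function is needed once the multiplicative factorization $Q[V]$ is available; that factorization is the idea missing from your proposal.
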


The above proposition sharpens \cite[Proposition 1 and Remark 3.1]{ab23}, in which the fact that all short closed orbits of $R_{\alpha}$ are given by the critical points of $S$ is proven assuming that $\alpha$ is $C^3$-close to $\alpha_0$. The  proof of Proposition \ref{proposition:normal_form_abbondandolo_benedetti_short_orbits} uses the following lemma.

\begin{lem}
\label{non-autonomous}
There exists $\nu>0$ such that the following is true. Let $V$ be an autonomous smooth vector field on $\R^k$ with $\|V\|_{C^1} < \nu$ and $A: [0,1]\times \R^k \rightarrow \mathrm{Hom}(\R^k,\R^k)$ a smooth map into the space of linear endomorphisms of $\R^k$ with $\|A\|_{C^0} < \nu$. Let us define the time-dependent vector field $W_t\coloneqq (\operatorname{id}+A_t)V$ and let $(\chi_t)_{t\in [0,1]}$ be its flow. Then the fixed points of $\chi_1$ are precisely the zeros of the vector field $V$. 
\end{lem}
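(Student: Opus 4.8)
\emph{Proof plan.} I would prove the two inclusions separately, the trivial one first. If $V(x)=0$, then $W_t(x)=(\operatorname{id}+A_t(x))V(x)=0$ for every $t$, so the constant curve $t\mapsto x$ is the solution of $\dot\gamma=W_t(\gamma)$ through $x$; by uniqueness $\chi_t(x)=x$ for all $t$, and in particular $\chi_1(x)=x$. Hence every zero of $V$ is a fixed point of $\chi_1$, with no smallness of $\nu$ needed.

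The substantive inclusion is the converse. Assume $\chi_1(x)=x$ and set $\gamma(t)\coloneqq\chi_t(x)$, a loop with $\gamma(0)=\gamma(1)=x$ satisfying $\dot\gamma(t)=(\operatorname{id}+A_t(\gamma(t)))V(\gamma(t))$; write $r_0\coloneqq|V(x)|$. The first step is a Gronwall estimate confining $|V|$ to a band around $r_0$ along the loop: differentiating $|V(\gamma(t))|^2$ and using $\|DV\|_{C^0}\le\|V\|_{C^1}<\nu$ together with the bound $|\dot\gamma(t)|\le(1+\nu)|V(\gamma(t))|$ coming from $\|A\|_{C^0}<\nu$, one gets $\bigl|\tfrac{d}{dt}|V(\gamma(t))|^2\bigr|\le 2\nu(1+\nu)|V(\gamma(t))|^2$, hence $|V(\gamma(t))|\le e^{\nu(1+\nu)}r_0\le 2r_0$ on $[0,1]$ once $\nu$ is small. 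Consequently $\int_0^1|\dot\gamma|\le 3r_0$, so $\gamma$ stays in the ball of radius $3r_0$ about $x$, and therefore $|V(\gamma(t))-V(x)|\le\|DV\|_{C^0}\cdot 3r_0\le 3\nu r_0$ for all $t$.

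Feeding this back into the ODE I would write $\dot\gamma(t)=V(x)+R(t)$ with $R(t)=\bigl(V(\gamma(t))-V(x)\bigr)+A_t(\gamma(t))V(\gamma(t))$, so that $|R(t)|\le 3\nu r_0+\nu\cdot 2r_0=5\nu r_0$. Integrating around the loop and using $\gamma(1)=\gamma(0)$ gives $0=V(x)+\int_0^1R(t)\,dt$, whence $r_0=|V(x)|\le\int_0^1|R(t)|\,dt\le 5\nu r_0$; choosing $\nu<\tfrac15$ (e.g.\ $\nu=\tfrac1{10}$) forces $r_0=0$, contradicting $V(x)\neq0$. The point to watch is that only $\|A\|_{C^0}$ is controlled, which rules out the classical Yorke-type period bound obtained from a second-derivative (Fourier) estimate of $\ddot\gamma$ — indeed $\ddot\gamma$ involves derivatives of $A$. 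The remedy, and the heart of the argument, is the observation that a closed orbit of the \emph{small} time-dependent field $W_t$ is necessarily confined to a ball of radius comparable to $|V(x)|$, on which $V$ is essentially the constant $V(x)$, so that integrating $\dot\gamma$ around the loop closes the estimate.
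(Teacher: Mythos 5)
Your proof is correct. The trivial inclusion is identical to the paper's, and the substantive inclusion shares the same key geometric observation — a putative closed orbit through $x$ with $V(x)\neq 0$ stays in a ball of radius comparable to $|V(x)|$, on which $V$ is nearly constant — but you exploit it differently. The paper chooses a radius $R=ar$ a priori and verifies that on $B_R(p)$ one has $|W_t|<R$ (trapping the orbit) and $|W_t-V(p)|<|V(p)|$, then concludes by showing the linear functional $t\mapsto\langle\gamma(t),V(p)\rangle$ is strictly increasing, hence $\gamma$ cannot close up. You instead first run a Gronwall estimate on $|V(\gamma(t))|^2$ to pin $|V|$ near $r_0=|V(x)|$ along the orbit, then bound the orbit's diameter, decompose $\dot\gamma=V(x)+R(t)$ with $|R(t)|\le 5\nu r_0$, and integrate around the loop: $0=\int_0^1\dot\gamma=V(x)+\int_0^1 R$ forces $r_0\le 5\nu r_0$, impossible for $\nu<\tfrac15$. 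The paper's monotone-functional argument is pointwise and avoids Gronwall; your integral argument is perhaps a hair longer but makes the explicit threshold $\nu<\tfrac15$ transparent, and your closing remark — that a Fourier/second-derivative period bound is unavailable because only $\|A\|_{C^0}$ is controlled — correctly identifies why one must argue at the level of first derivatives, as both proofs do.
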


\begin{proof}
Every zero of $V$ is also a zero of $W_t$ for all $t$ and hence a fixed point of $\chi_1$. Let $p\in \R^k$ be a point such that $V(p)\neq 0$. We need to show that $\chi_1$ does not fix $p$ if $\nu$ is small enough. We claim that there is $R>0$ such that for all $(t,x)\in [0,1]\times B_R(p)$, we have 
\[
\mathrm{(a)}\quad |W_t(x)|< R,\qquad \mathrm{(b)}\quad |W_t(x)-V(p)|<|V(p)|,
\]
where $B_R(p)$ denotes the open ball of radius $R$ centered at $p$. Let us show how to use the claim to prove the lemma. Call $\gamma\colon[0,1]\to \R^k$ the trajectory of $W_t$ starting at $p$, that is, $\gamma(t)=\chi_t(p)$ for all $t\in[0,1]$. We need to show that $\gamma$ is not closed. Let $s\in [0,1]$ be such that $\gamma([0,s])$ is contained in $B_R(p)$. By (a), 
\[
|\gamma(s)-p|=|\gamma(s)-\gamma(0)|\leq\int_0^s|\dot\gamma(t)|dt=\int_0^s|W_t(\gamma(t))|dt< Rs.
\]
Thus, $\gamma(s)\in B_{sR}(p)$ and we deduce that $\gamma([0,1])$ is contained in $B_R(p)$. Denoting by $\langle\cdot,\cdot\rangle$ the euclidean inner product in $\R^k$, for all $t\in[0,1]$ we have by (b)
\begin{align*}
\frac{d}{dt}\langle \gamma(t),V(p)\rangle=\langle \dot\gamma(t),V(p)\rangle=\langle W_t(\gamma(t)),V(p)\rangle&\geq |V(p)|^2-|W_t(\gamma(t)-V(p)||V(p)|\\
&>|V(p)|^2-|V(p)||V(p)|\\
&= 0.
\end{align*}
Therefore, $t\mapsto \langle \gamma(t),V(p)\rangle$ is a strictly increasing function and hence $\gamma$ is not closed.

We are left to prove the claim. Set $r\coloneqq |V(p)|>0$. Let $\nu>0$ and write $R=ar$ for some positive number $a>0$ to be determined. We have
\[
|V(x)-V(p)| \leq R\|V\|_{C^1} < a\nu r, \qquad \forall x\in B_R(p).
\]
Thus,
\[
|V(x)| \leq |V(x)-V(p)| + |V(p)| < (1+a\nu)r, \qquad \forall x\in B_R(p).
\]
For all $(t,x)\in [0,1]\times B_R(p)$, it follows that
\begin{align*}
|W_t(x)| &< (1+\nu)(1+a\nu)r,\\
|W_t(x) - V(p)| &< |A_t(x)V(x)| + a\nu r < (1+a+a\nu)\nu r.
\end{align*}
To achieve (a) and (b), we need to show that for every sufficiently small $\nu>0$, there exists $a>0$ such that
\[
(1+\nu)(1+a\nu)<a \quad \text{and} \quad (1+a+a\nu)\nu <1.
\]
If $\nu>0$ is sufficiently small, these two inequalities are equivalent to
\[
\frac{1+\nu}{1-\nu-\nu^2} < a < \frac{1-\nu}{\nu(1+\nu)}
\]
and clearly possess a solution $a>0$.
\end{proof}

\begin{proof}[Proof of Proposition \ref{proposition:normal_form_abbondandolo_benedetti_short_orbits}]
If the contact form $\alpha$ is $C^2$-close to $\alpha_0$ then the Reeb vector field $R_{\alpha}$ is $C^1$-close to $R_{\alpha_0}$. Statement (vii) is then a consequence of a general fact about $C^1$-perturbations of vector fields inducing a free $S^1$-action, see \cite[Corollary 1]{ban86}. 

By differentiating \eqref{norfor} and contracting along $R_{\alpha_0}$, we obtain the identity
\[
\imath_{R_{\alpha_0}} u^* \alpha = \imath_{R_{\alpha_0}} ( dS \wedge \alpha_0 + S\, d\alpha_0 + d\eta ) = - dS + F[dS],
\]
which shows that the Reeb vector field of $u^* \alpha$ is parallel to $R_{\alpha_0}$ on the critical set of $S$. Therefore, every orbit $\gamma$ of $\theta_t$ consisting of critical points of the $S^1$-invariant function $S$ is a closed orbit of $R_{u^* \alpha}$ of minimal period 
\[
\int_{\gamma} u^* \alpha = \int_{\gamma} ( S\alpha_0 + \eta + df) = T_0 S(\gamma).
\]
By \eqref{eq:normal_form_abbondandolo_benedetti_estimate}, this number is close to $T_0$ when $\|\alpha-\alpha_0\|_{C^2}$ is small, so $u(\gamma)$ is a short closed orbit of $R_{\alpha}$ if $\|\alpha-\alpha_0\|_{C^2}<\delta$ with $\delta=\delta(\epsilon)$ small enough. There remains to show that, up to reducing the number $\delta$ if necessary, every short closed orbit of $R_{\alpha}$ is obtained in this way or, equivalently, that every short closed orbit of $u^* R_{\alpha}$ is an orbit of the $S^1$-action $\theta_t$ consisting of critical points of $S$.

The diffeomorphism $u$ is given by a version of a theorem of Bottkol \cite{bot80} which is proven in \cite[Theorem 2.1]{ab23}. By this result, $u$ satisfies 
\begin{equation}
\label{after-bottkol}
h \, u^* R_{\alpha} = R_{\alpha_0} - Q[V],
\end{equation}
where $h$ is a smooth $S^1$-invariant function on $M$, $V$ is a smooth $S^1$-invariant vector field which is orthogonal to $R_{\alpha_0}$ with respect to the chosen $S^1$-invariant metric on $M$, and $Q$ is an endomorphism of $TM$ lifting the identity. Moreover
\begin{equation}
\label{bottkol-bounds}
\max\{ \|h-1\|_{C^1},\|V\|_{C^1}, \|Q-\mathrm{id}\|_{C^0} \} \leq \omega(\|\alpha-\alpha_0\|_{C^2}),
\end{equation}
for a suitable modulus of continuity $\omega$, and the zeros of the $S^1$-invariant vector field $V$ are precisely the critical points of the $S^1$-invariant function $S$, see \cite[Equation (2.25)]{ab23}.

Therefore, we have to show that if $\|\alpha-\alpha_0\|_{C^2}$ is small enough, then the short closed orbits of the vector field $u^* R_{\alpha}$ given by \eqref{after-bottkol}, where $h$, $Q$ and $V$ satisfy the above conditions, are orbits of $\theta_t$ which are contained in the set of zeroes of $V$.

The free $S^1$-action $\theta_t$ defines the smooth $S^1$-bundle
\[
S^1=\R/T_0 \Z \rightarrow M \stackrel{\pi}{\longrightarrow} B.
\]
Let $\{B_j\}_{j\in \{1,\dots,k\}}$ be a covering of $B$ consisting of open embedded balls and consider a second covering $\{B_j'\}_{j\in \{1,\dots,k\}}$ with $\overline{B_j'} \subset B_j$ for every $j$. By \eqref{bottkol-bounds}, the vector field $u^* R_{\alpha}$ is $C^0$-close to $R_{\alpha_0}$ when $\|\alpha-\alpha_0\|_{C^2}$ is small, so we can assume that any short closed orbit of $u^* R_{\alpha}$ which meets $\pi^{-1}(B_j')$ is fully contained in $\pi^{-1}(B_j)$.  This allows us to fix $j$ and work in $\pi^{-1}(B_j)$.

By a suitable bundle trivialization, we can identify $\pi^{-1}(B_j)$ with the product $S^1\times B_j$ in such a way that, denoting by $t$ the variable in $S^1$, we have
\[
R_{\alpha_0} = \partial_t \qquad \mbox{and} \qquad V(t,x) = V(x) \in T_x B_j  \quad \forall (t,x)\in S^1\times B_j.
\]
From \eqref{after-bottkol}, we obtain that on $\pi^{-1}(B_j)\cong S^1\times B_j$ the vector field $u^* R_{\alpha}$ satisfies
\[
g \, u^* R_{\alpha} = \partial_t + Q'[V],
\]
where the smooth function $g:S^1\times B_j \rightarrow \R$ is $C^0$-close to the constant function $1$ and $Q'$ is an endomorphism of $TB_j$ which lifts the identity and is $C^0$-close to the identity. Therefore, up to a time reparametrization which is determined by the function $g$, the short closed orbits of $u^* R_{\alpha}$ which are contained in $\pi^{-1}(B_j)$ correspond to the $T_0$-periodic orbits of the non-autonomous $T_0$-periodic vector field $Q'(t,x)[V(x)]$ which are contained in $B_j$. The desired conclusion that these $T_0$-periodic orbits are just the zeros of $V$ when $\|\alpha-\alpha_0\|_{C^2}$ is sufficiently small now follows from Lemma \ref{non-autonomous}. This concludes the proof of statement (viii).
\end{proof}

\section{Proof of Theorem \ref{qi}}
\label{qisec}

In this section, we prove Theorem \ref{qi}. Let $\alpha$ be a contact form on $M$ which is $C^2$-close to the Zoll contact form $\alpha_0$. By Theorem \ref{theorem:normal_form_abbondandolo_benedetti}, there exist a diffeomorphism $u\in\operatorname{Diff}(M)$, a positive $S^1$-invariant function $S\in C^\infty(M)$, a $1$-form $\eta\in\Omega^1(M)$ and a function $f\in C^\infty(M)$ satisfying the properties listed in Theorem \ref{theorem:normal_form_abbondandolo_benedetti}. In particular,
\[
u^*\alpha = S\alpha_0 + \eta + df.
\]

We claim that $S\alpha_0+\eta$ is a contact form strictly contactomorphic to $S\alpha_0+\eta+df$, i.e., that there exists a diffeomorphism $v: M \rightarrow M$ such that
\begin{equation}
\label{eq:normal_form_new_proof_chi}
v^*(S\alpha_0+\eta+df) = S\alpha_0+\eta.
\end{equation}
To see this, let us define the family of $1$-forms
\[
\beta_t\coloneqq S\alpha_0 + \eta + t \, df, \qquad t\in[0,1].
\]
Note that since $\alpha$ is $C^2$-close to $\alpha_0$, it follows from \eqref{eq:normal_form_abbondandolo_benedetti_estimate} that $S-1$ and $f$ are $C^1$-small and that $\eta$ and $d\eta$ are $C^0$-small. This fact implies that $\beta_t$ is $C^0$-close to $\alpha_0$ and that $d\beta_t$ is $C^0$-close to $d\alpha_0$. Therefore, $\beta_t$ is a contact form for all $t\in[0,1]$. Hence there exists a unique time-dependent vector field $B_t$ on $M$ which is parallel to the Reeb vector field $R_{\beta_t}$ and satisfies $\iota_{B_t}\beta_t+f=0$. Let $v_t$ denote the flow generated by $B_t$. We compute:
\[
\partial_t ( v_t^*\beta_t ) = v_t^*(\mathcal{L}_{B_t}\beta_t+\partial_t\beta_t) = v_t^*(d\iota_{B_t}\beta_t+\iota_{B_t}d\beta_t+df) = v_t^*d(\iota_{B_t}\beta_t+f)=0.
\]
Therefore, we have $v_t^*\beta_t=\beta_0$ for all $t\in[0,1]$ and hence the diffeomorphism $v\coloneqq v_1$ satisfies identity \eqref{eq:normal_form_new_proof_chi}.\\

Next, let us define the family of $1$-forms
\[
\gamma_t\coloneqq S\alpha_0 + (1-t)\overline{\eta} + t\eta,\qquad t\in[0,1],
\]
where $\overline{\eta}$ is the average of $\eta$ with respect to the $S^1$-action. Again, since $S-1$ is $C^1$-small and $\eta$ and $d\eta$ are $C^0$-small, $\gamma_t$ is a contact form for all $t\in[0,1]$. We apply Moser's homotopy argument: Let $C_t$ be the unique time-dependent vector field on $M$ satisfying
\[
\begin{cases}
C_t\in \operatorname{ker}\gamma_t,\\
(\iota_{C_t}d\gamma_t+\partial_t\gamma_t)|_{\operatorname{ker}\gamma_t}=0.
\end{cases}
\]
Let $w_t$ be the flow generated by $C_t$. Then
\[
w_t^*\gamma_t = e^{g_t}\gamma_0
\]
where $g_t$ is the unique solution of
\begin{equation}
\label{defg}
\begin{cases}
\partial_t g_t = w_t^* (\iota_{R_{\gamma_t}}\partial_t\gamma_t)\\
g_0=0.
\end{cases}
\end{equation}
Set $g\coloneqq g_1$ and $w\coloneqq w_1$. Then we have
\[
w^*(S\alpha_0+\eta)= e^g(S\alpha_0+\overline{\eta}).
\]
We proceed with the following claim:

\begin{claim}
\label{claim:normal_form_new_proof_g_estimate}
The function $g$ satisfies the pointwise bound
\begin{equation}
\label{bound-on-g}
|g|\leq \sigma(\|\alpha-\alpha_0\|_{C^2})\cdot|dS|^2,
\end{equation}
for some modulus of continuity $\sigma$. In particular, $g$ and $dg$ vanish on the critical set of $S$.
\end{claim}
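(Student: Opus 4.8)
The plan is to unwind the Moser construction so that $g$ becomes an explicit integral, to reduce its integrand to a quantity which is genuinely \emph{quadratic} in $dS$ by exploiting the $S^1$-averaging structure, and finally to control the pull-back by the Moser flow $w_t$ with a Gr\"onwall estimate. Throughout, $\omega$ denotes a modulus of continuity coming from \eqref{eq:normal_form_abbondandolo_benedetti_estimate}. Since $\partial_t\gamma_t=\eta-\overline\eta$, the defining ODE \eqref{defg} gives $g=g_1=\int_0^1 w_t^*\bigl(\iota_{R_{\gamma_t}}(\eta-\overline\eta)\bigr)\,dt$. The crucial observation is a cancellation: by Theorem \ref{theorem:normal_form_abbondandolo_benedetti}(iii) we have $\iota_{R_{\alpha_0}}\eta=0$, and since averaging commutes with $\iota_{R_{\alpha_0}}$ we also get $\iota_{R_{\alpha_0}}\overline\eta=\overline{\iota_{R_{\alpha_0}}\eta}=0$, hence $\iota_{R_{\alpha_0}}(\eta-\overline\eta)=0$. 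Decomposing the perturbed Reeb field as $R_{\gamma_t}=a_t R_{\alpha_0}+Y_t$ with $Y_t$ a section of $\ker\alpha_0$, this yields $\iota_{R_{\gamma_t}}(\eta-\overline\eta)=\iota_{Y_t}(\eta-\overline\eta)$: only the ``horizontal part'' $Y_t$ contributes, and it is precisely $Y_t$ that upgrades the estimate from linear to quadratic in $dS$.

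Next I would prove two pointwise bounds, uniform in $\alpha$. First, $|\eta-\overline\eta|\le K\,\omega(\|\alpha-\alpha_0\|_{C^2})\,|dS|$: from $\iota_{R_{\alpha_0}}\eta=0$ and Theorem \ref{theorem:normal_form_abbondandolo_benedetti}(iv), Cartan's formula gives $\tfrac{d}{ds}\theta_s^*\eta=\theta_s^*(\iota_{R_{\alpha_0}}d\eta)=\theta_s^*(F[dS])$, so $\eta-\overline\eta$ is a fixed weighted integral of $\theta_s^*(F[dS])$; one bounds it using $\|F\|_{C^0}\le\omega(\|\alpha-\alpha_0\|_{C^2})$ together with the fact that $|dS|$ is constant along the orbits of the isometric $S^1$-action (because both $S$ and the metric are $S^1$-invariant). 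Second, $|Y_t|\le K|dS|$: expand $d\gamma_t=dS\wedge\alpha_0+S\,d\alpha_0+d\mu_t$ with $\mu_t:=(1-t)\overline\eta+t\eta$ (so $\iota_{R_{\alpha_0}}\mu_t=0$ and $\iota_{R_{\alpha_0}}d\mu_t=((1-t)\overline F+tF)[dS]$); then $\iota_{R_{\gamma_t}}d\gamma_t=0$ together with $\iota_{R_{\gamma_t}}\gamma_t=1$, restricted to $\ker\alpha_0$, gives an identity of the form $S\,B_0[Y_t]=a_t\,dS|_{\ker\alpha_0}+O(\omega)\cdot(|dS|+|Y_t|)$ with $a_t=S^{-1}+O(\omega\,|dS|)$; since $B_0$ is invertible with controlled inverse and $\omega$ is small, one solves for $Y_t$ and absorbs the error. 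Combining, $|\iota_{R_{\gamma_t}}(\eta-\overline\eta)(p)|=|\iota_{Y_t}(\eta-\overline\eta)(p)|\le K\,\omega(\|\alpha-\alpha_0\|_{C^2})\,|dS(p)|^2$ for all $p\in M$ and $t\in[0,1]$.

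It remains to pass from this to the bound at a fixed base point $x$, i.e.\ to control $|dS(w_t(x))|$ in terms of $|dS(x)|$. The Moser vector field $C_t$, determined by $\iota_{C_t}d\gamma_t|_{\ker\gamma_t}=-(\eta-\overline\eta)|_{\ker\gamma_t}$ with $d\gamma_t|_{\ker\gamma_t}$ uniformly non-degenerate, inherits the bound $|C_t(p)|\le K\,\omega(\|\alpha-\alpha_0\|_{C^2})\,|dS(p)|$; in particular $C_t$ vanishes on the critical set of $S$, so $w_t$ fixes that set pointwise. A Gr\"onwall argument for $t\mapsto|dS(w_t(x))|^2$, differentiating and inserting $|C_t|\le K\omega|dS|$, then gives $|dS(w_t(x))|\le 2|dS(x)|$ on $[0,1]$ once $\|\alpha-\alpha_0\|_{C^2}$ is small enough, whence $|g(x)|\le K\,\omega(\|\alpha-\alpha_0\|_{C^2})\int_0^1|dS(w_t(x))|^2\,dt\le\sigma(\|\alpha-\alpha_0\|_{C^2})\,|dS(x)|^2$. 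The final assertion of the claim is then automatic: $|dS|^2$ is a smooth nonnegative function which, together with its differential, vanishes on the critical set of $S$, and a smooth function dominated by such a function must itself have vanishing value and differential there, so $g$ and $dg$ vanish on the critical set of $S$.

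The step I expect to be the main obstacle is this last transfer to the fixed base point: the Gr\"onwall constant must be controlled \emph{uniformly} in $\alpha$, which is delicate since $dS$ is controlled only in $C^1$, not $C^2$, by $\|\alpha-\alpha_0\|_{C^2}$. The argument works because the Moser vector field $C_t$ is not merely small but small \emph{relative to $|dS|$} — the quantitative content of the cancellation in the first paragraph and of the bound $|Y_t|\le K|dS|$ — so that the growth of $|dS|$ along the flow of $C_t$ is damped. Without this relative smallness one would only obtain $|g|\le\sigma(\|\alpha-\alpha_0\|_{C^2})|dS|$, which is insufficient for conclusion (b) of Theorem \ref{qi}.
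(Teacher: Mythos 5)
Your overall architecture is the same as the paper's — both establish the pointwise bounds $|\eta-\overline\eta|\lesssim \|F\|_{C^0}|dS|$, $|Y_t|\lesssim|dS|$, $|C_t|\lesssim\|F\|_{C^0}|dS|$, and then integrate the defining ODE for $g$ along the Moser flow. But the final Gr\"onwall step, which you yourself flag as the main obstacle, does not close as you describe it, and the ``relative smallness of $C_t$'' you invoke is not the mechanism that saves the argument.

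Concretely: running Gr\"onwall on $t\mapsto|dS(w_t)|^2$ requires differentiating,
\[
\frac{d}{dt}\,|dS(w_t)|^2 \;=\; 2\langle dS,\nabla_{C_t}dS\rangle\circ w_t \;\leq\; 2\,\|\nabla dS\|_{C^0}\,|dS(w_t)|\,|C_t(w_t)|.
\]
Inserting $|C_t|\leq K\omega\,|dS|$ gives the Gr\"onwall exponent $\approx\omega(\|\alpha-\alpha_0\|_{C^2})\cdot\|\nabla dS\|_{C^0}$. The factor $\omega$ is small, but $\|\nabla dS\|_{C^0}=\|\nabla^2 S\|_{C^0}$ is \emph{not} controlled by $\|\alpha-\alpha_0\|_{C^2}$: from \eqref{eq:normal_form_abbondandolo_benedetti_estimate} one only gets $\|S-1\|_{C^2}\leq\omega_1(\|\alpha-\alpha_0\|_{C^3})$. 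The product of a small quantity with an uncontrolled one is indeterminate, so neither the boundedness of $|dS(w_t)|$ nor the final modulus-of-continuity bound on $|g|$ follows. The smallness of $|C_t|/|dS|$ damps the integrand in \eqref{defg}, but it does nothing to tame the Hessian of $S$ entering $\nabla_{C_t}dS$, which is where the argument actually breaks.

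The missing ingredient is item (vi) of Theorem \ref{theorem:normal_form_abbondandolo_benedetti}: the factorization $dS=-B[V]$ with $V$ an $S^1$-invariant section of $\ker\alpha_0$ and $B$ an isomorphism, together with the estimate $\|V\|_{C^1}\leq\omega_0(\|\alpha-\alpha_0\|_{C^2})$. This is a genuine gain of one derivative: $\|V\|_{C^1}$ is controlled by the $C^2$-closeness of $\alpha$, whereas $\|dS\|_{C^1}$ is not. The paper runs Gr\"onwall on $|V(w_t)|^2$ instead, using $\frac{d}{dt}|V(w_t)|^2=2\langle V,\nabla_{C_t}V\rangle\circ w_t\leq 2\|V\|_{C^1}|V(w_t)||C_t(w_t)|$; the exponent is then $\lesssim\|V\|_{C^1}\|F\|_{C^0}$, both factors controlled by $\|\alpha-\alpha_0\|_{C^2}$. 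The pointwise comparison $\frac{1}{c}|V|\leq|dS|\leq c|V|$ (valid because $B$ and $B^{-1}$ are $C^0$-bounded) then converts the conclusion back to a bound by $|dS|^2$. You should replace your Gr\"onwall step with this one; the rest of your outline, including the cancellation $\iota_{R_{\alpha_0}}(\eta-\overline\eta)=0$ and the bounds on $Y_t$ and $C_t$, is in line with the paper.
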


\begin{proof}
In order to prove \eqref{bound-on-g}, we begin by showing that
\begin{equation}
\label{eq:normal_form_new_proof_difference_eta_avg_estimate}
|\eta-\overline{\eta}| \leq \frac{1}{2}T_0\|F\|_{C^0}\cdot |dS|.
\end{equation}
It follows from the properties of $\eta$ and $F$ listed in Theorem \ref{theorem:normal_form_abbondandolo_benedetti} that
\[
\mathcal{L}_{R_{\alpha_0}}\eta = d\iota_{R_{\alpha_0}}\eta + \iota_{R_{\alpha_0}}d\eta = F[dS].
\]
Together with the $S^1$-invariance of $S$, we deduce the pointwise estimate
\[
|\theta_t^*\eta-\eta| = \Bigl|\int_0^t \partial_\tau \theta_\tau^*\eta\, d\tau \Bigr| = \Bigl|\int_0^t \theta_\tau^* (F[dS]) \, d\tau \Bigr| \leq |t|\cdot \|F\|_{C^0}\cdot |dS|,
\]
from which the desired estimate \eqref{eq:normal_form_new_proof_difference_eta_avg_estimate} follows:
\[
|\eta-\overline{\eta}| =  \Bigl|\frac{1}{T_0}\int_0^{T_0}(\eta - \theta_t^*\eta) \, dt \Bigr| 
 \leq \frac{1}{T_0} \int_0^{T_0} |t|\cdot \|F\|_{C^0}\cdot  |dS|\, dt \leq  \frac{1}{2}T_0\|F\|_{C^0}\cdot |dS|.
\]

Recall that the vector field $C_t$ is characterized by $C_t\in \operatorname{ker}\gamma_t$ and
\[
(\iota_{C_t}d\gamma_t + \eta-\overline{\eta})|_{\operatorname{ker}\gamma_t}=0.
\]
Since $\gamma_t$ is $C^0$-close to $\alpha_0$ and $d\gamma_t$ is $C^0$-close to $d\alpha_0$, this implies together with \eqref{eq:normal_form_new_proof_difference_eta_avg_estimate} that
\begin{equation}
\label{eq:normal_form_new_proof_C_estimate}
|C_t| \leq b \, \|F\|_{C^0}\cdot|dS|
\end{equation}
for some constant $b>0$ which can be chosen uniform among all $\alpha$ which are sufficiently $C^2$-close to $\alpha_0$.

Let us split $R_{\gamma_t} = X_t + Y_t$ into a vector field $X_t$ parallel to $R_{\alpha_0}$ and a vector field $Y_t$ taking values in the contact distribution $\operatorname{ker}\alpha_0$. We can write $X_t = a_t \cdot R_{\alpha_0}$ where $a_t\in C^\infty(M)$ is  $C^0$-close to the constant function $1$. We compute
\begin{eqnarray*}
    \iota_{X_t} d\gamma_t  & = & \iota_{X_t} (dS\wedge \alpha_0 + S d\alpha_0 + (1-t)d\overline{\eta} + td\eta) \\
    & = & a_t (- dS + (1-t)\overline{F}[dS] +  t F[dS] ).
\end{eqnarray*}
Here we use that $\iota_{R_{\alpha_0}}dS=0$ because $S$ is $S^1$-invariant. Moreover, we use the identity $\iota_{R_{\alpha_0}}d\eta = F[dS]$ from Theorem \ref{theorem:normal_form_abbondandolo_benedetti}. Combining the above computation with the identity $0=\iota_{R_{\gamma_t}}d\gamma_t = \iota_{X_t+Y_t}d\gamma_t$, we obtain
\[
    \iota_{Y_t} d\gamma_t = - a_t (-dS + (1-t)\overline{F}[dS] +  t F[dS] ).
\]
Since $\operatorname{ker}\gamma_t$ is $C^0$-close to $\operatorname{ker}\alpha_0$ and $d\gamma_t$ is $C^0$-close to $d\alpha_0$, this implies the pointwise estimate
\[
    |Y_t| \leq b|dS|
\]
for some constant $b>0$ independent of the contact form $\alpha$. In particular, we obtain, in combination with \eqref{eq:normal_form_new_proof_difference_eta_avg_estimate}, that
\[
    |\iota_{R_{\gamma_t}}\partial_t\gamma_t| = |\iota_{X_t+Y_t} (\eta-\overline{\eta})| = |\iota_{Y_t}(\eta-\overline{\eta})| \leq |Y_t||\eta-\overline{\eta}| \leq b\|F\|_{C^0}\cdot |dS|^2,
\]
for some constant $b$ independent of $\alpha$. Using \eqref{defg}, this yields the estimate
\begin{equation}
\label{gdS}
    |g| \leq \int_0^1 |\iota_{R_{\gamma_t}}\partial_t\gamma_t|\circ w_t \, dt
    \leq b\|F\|_{C^0} \int_0^1 |dS(w_t)|^2 \, dt
\end{equation}
In order to bound the integrand in the above expression, we argue as follows. By assertion (vi) in Theorem \ref{theorem:normal_form_abbondandolo_benedetti}, we have
\[
dS = - B[V],
\]
where, provided $\alpha$ is $C^{k+2}$-close to $\alpha_0$, $V$ is a $C^{k+1}$ small, $S^1$-invariant section of $\ker \alpha_0$, and $B:\ker \alpha_0 \rightarrow (\ker\alpha_0)^*$ is an isomorphism which is $C^k$-close to the isomorphism $B_0$ induced by the non-degenerate bilinear form $d\alpha_0$ on $\ker \alpha_0$. Since we are assuming that $\alpha$ is $C^2$-close to $\alpha_0$, $V$ is $C^1$-small and $B$ is $C^0$-close to $B_0$. In particular, both $B$ and $B^{-1}$ are uniformly bounded and we obtain the pointwise bounds
\begin{equation}
\label{pwbds}
\frac{1}{c} |V| \leq |dS| \leq c |V|,
\end{equation}
for a suitable number $c\geq 1$. Therefore, from \eqref{gdS} we deduce the bound
\[
|g| \leq bc^2\|F\|_{C^0} \int_0^1 |V(w_t)|^2 \, dt.
\]
Next we notice that
\[
\begin{split}
    \frac{d}{dt} |V(w_t)|^2  &=  2\langle V, \nabla_{C_t}V\rangle\circ w_t 
    \leq 2  \|V \|_{C^1} |V(w_t)| \cdot |C_t(w_t)| \\
    &\leq  2b\|V\|_{C^1} \|F\|_{C^0}  |V(w_t)| \cdot |dS(w_t)| \leq 2 bc\|V\|_{C^1} \|F\|_{C^0}  |V(w_t)|^2,
\end{split}
\]
where we have used estimate \eqref{eq:normal_form_new_proof_C_estimate} and the second inequality in \eqref{pwbds}. It follows from Gronwall's inequality that, for all $t\in[0,1]$,
\[
 |V(w_t)|^2 \leq  e^{2bc\|V\|_{C^1}\|F\|_{C^0} t}\cdot |V|^2.
\]
Thus
\begin{equation}
\label{eq:normal_form_new_proof_g_estimate}
    |g|\leq  bc^2\|F\|_{C^0}\cdot \int_0^1 e^{2bc\|V\|_{C^1}\|F\|_{C^0} t}\,  dt \cdot |V|^2 
    \leq  bc^2\|F\|_{C^0}\cdot  e^{2bc\|V\|_{C^1}\|F\|_{C^0}} \cdot|V|^2.
\end{equation}
From \eqref{eq:normal_form_abbondandolo_benedetti_estimate} it follows that
\[
\max\bigl\{ \|V\|_{C^1}, \|F\|_{C^0} \bigr\}\leq\omega_0( \|\alpha-\alpha_0\|_{C^2}).
\]
Inequality \eqref{eq:normal_form_new_proof_g_estimate} and the first estimate in \eqref{pwbds} therefore imply a pointwise bound of the form 
\[
|g|\leq \sigma(\|\alpha-\alpha_0\|_{C^2})\cdot|dS|^2,
\]
for some modulus of continuity $\sigma$. This concludes the proof of Claim \ref{claim:normal_form_new_proof_g_estimate}.
\end{proof}

Our next task is to prove the following claim.

\begin{claim}
\label{claim:min_and_max}
If $\|\alpha-\alpha_0\|_{C^2}$ is small enough, then 
\begin{equation}
\label{minmax}
\min_M Se^g = \min_M S, \qquad \max_M S e^g = \max_M S.
\end{equation}
\end{claim}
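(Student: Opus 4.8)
I would establish the two identities in \eqref{minmax} in parallel, and in each of them one inequality is free. The minimum (resp.\ the maximum) of the $S^1$-invariant function $S$ is attained at some point $p$, which is then a critical point of $S$; by Claim \ref{claim:normal_form_new_proof_g_estimate} we have $g(p)=0$, hence $S(p)e^{g(p)}=S(p)$, and this gives $\min_M Se^g\le\min_M S$ and $\max_M Se^g\ge\max_M S$. So it remains to prove the two pointwise inequalities $\min_M S\le Se^g\le\max_M S$ everywhere on $M$.

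The extra ingredient I would use is the elementary estimate that for every nonnegative $f\in C^2(M)$ and every $x\in M$,
\[
|df(x)|^2\le 2\,\|\hess f\|_{C^0}\,f(x):
\]
if $\nabla f(x)\ne 0$, running the second-order Taylor inequality for $f$ along the unit-speed geodesic $\gamma$ with $\gamma(0)=x$ and $\dot\gamma(0)=-\nabla f(x)/|\nabla f(x)|$ gives $0\le f(x)-|\nabla f(x)|\,t+\tfrac12\|\hess f\|_{C^0}t^2$ for all $t\ge 0$ (geodesics on the closed manifold $M$ are complete), and optimizing in $t$ yields the claim. Applying this to $f=S-\min_M S\ge 0$ and to $f=\max_M S-S\ge 0$, so that $df=\pm dS$ and $\|\hess f\|_{C^0}=\|\hess S\|_{C^0}$, and inserting the result into the bound $|g|\le \sigma(\|\alpha-\alpha_0\|_{C^2})\,|dS|^2$ of Claim \ref{claim:normal_form_new_proof_g_estimate}, I obtain, with $\kappa:=2\,\sigma(\|\alpha-\alpha_0\|_{C^2})\,\|\hess S\|_{C^0}$,
\[
-\kappa\,(S-\min_M S)\ \le\ g\ \le\ \kappa\,(\max_M S-S)\qquad\text{on }M .
\]

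To conclude, I would note that by the normal-form estimates \eqref{eq:normal_form_abbondandolo_benedetti_estimate} the quantities $\|\hess S\|_{C^0}$ and $\max_M S$ stay bounded as $\alpha\to\alpha_0$, while $\sigma(\|\alpha-\alpha_0\|_{C^2})\to 0$, so after shrinking the $C^2$-neighborhood we may assume $\kappa\,\max_M S<1$, hence also $\kappa\le 1/S$. Using the elementary inequality $1-1/t\le\log t$ twice, with $t=S/\min_M S\ge 1$ and $t=\max_M S/S\ge 1$, this gives
\[
-g\ \le\ \kappa\,(S-\min_M S)\ \le\ \frac{S-\min_M S}{S}\ \le\ \log\frac{S}{\min_M S},\qquad
g\ \le\ \kappa\,(\max_M S-S)\ \le\ \frac{\max_M S-S}{\max_M S}\ \le\ \log\frac{\max_M S}{S},
\]
which are exactly $Se^g\ge\min_M S$ and $Se^g\le\max_M S$; together with the trivial inequalities from the first paragraph this proves \eqref{minmax}. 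The step I expect to require the most care is the $C^0$-control of $\hess S$ under the mere assumption that $\alpha$ is $C^2$-close to $\alpha_0$: the bound \eqref{eq:normal_form_abbondandolo_benedetti_estimate} is stated with a one-derivative loss, so one should either invoke the finer regularity of the particular datum $S$ coming out of the construction in \cite{ab23}, or rephrase the estimate for $\kappa$ so that only $V$, $B$ and $F$ enter — these being precisely the quantities that \eqref{eq:normal_form_abbondandolo_benedetti_estimate} controls from $\|\alpha-\alpha_0\|_{C^2}$.
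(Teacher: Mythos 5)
Your overall plan is the same as the paper's: get the easy inequalities by evaluating at a minimizer/maximizer of $S$ (where $g=0$), and for the hard inequalities establish a pointwise lower bound on $S-\min_M S$ (and upper bound on $\max_M S-S$) in terms of $|dS|^2$ with a constant that beats $\sigma(\|\alpha-\alpha_0\|_{C^2})$ when the latter is small. Both routes hinge on a Landau/\L{}ojasiewicz-type inequality $S-\min_M S\gtrsim |dS|^2$, and both finish with an elementary calculus step. The difference — and the gap you correctly flag but do not close — is in how that pointwise inequality is obtained.

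Your proposal derives $|dS|^2\le 2\|\hess S\|_{C^0}(S-\min_M S)$ by second-order Taylor expansion along geodesics. This is fine as a statement, but it requires a uniform bound on $\|\hess S\|_{C^0}$ as $\alpha\to\alpha_0$ in $C^2$. That bound is not available: by \eqref{eq:normal_form_abbondandolo_benedetti_estimate} with $k=0$, the $C^2$-closeness of $\alpha$ to $\alpha_0$ controls only $\|S-1\|_{C^1}$; controlling $\|S-1\|_{C^2}$ (hence $\hess S$) requires $k=1$, i.e.\ $C^3$-closeness. So under the stated hypotheses, $\kappa=2\sigma(\|\alpha-\alpha_0\|_{C^2})\|\hess S\|_{C^0}$ is a product of a small factor and an uncontrolled one, and your requirement $\kappa\max_M S<1$ cannot be guaranteed. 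This is exactly the obstruction that prevents the proof from going through at the advertised $C^2$-regularity.

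The paper's proof circumvents this by never touching $\hess S$. It uses the structural identity $dS=-B[V]$ from Theorem~\ref{theorem:normal_form_abbondandolo_benedetti}(vi), in which $\|V\|_{C^1}$ and $\|B-B_0\|_{C^0}$ \emph{are} controlled by $\|\alpha-\alpha_0\|_{C^2}$, to run a gradient-flow argument: starting at a point $p$ with $dS(p)\ne 0$, flow downhill for time one; the $C^1$-smallness of $V$ guarantees that $|V|$ (hence $|dS|$) stays comparable to $|V(p)|$ along the trajectory, which stays inside a ball of controlled radius, and integrating $\frac{d}{dt}S(\gamma(t))=-|dS(\gamma(t))|^2$ produces $\min_M S\le S(p)-\tfrac{1}{4c^4}|dS(p)|^2$ with $c$ uniformly close to $1$. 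This replaces your Hessian bound with Lipschitz control of $V$. Your closing suggestion — "rephrase the estimate for $\kappa$ so that only $V$, $B$ and $F$ enter" — is precisely what the paper does, but it requires a genuinely different argument (gradient flow and the identity $dS=-B[V]$) rather than an improved $C^0$-estimate on $\hess S$, which one should not expect to hold.
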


\begin{proof}
We claim that if $\|\alpha-\alpha_0\|_{C^2}$ is small enough, then $S$ satisfies the pointwise bound
\begin{equation}
\label{pwbdS}
S \geq \min S + \frac{1}{4c^4} |dS|^2,
\end{equation}
where $c$ is the number appearing in \eqref{pwbds}.  

As already recalled, the vector field $V$ is $C^1$-small when $\|\alpha-\alpha_0\|_{C^2}$ is small. Therefore, when $\|\alpha-\alpha_0\|_{C^2}$ is sufficiently small the following fact holds: for any $p\in M$ with $r:= |V(p)| > 0$, setting $R:=3rc$ we have
\begin{equation}
\label{VoBR}
\frac{r}{2} \leq |V| \leq 2r \qquad \mbox{on } B_R(p),
\end{equation}
where $B_R(p)\subset M$ denotes the ball of radius $R$ centered at $p$.

The bound \eqref{pwbdS} trivially holds at critical points of $S$. Let $p\in M$ be a point such that $dS(p)\neq 0$. By \eqref{pwbds} the number $r= |V(p)|$ is positive and \eqref{VoBR} holds with $R=3rc$. Now let $\gamma:\R \rightarrow M$ be the solution of the Cauchy problem
\[
\gamma'(t) = - \nabla S(\gamma(t)), \qquad \gamma(0)=p.
\]
If $\gamma([0,s])$ is contained in $B_R(p)$ for some $s\in [0,1]$, then by \eqref{pwbds} and \eqref{VoBR} we have
\[
\begin{split}
\mathrm{dist}(\gamma(s),p) = \mathrm{dist}(\gamma(s),\gamma(0)) &\leq \int_0^s |\gamma'(t)|\, dt = \int_0^s |dS(\gamma(t))|\, dt \\ &\leq c \int_0^s |V(\gamma(t))|\, dt \leq 2rcs \leq 2rc < R.
\end{split}
\]
The above inequality implies that $\gamma([0,1])\subset B_R(p)$. Therefore, using again \eqref{pwbds} and \eqref{VoBR}, we obtain the chain of inequalities
\[
\begin{split}
\min S \leq S(\gamma(1)) &= S(p) - \int_0^1 |dS(\gamma(t))|^2\, dt \leq S(p) - \frac{1}{c^2} \int_0^1 |V(\gamma(t))|^2\, dt \\ & \leq S(p) - \frac{1}{4c^2}  r^2=S(p) - \frac{1}{4c^2}  |V(p)|^2 \leq S(p) - \frac{1}{4c^4}  |dS(p)|^2,
\end{split}
\]
proving \eqref{pwbdS}.

We can now verify the first identity in \eqref{minmax}. Let $q\in M$ be a point at which $S$ achieves its minimum. Then $dS(q)=0$ and hence $g(q)=0$ thanks to Claim \ref{claim:normal_form_new_proof_g_estimate}. From this, we obtain the inequality
\[
\min_M Se^g \leq S(q) e^{g(q)} = S(q) = \min_M S.
\]
In order to prove the opposite inequality, we consider the pointwise bound
\begin{equation}
\label{bound-e^gS}
S e^g  \geq S (1+g)  \geq S - 2 |g| \geq \min_M S + \Bigl( \frac{1}{4 c^4} - 2 \sigma(\|\alpha-\alpha_0\|_{C^2}) \Bigr) |dS|^2.
\end{equation}
Here, the first inequality follows from the convexity of the exponential function, the second one from the fact that we can assume that $S\leq 2$, as by \eqref{eq:normal_form_abbondandolo_benedetti_estimate} $S$ is $C^0$-close to the constant function 1 when $\|\alpha-\alpha_0\|_{C^2}$ is small, and the third inequality follows from \eqref{bound-on-g} and \eqref{pwbdS}. 

If $\|\alpha-\alpha_0\|_{C^2}$ is small enough, then the term in brackets at the end of \eqref{bound-e^gS} is non-negative and hence
 \[
 \min_M Se^g \geq \min_M S.
 \]
This proves the first identity in \eqref{minmax}. The proof of the second one is analogous.
\end{proof}

Let us define the family of one-forms
\[
\delta_t \coloneqq S\alpha_0 + t\overline{\eta}, \qquad t\in[0,1].
\]
Since $S-1$ is $C^1$-small and $\eta$ and $d\eta$ are $C^0$-small when $\alpha$ is $C^2$-close to $\alpha_0$, see \eqref{eq:normal_form_abbondandolo_benedetti_estimate}, this is a contact form for all $t\in[0,1]$. Let $D_t$ denote the time-dependent vector field on $M$ characterized by 
\[
\begin{cases}
D_t\in \operatorname{ker}\delta_t,\\
(\iota_{D_t}d\delta_t+\partial_t\delta_t)|_{\operatorname{ker}\delta_t}=0.
\end{cases}
\]
We observe that since both $S$ and $\overline{\eta}$ are $S^1$-invariant, the vector field $D_t$ is $S^1$-invariant as well. Let $\psi_t$ be the flow generated by $D_t$. Since $D_t$ is $S^1$-invariant, this flow is $S^1$-equivariant. Moreover, we have
\[
\psi_t^*\delta_t = e^{d_t}\delta_0,
\]
where $d_t\in C^{\infty}(M)$ is the unique solution of
\[
\begin{cases}
\partial_t d_t = \psi_t^* (\iota_{R_{\delta_t}}\partial_t\delta_t)\\
d_0=0.
\end{cases}
\]
We claim that $\psi_1^*\delta_1=(S\circ\psi_1)\alpha_0$. Indeed, we compute
\[
\iota_{R_{\alpha_0}}\psi_1^*\delta_1 = \psi_1^*(\iota_{(\psi_1)_*R_{\alpha_0}}\delta_1) = \psi_1^*(\iota_{R_{\alpha_0}}\delta_1)= S\circ\psi_1.
\]
On the other hand, we have
\[
\iota_{R_{\alpha_0}}\psi_1^*\delta_1 = \iota_{R_{\alpha_0}}e^{d_1}\delta_0 = S e^{d_1}.
\]
Therefore, we have $S\circ\psi_1=S e^{d_1}$. This implies that
\[
\psi_1^*\delta_1 = e^{d_1}\delta_0 = S e^{d_1}\alpha_0 = (S\circ\psi_1)\alpha_0.
\]
Let us abbreviate $\psi\coloneqq \psi_1$. Then the above computations show:
\[
\psi^*(S\alpha_0+\overline{\eta}) = (S\circ\psi)\alpha_0.
\]

Let us define the diffeomorphism $\varphi$ of $M$ by $\varphi\coloneqq u\circ v\circ w\circ\psi$. Then
\begin{align*}
\varphi^*\alpha &=  \psi^*w^*v^*u^*\alpha = \psi^*w^*v^*(S\alpha_0+\eta+df) = \psi^*w^*(S\alpha_0+\eta) = 
 \psi^*(e^g(S\alpha_0+\overline{\eta}))\\
 &= \big((e^gS)\circ\psi\big)\alpha_0.
\end{align*}
Therefore, setting
\[
T\coloneqq S\circ \psi, \qquad h\coloneqq g\circ \psi,
\]
we obtain that the diffeomorphism $\varphi$ satisfies the desired identity
\[
\varphi^* \alpha = T e^h \, \alpha_0.
\]

We now check that properties (a)-(e) from Theorem \ref{qi} hold. Since $S$ is $S^1$-invariant and $\psi$ is $S^1$-equivariant, the function $T$ is $S^1$-invariant, proving (a). By Claim \ref{claim:normal_form_new_proof_g_estimate}, both $g$ and $dg$ vanish on the critical set of $S$. Therefore, the same is true for the pull-backs $h$ and $T$ under the diffeomorphism $\psi$, proving property (b). Property (c) follows from Claim \ref{claim:min_and_max}. Property (d) follows from property (vii) in Proposition \ref{proposition:normal_form_abbondandolo_benedetti_short_orbits}. By property (viii) from the same proposition, the short closed orbits of $R_{u^*\alpha}$ are precisely the orbits $\gamma$ of the free $S^1$-action $\theta_t$ that consist of critical points of $S$, and the minimal period of such an orbit is $T_0 S(\gamma)=T_0 T(\psi^{-1}(\gamma))$. Since $u^* \alpha = S\alpha_0+\eta+df$ and $v^*u^*\alpha=S\alpha_0+\eta$ induce the same Reeb flow up to a time reparametrization which preserves the periods of closed orbits, the same is true for the short closed Reeb orbits of $v^*u^*\alpha$.
By estimate \eqref{eq:normal_form_new_proof_C_estimate}, the diffeomorphism $w$ fixes the critical set of $S$, so the same fact continues to hold for the short closed Reeb orbits of $w^* v^*u^*\alpha$. By applying the $S^1$-equivariant diffeomorphism $\psi$, we obtain that the short closed Reeb orbits of $\varphi^*\alpha$ are the  orbits of $\theta_t$ forming the image by $\psi^{-1}$ of the critical set of $S$. The latter set is precisely the critical set of $T$ and property (e) follows. 

The last claim in Theorem \ref{qi} is that the map $\alpha\mapsto (\varphi,T,h)$ we have just constructed is smooth and maps $\alpha_0$ to $(\mathrm{id},1,0)$. This follows from the corresponding statement about the map $\alpha\mapsto (u,S,\eta,f)$ in Theorem \ref{theorem:normal_form_abbondandolo_benedetti} and from the fact that the tuple $(v,w,g,\psi)$ we constructed above depends smoothly on the defining data $(S,\eta,f)$ and takes the value $(\mathrm{id},\mathrm{id},0,\mathrm{id})$ for $(S,\eta,f)=(1,0,0)$. This concludes the proof of Theorem \ref{qi}. 

\section{Proof of Theorem \ref{1stcap} (ii)}
\label{proof-of-thm-1}

In this section, we deduce Theorem \ref{1stcap} (ii) from Theorem \ref{qi}. Recall that $\mathcal{A}$ denotes the set of domains in $\C^n$ of the form
\[
A_f = \{ r z \mid z\in S^{2n-1},\ 0\leq r< f(z) \},
\]
where $f$ is a positive smooth function on $S^{2n-1}$, and that the $C^k$-distance on $\mathcal{A}$ is induced by the $C^k$-distance on the space of positive smooth functions on $S^{2n-1}$. For every $A$ in $\mathcal{A}$, we denote by
\[
\alpha_A\coloneqq \lambda_0|_{\partial A}
\]
the restriction of the standard primitive of $\omega_0$ to the boundary of $A$. In the case of the unit ball $A_1=B$, we find the standard contact form
\[
\alpha_0\coloneqq \alpha_{B} = \lambda_0|_{S^{2n-1}}
\]
on $S^{2n-1}$, which is Zoll with all orbits of period $\pi$. 
Given a domain $A=A_f$ in $\mathcal{A}$, the radial projection
\[
\rho_A : S^{2n-1} \rightarrow \partial A, \qquad z\mapsto f(z) z,
\]
satisfies
\begin{equation}
\label{pbr}
\rho_A^* \alpha_{A} = f^2 \alpha_0.
\end{equation}
This identity shows that $A\in \mathcal{A}$ is $C^k$-close to $B$ if and only if $\rho_A^* \alpha_A$ is $C^k$-close to $\alpha_0$.

We now proceed with the proof of statement (ii) in Theorem \ref{1stcap}. Given a smooth positive function $f$ on $S^{2n-1}$, we denote by
\[
\{f_t \coloneqq 1 + t (f-1)\}_{t\in [0,1]}
\]
the smooth homotopy connecting the constant function $1$ to $f$ and we consider the corresponding smooth path of domains $\{A_{f_t}\}_{t\in [0,1]}$ in $\mathcal{A}$, connecting $B$ to $A_f$. We assume that the domain $A\coloneqq A_f$ is sufficiently $C^2$-close to $B$, so that 
\[
\|\rho_{A_{f_t}}^* \alpha_{A_{f_t}} - \alpha_0\|_{C^2} = \|(f^2_t-1)\alpha_0\|_{C^2} < \delta \quad \forall t\in [0,1],
\]
where $\delta$ is the positive number appearing in Theorem \ref{qi}. Note that $\{\rho_{A_{f_t}}^* \alpha_{A_{f_t}}\}_{t\in [0,1]}$ is a smooth path of contact forms on $S^{2n-1}$ connecting $\alpha_0$ to $\rho_A^* \alpha_A$. By Theorem \ref{qi}, there exists a smooth family of diffeomorphism $\{\varphi_t: S^{2n-1} \rightarrow S^{2n-1}\}_{t\in [0,1]}$ such that
\begin{equation}
\label{coniuga}
\varphi^*_t (\rho_{A_{f_t}}^* \alpha_{A_{f_t}}) = T_t e^{h_t}\, \alpha_0,
\end{equation}
where $\{T_t\}_{t\in [0,1]}$ and $\{h_t\}_{t\in [0,1]}$ are smooth families of functions on $S^{2n-1}$ satisfying properties (a)-(e) of Theorem \ref{qi} together with $\varphi_0=\mathrm{id}$, $T_0=1$ and $h_0=0$. By (e) we have
\[
\mathrm{sys}(A) = \mathrm{sys}(\alpha_{A}) = \mathrm{sys}(\rho_A^* \alpha_{A} ) =  \mathrm{sys}(\rho_{A_1}^* \alpha_{A_1} ) =
\mathrm{sys}(\alpha_0)  \min_{S^{2n-1}} T_1 = \pi \min_{S^{2n-1}} T_1.
\]
Therefore, Theorem \ref{1stcap} (ii) will be proven if we can find a symplectomorphism $\phi: \C^n \rightarrow \C^n$ mapping the open ball $B'$ of radius
\[
R\coloneqq \sqrt{ \min_{S^{2n-1}} T_1}
\]
centered at the origin into $A$. By property (c) of Theorem \ref{qi}, we have
\[
R = \sqrt{ \min_{S^{2n-1}} T_1e^{h_1} } = \min_{S^{n-1}} \sqrt{T_1} e^{\frac{h_1}{2}}
\]
and hence the domain
\[
A'\coloneqq A_{\sqrt{T_1} e^{\frac{h_1}{2}}}
\]
contains $B'$. Hence, it is enough to find a symplectomorphism $\phi\colon\C^n\to\C^n$ such that $\phi(A')= A$. Consider the smooth path of domains
\[
\{A'_t\coloneqq A_{\sqrt{T_t} e^{\frac{h_t}{2}}}\}_{t\in [0,1]},
\]
which satisfies $A'_0=B$ and $A'_1=A'$. By \eqref{coniuga} and \eqref{pbr}, the diffeomorphisms
\[
\psi_t: \partial A'_t \rightarrow \partial A_{f_t}, \qquad \psi_t\coloneqq \rho_{A_{f_t}} \circ \varphi_t \circ  \rho_{A'_t}^{-1},
\]
satisfy 
\[
\psi^*_t ( \lambda_0|_{\partial A_{f_t}} ) = ( \rho_{A'_t}^{-1} )^{*} ( \varphi^*_t  ( \rho_{A_{f_t}}^* \alpha_{A_{f_t}}  )) = ( \rho_{A'_t}^{-1} )^{*} ( T_t e^{h_t} \alpha_0 ) = \alpha_{A'_t}= \lambda_0|_{\partial A'_t}.
\]
The fact that the diffeomorphism $\psi_t$ preserves the restriction of $\lambda_0$ implies that its positively 1-homogeneous extension
\[
\tilde{\psi}_t: \C^n\setminus \{0\} \rightarrow \C^n\setminus \{0\}, \qquad \tilde{\psi}_t( r z ) \coloneqq r \psi_t(z) \quad \forall z\in \partial A'_t, \; \forall r>0,
\]
preserves $\lambda_0$. By construction, $\tilde{\psi}_t$ depends smoothly on $t$ and satisfies 
\[
\tilde{\psi}_0=\mathrm{id}, \qquad \tilde{\psi}_1(A'\setminus\{0\}) = A\setminus \{0\}. 
\]
Let $X_t$ be the vector field generating $\tilde{\psi}_t$. Since $\tilde{\psi}_t$ preserves $\lambda_0$, we have
\[
0 = \mathcal{L}_{X_t} \lambda_0 = \imath_{X_t} \omega_0 + d \imath_{X_t} \lambda_0,
\]
and hence $X_t$ is the Hamiltonian vector field of the Hamiltonian $H_t:= - \lambda_0(X_t)$ on $\C^n\setminus \{0\}$. Multiplying $H_t$ by a smooth function on $\C^n$ which is supported in $\C^n\setminus \{0\}$ and equals $1$ outside of a sufficiently small neighborhood of $0$, we obtain a new time-dependent Hamiltonian on $\C^n$ whose time-one map is a global symplectomorphism  $\phi: \C^n \rightarrow \C^n$ which coincides with $\tilde{\psi}_1$ on $\C^n\setminus A'$, and hence satisfies $\phi(A')=\tilde{\psi}_1(A') = A$. This concludes the proof of Theorem \ref{1stcap} (ii).

\section{Proof of Theorem \ref{nthcap}}
\label{pfnthcap}

Let $A\in \mathcal{A}$ be $C^2$-close to $B$. Using the notation of Section \ref{proof-of-thm-1}, we deduce that the contact form $\rho_A^* \alpha_A$ on $S^{2n-1}$ is $C^2$-close to the Zoll contact form $\alpha_0$ and hence there exists a diffeomorphism $\varphi:S^{2n-1} \rightarrow S^{2n-1}$ such that
\begin{equation}
\label{varphi}
\varphi^* (\rho^*_A \alpha_A) = T e^h \,\alpha_0,
\end{equation}
where the smooth functions $T: S^{2n-1} \rightarrow \R$ and $h: S^{2n-1} \rightarrow \R$ satisfy all the requirements of Theorem \ref{qi}. In particular, $\partial A$ has a closed characteristic $\gamma$ whose action equals the number
\[
T_{\max}(A)\coloneqq\pi \max_{S^{2n-1}} T.
\] 
Up to rescaling, we can assume that
\begin{equation}
\label{normass}
T_{\max}(A) = T_{\max}(B) = \pi,
\end{equation}
and we shall prove the following facts:
\begin{enumerate}[(i')]
\item There exists a symplectomorphism $\phi: \C^n \rightarrow \C^n$ mapping the ellipsoid $E\coloneqq E(\pi,\frac{\pi}{2}, \dots,\frac{\pi}{2})$ into $A$.
\item There exists a symplectomorphism $\psi: \C^n \rightarrow \C^n$ mapping $A$ into $B$.
\end{enumerate}
From the monotonicity property of the capacity $c$ we then have
\[
c(E)  \leq c(A)  \leq c(B).
\]
Since $c$ is $n$-normalized, its values at $E$ and $B$ coincide with the corresponding values of the $n$-th Ekeland-Hofer capacity, which assigns to both $E$ and $B$ the value $\pi$. Therefore, $c(E)=c(B)$ and both inequalities in the above expression are equalities. This proves claims (i) and (ii) from Theorem \ref{nthcap}. There remains to prove (i') and (ii').

\bigskip

The proof of (i') is similar to the proof of Theorem \ref{1stcap} (i) from Section \ref{pf1stcapi}, and here we use the notation which we introduced there. Up to the application of a unitary isomorphism, we may assume that the closed characteristic $\gamma$, which by \eqref{normass} has action $\pi$, passes through the point $(1,0,\dots,0)\in \C^n$ and is contained in the open set $V=\Phi(U)$, see \eqref{Phi} and \eqref{U}. By the explicit form \eqref{Phi} of the symplectomorphism $\Phi$, we have the identities
\begin{equation}
\label{formaE}
B\cap V =  \Phi(U\cap \{s<0\}), \qquad E\cap V = \Phi\bigl(U\cap \{s< - \pi |w|^2 \} \bigr).
\end{equation}
Since $A$ is $C^2$-close to $B$, we can find a $C^2$-small smooth function $H:\T \times \C^{n-1} \rightarrow \R$ with support in $\T\times \widehat{B}$ satisfying \eqref{buonadef} and such that
\begin{equation}
\label{AcapV2}
A \cap V = D(H)\cap V.
\end{equation}
The closed characteristic $\gamma$ of action $\pi$ corresponds to the fixed point $0$ of $\phi^1_H$, which has action $0$, see Proposition \ref{lift} (ii). By Proposition \ref{modham}, there exists a smooth family of Hamiltonians $\{H^{\lambda}\}_{\lambda\in [0,1]}$ on $\T \times \C^{n-1}$ which satisfies:
 \begin{enumerate}[(i)]
\item $H^0=H$;
\item $H^{\lambda}=H$ on $\T \times \widehat{B}^{\mathrm{c}}_{\frac{1}{3}}$ for every $\lambda\in [0,1]$;
\item $\phi_{H^{\lambda}}^1 = \phi_H^1$ for every $\lambda\in [0,1]$;
\item $|H^{\lambda}|< \frac{\pi}{2}$ for every $\lambda\in [0,1]$;
\item $H^1(t,w) \geq - \pi |w|^2$ for every $(t,w)\in \T\times \C^{n-1}$.
\end{enumerate}
By (i) and (iii), Proposition \ref{lift} (iii) gives us a symplectomorphism $\widetilde{\phi}: \C^n \rightarrow \C^n$ such that $\tilde{\phi}(D(H))=D(H^1)$, which by (ii), (iv) and Remark \ref{support} is supported in $V$ and by \eqref{AcapV2} satisfies
\[
\tilde{\phi}(A) = (A\setminus V) \cup (D(H^1)\cap V).
\]
Since the closure of $E\setminus V$ is contained in $B$, $C^0$-closeness of $A$ to $B$ implies that $E\setminus V$ is contained in $A\setminus V$. By (v) and the second identity in \eqref{formaE}, $E\cap V$ is contained in $D(H^1)\cap V$, so the above identity implies that $E$ is contained in $\tilde{\phi}(A)$. We conclude that the symplectomorphism $\phi\coloneqq \tilde{\phi}^{-1}$ satisfies $\phi(E)\subset A$, proving (i').

\bigskip

The proof of (ii') is similar to the proof of Theorem \ref{1stcap} from Section \ref{proof-of-thm-1}. Arguing as in that proof, we can apply Theorem \ref{qi} to a 1-parameter family of contact forms joining $\alpha_0$ and $\rho^*_A \alpha_A$ and obtain a symplectomorphism $\psi: \C^n \rightarrow \C^n$ mapping $A$ onto the domain
\[
A' \coloneqq A_{\sqrt{T} e^{\frac{h}{2}}},
\]
see \eqref{varphi}.
By \eqref{normass} and the fact that the maximum of $T e^h$ coincides with the maximum of $T$ (see statement (c) in Theorem \ref{qi}), we get that 
\[
\max_{S^{2n-1}} (\sqrt{T}e^{\frac{h}{2}})=1.
\]
Hence, $A'\subset A_1=B$, proving (ii').

\section{Proof of Proposition \ref{polydiscs}}\label{secpoly}

We now prove Proposition \ref{polydiscs}. Up to exchanging the coordinates and rescaling, we may assume that $a=1\leq b$, so that
\[
\widehat{c}_k(P(1,b)) = k\min\{1,b\}=k,\qquad \forall k\in\N,
\]
see \cite[Proposition 5]{eh90} and \cite[Example 1.7]{gh18}. 

Next we recall that the ellipsoid $E(1,2)$ symplectically embeds into $P(1,1)$ (see \cite[Theorem 1.3]{fm15} and \cite[Lemma 8.2]{sch18}). The existence of this symplectic embedding is quite remarkable, because $E(1,2)$ and $P(1,1)$ have the same volume and are not symplectomorphic, as $\widehat{c}_3(E(1,2))=2$ and $\widehat{c}_3(P(1,1))=3$.

By the ``extension after retraction principle'' from \cite[Proposition 1.3]{sch02}, for every $\theta<1$ there is a global symplectomorphism of $\C^2$ mapping $E(\theta,2\theta)$ into $P(1,1)$, and hence also into $P(1,b)$. Therefore, 
\[
\underline{c}_k(P(1,b)) \geq \widehat{c}_k(E(\theta,2\theta))  \qquad \forall k\in \N,
\]
and by taking the supremum over all $\theta<1$ we obtain
\begin{equation}
\label{lb}
\underline{c}_k(P(1,b)) \geq \widehat{c}_k(E(1,2))  \qquad \forall k\in \N.
\end{equation}
For every $\alpha>1$ we choose $\beta$ large enough, so that $P(1,b) \subset E(\alpha,\beta)$. Then for every $k\in \N$ we have
\[
\overline{c}_k(P(1,b)) \leq \widehat{c}_k(E(\alpha,\beta)) \leq k \alpha,
\]
and by taking the infimum over all $\alpha>1$ we obtain
\begin{equation}
\label{ub}
\overline{c}_k(P(1,b)) \leq k \qquad \forall k\in \N.
\end{equation}
On the other hand, $k=\widehat c_k(P(1,b))\leq \overline{c}_k(P(1,b))$ and we conclude that 
\[
\overline{c}_k(P(1,b))=k,\qquad\forall k\in\N.
\]
Since $\widehat{c}_2(E(1,2))=2$ and every 2-normalized capacity $c$ satisfies $\underline{c}_2\leq c \leq \overline{c}_2$, the inequalities \eqref{lb} and \eqref{ub} for $k=2$ imply that 
\[
c(P(1,b)) = 2 = 2 \min\{1,b\},
\]
proving statement (i) of Proposition \ref{polydiscs}.

In order to prove statement (ii), we use Hutchings' ECH-capacities $c_k^{\rm ECH}$ for four-dimensional domains from \cite{hut11}. By \cite[Proposition 1.2]{hut11}, the $k$-th ECH-capacity of the ellipsoid $E(a,b)$ is the $(k+1)$-th smallest element in the list $(ha+jb)_{h,j\geq 0}$, again allowing repetitions. In particular, $c_k^{\rm ECH}$ is not a $k$-normalized capacity in the sense of this paper.  

Let $E(\alpha,\beta)$ be an ellipsoid which symplectically embeds into $P(1,1)$. As observed in \cite[Remark 1.8]{hut11}, the ECH-capacities $P(1,1)$ and $E(1,2)$ coincide, and hence
\[
c_k^{\rm ECH}(E(\alpha,\beta)) \leq c_k^{\rm ECH}(P(1,1)) = c_k^{\rm ECH}(E(1,2)) \qquad \forall k\in \N.
\]
As proven by McDuff in \cite{mcd11}, the ECH-capacities form a complete list of obstructions for the problem of symplectically embedding an ellipsoid into another, and hence the above inequalities imply that $E(\alpha,\beta)$ symplectically embeds into $E(1,2)$. Therefore,
\[
\widehat{c}_k(E(\alpha,\beta)) \leq  \widehat{c}_k(E(1,2)) \qquad \forall k\in \N,
\]
and by taking the supremum over the space of all ellipsoids $E(\alpha,\beta)$ which embed into $P(1,1)$ by a globally defined symplectomorphism of $\C^n$ we obtain the inequalities
\[
\underline{c}_k(P(1,1)) \leq  \widehat{c}_k(E(1,2)) \qquad \forall k\in \N.
\]
Together with \eqref{lb}, we deduce the identities
\[
\underline{c}_k(P(1,1)) = \widehat{c}_k(E(1,2)) \qquad \forall k\in \N.
\]
Moreover, $\widehat{c}_k(E(1,2))\leq k$ with equality if and only if $k$ equals 1 or 2. This proves statement
(ii) of Proposition \ref{polydiscs}.

\section{Proof of Theorem \ref{achieved}} \label{secachieved}

The proof of statement (i) in Theorem \ref{achieved} makes use of the following well known alternative (see \cite[Theorem II.26]{sul76} for a proof based on the Hahn--Banach theorem and the appendix of \cite{ls94} for a more elementary proof). 

\begin{thm}
\label{alternative}
Let $X$ be a smooth vector field on the closed manifold $M$ and let $K$ be a compact subset of $M$. Then there exists either a probability measure which is supported in $K$ and is invariant under the flow of $X$ or a smooth real function $h$ on $M$ such that $dh[X]>0$ on $K$.
\end{thm}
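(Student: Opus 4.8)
The plan is to prove Theorem \ref{alternative} by the classical convex-separation (Hahn--Banach) argument. The first step is to recast invariance functional-analytically. Since $M$ is closed, the flow $\phi_t$ of $X$ is complete, and because the flow of a vector field commutes with that vector field one has $X(g\circ\phi_t)=(Xg)\circ\phi_t$, hence $\frac{d}{dt}\int_M g\circ\phi_t\,d\mu=\int_M X(g\circ\phi_t)\,d\mu$ for every finite Borel measure $\mu$ supported in $K$ and every $g\in C^\infty(M)$. Consequently $\mu$ is invariant under the flow of $X$ if and only if $\int_M(Xg)\,d\mu=0$ for all $g\in C^\infty(M)$. I would therefore set $W\coloneqq\{\,(Xg)|_K\mid g\in C^\infty(M)\,\}$, a linear subspace of the Banach space $C(K)$, and reduce the statement to the following dichotomy: either $W$ contains a function that is everywhere positive on $K$ --- which is precisely the existence of $h$ with $dh[X]>0$ on $K$ --- or there exists a probability measure on $K$ annihilating $W$, which by the reformulation above is an $X$-invariant probability measure supported in $K$.

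To resolve the dichotomy, suppose $W$ contains no everywhere-positive function. Then the open convex cone $\mathcal O\coloneqq\{u\in C(K)\mid u>0\text{ on }K\}$ (open in the sup norm because $K$ is compact, so a positive continuous function is bounded below) is nonempty and disjoint from the convex set $W$, so by Hahn--Banach there is a nonzero continuous linear functional $\lambda$ on $C(K)$ separating them. Since $W$ is a linear subspace, the separating inequality forces $\lambda$ to vanish on $W$; and, after replacing $\lambda$ by $-\lambda$ if necessary, it forces $\lambda(u)>0$ for every $u\in\mathcal O$, whence $\lambda(u)\ge 0$ for every $u\ge 0$ on $K$ by adding small positive constants and passing to the limit. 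By the Riesz representation theorem $\lambda$ is then a nonzero positive Radon measure supported in $K$; dividing by its total mass (which is positive, since $\lambda$ is a nonzero positive measure on the compact set $K$) yields the desired $X$-invariant probability measure supported in $K$.

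I do not expect a genuine obstacle here, as this is the standard argument behind \cite[Theorem II.26]{sul76}; the only care needed is bookkeeping --- verifying that $\mathcal O$ is open (compactness of $K$), that $X$ commutes with its own flow so that the reformulation of invariance is exact, and that the normalization of $\lambda$ is legitimate. An alternative, more elementary route in the spirit of the appendix of \cite{ls94} would be constructive: if no invariant probability measure on $K$ exists, build $h$ directly, e.g.\ by showing that the long-time empirical averages $\frac1T\int_0^T\delta_{\phi_t(x)}\,dt$ cannot weak-$*$ accumulate on a flow-invariant probability measure carried by $K$, and extracting $h$ from the resulting uniform drift; I would nonetheless present the Hahn--Banach proof as the main one, keeping the constructive argument as a remark.
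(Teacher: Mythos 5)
Your proof is correct and follows precisely the Hahn--Banach separation argument that the paper attributes to Sullivan \cite[Theorem II.26]{sul76} and cites without reproducing; the paper also points to the appendix of \cite{ls94} for the more elementary route you mention as an alternative. All of your steps are sound: the reformulation of invariance as $\int_M (Xg)\,d\mu=0$ for all smooth $g$ (using that the flow of $X$ commutes with $X$), the observation that the cone of strictly positive functions is open in $C(K)$ by compactness of $K$, the vanishing of $\lambda$ on the subspace $W$ and its positivity on the cone, the upgrade to positivity on all non-negative functions by adding small constants, and the appeal to Riesz representation and normalization.
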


Let $\xi$ be a co-oriented contact structure on the closed manifold $M$. A smooth vector field on $M$ is said to be a contact vector field if its flow consists of contactomorphisms of $(M,\xi)$. Contact vector fields are in one-to-one correspondence with smooth real functions on $M$: A smooth real function $H$ on $M$ (the contact Hamiltonian) and a defining contact form $\alpha$ for $\xi$ define the contact vector field $X$ by the identities
\begin{equation}
\label{defX}
\imath_X d\alpha = \bigl( \imath_{R_{\alpha}} dH \bigr) \alpha - dH, \qquad \imath_X \alpha = H,
\end{equation}
see \cite[Section 2.3]{gei08}. We shall make use of the following formula for the conformal factor of the flow of contactomorphisms in terms of the generating contact Hamiltonian.

\begin{lem}
\label{formula}
Let $X$ be the contact vector field induced by the contact Hamiltonian function $H\in C^{\infty}(M)$ together with the contact form $\alpha\in \mathcal{F}(M,\xi)$, and denote by $\phi^t_X$ its flow. Then
\[
(\phi^t_X)^* \alpha = \exp \Bigl( \int_0^t (\imath_{R_{\alpha}} dH) \circ \phi_X^s \, ds\Bigr) \alpha ,
\]
for every $t\in \R$.
\end{lem}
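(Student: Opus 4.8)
The plan is to differentiate the pull-back $(\phi^t_X)^*\alpha$ in $t$, recognize that it satisfies a linear first-order ODE with scalar coefficient, and solve it; the exponential in the statement is then simply the integrating factor.

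First I would compute the Lie derivative $\mathcal{L}_X\alpha$ by Cartan's formula, using the defining identities \eqref{defX} for the contact vector field $X$:
\[
\mathcal{L}_X\alpha = \imath_X d\alpha + d\imath_X\alpha = \bigl((\imath_{R_\alpha}dH)\,\alpha - dH\bigr) + dH = (\imath_{R_\alpha}dH)\,\alpha.
\]
Thus $\mathcal{L}_X\alpha = c\,\alpha$ with $c\coloneqq \imath_{R_\alpha}dH\in C^\infty(M)$; in particular the flow of $X$ preserves $\ker\alpha$, so $(\phi^t_X)^*\alpha$ is indeed a positive multiple of $\alpha$.

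Next, set $u_t\coloneqq(\phi^t_X)^*\alpha$, so that $u_0=\alpha$. Using the standard identity $\partial_t(\phi^t_X)^*\beta = (\phi^t_X)^*\mathcal{L}_X\beta$ for the pull-back of a time-independent form along its generating flow (valid since $M$ is closed, so $\phi^t_X$ is complete), together with the previous computation and the fact that pulling back a function times a form multiplies the form by the pulled-back function, I obtain
\[
\partial_t u_t = (\phi^t_X)^*(c\,\alpha) = (c\circ\phi^t_X)\,(\phi^t_X)^*\alpha = (c\circ\phi^t_X)\,u_t.
\]
Equivalently, writing $u_t = e^{g_t}\alpha$ with $g_0=0$ (legitimate by the previous paragraph), this says $\partial_t g_t = c\circ\phi^t_X$, since $\partial_t(e^{g_t}\alpha) = (\partial_t g_t)e^{g_t}\alpha$. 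Integrating and using $g_0=0$ gives
\[
g_t = \int_0^t (c\circ\phi^s_X)\,ds = \int_0^t (\imath_{R_\alpha}dH)\circ\phi^s_X\,ds,
\]
which is exactly the claimed formula $(\phi^t_X)^*\alpha = \exp\!\bigl(\int_0^t (\imath_{R_\alpha}dH)\circ\phi^s_X\,ds\bigr)\alpha$.

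There is no genuine obstacle here: the only computation of substance is the identity $\mathcal{L}_X\alpha = (\imath_{R_\alpha}dH)\,\alpha$, and once this is established the conclusion is the integration of a scalar linear ODE. The one point worth stating carefully — that a priori $(\phi^t_X)^*\alpha$ need not be a scalar multiple of $\alpha$ — is immediately settled by $\mathcal{L}_X\alpha\in C^\infty(M)\cdot\alpha$, and in any case the ODE computation above produces the exponential form directly.
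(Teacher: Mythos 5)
Your proof is correct and uses essentially the same approach as the paper: compute $\mathcal{L}_X\alpha = (\imath_{R_\alpha}dH)\,\alpha$ via Cartan's formula and the defining identities \eqref{defX}, then integrate the resulting scalar ODE for the conformal factor. The paper simply folds the Lie-derivative computation into the differentiation of $(\phi^t_X)^*\alpha = e^{h_t}\alpha$ rather than doing it as a separate first step, but the content is identical.
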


\begin{proof}
We define $h_t$ by the identity $(\phi^t_X)^* \alpha = e^{h_t} \alpha$. Then $h_0=0$ and differentiation in $t$ gives us
\[
\begin{split}
\partial_t h_t \, e^{h_t} \alpha &= \frac{d}{dt} (\phi^t_X)^* \alpha = (\phi^t_X)^* \mathcal{L}_X \alpha = (\phi^t_X)^* \bigl( \imath_X d\alpha + d \imath_X \alpha \bigr) \\ &=  (\phi^t_X)^* \bigl( ( \imath_{R_{\alpha}} dH) \alpha \bigr) = (\imath_{R_{\alpha}} dH) \circ \phi_X^t \, e^{h_t} \alpha,
\end{split}
\]
where we have used \eqref{defX}. Then
\[
h_t = \int_0^t \partial_s h_s \, ds = \int_0^t (\imath_{R_{\alpha}} dH) \circ \phi_X^s \, ds,
\]
as claimed.
\end{proof}

We can now prove Theorem \ref{achieved}.

\begin{proof}[Proof of Theorem \ref{achieved}.] We are assuming that the contact forms $\alpha,\beta\in \mathcal{F}(M,\xi)$ satisfy
\[
d(\alpha,\beta) = \max_M f - \min_M f \quad \mbox{with} \quad \varphi^* \beta = e^f \alpha,
\]
for some $\varphi\in \mathrm{Cont}_0(M,\xi)$.

\medskip

\noindent (i) If $f$ is constant, then $R_{\varphi^* \beta}$ is a constant multiple of $R_{\alpha}$ and hence the probability measure on $M$ which is determined by the volume form $\alpha\wedge d\alpha^{n-1}$ is invariant for the flows of both these Reeb vector fields. Therefore, we can assume that the compact sets
\[
K_{\min} \coloneqq f^{-1}(\min_M f) \qquad \mbox{and} \qquad  K_{\max} \coloneqq f^{-1}(\max_M f)
\]
are disjoint. Since $df$ vanishes on $K_{\min}$ and $K_{\max}$, the identity
\[
d(\varphi^* \beta) = d(e^f \alpha) = e^f (df\wedge \alpha + d\alpha)
\]
implies that
\[
\begin{split}
R_{\varphi^* \beta} &= e^{-\min f} R_{\alpha} \quad \mbox{on} \quad K_{\min}, \\
R_{\varphi^* \beta} &= e^{-\max f} R_{\alpha} \quad \mbox{on} \quad K_{\max}.
\end{split}
\]
Therefore, any invariant measure for the Reeb flow of $\varphi^* \beta$ which is supported in either $K_{\min}$ or $K_{\max}$ is invariant also for the Reeb flow of $\alpha$. We shall show the existence of a $R_{\varphi^* \beta}$-invariant probability measure supported in $K_{\min}$, the case of $K_{\max}$ being analogous. 

We argue by contradiction and assume that the flow of $R_{\varphi^* \beta}$ admits no invariant probability measure which is supported in $K_{\min}$. Then Theorem \ref{alternative} implies the existence of a smooth real function $H$ on $M$ such that
\[
dH\bigl[R_{\varphi^* \beta}\bigr] >0 \quad \mbox{on} \quad K_{\min}.
\]
By multiplication by a smooth function having the value 1 near $K_{\min}$ and $0$ near $K_{\max}$, we can assume that $H=0$ on a neighborhood $V$ of $K_{\max}$. Let $X$ be the contact vector field which is induced by the contact Hamiltonian $H$ and the contact form $\varphi^* \beta = e^f \alpha$. By Lemma \ref{formula}, we have
\[
(\phi_X^t)^* ( \varphi^* \beta) = e^{g_t} \alpha,
\]
where
\[
g_t\coloneqq f + \int_0^t \bigl( \imath_{R_{\varphi^* \beta}} dH\bigr) \circ \phi_X^s\, ds  = f + t \, dH\bigl[R_{\varphi^* \beta}\bigr] + O(t^2)
\]
for $t\rightarrow 0$. We can find an open neighborhood $U$ of $K_{\min}$ and a positive number $\delta$ such that
\begin{align*}
&dH\bigl[R_{\varphi^* \beta}\bigr]  \geq \delta  &\mbox{on}&\quad U, \\
&f \geq \min f + \delta  &\mbox{on}& \quad M\setminus U, \\
&f \leq \max f - \delta  &\mbox{on}& \quad M\setminus V.
\end{align*}
On $U$, we then have
\[
g_t \geq \min f + \delta t + O(t^2),
\]
while on $M\setminus U$ we have
\[
g_t \geq \min f + \delta - c t + O(t^2),
\]
where $c$ is the supremum norm of the function $dH\bigl[R_{\varphi^* \beta}\bigr]$. We conclude that
\begin{equation}
\label{st1}
g_t \geq \min f + {\textstyle \frac{\delta}{2}} t \quad \mbox{on} \quad M,
\end{equation}
for $t\geq 0$ small enough. Since $H$ vanishes on $V$, so does $X$ and hence $g_t=f$ on $V$ for every $t\in \R$. On $M\setminus V$ we have
\[
g_t \leq \max f - \delta + ct + O(t^2).
\]
Therefore,
\[
g_t \leq \max f  \quad \mbox{on} \quad M,
\]
for $t\geq 0$ small enough. Together with \eqref{st1}, this implies that for $t>0$ sufficiently small the contactomorphism 
\[
\psi\coloneqq \varphi\circ \phi_X^t\in \mathrm{Cont}_0(M,\xi)
\]
satisfies
\[
\psi^* \beta = e^{g_t} \alpha
\]
with
\[
\max g_t - \min g_t < \max f - \min f,
\]
contradicting the assumption that $d(\alpha,\beta) = \max f - \min f$.

\medskip

\noindent (ii) From the definition of the pseudo-metric $d$ we obtain
\begin{equation}
\label{stosc}
d(e^{g_1} \alpha, e^{g_2} \alpha ) =d(e^{g_1} \alpha, e^{g_2-g_1} e^{g_1} \alpha) \leq \max_M (g_2-g_1) - \min_M (g_2-g_1),
\end{equation}
for every $g_1$ and $g_2$ in $C^{\infty}(M)$.
Given the path $\{\psi_t\}_{t\in [0,1]}$ in $\mathrm{Cont}_0(M,\xi)$ such that $\psi_0=\mathrm{id}$ and $\psi_1= \varphi^{-1}$, we consider the curve
\[
\gamma(t) \coloneqq \psi_t^*(e^{tf} \alpha), \qquad t\in [0,1],
\]
which satisfies $\gamma(0)=\alpha$ and $\gamma(1)=\beta$. For any subdivision $0=t_0<t_1< \dots<t_k=1$ we have, by the invariance of the psedo-metric $d$ under the action of $\mathrm{Cont}_0(M,\xi)$ and by \eqref{stosc},
\[
\begin{split}
d(\alpha,\beta) &\leq \sum_{j=1}^k d(\gamma(t_{j-1}),\gamma(t_j)) =  \sum_{j=1}^k d(\psi_{t_{j-1}}^*(e^{t_{j-1}f} \alpha),\psi_{t_j}^*(e^{t_j f} \alpha)) =  \sum_{j=1}^k d(e^{t_{j-1}f} \alpha,e^{t_j f} \alpha) \\ & \leq \sum_{j=1}^k \max_M (t_j f - t_{j-1} f) -  \min_M (t_j f - t_{j-1} f) = \max_M f - \min_M f = d(\alpha,\beta).
\end{split}
\]
The above inequalities are then equalities and hence $\gamma$ has length $d(\alpha,\beta)$ and is therefore a minimizing geodesic.
\end{proof}

\section{Elementary spectral invariants}
\label{spesec}

The proof of Theorem \ref{short-geodesics} in Section \ref{secthm5} is based on a sequence of spectral invariants for contact forms on a closed contact manifold, whose definition and properties are discussed in the present section. These spectral invariants are inspired by the symplectic capacities defined by McDuff--Siegel in \cite{ms23}, by those introduced by Hutchings in \cite{hut22a,hut22b} and by the spectral invariants for Hamiltonian diffeomorphisms defined by one of us in \cite{edt22b}. They are more elementary than, say, the spectral invariants which are induced by contact homology, in the sense that their definition and the proof of their properties require only compactness results for pseudoholomorhpic curves. Their computation involves some knowledge of Gromov--Witten invariants, see Proposition \ref{zoll_computation} below.

Before discussing them, we recall some basic facts and fix notation concerning the symplectization of a contact manifold.
Let $\xi$ be a co-oriented contact structure on the $(2n-1)$-dimensional manifold $M$. A cotangent vector $x \in T^*_p M$ is said to define $\xi(p)$ if $\ker x =\xi(p)$ and $x$ is positive on vectors in $T_p M$ which are positively transverse to $\xi(p)$. We denote by 
\[
\widetilde{M} \coloneqq \{ x \in T^*M \mid x \mbox{ defines } \xi(\pi(x)) \}
\]
the symplectization of $M$, where $\pi: T^*M \rightarrow M$ denotes the footpoint projection. The symplectization $\widetilde{M}$ is a $2n$-dimensional submanifold of $T^*M$. It carries a Liouville $1$-form $\widetilde{\lambda}$ and a symplectic form $\widetilde{\omega} = d\widetilde{\lambda}$ given by the restriction of the standard Liouville $1$-form and symplectic form of $T^*M$. Moreover, $\widetilde{M}$ carries a canonical $\R$-action given by
\[
\R \times \widetilde{M} \rightarrow \widetilde{M}, \qquad (a,\eta) \mapsto e^a \eta.
\]
We denote by $\partial_a$ the vector field on $\widetilde{M}$ generating this action. The choice of a contact form $\alpha\in \mathcal{F}(M,\xi)$ induces the diffeomorphism
\[
\varphi_{\alpha}: \R \times M \rightarrow \widetilde{M}, \qquad (a,p) \mapsto e^a \alpha(p),
\]
such that $\varphi_{\alpha}^* \widetilde\omega=d(e^a \alpha)$ where $a$ is the variable in the first factor of $\R\times M$. Under the identification of $\widetilde{M}$ with $\R\times M$ defined by $\varphi_{\alpha}$, the canonical action of $\R$ is given by translations in the first factor (this justifies the choice of denoting the generating vector field by $\partial_a$) and the contact-type hypersurface 
\[
M_{\alpha} \coloneqq \{ \alpha(p) \mid p\in M \} \subset \widetilde{M}
\]
corresponds to the set $\{a=0\}$. The contact form $\alpha$ and the Reeb vector field $R_{\alpha}$ can be seen as a translation invariant 1-form and vector field on $\R\times M$, and by the diffeomorphism $\varphi_{\alpha}$ induce an $\R$-invariant 1-form and an $\R$-invariant vector field on $\widetilde{M}$:
\[
\widetilde{\alpha} := {\varphi_{\alpha}}_* \alpha, \qquad \widetilde{R}_{\alpha} := {\varphi_{\alpha}}_* R_{\alpha}.
\]

The vector field $\widetilde{R}_{\alpha}$ is tangent to the hypersurface $M_{\alpha}$ and all its images by the canonical $\R$-action, and spans the characteristic distribution of these hypersurfaces. 

Given contact forms $\alpha, \beta\in \mathcal{F}(M,\xi)$ with $\alpha<\beta$, we denote by $\widetilde{M}_{\alpha}^{\beta}\subset \widetilde{M}$ the symplectic cobordism which is given by the region in $\widetilde{M}$ bounded by $M_{\alpha}$ and $M_{\beta}$, that is,
\[
\widetilde{M}_{\alpha}^{\beta} \coloneqq \{ x \in \widetilde{M} \mid \alpha(\pi(x)) < x < \beta(\pi(x)) \mbox{\scalebox{.9}{ on vectors which are positively transverse to }} \xi(\pi(x))\}.
\]
The diffeomorphism $\varphi_\beta$ exhibits the region $\widetilde{M}_\beta^+\subset \widetilde{M}$ above the hypersurface $M_\beta$ as a positive cylindrical end attached to the symplectic cobordism $\widetilde{M}_\alpha^\beta$. Similarly, we can regard the region $\widetilde{M}_\alpha^-\subset \widetilde{M}$ below the hypersurface $M_\alpha$ as a negative cylindrical end of $\widetilde{M}_\alpha^\beta$ via the diffeomorphism $\varphi_\alpha$.

Let us recall the following terminology from \cite{behwz03}. An almost complex structure $J$ on $\widetilde{M}$ is called {\it adjusted} to a contact form $\alpha\in \mathcal{F}(M,\xi)$ if it satisfies the following properties:
\begin{enumerate}
\item $J$ is invariant under the canonical $\R$-action on $\widetilde{M}$.
\item $J\partial_a = \widetilde{R}_\alpha$.
\item The codimension-two distribution $\xi_{\alpha}:= {\varphi_{\alpha}}_*((0)\times \xi)$ is invariant under $J$. Moreover, the restriction of $J$ to $\xi_{\alpha}$ is compatible with $d\widetilde\alpha$.
\end{enumerate}

Suppose that $J$ is an $\alpha$-adjusted almost complex structure on $\widetilde{M}$. Let $u:(\Sigma,j)\rightarrow (\widetilde{M},J)$ be a $J$-holomorphic map whose domain is a punctured Riemann surface. We define the {\it action} $\mathcal{A}_\alpha(u)$ to be
\begin{equation*}
\mathcal{A}_\alpha(u) \coloneqq \int_{\Sigma} u^*d\widetilde\alpha \in [0,+\infty].
\end{equation*}
Note that since $u$ is $J$-holomorphic and $J$ is adjusted to $\alpha$, the $2$-form $u^*d\widetilde\alpha$ is everywhere non-negative on $\Sigma$. Thus $\mathcal{A}_\alpha(u)$ is indeed well-defined an non-negative. Moreover, we define
\begin{equation*}
E_\alpha(u) \coloneqq \sup_{\rho} \int_{\Sigma} u^* \bigl( \rho\circ a_{\alpha}\, da_{\alpha}\wedge\widetilde\alpha \bigr) \in [0,+\infty], \qquad \mbox{with } a_{\alpha} := a \circ \varphi_{\alpha}^{-1},
\end{equation*}
where the supremum is taken over all compactly supported functions $\rho:\R\rightarrow \R_{\geq 0}$ whose integral is equal to $1$. As before, the $2$-form $u^* (\rho\circ a_{\alpha}) da_{\alpha}\wedge\widetilde\alpha$ is everywhere non-negative because $J$ is $\alpha$-adjusted and $u$ is $J$-holomorphic. We say that $u$ is a {\it finite-energy} curve if both $\mathcal{A}_\alpha(u)$ and $E_{\alpha}(u)$ are finite. If $u$ is a finite-energy curve and $\alpha$ is non-degenerate, then $u$ is positively or negatively asymptotic to trivial cylinders over periodic Reeb orbits of $\alpha$ at every puncture of $\Sigma$ which is not a removable singularity of $u$. We define the {\it positive action} $\mathcal{A}_\alpha^+(u)$ to be the sum of the actions (with respect to $\alpha$) of the positive asymptotic Reeb orbits of $u$. Similarly, we define the {\it negative action} $\mathcal{A}_\alpha^-(u)$ to be the sum of the actions of the negative asymptotic Reeb orbits of $u$. By Stokes' theorem, we have $\mathcal{A}_\alpha(u) = \mathcal{A}_\alpha^+(u) - \mathcal{A}_\alpha^-(u)$.

Let $\alpha<\beta$ be contact forms and assume that $J$ is a compatible almost complex structure on $\widetilde{M}$ whose restriction to $\widetilde{M}_\beta^+$ is $\beta$-adjusted and whose restriction to $\widetilde{M}_\alpha^-$ is $\alpha$-adjusted. We define the {\it action}
\begin{equation*}
\mathcal{A}_\alpha^\beta(u)\coloneqq \int_{u^{-1}(\widetilde{M}_\beta^+)} u^*d\widetilde\beta
+ \int_{u^{-1}(\widetilde{M}_\alpha^\beta)} u^*\widetilde{\omega}
+ \int_{u^{-1}(\widetilde{M}_{\alpha}^-)} u^*d\widetilde\alpha \in [0,+\infty].
\end{equation*}
All three summands in this expression are well-defined and non-negative. For the terms involving the cylindrical ends $\widetilde{M}_\beta^+$ and $\widetilde{M}_\alpha^-$ this follows from the fact that $J$ is adjusted and for the cobordism $\widetilde{M}_\alpha^\beta$ this uses that $J$ is compatible. In addition, we define
\begin{equation*}
E_\alpha^\beta(u) \coloneqq \sup_{\rho^+} \int_{u^{-1}(\widetilde{M}_\beta^+)} u^* \bigl( \rho^+\circ a_{\beta}\, da_{\beta} \wedge \widetilde\beta \bigr)
+ \sup_{\rho^-} \int_{u^{-1}(\widetilde{M}_\alpha^-)} u^* \bigl( \rho^-\circ a_{\alpha}\,  da_{\alpha}\wedge \widetilde\alpha \bigr) \in [0,+\infty].
\end{equation*}
Here the suprema are taken over all compactly supported, non-negative functions $\rho^\pm$ on $\R_{\geq 0}$ and $\R_{\leq 0}$, respectively, with integral equal to $1$. Again, a curve $u$ is said to have finite energy if both $\mathcal{A}_\alpha^\beta(u)$ and $E_\alpha^\beta(u)$ are finite. If $u$ has finite energy and $\alpha$ and $\beta$ are non-degenerate, then $u$ is positively asymptotic to periodic Reeb orbits of $\beta$ or negatively asymptotic to periodic Reeb orbits of $\alpha$ at punctures which are not removable singularites. The positive action $\mathcal{A}_\beta^+(u)$ and the negative action $\mathcal{A}_\alpha^-(u)$ are defined as before and we have $\mathcal{A}_\alpha^\beta(u) = \mathcal{A}_\beta^+(u) - \mathcal{A}_\alpha^-(u)$ by Stokes' theorem.

Given a contact form $\alpha\in \mathcal{F}(M,\xi)$, define $\mathcal{J}(\alpha)$ to be the set all compatible almost complex structures $J$ on the symplectization $\widetilde{M}$ with the property that there exist real numbers $s^-<s^+$ such that $J$ is adjusted to $\alpha$ on both $\widetilde{M}_{e^{s^\pm}\alpha}^\pm$. Here the numbers $s^\pm$ are allowed to depend on $J$. For a given $J\in \mathcal{J}(\alpha)$, there are many choices of $s^\pm$ making it adjusted to $\alpha$ on $\widetilde{M}_{e^{s^\pm}\alpha}^\pm$. While the action $\mathcal A^{e^{s^+}\alpha}_{e^{s^-}\alpha}$ and the energy $E^{e^{s^+}\alpha}_{e^{s^-}\alpha}$ themselves depend on the choice of $s^\pm$, the property of being a finite-energy $J$-holomorphic curve does not.

Suppose now that $\alpha\in\mathcal{F}(M,\xi)$ is non-degenerate, that is, all the periodic orbits of the Reeb vector field $R_\alpha$ are non-degenerate. Given an almost complex structure $J\in \mathcal{J}(\alpha)$ and $k$ points $x_1,\dots,x_k\in \widetilde{M}$, we define $\mathcal{M}^J(x_1,\dots,x_k)$ to be the set of all pseudoholomorphic curves $u:(\Sigma,j)\rightarrow (\widetilde{M},J)$ with the following properties:
\begin{enumerate}
\item The domain $(\Sigma,j)$ is a finite union of Riemann spheres with finitely many punctures.
\item The map $u$ is non-constant on every connected component of $\Sigma$.
\item The points $x_1,\dots,x_k$ are contained in the image of $u$.
\item $u$ has finite energy.
\item $u$ has at least one negative asymptotic orbit.
\end{enumerate}
By Stokes' theorem and exactness of $\widetilde{M}$, any curve $u\in \mathcal{M}^J(x_1,\dots,x_k)$ has at least one positive asymptotic orbit. We recall that the positive action $\mathcal{A}^+_{\alpha}(u)$ is the sum of the $\alpha$-actions of these orbits. Given a non-degenerate $\alpha\in \mathcal{F}(M,\xi)$, we define
\begin{equation}\label{eq:ckalpha}
c_k(\alpha) \coloneqq \sup_{\substack{J\in \mathcal{J}(\alpha) \\ x_1,\dots,x_k\in\widetilde{M}}} \inf_{u\in \mathcal{M}^J(x_1,\dots,x_k)} \mathcal{A}^+_{\alpha}(u)\in (0,+\infty].
\end{equation}

The proof of the basic properties of the functions $c_k$ which are listed in the next theorem is similar to the analogous proofs in \cite{ms23, hut22a, hut22b, edt22b}.

\begin{thm}
\label{theorem:spectral_invariants}
The functions given by \eqref{eq:ckalpha} have unique extensions $c_k\colon\mathcal{F}(M,\xi) \rightarrow [0,+\infty]$, $k\geq 0$, satisfying the following properties:
\begin{enumerate}
\item (Scaling) $c_k(r\cdot \alpha)= r\cdot c_k(\alpha)$ for all positive real numbers $r$.
\item (Increasing) $c_k(\alpha)\leq c_\ell(\alpha)$ for $k\leq \ell$.
\item (Sublinearity) $c_{k+\ell}(\alpha) \leq c_k(\alpha) + c_\ell(\alpha)$.
\item (Invariance) $c_k(\varphi^*\alpha) = c_k(\alpha)$ for all $\varphi\in \operatorname{Cont}(M,\xi)$.
\item (Monotonicity): $c_k(\alpha)\leq c_k(\beta)$ whenever $\alpha\leq \beta$.
\item (Lipschitz): If $c_k$ is finite, then $c_k$ is Lipschitz for the $C^0$-norm on $\mathcal{F}(M,\xi)$.
\item (Spectrality): If $c_k(\alpha)$ is finite, then it is a finite sum of actions of periodic Reeb orbits of $\alpha$.
\item (Packing): Suppose that $\alpha< \beta$ and assume that the ball $B^{2n}(A)$ of symplectic width $A$ admits a symplectic embedding into $\widetilde{M}^{\beta}_{\alpha}$. Then
\begin{equation*}
c_{k+1}(\beta) \geq c_k(\alpha) + A.
\end{equation*}
\end{enumerate}
\end{thm}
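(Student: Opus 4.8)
The plan is to establish the listed properties first for non-degenerate $\alpha$, where $c_k(\alpha)$ is given by the formula \eqref{eq:ckalpha}, and then to extend $c_k$ to all of $\mathcal F(M,\xi)$ by $C^0$-continuity. Several properties are formal consequences of the definition. \emph{Scaling} holds because the rescaling $\alpha\mapsto r\alpha$ is realized on $\widetilde M$ by the time-$(\log r)$ map of the canonical $\R$-action: this map carries $\mathcal J(\alpha)$ bijectively onto $\mathcal J(r\alpha)$, carries each $\mathcal M^J(x_1,\dots,x_k)$ onto the corresponding moduli space with translated constraint points, and multiplies all positive actions by $r$, so the two sup-inf expressions differ by the factor $r$. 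The \emph{Increasing} property follows from the inclusion $\mathcal M^J(x_1,\dots,x_\ell)\subseteq \mathcal M^J(x_1,\dots,x_k)$ for $k\le \ell$: adding constraint points only enlarges the inner infimum, and taking suprema gives $c_k(\alpha)\le c_\ell(\alpha)$. For \emph{Sublinearity} one may assume $c_k(\alpha)$ and $c_\ell(\alpha)$ are finite (otherwise there is nothing to prove), so that $\mathcal M^J(x_1,\dots,x_m)\ne\emptyset$ for every $J$ and every $m$-tuple; given $J\in\mathcal J(\alpha)$ and points $z_1,\dots,z_{k+\ell}$, choose $u_1\in\mathcal M^J(z_1,\dots,z_k)$ with $\mathcal A^+_\alpha(u_1)\le c_k(\alpha)+\epsilon$ and $u_2\in\mathcal M^J(z_{k+1},\dots,z_{k+\ell})$ with $\mathcal A^+_\alpha(u_2)\le c_\ell(\alpha)+\epsilon$; the disjoint union $u_1\sqcup u_2$ is again an admissible (disconnected) curve in $\mathcal M^J(z_1,\dots,z_{k+\ell})$ of positive action $\mathcal A^+_\alpha(u_1)+\mathcal A^+_\alpha(u_2)$, which bounds the inner infimum and hence $c_{k+\ell}(\alpha)$ from above. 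For \emph{Invariance}, a (co-orientation preserving) contactomorphism $\varphi$ of $(M,\xi)$ lifts to the cotangent-lift diffeomorphism $\widetilde\varphi$ of $T^*M$, which preserves the Liouville form, restricts to a symplectomorphism of $\widetilde M$ commuting with the canonical $\R$-action, carries $\mathcal J(\alpha)$ onto $\mathcal J(\varphi^*\alpha)$ and holomorphic curves to holomorphic curves preserving all actions; hence $c_k(\varphi^*\alpha)=c_k(\alpha)$.

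The analytic heart is \emph{Monotonicity}: given non-degenerate $\alpha\le\beta$, we must show $c_k(\beta)\ge c_k(\alpha)$. Fix $J_\alpha\in\mathcal J(\alpha)$ and points $x_1,\dots,x_k$ almost realizing $c_k(\alpha)$, placed after a translation in a compact part of the $\alpha$-symplectization. Using that $\alpha$ and $\beta$ are conformal and $\alpha\le\beta$, one builds almost complex structures $J_\beta^{(n)}\in\mathcal J(\beta)$ (so, adjusted to $\beta$ near both ends of $\widetilde M$) which in the middle consist of an arbitrarily long $\alpha$-symplectization neck containing the $x_i$, attached above and below to $\beta$-ends through suitable exact symplectic cobordisms (one of which is $\widetilde M_\alpha^\beta$), with the neck length tending to $\infty$ as $n\to\infty$. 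If $c_k(\beta)<c_k(\alpha)-\epsilon$ there would be, for each $n$, a curve $u_n\in\mathcal M^{J_\beta^{(n)}}(x_1,\dots,x_k)$ with $\mathcal A^+_\beta(u_n)<c_k(\alpha)-\epsilon$; applying the SFT compactness theorem of \cite{behwz03} to the $u_n$ produces a holomorphic building, which one analyzes to exhibit a sub-building lying in the $\alpha$-symplectization, passing through the $x_i$ and having positive $\alpha$-action at least $c_k(\alpha)-\epsilon$, while $\liminf_n\mathcal A^+_\beta(u_n)$ is no smaller than that positive $\alpha$-action by Stokes' theorem (the cobordism contributions being non-negative) --- a contradiction. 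The \emph{Lipschitz} property is then formal: if $\beta=e^f\alpha$ with $\|f\|_{C^0}\le\delta$ then $e^{-\delta}\alpha\le\beta\le e^{\delta}\alpha$, so Monotonicity and Scaling give $e^{-\delta}c_k(\alpha)\le c_k(\beta)\le e^{\delta}c_k(\alpha)$, so $c_k$ is locally Lipschitz on the open set where it is finite and $\equiv+\infty$ elsewhere. Since the non-degenerate contact forms are $C^0$-dense in $\mathcal F(M,\xi)$, these estimates show that $c_k$ extends uniquely to a $C^0$-continuous function $\mathcal F(M,\xi)\to[0,+\infty]$, defined on a degenerate form $\alpha$ as the limit of $c_k(\alpha_n)$ along any non-degenerate sequence $\alpha_n\to\alpha$ in $C^0$, and all properties established so far pass to the limit.

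It remains to discuss \emph{Spectrality} and \emph{Packing}. For Spectrality, assume first $\alpha$ non-degenerate and $c_k(\alpha)<\infty$; choosing $(J^{(n)},x^{(n)})$ with inner infimum tending to $c_k(\alpha)$ and, for each $n$, a near-minimizing curve in $\mathcal M^{J^{(n)}}(x^{(n)})$, SFT compactness yields a limiting building whose positive asymptotic orbits have total $\alpha$-action exactly $c_k(\alpha)$; since $\mathrm{sys}(\alpha)>0$ bounds the number of such orbits and the spectrum of $\alpha$ is closed, the set of finite sums of periods of closed Reeb orbits of $\alpha$ is closed in $[0,+\infty)$, so $c_k(\alpha)$ lies in it; the general case follows by approximation, the orbits of the approximants converging (in period and geometry) to closed Reeb orbits of $\alpha$. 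For Packing, given a symplectic embedding $\iota\colon B^{2n}(A)\hookrightarrow\widetilde M_\alpha^\beta$, one runs the monotonicity construction while additionally arranging $J_\beta^{(n)}$ to agree with the push-forward of the standard complex structure on a slightly shrunk copy of $\iota(B^{2n}(A))$, inserting a neck along the boundary sphere of that ball, and one adds a $(k+1)$-st constraint point at $\iota(0)$. In the limit, the building contains on the one hand a finite-energy sub-building inside the completed ball through $\iota(0)$, whose $\widetilde\omega$-area is at least $A$ by the monotonicity (Lelong) inequality for holomorphic curves through the center of a symplectic ball, and on the other hand a sub-building in the $\alpha$-symplectization through $x_1,\dots,x_k$ with positive $\alpha$-action at least $c_k(\alpha)$; summing via Stokes' theorem yields $\mathcal A^+_\beta\ge c_k(\alpha)+A$ for the approximating curves, hence $c_{k+1}(\beta)\ge c_k(\alpha)+A$.

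The main obstacle throughout is the bookkeeping in the neck-stretched SFT limits: one has to guarantee that the limiting buildings split as claimed --- that the constraint points distribute over the desired levels, that none is absorbed into a trivial cylinder, that every relevant component is non-constant and carries positive action, and that the sub-building living in the $\alpha$-symplectization has positive action at least $c_k(\alpha)$ --- all of which is precisely where the techniques of \cite{ms23,hut22a,hut22b,edt22b} are needed and require care. A secondary point, not needed for the present theorem but essential when one applies the invariants, is the non-emptiness of the moduli spaces $\mathcal M^J(x_1,\dots,x_k)$ (equivalently the finiteness of $c_k$), which is obtained from the non-vanishing of suitable Gromov--Witten invariants of a symplectic cap, as in Proposition \ref{zoll_computation}.
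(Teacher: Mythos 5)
Your proposal follows essentially the same route as the paper: establish the properties for non-degenerate forms from the definition, treat scaling/increasing/sublinearity/invariance formally, deduce Lipschitz from scaling+monotonicity, prove monotonicity and packing by neck-stretching and SFT compactness, and extend by $C^0$-continuity. The overall framework and the key analytic ingredient (SFT compactness plus the monotonicity lemma for the ball through the center in the packing argument) are the ones the paper uses.

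One concrete step in your argument would fail as written. For scaling you assert that the time-$(\log r)$ map $T$ of the canonical $\R$-action carries $\mathcal{J}(\alpha)$ onto $\mathcal{J}(r\alpha)$. It does not: $T$ commutes with the $\R$-action, so $T_*\partial_a=\partial_a$, and $\widetilde{R}_\alpha$ is $\R$-invariant, so $T_*\widetilde{R}_\alpha=\widetilde{R}_\alpha$. Consequently, if $J\partial_a=\widetilde{R}_\alpha$ then $(T_*J)\partial_a=\widetilde{R}_\alpha\ne\frac{1}{r}\widetilde{R}_\alpha=\widetilde{R}_{r\alpha}$, so $T_*J$ is still $\alpha$-adjusted (just on translated ends), not $r\alpha$-adjusted. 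The paper instead uses the dilation $f_r(a,p)=(ra,p)$ of $\R\times M$ (a scaling in the $a$-coordinate, not a translation), which does carry $\partial_a$ to $r\partial_a$ and hence sends $\alpha$-adjusted structures to $r\alpha$-adjusted ones, with the action transformation $\mathcal{A}^+_{r\alpha}(u)=r\,\mathcal{A}^+_\alpha(f_r^*u)$.

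A secondary remark: for spectrality you invoke SFT compactness along a sequence of varying $(J^{(n)},x^{(n)})$, which is not directly justified (SFT compactness works for a fixed, or suitably converging, target structure). The argument you also give in the same breath is the correct one and suffices on its own: each inner infimum is a limit of finite sums of periods, the positive systole and non-degeneracy bound the number of terms in any sum below a given threshold, so the set of such bounded sums is closed, and hence both the inner infima and their supremum lie in this set. This is what the paper means by ``follows from the closedness of the action spectrum''.
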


\begin{rem}
{\rm It follows from the monotonicity and scaling properties that, for fixed $(M,\xi)$ and $k$, the spectral invariant $c_k$ is either finite on all of $\mathcal{F}(M,\xi)$ or identically equal to $+\infty$. If $c_k$ is finite, then the Weinstein conjecture holds for $(M,\xi)$.}
\end{rem}

\begin{proof}
It is enough to check the above properties for non-degenerate contact forms. Then, the Lipschitz property allows us to extend $c_k$ continuously to all, possibly degenerate, contact forms and, by continuity, all properties of $c_k$ continue to hold in the degenerate case.

Let $f_r$ be the diffeomorphism of $\mathbb{R}\times M$ given by $f_r(a,p)= (ra,p)$. The pull-back by $f_r$ yields a bijection
\begin{equation*}
f_r^* : \mathcal{J}(r\alpha) \rightarrow \mathcal{J}(\alpha)
\end{equation*}
and, for every $J\in \mathcal{J}(r\alpha)$ and every $x_1,\dots,x_k\in \widetilde{M}$, a bijection
\begin{equation*}
f_r^* : \mathcal{M}^J(x_1,\dots,x_k) \rightarrow \mathcal{M}^{f_r^*J}(f_r^{-1}(x_1),\dots,f_r^{-1}(x_k)).
\end{equation*}
For every $u\in \mathcal{M}^J(x_1,\dots,x_k)$, the positive action of $u$ with respect to $r\alpha$ is related to 
the positive action of $f_r^*(u)$ with respect to $\alpha$ by $\mathcal{A}^+_{r\alpha}(u)=r\mathcal{A}^+_{\alpha}(f_r^*u)$. This implies the scaling property $c_k(r\alpha) = rc_k(\alpha)$.

It is clear from the definition that $c_k$ is increasing in $k$ since adding point constraints can only make the set of relevant curves smaller.

Sublinearity is also clear because the union of two curves with $k$ and $\ell$ point constraints, respectively, yields a curve with $k+\ell$ point constraints.

Invariance follows because a contactomorphism $\varphi\in \operatorname{Cont}(M,\xi)$ lifts to a symplectomorphism of the symplectization which induces bijections between the relevant sets of auxiliary data and curves.

The Lipschitz property follows from scaling and monotonicity.

Spectrality follows from the fact that the action spectrum of $\alpha$ is a closed subset of $\R$.

The proofs of the monotonicity and packing property are more substantial and rely on a neck-stretching argument and the SFT compactness theorem. Compared to analogous arguments given in \cite{hut22a,edt22b}, the proofs are slightly simplified because we have an apriori genus bound and can apply the SFT compactness theorem directly. As these two proofs are very similar, we just sketch the proof of the packing property.

We fix $\alpha < \beta$ and a symplectically embedded ball $B= B^{2n}(A)$ in $\widetilde{M}_{\alpha}^{\beta}$. Moreover, we fix an almost complex structure $J\in \mathcal{J}(\alpha)$ and $k$ points $x_1,\dots, x_k\in \widetilde{M}$. Let $\epsilon>0$ be arbitrarily small. Our goal is to find a $J$-holomorphic curve $u\in \mathcal{M}^J(x_1,\dots,x_k)$ satisfying
\begin{equation*}
\mathcal{A}^+_{\alpha}(u) + A \leq c_{k+1}(\beta) + \epsilon.
\end{equation*}
Since the constant $\epsilon$, the almost complex structure $J$, and the point constraints are arbitrary, this implies the desired packing inequality.

Note that if we translate the almost complex structure $J$ and the points $x_i$ using the canonical $\R$-action on $\widetilde{M}$, the new moduli space $\mathcal{M}^J(x_1,\dots,x_k)$ is obtained from the old one by applying the same translation. The positive action $\mathcal{A}_\alpha^+(u)$ is not affected by translation. After translating $J$ and the points $x_i$ sufficiently far downwards, we can therefore assume w.l.o.g. that $J$ is adjusted to $\alpha$ on an open neighbourhood of $\widetilde{M}_\alpha^+$ and that all points $x_i$ lie below the hypersurface $M_\alpha$.

Fix a real number $S$ such that $J$ is adjusted to $\alpha$ on an open neighbourhood of $\widetilde{M}_{e^S\alpha}^-$ and such that all $x_i$ lie above the hypersurface $M_{e^S\alpha}$. Let $T\in\R$ be such that the hypersurface $M_{e^T\beta}$ lies below the hypersurface $M_{e^S\alpha}$. Let $J_\alpha^+$ denote the $\alpha$-adjusted almost complex structure whose restriction to $\widetilde{M}_\alpha^+$ agrees with $J$ and let $J_\alpha^-$ denote the $\alpha$-adjusted almost complex structure whose restriction to $\widetilde{M}_{e^s\alpha}^-$ agrees with $J$.

Let $J_\alpha^\beta$ be a compatible almost complex structure which agrees with $J_\alpha^+$ on a neighbourhood of $\widetilde{M}_\alpha^-$ and which is equal to a $\beta$-adjusted almost complex structure $J_\beta^+$ on $\widetilde{M}_\beta^+$. Moreover, we assume that the restriction of $J_\alpha^\beta$ to the ball $B$ agrees with the standard integrable almost complex structure on Euclidean space.

Let $J_\beta^\alpha$ be a compatible almost complex structure agreeing with $J_\alpha^-$ on a neighbourhood of $\widetilde{M}_{e^S\alpha}^+$ and whose restriction to $\widetilde{M}_{e^T\beta}^-$ agrees with a $\beta$-adjusted almost complex structure $J_\beta^-$. Let us fix small tubular neighbourhoods $\operatorname{nb}(M_\alpha)$ and $\operatorname{nb}(M_{e^S\alpha})$ and sequences of diffeomorphisms
\begin{equation*}
\varphi_\nu^+: \widetilde{M}_{e^{-\nu}\alpha}^{e^\nu\alpha} \rightarrow \operatorname{nb}(M_\alpha), \qquad \varphi_\nu^- : \widetilde{M}_{e^{-\nu}\alpha}^{e^\nu\alpha}\rightarrow \operatorname{nb}(M_{e^S\alpha})
\end{equation*}
with the following properties:
\begin{enumerate}
\item $\varphi_\nu^\pm$ maps translates of the hypersurface $M_\alpha$ to translates of $M_\alpha$.
\item $\pi\circ \varphi_\nu^\pm = \pi$ where $\pi:\widetilde{M}\rightarrow M$ is the canonical projection.
\item $d\varphi_\nu^\pm \partial_a = \partial_a$ near the boundary of $\widetilde{M}_{e^{-\nu}\alpha}^{e^\nu\alpha}$.
\end{enumerate}
Let us now define a sequence of almost complex structures $J^\nu\in \mathcal{J}(\beta)$ as follows:
\begin{enumerate}
\item $J^\nu$ agrees with $J_\alpha^\beta$ above $\operatorname{nb}(M_\alpha)$.
\item $J^\nu$ agrees with $(\varphi_\nu^+)_*J_\alpha^+$ on $\operatorname{nb}(M_\alpha)$.
\item $J^\nu$ agrees with $J$ between $\operatorname{nb}(M_\alpha)$ and $\operatorname{nb}(M_{e^S\alpha})$.
\item $J^\nu$ agrees with $(\varphi_\nu^-)_*J_\alpha^-$ on $\operatorname{nb}(M_{e^S\alpha})$.
\item $J^\nu$ agrees with $J_\beta^\alpha$ below $\operatorname{nb}(M_{e^S\alpha})$.
\end{enumerate}
Let $x_{k+1}$ denote the center of $B$. By the definition of $c_{k+1}(\beta)$ given in \eqref{eq:ckalpha}, there exists a sequence of $J^\nu$-holomorphic curves $u^\nu\in \mathcal{M}^{J^\nu}(x_1,\dots,x_{k+1})$ such that
\begin{equation*}
\mathcal{A}^+_{\beta}(u^\nu) \leq c_{k+1}(\beta) + \epsilon.
\end{equation*}
The SFT compactness theorem implies that, after possibly passing to a subsequence, the sequence of curves $u^\nu$ converges to a pseudo-holomorphic building $\underline{u} = (u_1,\dots,u_N)$. Each $u_i$ consists of finitely many pseudoholomorphic nodal punctured finite-energy spheres. In the following we will ignore nodal points and ghost bubbles and regard each $u_i$ as a finite union of non-constant pseudoholomorphic punctured finite-energy spheres. There exist indices $1\leq r < s < t \leq N$ such that
\begin{enumerate}
\item $u_i$ is $J_\beta^+$-holomorphic and positively and negatively asymptotic to closed orbits of $R_\beta$ for all $1\leq i < r$.
\item $u_r$ is $J_\alpha^\beta$-holomorphic and passes through $x_{k+1}$. It is positively asymptotic to closed orbits of $R_\beta$ and negatively asymptotic to closed orbits of $R_\alpha$.
\item $u_i$ is $J_\alpha^+$-holomorphic and positively and negatively asymptotic to closed orbits of $R_\alpha$ for all $r<i<s$.
\item $u_s$ is $J$-holomorphic and passes through $x_1,\dots,x_k$. It is positively and negatively asymptotic to closed orbits of $R_\alpha$.
\item $u_i$ is $J_\alpha^-$-holomorphic and positively and negatively asymptotic to closed orbits of $R_\alpha$ for all $s<i<t$.
\item $u_t$ is $J_\beta^\alpha$-holomorphic. It is positively asymptotic to closed orbits of $R_\alpha$ and negatively asymptotic to closed orbits of $R_\beta$.
\item $u_i$ is $J_\beta^-$-holomorphic and positively and negatively asymptotic to closed orbits of $R_\beta$ for all $t<i\leq N$.
\end{enumerate}
Note that since every $u^\nu$ has at least one negative end, each $u_i$ must have at least one negative end. Therefore, $u_s\in\mathcal{M}^J(x_1,\dots,x_k)$. We need to estimate the positive action $\mathcal{A}^+_\alpha(u_s)$. For every $i$, the negative asymptotic orbits of $u_i$ match the positive asymptotic orbits of $u_{i+1}$. This implies that
\begin{equation*}
\sum\limits_{i=1}^{r-1} \mathcal{A}_\beta(u_i) + \mathcal{A}_\alpha^\beta(u_r) + \sum\limits_{i=r+1}^{s-1} \mathcal{A}_\alpha(u_i) +\mathcal{A}_\alpha^+(u_s)  = \mathcal{A}_\beta^+(u_1).
\end{equation*}
Since $\mathcal{A}_\beta^+(u^\nu) \leq c_{k+1}(\beta) + \varepsilon$ for all $\nu$ and the positive ends of $u^\nu$ are converging to the positive ends of $u_1$, we have
\begin{equation*}
\mathcal{A}_\beta^+(u_1) \leq c_{k+1}(\beta) + \varepsilon.
\end{equation*}
The actions $\mathcal{A}_\beta(u_i)$ are non-negative for $1\leq i<r$. Similarly, the actions $\mathcal{A}_\alpha(u_i)$ are non-negative for $r<i<s$. Since $u_r$ passes through the center of the ball $B$ equipped with the standard complex structure, $\mathcal{A}_\alpha^\beta(u_r)\geq A$ by the monotonicity lemma. Combining the above estimates yields
\begin{equation*}
\mathcal{A}_\alpha^+(u_s) + A \leq c_{k+1}(\beta) + \varepsilon
\end{equation*}
as desired.
\end{proof}

In the next result, we determine the value of the first two spectral invariants for Zoll contact forms.

\begin{prop}
\label{zoll_computation}
Let $\alpha_0\in \mathcal{F}(M,\xi)$ be a Zoll contact form with minimal period $\operatorname{sys}(\alpha_0)$. Then we have
\begin{equation*}
c_0(\alpha_0) = c_1(\alpha_0) = \operatorname{sys}(\alpha_0).
\end{equation*}
\end{prop}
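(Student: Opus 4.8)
The plan is to establish the two–sided chain
$\operatorname{sys}(\alpha_0)\le c_0(\alpha_0)\le c_1(\alpha_0)\le \operatorname{sys}(\alpha_0)$, the middle inequality being the ``Increasing'' property in Theorem \ref{theorem:spectral_invariants}. For the lower bound, recall that for a non-degenerate $\alpha$ and any $J\in\mathcal{J}(\alpha)$ every curve in $\mathcal{M}^J$ (no point constraints) has at least one positive asymptotic Reeb orbit, by Stokes' theorem and the exactness of $\widetilde M$; hence $\mathcal{A}^+_\alpha(u)\ge\operatorname{sys}(\alpha)$ for all such $u$ and therefore $c_0(\alpha)\ge\operatorname{sys}(\alpha)$. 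To reach the degenerate Zoll form $\alpha_0$ I would approximate it by non-degenerate forms $\alpha_m=f_m\alpha_0$ with $f_m\to1$ in $C^\infty$; by Proposition \ref{proposition:normal_form_abbondandolo_benedetti_short_orbits}(vii) the Reeb orbits of $\alpha_m$ are either long or short, so $\operatorname{sys}(\alpha_m)\to\operatorname{sys}(\alpha_0)$, and---once finiteness of $c_0$ is known, which follows from the upper bound below---the ``Lipschitz'' property gives $c_0(\alpha_0)=\lim_m c_0(\alpha_m)\ge\lim_m\operatorname{sys}(\alpha_m)=\operatorname{sys}(\alpha_0)$. Together with $c_0\le c_1$ this reduces the Proposition to the upper bound $c_1(\alpha_0)\le\operatorname{sys}(\alpha_0)$.

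For the upper bound I would first use the ``Lipschitz'' property, together with the dichotomy that each $c_k$ is finite on all of $\mathcal{F}(M,\xi)$ or identically $+\infty$, to reduce to the following: for every $\epsilon>0$, every non-degenerate $\alpha$ that is $C^\infty$-close enough to $\alpha_0$ satisfies $c_1(\alpha)\le\operatorname{sys}(\alpha_0)+\epsilon$. Fix such an $\alpha$, an auxiliary $J\in\mathcal{J}(\alpha)$ and a point $x_1\in\widetilde M$; the task is to produce some $u\in\mathcal{M}^J(x_1)$ with $\mathcal{A}^+_\alpha(u)\le\operatorname{sys}(\alpha_0)+\epsilon$. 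Here I exploit the Zoll structure: the free $S^1$-action of $R_{\alpha_0}$ presents $(M,\ker\alpha_0)$ as the prequantization circle bundle of a closed symplectic manifold $(B,\omega_B)$, with $B=M/S^1$. After translating $(J,x_1)$ by the canonical $\R$-action---which affects neither $\mathcal{M}^J(x_1)$ up to isomorphism nor positive actions---I may assume $x_1$ lies on a contact-type hypersurface near which $J$ is adjusted to $\alpha$. I then cap off the symplectization $\widetilde M$ along this hypersurface: the negative side is filled by a neighbourhood of the zero section $B_0$ of the associated complex line bundle $L\to B$, and the positive side by a thin concave cap containing a section at infinity $B_\infty$. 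This yields a closed symplectic manifold $X\cong\mathbb{P}(L\oplus\underline{\mathbb C})$ whose form agrees with the symplectization near the hypersurface and whose fibre class $F$ has $\omega_X$-area as close to $\operatorname{sys}(\alpha_0)$ as desired: the half of $F$ from $B_0$ to the hypersurface has area equal to the $\alpha$-action of an $S^1$-fibre, which is $C^0$-close to $\operatorname{sys}(\alpha_0)$, while the half inside the cap can be made to have area $<\epsilon$.

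The Gromov--Witten input is that the genus-$0$, one-pointed GW invariant of $X$ in the class $F$ is non-zero---indeed equal to $1$: $X\to B$ is a $\mathbb{P}^1$-bundle, a generic point lies on a unique fibre, and no energy escapes into lower classes since $\omega_B$ is positive on base spheres. Extending $J$ to a compatible $\widehat J$ on $X$ (arbitrary away from the hypersurface, translation-invariant near it), there is a $\widehat J$-holomorphic sphere in class $F$ through $x_1$. Stretching the neck along the hypersurface and invoking the SFT compactness theorem---using the a priori genus bound, exactly as in the proof of the ``Packing'' property---yields a holomorphic building whose bottom level lives in the completion of the $B_0$-filling, whose top level in the completion of the $B_\infty$-cap, and whose intermediate levels are symplectization levels of $\widetilde M$. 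The level through $x_1$ is such a symplectization level; it is $J$-holomorphic, has finite energy, and must carry at least one negative end (the building descends to $B_0$), so it defines an element $u\in\mathcal{M}^J(x_1)$; and the telescoping of actions across matching ends---all building pieces having non-negative action, the total being $\omega_X(F)$---gives $\mathcal{A}^+_\alpha(u)\le\omega_X(F)\le\operatorname{sys}(\alpha_0)+\epsilon$. Letting $\alpha\to\alpha_0$ and then $\epsilon\to0$ completes the argument.

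The hard part is the construction in the last two paragraphs: arranging the capped manifold $X$ so that its fibre area is genuinely governed by $\operatorname{sys}(\alpha_0)$, choosing the stretching hypersurface and the translation so that the point constraint lands on an honest symplectization level of the limit building rather than inside one of the caps, and keeping control of actions and energies through the neck-stretch. By contrast, the non-vanishing of the relevant Gromov--Witten invariant is classical, and the passage to the degenerate form $\alpha_0$ via continuity is routine once finiteness of $c_1$ has been established.
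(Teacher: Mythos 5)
Your proposal is substantially the same argument as the paper's: compactify the symplectization by a symplectic cut to the ruled manifold $X\cong\mathbb{P}(L\oplus\underline{\mathbb C})$, invoke non-vanishing of the genus-zero one-pointed Gromov--Witten invariant in the fibre class $F$ (the paper's route to this is to observe that $X$ carries a Hamiltonian $S^1$-action with fixed-point divisors $B_\pm$ and to cite McDuff's Proposition~4.3 in \cite{mcd09}; your heuristic ``a generic point lies on a unique fibre'' is what that result rigorizes), then neck-stretch along two hypersurfaces flanking $x_1$ and extract the symplectization level passing through $x_1$, which lands in $\mathcal M^J(x_1)$ because the building must reach both $B_+$ and $B_-$. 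The genuine difference is the endgame. You tighten the positive cap so that the ratio $e^{S-S'}$ governing $\mathcal A^+_\alpha(u)$ is within $1+\epsilon$, which indeed works since $\max(\alpha/\alpha_0)\to 1$ as $\alpha\to\alpha_0$ in $C^0$, and you pair this with a separate easy lower bound $c_0\ge\operatorname{sys}$. The paper instead fixes $e^{S-S'}=3/2$ once and for all, so only proves $c_1(\alpha_0)\le\frac32\operatorname{sys}(\alpha_0)$, and closes the gap using the spectrality axiom: the action spectrum of a Zoll form is $\operatorname{sys}(\alpha_0)\cdot\N$, and the only multiple of $\operatorname{sys}(\alpha_0)$ in $\bigl(0,\frac32\operatorname{sys}(\alpha_0)\bigr]$ is $\operatorname{sys}(\alpha_0)$ itself; by the increasing property this pins down both $c_0$ and $c_1$. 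The paper's variant is slicker (no optimization of the cut level, no independent lower bound), while yours avoids relying on spectrality; both are correct. One small inaccuracy in your description: the $S^1$-fibres are Reeb orbits of $\alpha_0$, not of $\alpha$, so the area of the lower hemisphere of $F$ is not ``the $\alpha$-action of an $S^1$-fibre''; it is determined by the cut level measured in $\alpha_0$-coordinates, and $C^0$-closeness of $\alpha$ to $\alpha_0$ is what makes it near $\operatorname{sys}(\alpha_0)$.
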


\begin{proof}
Let $\alpha$ be a $C^\infty$-small non-degenerate perturbation of $\alpha_0$. Fix an almost complex structure $J\in \mathcal{J}(\alpha)$ and a point $x\in \widetilde{M}$. Our goal is to show that there exists a $J$-holomorphic curve $u\in \mathcal{M}^J(x)$ with positive action bounded from above by
\[
\mathcal{A}^+_{\alpha}(u)\leq \tfrac{3}{2}\operatorname{sys}(\alpha_0).
\]Since $J$ and $x$ are arbitrary, this implies that $c_1(\alpha)\leq \frac{3}{2}\operatorname{sys}(\alpha_0)$. It then follows from continuity of $c_1$ that $c_1(\alpha_0)\leq \frac{3}{2}\operatorname{sys}(\alpha_0)$. The increasing and the spectrality properties imply that we must have $c_0(\alpha_0) = c_1(\alpha_0) = \operatorname{sys}(\alpha_0)$.

We construct the pseudoholomorphic curve $u$ by neck-stretching pseudoholomorphic spheres in a ruled symplectic manifold $(X,\omega)$ which contains $M_\alpha$ as a contact type hypersurface. The symplectic manifold $(X,\omega)$ is constructed by performing a so-called symplectic cut, see \cite{ler95}, as follows. Identify the symplectization $\widetilde{M}$ with $\R\times M$ by means of the diffeomorphism $\varphi_{\alpha_0}$, so that $\widetilde\omega$ is given by $d(e^s\alpha_0)$. Fix a constant $S>0$. The boundary components of the compact region $[-S,S]\times M$ in the symplectization are foliated by closed characteristic leaves. The symplectic manifold $(X,\omega)$ is obtained from $[-S,S]\times M$ by collapsing each of these leaves to a point. Let $B$ denote the quotient of $M$ by the free $S^1$-action induced by the Zoll Reeb flow. The space $X$ naturally fibers over $B$. The fibers are symplectic $2$-spheres of the form $([-S,S]\times \gamma) / \sim$ where $\gamma$ is a closed Reeb orbit of $\alpha_0$ and $\sim$ collapses the circles $\left\{ \pm S \right\}\times \gamma$ to points. Under the quotient, the two boundary components of $[-S,S]\times M$ yield smooth symplectic divisors $B_\pm$ of $X$ both diffeomorphic to $B$. Clearly, $X$ contains $M_\alpha$ as a contact type hypersurface. The function $(s,p)\mapsto -\operatorname{sys}(\alpha_0)e^s$ on $\R\times M$ induces a smooth Hamiltonian $H$ on $X$ which generates a Hamiltonian $S^1$-action. The two divisors $B_\pm$ are fixed point sets of this action and the restriction of the action to the complement $X\setminus (B_+\cup B_-)$ is free. In particular, the assumptions in \cite[Proposition 4.3]{mcd09} are satisfied. Let $F\in H_2(X;\Z)$ denote the homology class of a fiber. Then \cite[Proposition 4.3]{mcd09} implies that the Gromov--Witten invariant $\langle \mathrm{pt} \rangle_{F}^X$ does not vanish. We conclude that, for every $\omega$-compatible almost complex structure $\widetilde{J}$ on $X$ and for every point $p\in X$, there exists a nodal $\widetilde{J}$-holomorphic sphere passing through $p$ and representing the homology class $F$.

Let us now choose a constant $S'$ large enough such that the almost complex structure $J\in \mathcal{J}(\alpha)$ fixed at the beginning of the proof is adjusted to $\alpha$ on neighbourhoods of the sets $\widetilde{M}_{e^{\pm S'}\alpha}^\pm$. Moreover, we assume that the point $x$ strictly lies between the two hypersurfaces $M_{e^{\pm S'}\alpha}$. We set the constant $S$ appearing in the construction of $X$ to be $S\coloneqq S'+\ln(3/2)$. Since $\alpha$ is $C^\infty$-close to $\alpha_0$, the hypersurfaces $M_{e^{\pm S'}\alpha}$ lie between the hypersurfaces $M_{e^{\pm S}\alpha_0}$ and do not intersect them. 

We define a sequence of compatible almost complex structures $J^\nu$ on $X$ as follows. Let $J_\alpha^\pm$ denote the $\alpha$-adjusted almost complex structure whose restriction to $\widetilde{M}_{e^{\pm S'}}^\pm$ agrees with $J$. Let $X_\pm\subset X$ denote the connected component of $X\setminus M_{e^{\pm S'}\alpha}$ containing $B_\pm$. Let $\widehat{X}_\pm$ denote the symplectic completion of $X_\pm$ obtained by attaching the cylindrical end $\widetilde{M}_{e^{\pm S'}\alpha}^{\mp}$. Let $J_\pm$ be a compatible almost complex structure on $\widehat{X}_\pm$ agreeing with $J_\alpha^\pm$ in a neighbourhood of the cylindrical end $\widetilde{M}_{e^{\pm S'}\alpha}^{\mp}$. 

Let us fix small tubular neighbourhoods $\operatorname{nb}(M_{e^{\pm S'}\alpha})$ and sequences of diffeomorphisms
\begin{equation*}
\varphi_\nu^\pm: \widetilde{M}_{e^{-\nu}\alpha}^{e^\nu\alpha} \rightarrow \operatorname{nb}(M_{e^{\pm S'}\alpha})
\end{equation*}
with the following properties:
\begin{enumerate}
\item $\varphi_\nu^\pm$ maps translates of the hypersurface $M_\alpha$ to translates of $M_\alpha$.
\item $\pi\circ \varphi_\nu^\pm = \pi$ where $\pi:\widetilde{M}\rightarrow M$ is the canonical projection.
\item $d\varphi_\nu^\pm \partial_a = \partial_a$ near the boundary of $\widetilde{M}_{e^{-\nu}\alpha}^{e^\nu\alpha}$.
\end{enumerate}
Let us now define the compatible almost complex structure $J^\nu$ on $X$ as follows:
\begin{enumerate}
\item $J^\nu$ agrees with $J_+$ above $\operatorname{nb}(M_{e^{S'}\alpha})$.
\item $J^\nu$ agrees with $(\varphi_\nu^+)_*J_\alpha^+$ on $\operatorname{nb}(M_{e^{S'}\alpha})$.
\item $J^\nu$ agrees with $J$ between $\operatorname{nb}(M_{e^{S'}\alpha})$ and $\operatorname{nb}(M_{e^{-S'}\alpha})$.
\item $J^\nu$ agrees with $(\varphi_\nu^-)_*J_\alpha^-$ on $\operatorname{nb}(M_{e^{-S'}\alpha})$.
\item $J^\nu$ agrees with $J_-$ below $\operatorname{nb}(M_{e^{-S'}\alpha})$.
\end{enumerate}
By the above discussion, we may choose, for every $\nu$, a $J^\nu$-holomorphic sphere $u^\nu$ representing the homology class of the fiber $F$ and passing through the point $x\in X$. By the SFT compactness theorem, this sequence converges, after possibly passing to a subsequence, to a pseudoholomorphic building $\underline{u} = (u_1,\dots,u_N)$. Every $u_i$ consists of finitely many nodal punctured finite-energy pseudoholomorphic spheres. As before, we ignore nodes and ghost bubbles and regard each $u_i$ as a finite union of punctured finite energy spheres.

The homological intersection number $F\cdot B_{\pm}$ is equal to $1$. This implies that every $u^\nu$ intersects both $B_\pm$. Therefore, $u_1$ is a $J_+$-holomorphic curve in $\widehat{X}_+$ and its intersection number with $B_+$ is equal to $1$. Similarly, $u_N$ is a $J_-$-holomorphic curve in $\widehat{X}_-$ intersecting $B_-$ with intersection number $1$. Since the negative asymptotic limit orbits of $u_i$ match the positive asymptotic limit orbits of $u_{i+1}$, we conclude that every $u_i$ for $1<i<N$ must have both positive and negative asymptotic orbits. Let $1<r<N$ be the index such that $u_r$ is a $J$-holomorphic curve in $\widetilde{M}$ passing through $x\in\widetilde{M}$. Since $u_r$ has a negative asymptotic orbit, we have $u_r\in \mathcal{M}^J(x)$. It remains to estimate $\mathcal{A}_\alpha^+(u)$.

The complement of $B_+$ in $\widehat{X}_+$ is exact. Indeed, restricting the natural Liouville $1$-form $\widetilde{\lambda}$ on $\widetilde{M}$ to the interior of $\widetilde{M}_{e^S\alpha_0}^-$ yields a primitive $\lambda$ of the symplectic form $\omega_+$ on $\widehat{X}_+\setminus B_+$. Clearly, the primitive $\lambda$ does not extend over $B_+$. In fact, if $\Sigma$ is a compact oriented surface, possibly with boundary, and $v:\Sigma\rightarrow \widehat{X}_+$ is a smooth map sending $\partial\Sigma$ into the complement of $B_+$, then
\begin{equation*}
\int_{\Sigma}v^*\omega_+ = \int_{\partial\Sigma} v^*\lambda + e^S\operatorname{sys}(\alpha_0) v\cdot B_+.
\end{equation*}
In order to make sense of the homological intersection numer $v\cdot B_+$, we regard $v$ as a $2$-chain with fixed boundary in the complement of $B_+$. Applying this identity to $u_1$ yields
\begin{equation*}
\int_{u_1^{-1}(\widetilde{M}^-_{e^{S'}\alpha})} (\varphi_\alpha^{-1}\circ u_1)^* d\alpha + \int_{u_1^{-1}(X_+)} u_1^*\omega_+ = -\mathcal{A}_{e^{S'}\alpha}^-(u_1) + e^S\operatorname{sys}(\alpha_0).
\end{equation*}
Since $u_1$ is pseudoholomorphic, the left-hand side of this identity is non-negative. We deduce that
\begin{equation}
e^{S'}\mathcal{A}_\alpha^-(u_1) = \mathcal{A}_{e^{S'}\alpha}^-(u_1)\leq e^S\operatorname{sys}(\alpha_0).
\end{equation}
It follows from our choice $S=S'+\ln(3/2)$ that $\mathcal{A}_\alpha^-(u_1) \leq \frac{3}{2}\operatorname{sys}(\alpha_0)$.

The curve $u_i$ is a $J_\alpha^+$-holomorphic curve in $\widetilde{M}$ for $1<i<r$. We have
\begin{equation*}
\sum\limits_{i=2}^{r-1} \mathcal{A}_\alpha(u_i) + \mathcal{A}_\alpha^+(u_r) = \mathcal{A}_{\alpha}^-(u_1).
\end{equation*}
Since the actions $\mathcal{A}_{\alpha}(u_i)$ are non-negative for $1<i<r$, we deduce that
\begin{equation*}
\mathcal{A}_\alpha^+(u_r) \leq \mathcal{A}_{\alpha}^{-}(u_1) \leq \frac{3}{2}\operatorname{sys}(\alpha_0).
\end{equation*}
This concludes the proof of the proposition.
\end{proof}

Next, we compute the first two spectral invariants for contact forms which are $C^2$-small, $S^1$-invariant perturbations of a Zoll one.

\begin{prop}
\label{c_kqi}
Let $\alpha_0\in \mathcal{F}(M,\xi)$ be a Zoll contact form with minimal period $\mathrm{sys}(\alpha_0)$. If the smooth function $S:M\rightarrow \R$ is $C^2$-close to the constant function 1 and $S^1$-invariant, then 
\[
c_0(S \alpha_0) = \mathrm{sys}(\alpha_0) \min_M S, \qquad c_1(S \alpha_0) = \mathrm{sys}(\alpha_0) \max_M S.
\]
\end{prop}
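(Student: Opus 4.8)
The strategy is to compute the two spectral invariants separately, combining monotonicity and scaling (Theorem \ref{theorem:spectral_invariants}, properties 1 and 5) with the packing property (property 8) and Proposition \ref{zoll_computation}. Write $m\coloneqq\min_M S$ and $M'\coloneqq\max_M S$ (both close to $1$). Since $S$ is $S^1$-invariant, it descends to a function $\bar S$ on the base $B$ of the Zoll fibration, and the key geometric input is that $S\alpha_0$ admits closed Reeb orbits of period exactly $\mathrm{sys}(\alpha_0)\,S(\gamma)$ for each fibre $\gamma$ on which $S$ is critical — in particular over the points where $\bar S$ attains its min and its max — and that the Reeb flow of $S\alpha_0$ is itself Zoll only when $S$ is constant, but still has a very controlled dynamics. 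It may be cleanest to first handle the case where $S$ is locally constant near its minimum and maximum (which one can always arrange by a $C^2$-small perturbation, using continuity of $c_0,c_1$), so that $S\alpha_0$ genuinely restricts to a rescaled Zoll contact form over small balls in $B$.

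\emph{Lower bound for $c_1$.} The inequality $c_1(S\alpha_0)\geq \mathrm{sys}(\alpha_0)\,M'$ should come from the packing property applied ``downward'': since $m\leq S$, we have the cofiltration $m\,\alpha_0 \leq S\alpha_0$, and the symplectic cobordism $\widetilde M_{m\alpha_0}^{S\alpha_0}$ contains, near the fibre over the maximum of $\bar S$, a region symplectomorphic to a ball $B^{2n}(A)$ with $A$ as close as we like to $\mathrm{sys}(\alpha_0)(M'-m)$ — this is exactly the local symplectic-cut picture used in the proof of Proposition \ref{zoll_computation}, where a slab $[-S,S]\times\gamma$ collapses to a $2$-sphere of area $\mathrm{sys}(\alpha_0)\cdot(\text{height})$. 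Then property 8 gives $c_1(S\alpha_0)\geq c_0(m\alpha_0)+A = \mathrm{sys}(\alpha_0)m + \mathrm{sys}(\alpha_0)(M'-m) = \mathrm{sys}(\alpha_0)M'$, using the scaling $c_0(m\alpha_0)=m\,c_0(\alpha_0)=m\,\mathrm{sys}(\alpha_0)$ from Proposition \ref{zoll_computation}.

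\emph{Upper bound for $c_1$ and the value of $c_0$.} For the upper bound, I would reprise the neck-stretching argument of Proposition \ref{zoll_computation}, but now performing the symplectic cut adapted to $S\alpha_0$: one builds a ruled symplectic manifold $X$ containing $M_{S\alpha_0}$ as a contact-type hypersurface, whose fibres are $2$-spheres and whose fibre class $F$ has nonvanishing Gromov--Witten invariant $\langle\mathrm{pt}\rangle_F^X$ by \cite[Proposition 4.3]{mcd09} (the Hamiltonian $S^1$-action survives because $S$ is $S^1$-invariant, so the collapsing construction is still equivariant). The crucial quantitative point is that the cut above the hypersurface $M_{e^{S'}S\alpha_0}$ can be chosen so that the area picked up by $u_1$ on the ``cap'' is at most $e^{S'}\cdot(\text{something})$ with total bounded by $\mathrm{sys}(\alpha_0)\max_M S$ plus an arbitrarily small error, because the Reeb orbits that $u_1$ can be negatively asymptotic to over the max-region have period $\mathrm{sys}(\alpha_0)M'$; the slab height is tuned to $S'+\ln$ of a factor approaching $1$ rather than $3/2$, since here there is no extra loss. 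Passing to the limit building and reading off the component $u_r$ through the point $x$ gives $\mathcal A^+_{S\alpha_0}(u_r)\leq \mathrm{sys}(\alpha_0)M'+\epsilon$, hence $c_1(S\alpha_0)\leq\mathrm{sys}(\alpha_0)M'$. Combined with the lower bound this gives the claimed value of $c_1$. The value $c_0(S\alpha_0)=\mathrm{sys}(\alpha_0)m$ then follows: $c_0\leq c_1$ by property 2, $c_0(S\alpha_0)\geq c_0(m\alpha_0)=\mathrm{sys}(\alpha_0)m$ by monotonicity (property 5), and for the reverse inequality one uses that $\mathrm{sys}(S\alpha_0)=\mathrm{sys}(\alpha_0)m$ together with spectrality (property 7) and the fact that the same neck-stretching argument, without the extra point constraint, produces a curve with positive action an arbitrarily small amount above $\mathrm{sys}(\alpha_0)m$ — concretely, stretch along $M_{e^{-S'}S\alpha_0}$ near the \emph{minimum} fibre.

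\emph{Main obstacle.} The delicate part is the quantitative control in the upper bounds: one must set up the symplectic cut and the slab heights so that the area contributions of the top and bottom buildings are governed by, respectively, $\max_M S$ and $\min_M S$ with \emph{no} loss (unlike the harmless $3/2$ factor in Proposition \ref{zoll_computation}, which there only needed to beat $\mathrm{sys}(\alpha_0)$ by spectrality). This forces one to localize the neck-stretching near the extremal fibres of $\bar S$ and to check that the limit building's component $u_1$ (or $u_N$) really sees only orbits of the extremal period, which in turn uses the controlled Reeb dynamics of $S\alpha_0$ from Theorem \ref{qi}(d)--(e) / Proposition \ref{proposition:normal_form_abbondandolo_benedetti_short_orbits} together with a careful choice of the cut-locus. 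Making the reduction to $S$ locally constant near its extrema rigorous (so that the cut is genuinely smooth) and ensuring the Gromov--Witten nonvanishing still applies in the $S^1$-invariant-but-nonconstant setting are the other points requiring care, though both are routine given \cite{mcd09}.
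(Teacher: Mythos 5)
Your overall outline correctly identifies the two ingredients — a ball embedding of width roughly $\mathrm{sys}(\alpha_0)(\max_M S-\min_M S)$ into a cobordism between rescalings of $\alpha_0$, and the packing axiom fed by Proposition~\ref{zoll_computation} — and your $c_1$ lower bound matches the paper's Claim~2 (modulo the harmless technicality that you should take $\underline S\alpha_0$ with $\underline S<\min_M S$ rather than $m\alpha_0$ itself, since the packing axiom needs a strict inequality of contact forms and the cobordism $\widetilde M_{m\alpha_0}^{S\alpha_0}$ pinches at the minimum fibre). But your plan for the remaining bounds misses the decisive structural trick and replaces it with an argument that cannot be made to close.

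The upper bound $c_1(S\alpha_0)\le \mathrm{sys}(\alpha_0)\max_M S$ requires no pseudoholomorphic curves at all: by monotonicity, scaling, and Proposition~\ref{zoll_computation}, one has
\[
c_1(S\alpha_0)\le c_1\bigl((\max_M S)\,\alpha_0\bigr)=(\max_M S)\,c_1(\alpha_0)=\mathrm{sys}(\alpha_0)\max_M S.
\]
Likewise the lower bound $c_0(S\alpha_0)\ge\mathrm{sys}(\alpha_0)\min_M S$ is immediate from spectrality (the minimum of the action spectrum of $S\alpha_0$ is $\mathrm{sys}(\alpha_0)\min_M S$). The only nontrivial point beyond your $c_1$ lower bound is the upper bound on $c_0$, and here is where your approach breaks down.

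You propose to establish $c_0(S\alpha_0)\le \mathrm{sys}(\alpha_0)\min_M S+\epsilon$ by redoing the symplectic cut and Gromov--Witten/neck-stretching argument of Proposition~\ref{zoll_computation} with the cut ``adapted to $S\alpha_0$.'' This cannot work: the symplectic cut collapses the characteristic circles of the boundary hypersurface, and those circles are given by the Reeb flow of the relevant contact form. For non-constant $S$ the Reeb flow of $S\alpha_0$ is not periodic, so cutting along a rescaled copy of $M_{S\alpha_0}$ does not produce a smooth closed symplectic manifold, and the Gromov--Witten input is unavailable. Perturbing $S$ to be locally constant near its extrema does not help, because the cut is a global operation and the hypersurface must be Besse globally. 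If instead you cut along constant multiples of the Zoll form $\alpha_0$ (the only way to make the construction go through), then the top cutting level must lie above $M_{(\max_M S)\alpha_0}$, and the action-bookkeeping in the proof of Proposition~\ref{zoll_computation} yields a bound of the form $c_0(S\alpha_0)\lesssim \mathrm{sys}(\alpha_0)\max_M S$ — off by a factor $\max_M S/\min_M S$. This is not a removable $3/2$-style slack: it is exactly the wrong constant, and spectrality cannot save it since the action spectrum of $S\alpha_0$ contains values between $\mathrm{sys}(\alpha_0)\min_M S$ and $\mathrm{sys}(\alpha_0)\max_M S$.

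What you are missing is that the packing inequality $c_{k+1}(\beta)\ge c_k(\alpha)+A$ can be read in \emph{both} directions. Applying it to the cobordism $\widetilde M_{S\alpha_0}^{\overline S\alpha_0}$ (with $\overline S>\max_M S$ close to $\max_M S$), which again contains a ball of width close to $\mathrm{sys}(\alpha_0)(\max_M S-\min_M S)$ near the minimum fibre of $S$, gives
\[
c_0(S\alpha_0)+\mathrm{sys}(\alpha_0)(\max_M S-\min_M S)\le c_1(\overline S\alpha_0)=\overline S\,\mathrm{sys}(\alpha_0),
\]
and letting $\overline S\to\max_M S$ yields exactly $c_0(S\alpha_0)\le\mathrm{sys}(\alpha_0)\min_M S$. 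This elementary bookkeeping, together with the two ball-embedding claims (both proved by an explicit local Darboux chart near the minimum fibre and Traynor's embedding construction), is all that is needed; no new neck-stretching is involved.
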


\begin{proof}
We abbreviate $\max S\coloneqq\max_M S$ and $\min S\coloneqq\min_M S$. Our proof is based on the following claims.

\medskip

\noindent{\sc Claim 1.} If $S$ is $C^2$-close to 1 and the positive number $\overline{S}> \max S$ is close to $\max S$, then there is a symplectic embedding of the $2n$-dimensional ball of symplectic width $\mathrm{sys}(\alpha_0) (\max S - \min S)$ into the cobordism $\widetilde{M}_{S \alpha_0}^{\overline{S} \alpha_0}$.

\medskip

\noindent{\sc Claim 2.} If $S$ is $C^2$-close to 1 and the positive number $\underline{S}< \min S$ is close to $\min S$, then there is a  symplectic embedding of the $2n$-dimensional ball of symplectic width $\mathrm{sys}(\alpha_0) (\max S - \min S)$ into the cobordism $\widetilde{M}_{\underline{S} \alpha_0}^{S \alpha_0}$.

\medskip

Before proving these claims, we show how they allow us to determine $c_0(S \alpha_0)$ and $c_1(S \alpha_0)$. By Claim 1, the packing property applied to $\widetilde{M}_{S \alpha_0}^{\overline{S} \alpha_0}$, the scaling property and Proposition \ref{zoll_computation}, we have
\[
c_0(S\alpha_0) + \mathrm{sys}(\alpha_0)(\max S- \min S) \leq c_1 (\overline{S} \alpha_0) = \overline{S}\,  c_1(\alpha_0) = \overline{S} \, \mathrm{sys}(\alpha_0),
\]
and hence, letting $\overline S$ tend to $\max S$, we get
\[
c_0(S\alpha_0) \leq \mathrm{sys}(\alpha_0) \min S.
\]
Since the right-hand side is the minimum of the action spectrum of $S\alpha_0$, the spectrality property implies that the above inequality is an equality. Similarly, the packing property applied to $\widetilde{M}_{\underline{S} \alpha_0}^{S\alpha_0}$, the scaling property and Proposition \ref{zoll_computation} imply
\[
c_1(S {\alpha_0}) \geq c_0(\underline{S} \alpha_0) + \mathrm{sys}(\alpha_0)  (\max S - \min S) = \underline{S}\, c_0(\alpha_0) + \mathrm{sys}(\alpha_0)  (\max S - \min S).
\]
Letting $\underline S$ tend to $\min S$, we get $c_1(S\alpha_0)\geq \mathrm{sys}(\alpha_0) \max S$. By monotonicity, we have
\[
c_1(S\alpha_0) \leq c_1( (\max S) \, \alpha_0) = \mathrm{sys}(\alpha_0)  \max S,
\]
and the desired formula for $c_1(S \alpha_0)$ follows.

\medskip

There remains to prove the above claims. We prove Claim 1, the proof of Claim 2 being analogous. Up to rescaling, we may assume that $\mathrm{sys}(\alpha_0) = 1$.
We identify the symplectization $\widetilde{M}$ with $\R\times M$ via $\varphi_{\alpha_0}$, so that $\widetilde{\omega} = d (e^a \alpha_0)$ and 
\[
M_{S\alpha_0}^{\overline{S} \alpha_0} = \{(a,q)\in \R\times M \mid S(q) < e^a < \overline{S} \}.
\]
Denote by $p: M \rightarrow Y$ the $\T$-bundle, $\T\coloneqq \R/\Z$, whose base is the quotient of $M$ by the free action of the Reeb flow of $\alpha_0$. Then $Y$ is a $(2n-2)$-dimensional manifold with a symplectic form $\omega$ satisfying $p^* \omega = d\alpha_0$. By means of a Darboux trivializing chart, we can identify a neighborhood $(U_y,\omega)$ of any $y$ in $Y$ with a ball $\widehat{B}_{\rho}$ of radius $\rho>0$ in $(\C^{n-1},\widetilde{\omega}_0)$, where $\rho$ is independent of $y$. Moreover, we can identify $p^{-1}(U_y)$ with $\T \times \widehat{B}_\rho$ in such a way that $\alpha_0$ corresponds to $\widehat{\lambda}_0 + dt$. Here, $t$ denotes the variable in $\T$ and $\widehat{\lambda}_0$ is the standard primitive of the standard symplectic form $\widehat{\omega}_0$ on $\C^{n-1}$ (we are using the notation from Section \ref{lifts}).

Correspondingly, we obtain an identification of the open subset $\widetilde{U}_y\coloneqq \R \times p^{-1}(U_y)$ of $\widetilde{M}$ with $\R \times \T \times \widehat{B}_{\rho}$, with respect to which the 1-form $e^a \alpha_0$ reads $e^a (dt + \widehat{\lambda}_0)$. The diffeomorphism
\[
\Psi_y: \{ (s,t,w)\in (0,+\infty)\times \T \times \C^{n-1} \mid |w| < s^{\frac{1}{2}} \rho\} \rightarrow \R \times \T \times \widehat{B}_{\rho} = \widetilde{U}_y \subset \widetilde{M}
\]
given by $\Psi_y(s,t,w)\coloneqq(\log s,t,s^{-\frac12}w)$ satisfies
\[
\Psi_y^* (e^a \alpha_0) = \Psi_y^*( e^a(dt + \widehat{\lambda}_0)) = s\, dt + \widehat{\lambda}_0,
\]
and hence
\[
\Psi_y^* \widetilde{\omega} = \Psi_y^* d  (e^a \alpha_0)  = ds\wedge dt + \widetilde{\omega}_0.
\]
Now we choose $y\in Y$ so that the $\T$-invariant function $S$ achieves its minimum at the circle $p^{-1}(y)$. In the above identifications, we then have $S(t,0) = \min S$ for every $t\in \T$, 
\[
\widetilde{M}_{S \alpha_0}^{\overline{S} \alpha_0} \cap \widetilde{U}_y = \{ (a,t,u)\in \R \times \T\times \widehat{B}_\rho \mid S(t,u) < e^a < \overline{S} \},
\]
and hence it is enough to symplectically embed a $2n$-ball of symplectic width $\max S - \min S$ into the open subset
\[
V \coloneqq \Psi^{-1}_y ( \widetilde{M}_{S \alpha_0}^{\overline{S} \alpha_0} \cap \widetilde{U}_y ) = \{ (s,t,w) \in (0,+\infty) \times \T \times \C^{n-1} \mid S(t,s^{-\frac{1}{2}} w) < s < \overline{S}, \; |w|< s^{\frac{1}{2}} \rho\}
\]
of $((0,+\infty)\times \T \times \C^{n-1}, ds\wedge dt + \widehat{\omega}_0)$. Since $S(t,0) = \min S$, we have
\[
S(t,u) \leq \min S + c \|\nabla^2 S\|_{C^0} |u|^2 \qquad \forall (t,u)\in \T\times \widehat{B}_{\rho},
\]
for a suitable constant $c>0$. By this bound, the inequality $s>S(t,s^{-\frac{1}{2}}w)$ appearing in the representation of $V$ is implied by
\[
s > \min S + c \|\nabla^2 S\|_{C^0} \frac{|w|^2}{s},
\]
which is equivalent to
\[
s > \frac{1}{2} \min S \left( 1 + \sqrt{1 + \frac{4c \|\nabla^2 S\|_{C^0}}{(\min S)^2} |w|^2 } \right).
\]
Since $\sqrt{1+r}\leq 1 + \frac{r}{2}$ for every $r\geq 0$, the above inequality is implied by
\[
s> \min S + \frac{c}{\min S} \|\nabla^2 S\|_{C^0} |w|^2,
\] 
and we conclude that $V$ contains the set
\[
V' \coloneqq \{(s,t,w) \in (0,+\infty) \times \T \times \C^{n-1} \mid f(|w|^2) < s < \overline{S} \},
\]
where
\[
f(r) \coloneqq \max \left\{ \min S + \frac{c\|\nabla^2 S\|_{C^0}}{\min S} r, \frac{r}{\rho^2} \right\}.
\]
If $S$ is $C^2$-close to $1$ and $\overline{S}$ is close to $\max S$, then both $\overline{S} - \min S$ and $\|\nabla^2 S\|_{C^0}$ are small and we can guarantee that
\[
f(r) \leq \min S + \pi r \qquad \forall r\in \bigl[0, {\textstyle \frac{1}{\pi}} ( \overline{S} - \min S) \bigr].
\]
This implies that $V'$ contains the set
\[
V''\coloneqq \bigl\{ (s,t,w) \in (0,+\infty) \times \T \times \C^{n-1} \mid \min S + \pi |w|^2 < s < \overline{S} \bigr\}.
\]
Let $\Phi: \Omega\rightarrow \C^* \times \C^{n-1}$ be the symplectomorphism defined in \eqref{Phi}. By \eqref{normPhi}, the symplectomorphism
\[
(s,t,w) \mapsto \Phi\bigr( s - \min S - \pi,t,w \bigr)
\]
maps $V''$ onto $B(\rho)\setminus\{z_1=0\}$, where $B(\rho)$ is the ball of $\C^n$ of symplectic width $\rho\coloneqq\overline{S} - \min S$ and $\{z_1=0\}$ is a complex hyperplane. By Traynor's embedding construction (see Construction 3.2 in \cite{tra95}), for every $\rho'\in(0,\rho)$, there is a symplectic embedding $\Psi\colon B(\rho')\to B(\rho)\setminus \{z_1=0\}$. In the simple case we are considering here, the embedding is constructed as follows. First, let $\psi\colon D(\rho')\to D(\rho)\setminus\{0\}$ be an area preserving embedding such that 
	\[
	\pi|\psi(\zeta)|^2\leq \pi |\zeta|^2+(\rho-\rho'),\qquad \forall\,\zeta\in D(\rho').
	\]
Then, \[
\Psi(\zeta_1,\ldots,\zeta_n)\coloneqq (\psi(\zeta_1),\zeta_2,\ldots,\zeta_n),\qquad (\zeta_1,\ldots,\zeta_n)\in B(\rho')
\]
is the required symplectic embedding.
Choosing $\rho'=\max S-\min S<\overline S-\min S=\rho$, we deduce that $V\supset V''$ contains a symplectic ball of width $\max S - \min S$.
\end{proof}

\section{Proof of Theorem \ref{short-geodesics}}
\label{secthm5}
Let $\alpha\in \mathcal{F}(M,\xi)$ be $C^2$-close to the Zoll contact form $\alpha_0\in \mathcal{F}(M,\xi)$. By Theorem \ref{qi}, there is $\varphi\in \mathrm{Cont}_0(M,\xi)$ such that
\[
\varphi^* \alpha = T e^h \alpha_0,
\]
where the functions $T$ and $h$ satisfy the conditions (a)-(e) listed in that theorem. We denote by $T_{\min}(\alpha)=\mathrm{sys}(\alpha)$ and $T_{\max}(\alpha)$ the minimum and the maximum of the periods of the short closed orbits of $R_{\alpha}$, which by (e) satisfy
\begin{equation}
\label{Tminmax}
T_{\min}(\alpha) = \mathrm{sys}(\alpha_0) \min T, \qquad  T_{\max}(\alpha) = \mathrm{sys}(\alpha_0) \max T.
\end{equation}
Then $\varphi^* \alpha = e^f \alpha_0$ with
\[
f\coloneqq h + \log T,
\]
and by property (c) of Theorem \ref{qi} we have
\[
\max_M f - \min_M f = \log \frac{T_{\max}(\alpha)}{T_{\min}(\alpha)}.
\]
This shows that
\[
d(\alpha_0,\alpha) \leq \log \frac{T_{\max}(\alpha)}{T_{\min}(\alpha)}.
\]
In order to conclude our proof, we need to prove the reverse inequality
\begin{equation}
\label{reverse}
d(\alpha_0,\alpha) \geq \log \frac{T_{\max}(\alpha)}{T_{\min}(\alpha)}.
\end{equation}
We shall deduce this from the following claim.

\medskip

\noindent {\sc Claim.} $c_0(\alpha)=T_{\min}(\alpha)$ and $c_1(\alpha)=T_{\max}(\alpha)$.

\medskip

Postponing its proof, we show how this claim implies \eqref{reverse}. Let $\psi\in \mathrm{Cont}_0(M,\xi)$ and let $g$ be the smooth positive function such that
\[
\psi^* \alpha = e^g \alpha_0.
\]
Denoting by 
\[
a\coloneqq e^{\min_ M g} = \min_M e^g, \qquad b\coloneqq e^{\max_M g} = \max_M e^g,
\]
we have
\[
a \alpha_0 \leq \psi^* \alpha \leq b \alpha_0.
\]
Using Proposition \ref{zoll_computation}, the scaling, monotonicity and invariance properties of the spectral invariants, and the above claim, we compute
\[
a\, \mathrm{sys}(\alpha_0) = c_0 (a \, \alpha_0) \leq c_0(\psi^* \alpha) = c_0(\alpha) = T_{\min}(\alpha),
\]
and
\[
b\, \mathrm{sys}(\alpha_0) = c_1 (b \, \alpha_0) \geq c_1(\psi^* \alpha) = c_1(\alpha) = T_{\max}(\alpha).
\]
Therefore
\[
\max_M g - \min_M g = \log \frac{b}{a} \geq \log \frac{T_{\max}(\alpha)}{T_{\min}(\alpha)}.
\]
Since the contactomorphism $\psi\in \mathrm{Cont}_0(M,\xi)$ was arbitrary, the above inequality implies \eqref{reverse}.

\medskip

There remains to prove the claim. In the notation from the proof of Theorem \ref{qi}, we have
\[
v^*u^*\alpha = S\alpha_0 + \eta.
\]
By invariance, this implies that
\[
c_k(\alpha) = c_k(S\alpha_0+\eta),
\]
where $c_k$ is viewed as a function on $\mathcal{F}(M,\operatorname{ker}(S\alpha_0+\eta))$ on the right hand side. For $t\in [0,1]$, we define
\[
\gamma_t \coloneqq S\alpha_0 + t\eta.
\]
Since $S$ is $C^1$-close to $1$ and $\eta$ and $d\eta$ are both $C^0$-small, $\gamma_t$ is a contact form for all $t$. Moreover, $\gamma_t$ and $d\gamma_t$ are $C^0$-close to $\alpha_0$ and $d\alpha_0$, respectively.

\medskip
\noindent {\sc Claim.} The short closed Reeb orbits of $\gamma_t$ and their periods are independent of $t\in [0,1]$.

\begin{proof}
Let us split the Reeb vector field $R_{\gamma_t}$ of $\gamma_t$ into a component $X_t$ parallel to $R_{\alpha_0}$ and a component $Y_t$ tangent to $\xi = \operatorname{ker}\alpha_0$. Since $\gamma_t$ and $d\gamma_t$ are $C^0$-close to $\alpha_0$ and $d\alpha_0$, the component $X_t$ is $C^0$-close to $R_{\alpha_0}$ and the component $Y_t$ is $C^0$-small. Let us write $X_t= a_t R_{\alpha_0}$ for some function $a_t$ which is $C^0$-close to $1$. Then a direct computation shows that
\[
\iota_{X_t}d\gamma_t = a_t(-dS + tF[dS])
\]
and therefore
\[
\iota_{Y_t}d\gamma_t = a_t(dS - tF[dS]).
\]
Recall that $dS=-B[V]$ where $B:\operatorname{ker}\alpha_0\rightarrow (\operatorname{ker}\alpha_0)^*$ is $C^0$-close to the isomorphism induced by $d\alpha_0$ and $V$ is a section of $\operatorname{ker}\alpha_0$ which is $C^1$-small. Let $B_t:\operatorname{ker}\alpha_0\rightarrow (\operatorname{ker}\alpha_0)^*$ denote the isomorphism induced by $d\gamma_t$, which is also $C^0$-close to the isomorphism induced by $d\alpha_0$. We have
\[
Y_t = a_t B_t^{-1}(B[V]-tF[B[V]]) = a_t Q_t[V]
\]
where $Q_t$ is a suitable isomorphism of $\operatorname{ker}\alpha_0$ which is $C^0$-close to the identity. We may therefore write $R_{\gamma_t} = a_t (R_{\alpha_0} + Q_t[V])$. The argument in the proof of Proposition \ref{proposition:normal_form_abbondandolo_benedetti_short_orbits} (ii) shows that the short closed Reeb orbits of $\gamma_t$ are precisely the zero set of $V$ or equivalently the critical set of $S$. The action of such an orbit $\gamma$ is simply given by $S(\gamma)\operatorname{sys}(\alpha_0)$ and in particular independent of $t$.
\end{proof}

The above claim shows that the short action spectrum of $\gamma_t$ is independent of $t$. The function $t\mapsto c_k(\gamma_t)$ for $k\in \left\{ 0,1 \right\}$ is continuous and takes values in this short action spectrum. Since the action spectrum has measure zero, this implies that $t\mapsto c_k(\gamma_t)$ is constant for $k\in \left\{ 0,1 \right\}$. In particular, we obtain
\[
c_0(\alpha) = c_0(S\alpha_0) \quad \text{and} \quad c_1(\alpha) = c_1(S\alpha_0).
\]
Next, we define $\beta_t \coloneqq (1-t)S\alpha_0 + t\alpha_0$. Since $S$ is $C^1$-close to $1$, this is a contact form for all $t\in [0,1]$.

\medskip
\noindent {\sc Claim.} The short closed Reeb orbits of $\beta_t$ are independent of $t\in (0,1]$. If $\gamma$ is such a short closed Reeb orbit, then its action with respect to $\beta_t$ is given by $((1-t) S(\gamma)+t) \operatorname{sys}(\alpha_0)$.

\begin{proof}
The proof of this claim is similar to the proof of the claim above.
\end{proof}

It follows from continuity of the spectral invariants, the above claim and the fact that action spectra have measure zero that
\[
c_k(\beta_t) = (1-t)c_k(S\alpha_0) + t\operatorname{sys}(\alpha_0) \quad \text{for $k\in \left\{ 0,1 \right\}$}.
\]
If $t>0$ is sufficiently small, then $(1-t)S+t$ is $C^2$-small. Therefore, we may apply Proposition \ref{c_kqi} to the contact form $\beta_t$ for $t>0$ small. We conclude that
\[
c_0(\beta_t) = \min_M ((1-t)S + t) \operatorname{sys}(\alpha_0) \quad\text{and}\quad c_1(\beta_t) = \max_M ((1-t)S + t) \operatorname{sys}(\alpha_0).
\]
In combination with the above expression for $c_k(\beta_t)$, this implies
\[
c_0(S\alpha_0) = \min_M S\operatorname{sys}(\alpha_0) = T_{\min}(\alpha) \quad\text{and}\quad c_1(S\alpha_0) = \max_M S\operatorname{sys}(\alpha_0)= T_{\max}(\alpha).
\]
This concludes the proof of the claim.

\appendix

\section{Appendix: Construction of exotic ellipsoids}

\paragraph{The statements.} Denote by $\alpha_0=\lambda_0|_{S^3}$ the standard contact form on the 3-sphere, see \eqref{lambda_0}, by $\xi_{\mathrm{st}}=\ker \alpha_0$ the induced co-oriented contact structure, and by $\mathcal{F}(S^3,\xi_{\mathrm{st}})$ the space of contact forms on $S^3$ which define $\xi_{\mathrm{st}}$. This space can be identified with the space of positive smooth functions on $S^3$ and is endowed with the $C^{\infty}$-topology. If $(a,b)$ is a pair of positive real numbers, we denote by $E(a,b)$ the open ellipsoid
\[
E(a,b)\coloneqq \Bigr\{ (z_1,z_2) \in \C^2 \mid \frac{\pi}{a} |z_1|^2 + \frac{\pi}{b} |z_2|^2 < 1 \Bigr\},
\]
and by $\varepsilon_{a,b} \in \mathcal{F}(S^3,\xi_{\mathrm{st}})$ the contact form which is obtained by pulling the restriction of $\lambda_0$ to $\partial E(a,b)$ back to $S^3$ by the radial projection, i.e.\
\[
\epsilon_{a,b} (z) = \left( \frac{\pi}{a}|z_1|^2 + \frac{\pi}{b} |z_2|^2 \right)^{-1} \alpha_0(z), \qquad \forall z=(z_1,z_2)\in S^3.
\]
The Reeb flow of $\varepsilon_{a,b}$ has the form
\[
\phi_{\varepsilon_{a,b}}^t (z_1,z_2) = \bigl(e^{\frac{2\pi}{a} t i} z_1, e^{\frac{2\pi}{b} t i} z_2 \bigr).
\]
The circles $\Gamma_1\coloneqq \{z_2=0\}\subset S^3$ and $\Gamma_2\coloneqq \{z_1=0\}\subset S^3$ are closed Reeb orbits of $\varepsilon_{a,b}$ of period $a$ and $b$, respectively. The Reeb flow of $\varepsilon_{a,b}$ preserves the 2-tori
\[
\{|z_1|=\cos \beta, \; |z_2|=\sin \beta\} \subset S^3, \qquad \forall \beta\in \left( 0, {\textstyle \frac{\pi}{2}} \right),
\]
on which the flow is periodic if and only if the number $\frac{a}{b}$ is rational. 

The first aim of this Appendix is to present a self-contained proof of the following result.

\begin{thm}
\label{katok}
Let $(a_0,b_0)$ be a pair of positive numbers. For every neighborhood $U$ of $(a_0,b_0)$ in $(0,+\infty)^2$ and every neighborhood $\mathcal{U}$ of $\varepsilon_{a_0,b_0}$ in $\mathcal{F}(S^3,\xi_{\mathrm{st}})$ there exist $(a,b)\in U$, a contact form $\alpha$ in $\mathcal{U}$ and a sequence $(\varphi_j)$ in $\mathrm{Cont}_0(S^3,\xi_{\mathrm{st}})$, each having support away from $\Gamma_1 \cup \Gamma_2$, such that:
\begin{enumerate}[(i)]
\item $(\varphi_j^* \varepsilon_{a,b})$ converges to $\alpha$ in the $C^\infty$-topology;
\item the only closed Reeb orbits of $\alpha$ are $\Gamma_1$ and $\Gamma_2$, with periods $a$ and $b$, respectively;
\item the Reeb flow of $\alpha$ has a dense orbit.
\end{enumerate}
\end{thm}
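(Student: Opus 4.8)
The plan is to construct the contact form $\alpha$ and the sequence $(\varphi_j)$ by the Anosov--Katok conjugation method, carried out in the contact category on $(S^3,\xi_{\mathrm{st}})$. The essential observation is that the Reeb flow of $\varepsilon_{a,b}$ commutes with the obvious $T^2$-action on $S^3\subset\C^2$ by rotations of the two coordinates. One conjugates inductively: having built $\varphi_j=g_1\circ g_2\circ\cdots\circ g_j$ with each $g_i\in\mathrm{Cont}_0(S^3,\xi_{\mathrm{st}})$ supported away from $\Gamma_1\cup\Gamma_2$, one chooses the parameters $(a_{j+1},b_{j+1})$ rationally dependent and very close to $(a_j,b_j)$, so that the Reeb flow of $\varepsilon_{a_{j+1},b_{j+1}}$ is periodic, with period $q_j$ going to infinity. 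One then picks $g_{j+1}$ to be a contactomorphism that commutes with the circle action generated by this periodic flow (so that conjugation by $\varphi_{j+1}$ does not disturb the structure already arranged at earlier stages), is supported away from $\Gamma_1\cup\Gamma_2$, and that ``mixes'' the torus fibers $\{|z_1|=\cos\beta\}$ enough to push the $j$-th member of a countable dense family of open sets $\{O_m\}$ in $S^3\setminus(\Gamma_1\cup\Gamma_2)$ close to each other under the flow. The limit $(a,b)=\lim(a_j,b_j)$ and $\alpha=\lim\varphi_j^*\varepsilon_{a_j,b_j}$ will then be the desired data, once one checks convergence.

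The key steps, in order, are: (1) set up the inductive scheme, specifying at step $j$ a triple $(a_j,b_j,\varphi_j)$ together with the periodic circle action $S^1_j$ generated by the Reeb flow of $\varepsilon_{a_j,b_j}$; (2) establish the convergence of $(a_j,b_j)$ — trivial by making the increments summable — and the $C^\infty$-convergence of $\alpha_j:=\varphi_j^*\varepsilon_{a_j,b_j}$, which is the quantitative heart: since $\alpha_{j+1}=\varphi_j^*g_{j+1}^*\varepsilon_{a_{j+1},b_{j+1}}$ and $g_{j+1}$ commutes with $S^1_j$, one writes $\alpha_{j+1}-\alpha_j=\varphi_j^*\bigl(g_{j+1}^*(\varepsilon_{a_{j+1},b_{j+1}}-\varepsilon_{a_j,b_j})\bigr)+\varphi_j^*\bigl(g_{j+1}^*\varepsilon_{a_j,b_j}-\varepsilon_{a_j,b_j}\bigr)$, and the second term vanishes because $g_{j+1}$ is $S^1_j$-equivariant and $\varepsilon_{a_j,b_j}$ is $S^1_j$-invariant; so it suffices to choose $(a_{j+1},b_{j+1})$ so close to $(a_j,b_j)$ that $\|\varphi_j^* g_{j+1}^*(\varepsilon_{a_{j+1},b_{j+1}}-\varepsilon_{a_j,b_j})\|_{C^j}<2^{-j}$, which is possible since $\varphi_j$ and $g_{j+1}$ are already fixed when this closeness is imposed; (3) arrange density of an orbit: fix a countable basis $\{O_m\}_{m\in\N}$ of $S^3\setminus(\Gamma_1\cup\Gamma_2)$ and at stage $j$ choose $g_{j+1}$ so that the $\varphi_j$-conjugated periodic Reeb flow of $\varepsilon_{a_j,b_j}$ carries a fixed reference point through a $2^{-j}$-neighborhood of $O_{j}$ — possible because the periodic flow has very long period and the equivariant contactomorphisms act transitively enough on each torus fiber; a standard $\limsup$/Baire-type argument then shows the reference orbit of $\alpha$ is dense; (4) rule out other closed Reeb orbits: the contactomorphisms $g_i$ are supported away from $\Gamma_1\cup\Gamma_2$, so $\Gamma_1,\Gamma_2$ remain closed orbits of $\alpha$ with unchanged periods $a,b$; conversely, a third closed Reeb orbit of $\alpha$ would, for $j$ large, give a nearly closed orbit of $\alpha_j$ on a torus fiber of irrationally-sloped $\varepsilon_{a_j,b_j}$, a contradiction when pushed to the limit, since the periods of any closed orbits of $\alpha_j$ other than $\Gamma_1,\Gamma_2$ tend to infinity; (5) check $\alpha\in\mathcal{U}$ and $(a,b)\in U$, immediate by the summable estimates. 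Finally one notes each $\varphi_j\in\mathrm{Cont}_0$ since each $g_i$ can be taken isotopic to the identity through equivariant contactomorphisms.

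The main obstacle will be step (3)–(4): making the inductive choices of $g_{j+1}$ simultaneously (a) $S^1_j$-equivariant, (b) supported away from $\Gamma_1\cup\Gamma_2$, (c) enough to achieve the approximate density at stage $j$, and (d) compatible with the requirement, needed for (4), that no spurious short closed orbits are created — all while keeping everything $C^\infty$-controlled. The equivariance constraint is what makes the $C^\infty$-convergence work, but it restricts the $g_{j+1}$ to act essentially only on the base $B_j=S^3/S^1_j$ (a lens-space-like orbifold) away from $\Gamma_1,\Gamma_2$, so one must verify that contactomorphisms of this restricted type still suffice to move points around densely. I would handle this by working in the symplectization picture or, more concretely, by using the identification of a neighborhood of a torus fiber with $T^*T^2$-type coordinates (as in the $\Phi$-chart of Section~\ref{lifts}) and building $g_{j+1}$ from a Hamiltonian on the base that is a small perturbation supported in an annular region, exactly as in the classical Anosov--Katok construction for the annulus, lifted to the contact setting. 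The rest is bookkeeping: Diophantine-type estimates controlling how close $(a_{j+1},b_{j+1})$ must be in terms of $\varphi_j$, $g_{j+1}$, and the modulus of continuity of the $C^j$-norms.
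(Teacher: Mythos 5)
Your plan is the same Anosov--Katok conjugation scheme the paper uses, and the two load-bearing ideas are identical: (a) choose each $g_{j+1}$ to preserve $\varepsilon_{a_j,b_j}$ (equivalently, to be $S^1_j$-equivariant), so that $\alpha_{j+1}-\alpha_j=\varphi_{j+1}^*(\varepsilon_{a_{j+1},b_{j+1}}-\varepsilon_{a_j,b_j})$, which can then be made $C^j$-small by taking $(a_{j+1},b_{j+1})$ close to $(a_j,b_j)$; and (b) rule out spurious closed orbits because the non-$\Gamma$ orbits of $\alpha_j$ all have period $p_jb_j=q_ja_j\to\infty$. Where the paper differs is in how it organizes the density argument, and you should note this: rather than tracking a fixed reference orbit through a countable basis $\{O_m\}$ (the classical recipe, which forces you to control a single orbit across all later stages), the paper arranges at stage $j$ that \emph{some} orbit of $\alpha_j$ is $\epsilon_j$-dense — by picking a fixed invariant torus $T$, using a $k$-transitivity lemma (Lemma~\ref{transitive}, for $\varepsilon_{a_j,b_j}$-preserving contactomorphisms supported away from $\Gamma$) to make $\psi_j^{-1}(T)$ hit a finite $\frac{\delta}{2}$-dense set, and then taking $(a_{j+1},b_{j+1})$ so that each orbit in $T$ is $\sigma$-dense — and then observes that ``the flow has an $\epsilon_j$-dense orbit'' is a $C^1$-open condition on the contact form, so it passes to the limit $\alpha$; a Baire-type argument on $\alpha$'s orbits then produces a dense orbit. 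This mechanism (formulate each stage-$j$ requirement as an open condition, then transmit it to the limit) is the bookkeeping device you allude to but leave implicit, and it is what makes the proof short; you should make it explicit since your fixed-reference-point variant needs essentially the same transmission argument but with additional care about how the reference orbit evolves across stages. Two small slips worth correcting: in your step (4), the $(a_j,b_j)$ have \emph{rational} ratio at every finite stage (irrationality appears only in the limit); and your formula $\alpha_{j+1}=\varphi_j^* g_{j+1}^*\varepsilon_{a_{j+1},b_{j+1}}$ is inconsistent with $\varphi_j=g_1\circ\cdots\circ g_j$ (it would require $\varphi_{j+1}=g_{j+1}\circ\varphi_j$; the paper uses precisely $\varphi_{j+1}=\psi_j\circ\varphi_j$). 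Finally, you should state and prove the $k$-transitivity lemma — it reduces, via the quotient $(S^3\setminus\Gamma)\to\Sigma$ by the Reeb $S^1$-action, to the $k$-transitivity of compactly supported Hamiltonian diffeomorphisms of the reduced symplectic surface $\Sigma$, lifted to strict contactomorphisms — which is the one ingredient your outline takes for granted.
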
 

By (iii), the Reeb flow of $\alpha$ is not conjugate to the one of $\varepsilon_{a,b}$. Together with (i), this implies that the orbit of $\varepsilon_{a,b}$ by the action of $\mathrm{Cont}_0(S^3,\xi_{\mathrm{st}})$ is not $C^{\infty}$-closed in $\mathcal{F}(S^3,\xi_{\mathrm{st}})$, see Example \ref{irrational} in the Introduction. The above result can be deduced  from Theorem A in \cite{kat73}, where Katok used the conjugacy method which he had introduced together with Anosov in \cite{ak70b} to construct ergodic Hamiltonian flows arbitrarily close to suitably integrable flows. Therefore, (iii) can be replaced by the stronger condition:
\begin{enumerate}[(i')]
\setcounter{enumi}{2}
\item the Reeb flow of $\alpha$ is ergodic with respect to the invariant measure induced by the volume form $\alpha\wedge d\alpha$.
\end{enumerate}
Specializing the argument to the perturbation of $\varepsilon_{a_0,b_0}$ and showing  just the existence of a dense orbit, instead of ergodicity, allows us to give here a short self-contained proof, still based on the Anosov-Katok conjugacy method. See also \cite{cs16} for a reformulation of Katok's result in the Reeb setting.

\begin{rem}
\label{more}
{\rm In the proof of Theorem \ref{katok}, the pair $(a,b)$ is obtained as a limit of pairs of positive numbers $(a_j,b_j)$ with rational ratio which is required to converge extremely fast and we do not control which kind of limiting pairs $(a,b)$ we can produce. It is actually possible to strengthen the above result by prescribing the pair $(a,b)$: For any pair of positive numbers $(a,b)$ whose ratio is irrational and not Diophantine, there exist a contact form $\alpha\in \mathcal{F}(S^3,\xi_{\mathrm{st}})$  and a sequence $(\varphi_j)$ in $\mathrm{Cont}_0(S^3,\xi_{\mathrm{st}})$ which satisfy (i), (ii) and (iii) (or even (iii')) with respect to $(a,b)$. This follows from a corresponding result for area-preserving diffeomorphisms of the 2-disk which is proven in \cite{fs05b} and by a construction from \cite{agz22}, which allows to lift such diffeomorphisms to Reeb flows on $(S^3,\xi_{\mathrm{st}})$. 

The condition on $(a,b)$ is optimal. Indeed, if the ratio $\frac{a}{b}$ is rational, then any contact form $\alpha$ which satisfies (i) (even just with $C^0$-convergence) is smoothly conjugate to $\varepsilon_{a,b}$, as discussed in Example \ref{rational} in the Introduction, so cannot satisfy (iii). If the ratio $\frac{a}{b}$ is irrrational and Diophantine and the contact form $\alpha$ satisfies (i), then the first-return map to a local Poincar\'e section transverse to $\Gamma_1$ of the Reeb flow of $\alpha$ is an area-preserving diffeomorphism with an elliptic and Diophantine fixed point. In his ``last geometric theorem'', Herman proved that arbitrarily near to such a fixed point there are smooth invariant circles, see \cite[Theorem 4]{fk09}, and hence the Reeb flow of $\alpha$ has invariant tori. Since these tori disconnect $S^3$, $\alpha$ cannot satisfy (iii).}
\end{rem}

Writing the contact form $\alpha$ of Theorem \ref{katok} as $\alpha=f^2 \alpha_0$ for a suitable smooth positive function on $S^3$, we consider the starshaped domain $A\subset \C^2$ which is given by 
\[
A\coloneqq \{ r z \mid z\in S^3, \; 0\leq r < f(z) \}.
\]
The pullback by the radial projection $S^3\rightarrow \partial A$ of $\lambda_0|_{\partial A}$ to $S^3$ is then $\alpha$. We can complement Theorem \ref{katok} with the following result. 

\begin{thm}
\label{eliashberg-hofer}
The domains $A$ and $E(a,b)$ are symplectomorphic but the contact forms which are given by the restrictions of $\lambda_0$ to their boundaries are not conjugate.
\end{thm}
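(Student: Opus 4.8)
The plan is to prove the two assertions of Theorem \ref{eliashberg-hofer} separately, since they have completely different flavours: the positive statement (a symplectomorphism exists) is a limiting/exhaustion argument, while the negative statement (the contact forms are not conjugate) is a dynamical obstruction that follows almost immediately from part (iii) of Theorem \ref{katok}.

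\medskip

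\noindent\textbf{The two contact forms are not conjugate.} This is the easy half. Suppose there were a diffeomorphism $\Psi$ of $S^3$ with $\Psi^*(\lambda_0|_{\partial A}) $ proportional-by-pullback to $\lambda_0|_{\partial E(a,b)}$, i.e.\ a strict contactomorphism between $\alpha=f^2\alpha_0$ and $\varepsilon_{a,b}$. Then $\Psi$ would conjugate the Reeb flow of $\alpha$ to the Reeb flow of $\varepsilon_{a,b}$, carrying orbits to orbits and preserving periods. But by Theorem \ref{katok}(iii) the Reeb flow of $\alpha$ has a dense orbit, whereas the Reeb flow of $\varepsilon_{a,b}$ preserves the foliation of $S^3\setminus(\Gamma_1\cup\Gamma_2)$ by the 2-tori $\{|z_1|=\cos\beta,\ |z_2|=\sin\beta\}$ and hence has no dense orbit (every orbit stays on one such torus or on $\Gamma_1$ or $\Gamma_2$). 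This contradiction proves the second assertion. One should note that any conjugacy between the restrictions of $\lambda_0$ need not be strict a priori, only a contactomorphism composed with a rescaling; but a rescaling by a constant does not affect the existence of dense orbits, and the conjugacy of contact forms in the sense relevant here (equality of the $G$-orbits, $G=\R\times\mathrm{Cont}_0$) is exactly conjugacy up to such a rescaling, so the argument goes through verbatim.

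\medskip

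\noindent\textbf{The two domains are symplectomorphic.} Here the idea is that $A$ is exhausted from inside by the images under the diffeomorphisms $\varphi_j\in\mathrm{Cont}_0(S^3,\xi_{\mathrm{st}})$ of slightly shrunk copies of $E(a,b)$, and these inclusions are symplectic, so one can run a Gromov--McDuff type uniqueness-of-symplectic-exhaustion argument. Concretely: each $\varphi_j$, being an element of $\mathrm{Cont}_0(S^3,\xi_{\mathrm{st}})$ with $\varphi_j^*\varepsilon_{a,b}=e^{g_j}\alpha_0$ for some function $g_j$, extends to a diffeomorphism $\widehat\varphi_j$ of $\C^2\setminus\{0\}$ by the rule $\widehat\varphi_j(r z)= r\,\rho(\varphi_j(z))$ with an appropriate radial adjustment — more usefully, one uses that the symplectic manifold $(\C^2\setminus\{0\},\omega_0)$ is the symplectization $\widetilde{S^3}$ and that a strict contactomorphism-up-to-conformal-factor of $\xi_{\mathrm{st}}$ lifts to an exact symplectomorphism of this symplectization which carries the hypersurface $\{\alpha_0\}$ to $\{e^{g_j}\alpha_0\}$. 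Since $\varepsilon_{a,b}$ corresponds to the hypersurface $\partial E(a,b)$ and $\varphi_j^*\varepsilon_{a,b}\to\alpha$ in $C^\infty$, the star-shaped hypersurfaces obtained by applying $\widehat\varphi_j^{-1}$ to slightly rescaled copies $\partial E(\lambda a,\lambda b)$, $\lambda<1$, converge to $\partial A$ from inside and exhaust $A$. Thus $A=\bigcup_j U_j$ where $U_j$ is an increasing sequence of open subsets, each symplectomorphic (via the restriction of a global symplectomorphism of $\C^2$ supplied by $\widehat\varphi_j$) to $E(\lambda_j a,\lambda_j b)$ with $\lambda_j\uparrow 1$. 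Since $E(\lambda_j a,\lambda_j b)$ is itself an increasing exhaustion of $E(a,b)$ by open subsets each conformally symplectomorphic to a fixed ellipsoid, and since $A$ and $E(a,b)$ have the same volume (volume being preserved along the whole construction), one concludes $A\cong E(a,b)$ by the standard fact that a star-shaped domain exhausted by an increasing union of symplectic images of the ellipsoids $E(\lambda_j a,\lambda_j b)$ is symplectomorphic to $E(a,b)$ — this is precisely the ``invisible boundary'' phenomenon of Eliashberg--Hofer \cite{eh96}, and in fact in this setting it suffices to invoke their construction directly.

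\medskip

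\noindent\textbf{Main obstacle.} The delicate point is making the exhaustion argument rigorous: one must check that the convergence $\varphi_j^*\varepsilon_{a,b}\to\alpha$ in $\mathcal{F}(S^3,\xi_{\mathrm{st}})$, together with the freedom to shrink the ellipsoid by $\lambda_j<1$, genuinely produces an \emph{increasing} sequence of open subsets of $A$ whose union is all of $A$, and that each inclusion is induced by a genuine symplectomorphism of $\C^2$ (not merely of $\C^2\setminus\{0\}$). The first is a matter of choosing $\lambda_j$ carefully against the rate of $C^0$-convergence of the conformal factors $g_j$; the second is handled by the same device used in the proof of Theorem \ref{1stcap}(ii) in Section \ref{proof-of-thm-1}, namely integrating the generating vector field and cutting it off near the origin to produce a compactly supported Hamiltonian isotopy. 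With these two points in hand — and invoking the Eliashberg--Hofer exhaustion result for the final identification — the proof is complete. It is worth remarking that the volume equality $\mathrm{vol}(A)=\mathrm{vol}(E(a,b))$ is automatic here, because $\varphi_j\in\mathrm{Cont}_0$ preserves $\int_{S^3}\alpha_0\wedge d\alpha_0$-normalized volumes only up to the conformal factors, but these factors tend to $1$ in $C^0$, forcing $\int_A\omega_0^2=\int_{E(a,b)}\omega_0^2$ in the limit.
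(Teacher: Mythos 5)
Your non-conjugacy argument is essentially the same as the paper's and is correct: a conjugacy of the restrictions of $\lambda_0$ (up to the $G$-action, i.e.\ up to a constant rescaling and a contactomorphism) would conjugate the two Reeb flows up to a constant time reparametrization, which is impossible since the Reeb flow of $\alpha$ has a dense orbit and that of $\varepsilon_{a,b}$ does not.

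The symplectomorphism half has a genuine gap. The exhaustion set-up you describe is the right starting point, but the crucial step — passing from an increasing exhaustion of $A$ by open sets $U_j$, each symplectomorphic to $E(\lambda_j a,\lambda_j b)$, to a symplectomorphism $E(a,b)\cong A$ — is precisely the hard part, and it is not a ``standard fact,'' nor is it ``the invisible boundary phenomenon of Eliashberg--Hofer'' (that phenomenon is the \emph{conclusion} of the theorem, not a tool). An increasing union of symplectic images of shrinking ellipsoids need not, in general, be symplectomorphic to the limiting ellipsoid; to glue the symplectomorphisms $\phi_j$ into a single one you must modify each $\phi_{j+1}$ so that it agrees with $\phi_j$ on $r_jE$, and this requires producing a compactly supported symplectic isotopy between the two embeddings $r_jE\hookrightarrow r_{j+1}E$ given by $\phi_j$ and $\phi_{j+1}$. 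This is where the paper invokes McDuff's theorem that the space of symplectic embeddings of one ellipsoid into another in $\C^2$ is connected \cite{mcd09} — a nontrivial, specifically four-dimensional result — and then cuts off the generating Hamiltonian to make the isotopy compactly supported. Your proposal does not identify this step, and the appeal to ``invoking [Eliashberg--Hofer's] construction directly'' is not accurate either: the paper explicitly states that its construction is different from (and simpler than) Eliashberg--Hofer's precisely because it can use such dimension-four inputs. The volume-equality discussion is not needed in the paper's proof and does not help plug the gap. So: the framework is right, but the central lemma making the inductive gluing possible is missing.
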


The first examples of pairs of starshaped domains in $\C^n$, $n>1$, having the above properties were constructed by Eliashberg and Hofer in \cite{eh96}. The construction we present here is somehow simpler, but builds on some results which are known to hold only in dimension four. 

\paragraph{Proof of theorem \ref{katok}.} By slightly perturbing $(a_0,b_0)$, we may assume that the ratio $\frac{a_0}{b_0}$ is rational.
Starting from the pair $(a_0,b_0)$ and from $\varphi_0=\mathrm{id}$, we shall inductively construct sequences $(a_j,b_j)$ in $U$ with $\frac{a_j}{b_j}\in \Q$ and $(\varphi_j)$ in $\mathrm{Cont}_0(S^3,\xi_{\mathrm{st}})$ such that $(a_j,b_j)$ converges to some $(a,b)$ in $U$ and the sequence of contact forms
\[
\alpha_j \coloneqq \varphi_j^* \varepsilon_{a_j,b_j}
\]
converges to some contact form $\alpha$ in $\mathcal{U}$. Assuming that $(a_j,b_j)$ and $\varphi_j$ have been chosen, we choose $\varphi_{j+1}$ of the form
\[
\varphi_{j+1} \coloneqq \psi_j \circ \varphi_j,
\]
for some $\psi_j$ in $\mathrm{Cont}_0(S^3,\xi_{\mathrm{st}})$ satisfying $\psi_j^* \varepsilon_{a_j,b_j}= \varepsilon_{a_j,b_j}$ whose properties are discussed below. With these choices, we have for every $(a',b')\in (0,+\infty)^2$
\begin{equation}
\label{general}
\begin{split}
& \varphi_{j+1}^* \varepsilon_{a',b'} - \varphi_j^* \varepsilon_{a_j,b_j} =  \varphi_{j+1}^* \varepsilon_{a',b'} - (\psi_j^{-1} \circ \varphi_{j+1})^*  \varepsilon_{a_j,b_j} \\ &= \varphi_{j+1}^* \varepsilon_{a',b'} - \varphi_{j+1}^* ( (\psi_j^{-1})^*   \varepsilon_{a_j,b_j} ) = \varphi_{j+1}^* (  \varepsilon_{a',b'} -\varepsilon_{a_j,b_j} ).
\end{split}
\end{equation}
Once $\psi_j$, and hence $\varphi_{j+1}$, has be chosen, the above identity shows that by choosing $(a_{j+1},b_{j+1})$ close to $(a_j,b_j)$ we can make the $C^k$-norm of
\[
\alpha_{j+1} - \alpha_j = \varphi_{j+1}^* (  \varepsilon_{a_{j+1},b_{j+1}} -\varepsilon_{a_j,b_j} )
\]
arbitrarily small. Therefore, we can ensure the convergence of $(a_j,b_j)$ to some $(a,b)\in U$ and of $(\alpha_j)$ to some $\alpha \in \mathcal{U}$ in the $C^{\infty}$-topology. Moreover, \eqref{general} shows that by making the convergence of $(a_j,b_j)$ fast enough and by a diagonal argument we can also ensure that the sequence $(\varphi_j^* \varepsilon_{a,b})$ converges to $\alpha$, as claimed in (i). 

Next, we remark that any open condition which is satisfied by $\alpha_j$ can be transmitted to $\alpha$ by making the convergence of $(a_j,b_j)$ sufficiently fast. More precisely: if $\mathcal{V}_j$ is some open neighborhood of $\alpha_j$ in $\mathcal{F}(S^3,\xi_{\mathrm{st}})$, we can fix a closed neighborhood $\mathcal{W}_j\subset \mathcal{V}_j$ of $\alpha_j$ and make all the subsequent choices of $(a_i,b_i)$ in such a way that for every $i>j$ the contact form $\alpha_i$ belongs to $\mathcal{W}_j$. By doing this, we can ensure that also the limiting contact form $\alpha$ belongs to $\mathcal{V}_j$.

We now discuss the choice of $\psi_j$. By choosing each $\psi_j$ to be supported away from the link $\Gamma\coloneqq \Gamma_1 \cup \Gamma_2$, we can ensure that $\alpha$ agrees up to every order with $\varepsilon_{a,b}$ on $\Gamma$, and hence $\Gamma_1$ and $\Gamma_2$ are closed Reeb orbits of $\alpha$ of period $a$ and $b$, respectively.

Write $\frac{a_j}{b_j}=\frac{p_j}{q_j}$, where $p_j$ and $q_j$ are coprime natural numbers. By choosing $(a_{j+1},b_{j+1})$ close to $(a_j,b_j)$, but not equal to it, we have that the sequences $(p_j)$ and $q_j)$ diverge. 

We claim that if the convergence of $(a_j,b_j)$  is sufficiently fast then $\Gamma_1$ and $\Gamma_2$ are the only closed Reeb orbits of $\alpha$. Let $V_j$ be a sequence of open neighborhoods of $\Gamma$ whose intersection is $\Gamma$. Since the Reeb orbits of $\alpha_j$ other than $\Gamma_1$ and $\Gamma_2$ are all closed with period $p_j b_j =  q_j a_j$, the Reeb flow of $\alpha_j$ has no closed orbits of period $\tau\leq \tau_j\coloneqq p_j b_j - 1$ which pass through $S^3 \setminus V_j$. The set of contact forms with the latter property is $C^1$-open, so by the remark above we can guarantee that the Reeb flow of $\alpha$ has no closed orbits of period $\tau\leq \tau_j$ which pass through $S^3 \setminus V_j$, for every $j\in \N$. Since the sequence $(V_j)$ shrinks to $\Gamma$ and  $(\tau_j)$ diverges, $\Gamma_1$ and $\Gamma_2$ are the only two closed Reeb orbits of $\alpha$, as claimed in (ii).	

Fix some infinitesimal sequence $(\epsilon_j)$ of positive numbers. We claim that by a suitable choice of the defining data we can ensure that for every $j \geq 1$ the Reeb flow of $\alpha_j$ has an orbit which is $\epsilon_j$-dense. Here, a subset of $S^3$ is said to be $\epsilon$-dense if it meets any open ball of radius $\epsilon$, with respect to some fixed metric $d$ on $S^3$.

Indeed, once this is achieved we can argue as follows. Having an $\epsilon_j$-dense orbit is a $C^0$-open condition in the space of smooth vector fields, and hence a $C^1$-open condition in $\mathcal{F}(S^3,\xi_{\mathrm{st}})$. Therefore, the above property of each $\alpha_j$ can be transmitted to $\alpha$, whose Reeb flow then has an $\epsilon_j$-dense orbit for every $j$. When non-empty, the set $O_j$ of $\epsilon_j$-dense orbits of a flow is open and $\epsilon_j$-dense. Using that $\epsilon_j\to 0$, a standard argument, which is analogous to the proof of Baire's theorem, shows that $\cap_{k\in\N}O_{j_k}$ is non-empty for a suitable subsequence $(j_k)$. Every element of this intersection has a dense orbit and, therefore, $\alpha$ satisfies (iii).

There remains to prove the following claim, in which $\epsilon$ is an arbitrary positive number: Once $\varphi_j$ and $(a_j,b_j)$ have been chosen, by a suitable choice of the diffeomorphism $\psi_j$, which we recall is required to preserve the contact form $\varepsilon_{a_j,b_j}$ and to be supported away from $\Gamma$, and by choosing $(a_{j+1},b_{j+1})$ sufficiently close to $(a_j,b_j)$, we can make sure that the Reeb flow of 
\[
\alpha_{j+1}= \varphi_j^*( \psi_j^* \varepsilon_{a_{j+1},b_{j+1}}) 
\]
has an orbit which is $\epsilon$-dense. The proof of this fact uses the following lemma.

\begin{lem}
\label{transitive}
Let $(a,b)$ be a pair of positive numbers with rational ratio. Let $(\gamma_1,\dots,\gamma_k)$ and $(\gamma_1',\dots,\gamma_k')$ be $k$-tuples of distinct Reeb orbits of $\varepsilon_{a,b}$ in $S^3\setminus \Gamma$. Then there exists $\psi\in \mathrm{Cont}_0(S^3,\xi_{\mathrm{st}})$ which preserves the contact form $\varepsilon_{a,b}$, is  supported away from $\Gamma$ and satisfies $\psi(\gamma_j)=\gamma_j'$ for every $j=1,\dots,k$.
\end{lem}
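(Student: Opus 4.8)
The plan is to pass to the quotient of $S^3\setminus\Gamma$ by the Reeb flow of $\varepsilon_{a,b}$. Writing $\frac ab=\frac pq$ in lowest terms, the Reeb flow $\phi^t_{\varepsilon_{a,b}}$ is periodic, and on $S^3\setminus\Gamma$ it defines a \emph{free} circle action (the only orbits with nontrivial stabilizer are $\Gamma_1$ and $\Gamma_2$ themselves). Hence the orbit space $\mathcal O\coloneqq(S^3\setminus\Gamma)/\phi_{\varepsilon_{a,b}}$ is a connected smooth surface — concretely an open annulus, whose two ends correspond to approaching $\Gamma_1$ and $\Gamma_2$ — and $d\varepsilon_{a,b}$ descends to an area form $\omega$ on $\mathcal O$. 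Let $p\colon S^3\setminus\Gamma\to\mathcal O$ be the projection; since distinct Reeb orbits in $S^3\setminus\Gamma$ have distinct images, the points $\bar\gamma_j\coloneqq p(\gamma_j)$ are $k$ distinct points of $\mathcal O$, and likewise the $\bar\gamma_j'\coloneqq p(\gamma_j')$.

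The first step is a dictionary between strict contactomorphisms of $\varepsilon_{a,b}$ supported away from $\Gamma$ and Hamiltonian diffeomorphisms of $(\mathcal O,\omega)$. Given a compactly supported $H\in C^\infty(\mathcal O)$, form $\tilde H\coloneqq H\circ p$, which is invariant under the Reeb flow and extends smoothly by $0$ over $\Gamma$, and let $X$ be the contact vector field generated by the contact Hamiltonian $\tilde H$ and the contact form $\varepsilon_{a,b}$ as in \eqref{defX}; it vanishes near $\Gamma$. Since $\iota_{R_{\varepsilon_{a,b}}}d\tilde H=0$, Lemma \ref{formula} gives $(\phi^t_X)^*\varepsilon_{a,b}=\varepsilon_{a,b}$ for all $t$, so $\phi^t_X$ is a contact isotopy through strict contactomorphisms supported away from $\Gamma$. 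A short computation with \eqref{defX} (using Reeb-invariance of $\varepsilon_{a,b}$, $d\varepsilon_{a,b}$ and $d\tilde H$) shows $[R_{\varepsilon_{a,b}},X]=0$, so $X$ descends to a vector field $\bar X$ on $\mathcal O$; from $\iota_Xd\varepsilon_{a,b}=-d\tilde H$ and $p^*\omega=d\varepsilon_{a,b}$ one reads off $\iota_{\bar X}\omega=-dH$, i.e.\ $\bar X$ is the Hamiltonian vector field of $H$. Thus $\psi\coloneqq\phi^1_X$ is a strict contactomorphism of $\varepsilon_{a,b}$, supported away from $\Gamma$, lying in $\mathrm{Cont}_0(S^3,\xi_{\mathrm{st}})$ (it is the time-one map of the contact isotopy $\phi^t_X$), whose projection $\bar\psi\coloneqq\phi^1_{\bar X}$ is a compactly supported Hamiltonian diffeomorphism of $(\mathcal O,\omega)$. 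As over each point of $\mathcal O$ there is exactly one Reeb orbit, $\psi(\gamma_j)=\gamma_j'$ holds as soon as $\bar\psi(\bar\gamma_j)=\bar\gamma_j'$ for every $j$.

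It therefore remains to produce a compactly supported Hamiltonian diffeomorphism $\bar\psi$ of the connected surface $(\mathcal O,\omega)$ with $\bar\psi(\bar\gamma_j)=\bar\gamma_j'$ for $j=1,\dots,k$, i.e.\ to know that $\mathrm{Ham}_c(\mathcal O,\omega)$ acts transitively on ordered $k$-tuples of distinct points. This is classical: the configuration space $\mathrm{Conf}_k(\mathcal O)$ of $k$ distinct points is connected because $\dim\mathcal O=2$, so one joins $(\bar\gamma_1,\dots,\bar\gamma_k)$ to $(\bar\gamma_1',\dots,\bar\gamma_k')$ by a smooth path in $\mathrm{Conf}_k(\mathcal O)$; subdividing it finely enough that along each subinterval the $k$ moving points stay inside $k$ fixed pairwise disjoint Darboux discs, each elementary motion is realized by the time-one flow of a compactly supported Hamiltonian supported in those discs (inside a Darboux disc a point can be pushed to any point of a slightly smaller concentric disc by a compactly supported Hamiltonian), and composing these finitely many Hamiltonian diffeomorphisms yields the desired $\bar\psi$.

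The only genuinely non-formal ingredient is the reduction to the quotient together with the strict-contactomorphism dictionary of the second paragraph; the surface-geometry input of the third paragraph is standard. I expect the care to be needed in verifying that the contact vector field generated by a basic (Reeb-invariant) contact Hamiltonian is itself Reeb-invariant, hence projectable to $\mathcal O$, and that cutting off $H$ away from the two ends of $\mathcal O$ corresponds exactly to having the lift $\psi$ supported away from $\Gamma$ — both immediate once the conventions of \eqref{defX} and Lemma \ref{formula} are in hand.
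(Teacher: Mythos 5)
Your argument is correct and follows the paper's proof essentially verbatim: pass to the quotient surface $\Sigma=(S^3\setminus\Gamma)/S^1$ with area form $\omega$ satisfying $\pi^*\omega=d\varepsilon_{a,b}$, reduce to $k$-transitivity of compactly supported Hamiltonian diffeomorphisms of $(\Sigma,\omega)$, and lift the resulting Hamiltonian to a Reeb-invariant contact Hamiltonian supported away from $\Gamma$, whose flow preserves $\varepsilon_{a,b}$ by Lemma \ref{formula} and whose time-one map is the desired $\psi$. The only difference is that the paper cites \cite[p.~109]{ban97} for the $k$-transitivity of $\mathrm{Ham}_c(\Sigma,\omega)$, while you supply the standard configuration-space and Darboux-disc argument directly, and you spell out the verification that $X$ commutes with $R_{\varepsilon_{a,b}}$ and projects to the Hamiltonian vector field of $H$.
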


Postponing the proof of this lemma, we show how it implies the above claim. Note that the Reeb flow of $\alpha_{j+1}$ has the form
\begin{equation}
\label{composizione}
\phi_{\alpha_{j+1}}^t = \varphi_j^{-1} \circ \psi_j^{-1} \circ \phi^t_{\varepsilon_{a_{j+1},b_{j+1}}} \circ \psi_j \circ \varphi_j,
\end{equation}
where $\phi^t_{\varepsilon_{a_{j+1},b_{j+1}}}$ denotes the Reeb flow of $\varepsilon_{a_{j+1},b_{j+1}}$. Let $\delta>0$ be such that
\begin{equation}
\label{uc1}
d(z,z') < \delta \qquad \Rightarrow \qquad d \bigl( \varphi_j^{-1}(z),\varphi_j^{-1}(z')) < \epsilon,
\end{equation}
and let $A\subset S^3\setminus \Gamma$ be a finite set which is $\frac{\delta}{2}$-dense in $S^3$. Fix some invariant torus $T$ in $S^3\setminus \Gamma$, such as for instance
\[
T \coloneqq \bigl\{ (z_1,z_2)\in S^3 \mid |z_1|^2 = |z_2|^2 = \textstyle{\frac{1}{2}} \bigr\}.
\]
Since this torus consists of infinitely many Reeb orbits of $\varepsilon_{a_j,b_j}$, by the above lemma there exists $\psi_j \in \mathrm{Cont}_0(S^3,\xi_{\mathrm{st}})$ which  preserves  $\varepsilon_{a_j,b_j}$ and is supported away from $\Gamma$ such that $\psi_j^{-1}(T)$ contains the finite set $A$, and is therefore $\frac{\delta}{2}$-dense in $S^3$.

Now let $\sigma>0$ be such that
\begin{equation}
\label{uc2}
d(z,z') < \sigma \qquad \Rightarrow \qquad d \bigl( \psi_j^{-1}(z),\psi_j^{-1}(z')) < \textstyle{\frac{\delta}{2}}.
\end{equation}
Let $(a',b')$ be a pair of positive numbers with rational ratio $\frac{a'}{b'} = \frac{p}{q}$, where $p$ and $q$ are coprime natural numbers. Each Reeb orbit of $\varepsilon_{a',b'}$ in $T$ is $\sigma$-dense, provided that $p$ and $q$ are large enough. Therefore, by choosing $(a_{j+1},b_{j+1})$ with the already required properties and sufficiently close to $(a_j,b_j)$, we obtain that each Reeb orbit of $\varepsilon_{a_{j+1},b_{j+1}}$ is $\sigma$-dense in $T$.

Let $z$ be a point in $\varphi_j^{-1}\circ \psi_j^{-1}(T)$ and set $w\coloneqq \psi_j\circ \varphi_j(z)\in T$. Then the orbit of $w$ by the Reeb flow of $\varepsilon_{a_{j+1},b_{j+1}}$ is $\sigma$-dense in $T$.  By \eqref{uc2}, the image of this orbit by $\psi_j^{-1}$ is $\frac{\delta}{2}$-dense in $\psi_j^{-1}(T)$ and hence $\delta$-dense in $S^3$. Together with \eqref{composizione} and \eqref{uc1}, we conclude that the orbit of $z$ by the Reeb flow of $\alpha_{j+1}$ is $\epsilon$-dense. This concludes the proof of Theorem \ref{katok}.

\begin{proof}[Proof of Lemma \ref{transitive}]
Since $\frac{a}{b}$ is rational, the Reeb flow of $\varepsilon_{a,b}$ defines a free $S^1$-action on $S^3\setminus \Gamma$. The orbit space of this action is a smooth surface $\Sigma$ which is endowed with a symplectic form $\omega$ satisfying $\pi^* \omega = d \varepsilon_{a,b}$, where 
\[
\pi: S^3 \setminus \Gamma \rightarrow \Sigma
\]
denotes the quotient projection. Let $(p_1,\dots,p_k)$ and $(p_1',\dots,p_k')$ be the $k$-tuples of distinct points in $\Sigma$ such that 
\[
\gamma_j = \pi^{-1}(p_j), \qquad \gamma_j' = \pi^{-1}(p_j') \qquad \forall j=1,\dots,k.
\]
It is easy to show that the group of compactly supported Hamiltonian diffeomorphisms of $(\Sigma,\omega)$ acts $k$-transitively: There exists a smooth function $H: [0,1]\times \Sigma \rightarrow \R$ with compact support such that the time-1 map of the induced Hamiltonian flow maps $p_j$ to $p_j'$, for every $j=1,\dots,k$ (see e.g.\ \cite[p.\ 109]{ban97}). Lift $H$ to a smooth function $K:[0,1]\times S^3\rightarrow \R$ supported away from $\Gamma$ and let $X$ be the contact vector field on $S^3$ which is induced by the Hamiltonian $K$ and the contact form $\varepsilon_{a,b}$, i.e., the unique vector field satisfying
\[
\imath_X d\varepsilon_{a,b} = \bigl( \imath_{R_{\varepsilon_{a,b}}} dK) \varepsilon_{a,b} - dK, \qquad \imath_X \varepsilon_{a,b}= K,
\]
where $R_{\varepsilon_{a,b}}$ denote the Reeb vector field of $\varepsilon_{a,b}$.
The flow of $X$ consists of contactomorphisms of $(S^3,\xi_{\mathrm{st}})$ preserving $\varepsilon_{a,b}$ which are supported away from $\Gamma$ and whose time-1 map $\psi$ maps $\gamma_j$ to $\gamma_j'$, for every $j=1,\dots,k$.
\end{proof}

\paragraph{Proof of theorem \ref{eliashberg-hofer}.} By composition with the radial projections, a diffeomorphism from $\partial E(a,b)$ to $\partial A$ which intertwines the restrictions of $\lambda_0$ would induce a smooth conjugacy between $\varepsilon_{a,b}$ and $\alpha$ on $S^3$, which cannot exist because the Reeb flow of $\alpha$ has a dense orbit while the one of $\varepsilon_{a,b}$ does not. So we only have to prove that $E(a,b)$ and $A$ are symplectomorphic.

Set for simplicity $E\coloneqq E(a,b)$. Write $\varphi_j^* \varepsilon_{a,b} = f_j^2 \, \alpha_0$ and set
\[
A_j \coloneqq \{rz \mid z\in S^3, \; 0\leq r < f_j(z) \}.
\]
By composing the contactomorphism $\varphi_j^{-1}: S^3 \rightarrow S^3$, which isisotopic to the identity, with the radial projections $\partial E \rightarrow S^3$ and $S^3 \rightarrow \partial A_j$, we obtain a diffeomorphism from $\partial E$ to $\partial A_j$ which intertwines the restrictions of $\lambda_0$. The positively 1-homogeneous extension of this map is a symplectomorphism of $\C^2\setminus \{0\}$ isotopic to the identity via such maps, and mapping $r E\setminus \{0\}$ to $r A_j\setminus \{0\}$, for every $r>0$. Repeating the argument at the end of the proof of Theorem \ref{1stcap} (ii) in Section
\ref{proof-of-thm-1}, this map can be smoothed near the origin producing a global symplectomorphism $\phi_j : \C^2 \rightarrow \C^2$ such that
\[
\phi_j( r E) = r A_j \qquad \forall r\geq \textstyle{\frac{1}{2}}.
\]
The $C^0$-convergence of $(f_j)$ to $f$ implies that, up to passing to a subsequence, we can find a strictly increasing sequence $(r_j)$ in the interval $( \frac{1}{2},1)$ which converges to 1 and satisfies
\begin{equation}
\label{mp2}
r_j \overline{A}_j \subset r_{j+1} A_{j+1}, \qquad \bigcup_{j\geq 0} r_j A_j = A.
\end{equation}
Starting with $\psi_0\coloneqq \phi_0$ and arguing inductively, we wish to construct symplectomorphisms $\psi_j :\C^2 \rightarrow \C^2$ such that for every $j\geq 0$:
\begin{eqnarray}
\label{wish1}
&\psi_{j+1} = \psi_j  \quad  & \mbox{on } r_j E,\\
\label{wish2}
&\psi_{j+1} = \phi_{j+1} \quad & \mbox{on } \C^2 \setminus r_{j+1} E.
\end{eqnarray}
Once $\psi_0,\dots,\psi_j$ have been constructed, the construction of $\psi_{j+1}$ goes as follows. The symplectomorphism $\theta\coloneqq \phi_{j+1}^{-1}\circ \psi_j$ maps $r_j E$ to the domain $\phi_{j+1}^{-1}(r_j A_j)$, whose closure satisfies
\[
\overline{\phi_{j+1}^{-1}(r_j A_j)} = \phi_{j+1}^{-1}(r_j \overline{A_j}) \subset \phi_{j+1}^{-1}(r_{j+1} A_{j+1}) = r_{j+1} E.
\]
Since the space of symplectic embeddings between ellipsoids is connected, see \cite{mcd09}, we can find a symplectic isotopy $\theta_t:\C^2 \rightarrow \C^2$ such that $\theta_0=\mathrm{id}$, $\theta_1= \theta$ and
\[
\theta_t(r_j \overline{E}) \subset r_{j+1} E \qquad \forall t\in [0,1].
\]
Let $H_t: \C^2 \rightarrow \R$ be a smooth path of Hamiltonians generating $\theta_t$. Let $\chi:\C^2 \rightarrow \R$ be a smooth function with support in $r_{j+1} E$ and taking the value 1 on $\bigcup_{t\in [0,1]} \theta_t(r_j \overline{E})$. Denote by $\eta:\C^2 \rightarrow \C^2$ the time-1 map of the Hamiltonian isotopy induced by the non-autonomous Hamiltonian $\chi H$. Then
\[
\eta = \theta_1 =   \phi_{j+1}^{-1}\circ \psi_j \quad \mbox{on } r_j E, \qquad \eta = \mathrm{id} \quad \mbox{on } \C^2 \setminus r_{j+1} E,
\]
and hence $\psi_{j+1}\coloneqq \phi_{j+1}\circ \eta$ satisfies \eqref{wish1} and \eqref{wish2}.

Note that by \eqref{wish2} we have
\begin{equation}
\label{inpart}
\psi_{j}(rE) = r A_{j} \quad \forall r\geq r_{j}, \qquad \forall j\geq 0.
\end{equation}
By \eqref{wish1}, the sequence $\psi_j(z)$ stabilizes for every $z\in E$, and hence the map
\[
\psi:E \rightarrow \C^2, \qquad \psi(z) \coloneqq \lim_{j\rightarrow \infty} \psi_j(z)
\]
is a well defined symplectic embedding, which by \eqref{mp2} and \eqref{inpart} has image $A$. The maps $\psi$ is then the required symplectomorphism from $E(a,b)$ to $A$.

%\bibliographystyle{amsalpha}
%\bibliography{../../biblio/nonlinear}

\newcommand{\etalchar}[1]{$^{#1}$}
\providecommand{\bysame}{\leavevmode\hbox to3em{\hrulefill}\thinspace}
\providecommand{\MR}{\relax\ifhmode\unskip\space\fi MR }
% \MRhref is called by the amsart/book/proc definition of \MR.
\providecommand{\MRhref}[2]{%
  \href{http://www.ams.org/mathscinet-getitem?mr=#1}{#2}
}
\providecommand{\href}[2]{#2}

\end{document}